\newtheorem{thm}{Theorem}[section]
\newtheorem{lem}[thm]{Lemma}
\newtheorem{prop}[thm]{Proposition}
\newtheorem{prob}[thm]{Problem}
\theoremstyle{definition}
\newtheorem{defin}[thm]{Definition}
\newtheorem{eg}[thm]{Example}
\newtheorem*{Notation}{Notation and Conventions}
\newtheorem*{Acknowledgement}{Acknowledgement}
\theoremstyle{remark}
\newtheorem{rem}[thm]{Remark}
\numberwithin{equation}{section}
\newcommand{\bA}{\mathbb{A}}
\newcommand{\bC}{\mathbb{C}}
\newcommand{\bF}{\mathbb{F}}
\newcommand{\kc}{\overline{k}}
\newcommand{\sL}{\mathscr{L}}
\newcommand{\wL}{\widetilde{L}}
\newcommand{\wsL}{\widetilde{\mathscr{L}}}
\newcommand{\sO}{\mathscr{O}}
\newcommand{\bP}{\mathbb{P}}
\newcommand{\wtp}{\widetilde{p}}
\newcommand{\bQ}{\mathbb{Q}}
\newcommand{\bR}{\mathbb{R}}
\newcommand{\wS}{\widetilde{S}}
\newcommand{\wU}{\widetilde{U}}
\newcommand{\bZ}{\mathbb{Z}}
\newcommand{\Bs}{\mathrm{Bs}}
\newcommand{\Gal}{\mathrm{Gal}(\overline{k}/k)}
\newcommand{\Pic}{\mathrm{Pic}}
\newcommand{\Proj}{\mathrm{Proj}}
\newcommand{\Spec}{\mathrm{Spec}}
\newcommand{\Supp}{\mathrm{Supp}}
\newcommand{\mn}{\lceil \frac{n}{2} \rceil}
\begin{document}

\title{Cylinders in canonical del Pezzo fibrations}


\author{}
\address{}
\curraddr{}
\email{}
\thanks{}

\author{Masatomo Sawahara}
\address{Graduate School of Science and Engineering, Saitama University, Shimo-Okubo 255, Sakura-ku Saitama-shi,  Saitama 338-8570, JAPAN}
\curraddr{}
\email{sawahara.masatomo@gmail.com}
\thanks{}

\subjclass[2020]{14C20, 14E30, 14J17, 14J26, 14J45, 14R25. }

\keywords{Del Pezzo fibration, Cylinder, Generic fiber, Du Val singularity. }

\date{}

\dedicatory{}

\begin{abstract}
Cylinders in projective varieties play an important role in connection with unipotent group actions on certain affine algebraic varieties. The previous work due to Dubouloz and Kishimoto deals with the condition for a del Pezzo fibration to contain a vertical cylinder. In the present work, as a generalization in the sense of singularities, we shall determine the condition under which a del Pezzo fibration with canonical singularities admits a vertical cylinder by means of degree and type of singularities found on the corresponding to the generic fiber. 
\end{abstract}

\maketitle

In this article, let $k$ be a field of characteristic zero (not necessarily an algebraically closed field) and let $\kc$ be the algebraic closure of $k$. 
\section{Introduction}\label{1}

Let $X$ be an algebraic variety over $k$. 
Then an open subset $U$ of $X$ is called a {\it cylinder} if $U$ is isomorphic to $\bA ^1_k \times Z$ for some variety $Z$ over $k$. 
Certainly, cylinders are geometrically very simple, however recently they begin to receive a lot of attention in connection with unipotent group actions on affine cones over polarized varieties (see, e.g., {\cite{CPPZ21, CPW16a,KPZ11,KPZ13,KPZ14a,KPZ14b}}). 
Thus, it is important to find a cylinder in projective varieties, but in general, it is not easy to decide whether a given projective variety $V$ contains a cylinder. 
Supposing that $V$ contains a cylinder, a resolution of singularities of $V$ still contains a cylinder, in particular, its canonical divisor is not pseudo-effective. 
Then by virtue of {\cite[Corollary 1.3.3]{BCHM10}}, $V$ is birational to a suitable Mori Fiber Space (MFS, for short) by means of the minimal model program (MMP, for short). 
Conversely, assuming that a normal projective variety $V$ admits a process of MMP $V \dasharrow X$, where $X$ is MFS which contains a cylinder, it follows that so does the initial $V$ by {\cite[Lemma 9]{DK19}}. Thus, in some sense, it is important and essential to try to find cylinders contained in MFS. 

Let $f: X \to Y$ be MFS. In case of $\dim (Y)=0$, namely, $X$ is a $\bQ$-Fano variety of Picard rank one, it is a delicate question to know whether $X$ possesses a cylinder or not (see {\cite{Fur93a,Fur93b,KPZ11,KPZ14a,PZ16,PZ17,PZ18}}). 
In order to deal with cylinders found in MFS with base variety $Y$ of positive dimension, it is useful to prepare the notion of {\it vertical cylinders}: 
\begin{defin}[{\cite{DK18}}]\label{vertical} 
Let $f: X \to Y$ be a dominant morphism between normal algebraic varieties defined over $k$ and let $U \simeq Z \times \bA^1_k$ be a cylinder on $X$. 
We say that $U$ is a {\it vertical cylinder with respect to} $f$ if there exists a morphism $g : Z \to Y$ such that the restriction of $f$ to $U$ coincides with $g \circ pr_Z: U \simeq \bA ^1_k \times Z \overset{pr_Z}\to Z \overset{g}\to Y$. 
\end{defin} 
We note that the existence of a vertical cylinder with respect to $f$ is equivalent to saying that the generic fiber of $f$ contains a cylinder defined over the field of function over the base variety (cf. \cite[Lemma 3]{DK18}). 
Let $f: X \to Y$ be MFS of $\dim (Y)=\dim (X)-1$, i.e., Mori conic bundle. 
Then it is not difficult to see that the existence of vertical cylinders with respect to $f$ results in that of a rational point in the generic fiber of $f$. 
In this article, we are mainly interested in the case of $\dim (Y)=\dim (X)-2$, i.e., $f$ is a {\it del Pezzo fibration}. 
Since general fibers are del Pezzo surfaces, they contain cylinders. 
Hence to some extent it would be reasonable to expect that cylinders found on general fibers of $f$ could be unified to yield a vertical cylinder in the total space $X$. 
However this is too optimistic, indeed if $X$ has only terminal singularities, then the total space $X$ of a del Pezzo fibration $f: X \to Y$ admits a vertical cylinder if and only if the generic fiber $X_{\eta}$ of $f$ admits a rational point and is of degree greater than or equal to $5$ (see {\cite[Theorem 1]{DK18}}). 
Our next target lies in del Pezzo fibrations $f: X \to Y$ whose total spaces possess canonical singularities. 
\begin{defin}\label{def} 
A dominant projective morphism $f: X \to Y$ of relative dimension two between complex algebraic varieties is called a {\it canonical del Pezzo fibration} if the generic fiber $X_{\eta}$ of $f$ is a Du Val del Pezzo surface of Picard rank one over the function field $k= \bC (Y)$ of $Y$, i.e., the base extension $X_{\eta, \kc}$ to the algebraic closure $\kc$ of $k$ has at most Du Val singularities. 
\end{defin} 
In consideration of {\cite[Lemma 3]{DK18}}, the existence of a vertical cylinder contained in a given canonical del Pezzo fibration consists in a cylinder defined over $\bC (Y)$ in the generic fiber $X_{\eta}$. 
The field $\bC (Y)$ being not algebraically closed, the essence lies in the following problem: 
\begin{prob}\label{prob}
Let $S$ be a Du Val del Pezzo surface of $\rho _k(S)=1$ over $k$. 
In which case does $S$ contain a cylinder ?
\end{prob}
See \cite{CPW16b,Bel17} for relevant results about Problem \ref{prob} in case where $k$ is algebraically closed. Our main results in this article, which are stated in what follows depending on the degree $d$ of Du Val del Pezzo surfaces, shall reply completely to Problem \ref{prob}. The meanings of the notation used in Theorems \ref{main(1-2)} and  \ref{main(1-3)} are defined later in Definitions \ref{def:duval} and \ref{Urabe}: 
\begin{thm}\label{main(1-1)}
Any Du Val del Pezzo surface $S$ over $k$ with Picard rank $\rho_k(S)=1$ and of degree greater than or equal to $5$ contains a cylinder. 
\end{thm}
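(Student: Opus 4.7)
The strategy is to pass to the minimal resolution of $S$ and argue case by case on the degree $d\in\{5,6,7,8,9\}$. Since the singularities of $S$ are Du Val, the minimal resolution $\pi\colon\tilde{S}\to S$ is crepant, so $\tilde{S}$ is a smooth weak del Pezzo surface of degree $d$; over $\kc$ it is obtained from $\bP^2_{\kc}$ by blowing up $9-d$ (possibly infinitely near) points in almost general position, or is $\bP^1_{\kc}\times\bP^1_{\kc}$ in one sub-case at $d=8$. The condition $\rho_k(S)=1$ forces $\Gal$ to act on $\Pic(\tilde{S}_{\kc})$ so that its invariants, modulo the sublattice spanned by the $(-2)$-curves contracted by $\pi$, have rank one, which strongly constrains the Galois orbit structure of the $(-1)$-curves.

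For each $d\geq 5$ I would first exhibit a Galois-invariant class $[F]\in\Pic(\tilde{S}_{\kc})$ of a smooth rational curve with $F^2=0$ and $F\cdot K_{\tilde{S}}=-2$, which descends to a $\bP^1$-fibration $\varphi\colon\tilde{S}\to\bP^1_k$ defined over $k$. I would then choose a Galois-invariant divisor $D\subset\tilde{S}$ assembled from singular-fibre components, $(-2)$-curves of $\pi$, and a $k$-rational section (or a multisection supported on carefully chosen $(-1)$-curves) so that $\tilde{S}\setminus D$ becomes an $\bA^1$-bundle over an affine open $V\subset\bP^1_k$. Because $V$ is affine, this $\bA^1$-bundle is Zariski locally trivial, and after further shrinking $V$ one obtains a cylinder $\bA^1_k\times V\hookrightarrow\tilde{S}$ disjoint from the exceptional locus of $\pi$, whose image in $S$ is the desired cylinder.

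The extreme cases admit shortcuts. For $d=5$, the classical $k$-rationality of a quintic del Pezzo surface (Enriques--Shepherd-Barron) gives an embedding $\bA^2_k\hookrightarrow S$ immediately. For $d=9$ the surface $S$ is a Severi-Brauer surface; a direct analysis of the configurations compatible with $\rho_k(S)=1$, combined with anticanonical pencil geometry, produces the cylinder (in particular, when $S\cong\bP^2_k$ one simply takes $\bA^2_k$).

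The main obstacle is producing a Galois-invariant conic class $[F]$ for the intermediate degrees $d=6,7,8$. The hypothesis $\rho_k(S)=1$ forces the Galois representation on $\Pic(\tilde{S}_{\kc})$ to be as large as possible, so relatively few conic classes remain invariant; the technical heart of the argument is a careful case analysis of the Galois orbit structures on the $(-1)$- and $(-2)$-curves of the weak del Pezzo surface of degree $d$, verifying in each case both the existence of a suitable $[F]$ and the availability of a $k$-rational (multi)section of $\varphi$ that can be combined with $D$ to yield the cylinder.
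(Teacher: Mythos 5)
Your overall strategy (pass to the minimal resolution, exploit $\rho_k(S)=1$ to constrain the Galois action on the curves of $\wS_{\kc}$, then delete a suitable Galois-stable boundary) is the same shape as the paper's argument, but as written it has a genuine gap: the entire content of the theorem lives in the case analysis you defer. The paper first proves (Lemmas \ref{outside} and \ref{less (-2)-curves}) that most singularity types force $\rho_k(S)>1$, so that for $d\ge 5$ only the types $(8,A_1)$, $(6,A_2+A_1)$, $(6,A_2)$, $(6,A_1(1))$ and $(5,A_4)$ survive, and then for each of these exhibits an explicit birational morphism $\tau:\wS\to V$ onto a form of $\bP^2_{\kc}$ or $\bF_2$ and identifies the cylinder as the complement of an explicit divisor (a line, $M\cup F$ in $\bF_2$, etc.). You acknowledge this is ``the technical heart'' but do not carry it out, and your framework has a concrete failure point: in the degree $8$ case $\wS$ is a $k$-form of $\bF_2$ that need not have any $k$-rational point (Remark \ref{cylinder rem}(2)), so no $k$-rational section of the conic bundle exists and there are no $(-1)$-curves to serve as a multisection; the paper instead removes the Galois-stable $(-2)$-section and obtains a cylinder $\bA^1_k\times Z$ with $Z$ a possibly pointless conic.

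Both of your committed shortcuts are also wrong or vacuous. For $d=5$ the relevant surface is the singular $A_4$ type (all other degree-$5$ types have $\rho_k>1$), so the classical rationality of \emph{smooth} quintic del Pezzo surfaces is not directly applicable; and in any case $k$-rationality of a surface does not yield an open embedding of $\bA^2_k$, nor even of a cylinder --- it only gives a common dense open subset with $\bP^2_k$. For $d=9$ (and $d=7$) there is nothing to prove: no singular Du Val del Pezzo surface of degree $9$ exists, and the unique singular type in degree $7$ has $\rho_k\ge 2$ by Lemma \ref{outside}. Worse, if one did admit smooth surfaces into the statement, your claim that ``anticanonical pencil geometry produces the cylinder'' on a Severi--Brauer surface is false: a nontrivial Severi--Brauer surface has no $k$-rational point and hence contains no cylinder (cf.\ the fact quoted in Remark \ref{cylinder rem}(2)), which is precisely why the theorem is stated for genuinely singular surfaces. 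These missteps, together with the deferred classification, mean the proposal does not constitute a proof.
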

\begin{thm}\label{main(1-2)}
Let $S$ be a Du Val del Pezzo surface over $k$ with $\rho_k (S)=1$ and of degree $3$ or $4$. 
Then $S$ contains a cylinder if and only if $S$ has a singular point, which is $k$-rational, such that it is not of type $A_1^{++}$. 
\end{thm}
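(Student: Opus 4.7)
The plan is to prove the two implications separately, handling degrees $3$ and $4$ in parallel. For the sufficiency direction, I would start with a $k$-rational singular point $P$ of $S$ whose type is \emph{not} $A_1^{++}$ and construct a cylinder explicitly via a pencil of rational curves on $S$ centered at $P$. Concretely, passing to the minimal resolution $\sigma : \wS \to S$, I would use projection from $P$ to construct a $k$-morphism onto $\bP ^1_k$ whose general fiber, after removing an appropriate $k$-rational boundary divisor supported on curves through $P$ and through the remaining singularities, is $\bA ^1_k$. In degree $3$ (cubic surfaces in $\bP ^3_k$), projection from $P$ directly produces such a pencil onto $\bP ^2_k$; in degree $4$ (intersections of two quadrics in $\bP ^4_k$), projection from $P$ yields a birational map onto $\bP ^3_k$ which, composed with a further projection, gives the required pencil. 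The hypothesis that $P$ is not of $A_1^{++}$ type should guarantee that the boundary divisor so produced contains a $k$-rational section, allowing the fibration restricted to the cylinder to be trivialized as $\bA ^1_k \times Z$.

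For the necessity direction, I would argue contrapositively: assuming $S$ carries a cylinder, I aim to exhibit a $k$-rational singular point of $S$ of type different from $A_1^{++}$. The main tool is the theory of $(-K_S)$-polar cylinders. Since $-K_S$ is ample and $\rho _k(S)=1$, a cylinder on $S$ yields an effective $\bQ$-divisor $D \sim _{\bQ} -K_S$ whose support is Galois-stable and whose complement $S \setminus \Supp (D)$ contains a cylinder. Pulling back to $\wS$ and running a log-canonical-threshold analysis, I would locate a non-klt center of the pair $(S,D)$ at a distinguished $k$-rational point $Q \in S$. A case-by-case examination, leveraging the low degree $d \in \{3,4\}$ and the constraint $\rho _k(S)=1$, should then show that $Q$ is forced to be a singular point of $S$ of type distinct from $A_1^{++}$, producing the required contrapositive.

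The main obstacle will be the necessity direction, and specifically the sharp arithmetic isolation of the $A_1^{++}$ case. Because $k$ is not algebraically closed, I cannot directly invoke the absolute classification of \cite{CPW16b, Bel17}; instead I must enumerate the $\Gal$-orbits of singularities and of $(-1)$- and $(-2)$-curves on $\wS _{\kc}$ that are compatible with $\rho _k(S)=1$, and then track which Galois-invariant configurations of negative curves through a singular point can (or cannot) be assembled into the boundary of a vertical cylinder. The definition of $A_1^{++}$ from $\S \S \ref{2-2}$ should encode exactly the Galois-theoretic rigidity that obstructs the sufficiency construction: the expected picture is that such a singular point forces every $k$-rational anticanonical $\bQ$-divisor through $P$ to have too many Galois-conjugate components clustered at $P$ for the complement to contain an $\bA ^1_k$-family, and matching this obstruction exactly against the pencil construction of the first paragraph is the delicate point.
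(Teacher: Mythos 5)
Your overall strategy points in the right direction on both sides, but the necessity (``only if'') direction has a genuine gap at exactly the step you defer to ``a case-by-case examination.'' The paper's argument there is not a case analysis but a short arithmetic computation, and without it the $A_1^{++}$ case is not excluded. Concretely: once the Corti-inequality argument (the paper's Lemma \ref{Bs}) places the base locus of the pencil $\sL$ at a single $k$-rational singular point $p$, the only remaining issue is to rule out that $p$ is geometrically an $A_1$-point whose exceptional $(-2)$-curve $M$ has no $k$-point. The paper's Lemma \ref{lem4.1} does this by writing the proper transform $\wsL$ on the minimal resolution as $\wsL \sim_{\bQ} a(-K_{\wS}) - bM$ with $a,b \in \bQ_{>0}$ and computing $(\wsL)^2 = da^2 - 2b^2$, which cannot vanish for $d \in \{3,4\}$ because $d/2$ is not the square of a rational number. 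Hence $\Bs(\wsL)$ is nonempty, consists of a single $k$-rational point lying on $M$, and therefore $M(k) \neq \emptyset$, i.e.\ $p$ is of $A_1^{+}$-type. This quadratic-irrationality step is precisely where the hypothesis $d \in \{3,4\}$ enters and is the exact mechanism isolating $A_1^{++}$; your proposal never identifies it, and a log-canonical-threshold analysis on $S$ alone will not produce it, since the non-klt center at a singular $k$-point exists regardless of whether $p$ is $A_1^{+}$ or $A_1^{++}$ --- the obstruction is detected upstairs on $\wS$, by the base point of $\wsL$ being forced onto the exceptional curve.

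On the sufficiency side your projection idea is geometrically reasonable (for $d=4$, projection from a node lands on a quadric in $\bP^3_k$, i.e.\ a $k$-form of $\bP^1_{\kc}\times\bP^1_{\kc}$ or of $\bF_2$, which is in fact what the paper's morphism $\tau$ produces), but the substance of the proof is the classification in \S\S\ref{3-1}: one must enumerate which types of weak del Pezzo surfaces $\wS$ are compatible with $\rho_k(S)=1$, determine for each the Galois-stable configurations of $(-1)$- and $(-2)$-curves, and verify case by case (the paper's cases $1^{\circ}$--$10^{\circ}$) that the complement of a $k$-defined boundary is a cylinder. Your sketch asserts that the non-$A_1^{++}$ hypothesis ``should'' supply a $k$-rational section of the boundary; in the paper this is checked explicitly, and it is only in some cases (e.g.\ $n^{\circ}=3^{\circ},5^{\circ},7^{\circ},8^{\circ}$) that the $A_1^{+}$ (or $A_2^{+}$) condition supplies the needed $k$-point, while in others no such input is required. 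As written, both halves of your argument stop exactly where the proof becomes nontrivial.
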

\begin{thm}\label{main(1-3)}
Let $S$ be a Du Val del Pezzo surface over $k$ with $\rho_k (S)=1$ and of degree $d \le 2$. 
Then we have the following: 
\begin{enumerate}
\item If $d=2$ (resp. $d=1$) and $S_{\kc}$ has a singular point of type $A_6$, $A_7$, $D_4$, $D_5$, $D_6$, $E_6$ or $E_7$ (resp. type $A_8$, $D_6$, $D_7$, $D_8$, $E_7$ or $E_8$), then $S$ contains a cylinder. 
\item If $d=2$ (resp. $d=1$) and $S_{\kc}$ has a singular point of type $(A_5)''$ (resp. type $(A_7)''$)\footnote{Note that a singular point of type $(A_5)''$ (resp. $(A_7)''$) on a Du Val de Pezzo surface of degree $2$ (resp. $1$) is automatically $k$-rational. }, then $S$ contains a cylinder if and only if this singular point is not of type $A_5^{++}$ (resp. type $A_7^{++}$) on $S$. 
\item If $d=2$ (resp. $d=1$) and $S_{\kc}$ allows only singular points of type $A_1$ (resp. $A_1$, $A_2$, $A_3$, $D_4$), then $S$ does not contain a cylinder. 
\item If $S$ does not satisfy any condition on singularities of (1), (2) and (3) above, then $S$ contains a cylinder if and only if $S$ has a singular point, which is $k$-rational, of type $A^-_n$, $D^-_n$ or $E^-_n$. 
\end{enumerate}
\end{thm}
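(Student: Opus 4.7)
The plan is a case-by-case analysis on the Dynkin types of the $k$-rational Du Val points of $S$, constructing or obstructing cylinders depending on the singularity configuration. Since $\rho_k(S)=1$, any effective divisor is numerically proportional to $-K_S$, which will play a role in both directions.

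For the positive direction of (1), let $p\in S(k)$ be a singularity of one of the listed ``large'' types, and let $\sigma\colon \widetilde{S}\to S$ be its minimal resolution, whose exceptional Dynkin configuration $E$ is correspondingly large. My approach is to exhibit on $\widetilde{S}$ a $k$-rational pencil $|L|$ whose general member is a smooth rational curve meeting $E$ at a single point; the required pencil arises by contracting a $\Gal$-stable collection of $(-1)$-curves in $\widetilde{S}_{\kc}$ to obtain a Hirzebruch surface and pulling back the ruling. The size of $E$ in the cases listed in (1) is precisely what guarantees enough negative curves to perform this contraction $\Gal$-equivariantly. Removing a Galois-invariant section and one fiber from the resulting fibration then exhibits an affine open $U\subset S$ isomorphic to $\bA^1_k\times Z$ for some affine curve $Z$.

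Parts (2) and (4) proceed by the same construction but with the combinatorics refined by the Galois action. The $(A_5)''$ and $(A_7)''$ resolution graphs possess an involution exchanging two symmetric sub-chains; the superscript ``$++$'' defined in $\S \S \ref{2-2}$ is designed precisely to record that $\Gal$ realizes this involution. In the $A^{++}$ case the two candidate cylinder-pencils are Galois-conjugate, so neither is $k$-rational, proving the ``only if'' direction of (2); otherwise one chain is fixed and the construction descends. Similarly the ``$-$'' superscript in $A_n^-,D_n^-,E_n^-$ of (4) records Galois-invariance of the cylinder construction along the chain, and the same analysis yields the corresponding equivalence; combined with (1) and (3) this accounts for all remaining singularity profiles.

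The main difficulty is the non-existence assertion (3). Arguing by contradiction, suppose $U=S\setminus D$ is a cylinder on $S$ whose singularities over $\kc$ are only of the listed mild types. Since $\rho_k(S)=1$, $D$ is an effective divisor with $-K_S\sim _{\bQ}\lambda D$ for some $\lambda>0$, making $(S,\tfrac{1}{\lambda}D)$ a $(-K_S)$-polar cylinder in the sense of Cheltsov--Park--Won. The classifications in \cite{CPW16b, Bel17} applied to $S_{\kc}$ imply that any such polar cylinder forces $S_{\kc}$ to carry a singular point strictly larger than the listed mild types, yielding the contradiction. The principal obstacle is verifying this step $\Gal$-equivariantly: the obstruction on $\widetilde{S}_{\kc}$ coming from the intersection combinatorics must itself be Galois-invariant, so that it descends to the $k$-form of $S$. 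Managing this bookkeeping uniformly across all the allowed mild-singularity profiles --- in particular distinguishing $A_1^{++}$ behaviour from the constructive $A_n^-$ cases of (4) at the borderline --- is the most delicate part of the argument.
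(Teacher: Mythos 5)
Your outline for (1) and (3) does match the paper's strategy: for (1) one contracts a Galois-stable union of $(-1)$-curves on $\wS$ to reach a surface with a known cylinder whose boundary absorbs all $(-2)$-curves (the paper's Table \ref{list(2)} and Lemmas \ref{construct cylinder(1)}, \ref{construct cylinder(2)}), and for (3) one uses $\rho_k(S)=1$ to turn any cylinder into a $(-K_{S_{\kc}})$-polar cylinder on $S_{\kc}$ and invokes {\cite[Theorem 1.5]{CPW16b}}. Note, however, that no Galois descent is needed in (3): the contradiction is obtained entirely over $\kc$, so the ``delicate equivariant bookkeeping'' you flag as the main difficulty there simply does not arise. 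The genuine gaps are in (2) and, above all, in the ``only if'' half of (4). For (2) your mechanism is wrong: $A_n^{++}$ is not the condition that $\Gal$ realizes the flip of the chain --- that already holds for $A_n^+$ (and for $(A_5)''$ with $\rho_k(S)=1$ one has $\rho_k(\wS)=4$, so the flip is \emph{always} realized) --- but the condition that the exceptional set has no $k$-rational point. In the $A_5^+$ sub-case the two half-chains are conjugate and yet a cylinder exists, built from the $k$-point on the central $(-2)$-curve; your ``two conjugate pencils'' argument would wrongly exclude this case, and in any event it only rules out two specific candidate pencils rather than an arbitrary cylinder. The paper's actual obstruction is that the pencil of closures of the cylinder rulings has nonempty base locus on $\wS$ consisting of a single $k$-rational point on the exceptional set (Proposition \ref{Bs(3)}), which forces $k(N)\neq\emptyset$.

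For (4), the claim that ``the same analysis yields the corresponding equivalence'' skips the technical core of the proof. Even granting that the base point of the cylinder pencil is a $k$-rational singular point $p$, one must show $p$ cannot be of $A_n^+$, $D_5^+$ or $E_6^+$ type, and this is not a formal Galois-invariance statement. The paper writes $\wsL \sim_{\bQ} a(-K_{\wS})-bM$ with $M$ an explicit $\bQ$-divisor supported on the exceptional chain (Lemmas \ref{A-1}--\ref{E-1}), produces auxiliary $(-1)$-curves via Proposition \ref{ADE-prop} to bound $a/b$, and derives a contradiction from Corti's inequality applied to a non-log-canonical pair at the base point (Lemmas \ref{Corti(2)}, \ref{sym(1)}--\ref{sym(3)}); the cases $(d,\text{sing})=(2,A_2^+)$ and $(1,A_4^+)$ even require a separate global argument using a conic-bundle structure. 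None of this appears in your outline, and without it the ``only if'' direction of (4) is unproved. The ``if'' direction likewise needs the classification of the admissible types of $\wS$ (Proposition \ref{5-4(0)}) before the explicit constructions of \S\S\ref{5-5} can be carried out.
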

Since Problem \ref{prob} is completely settled by virtue of Theorems \ref{main(1-1)}, \ref{main(1-2)} and \ref{main(1-3)}, we can determine canonical del Pezzo fibrations $f: X \to Y$ admitting vertical cylinders depending on degree of $f$ and singularities in the generic fiber $X_{\eta}$ of $f$. 
However, the treatment of singularities on generic fibers is quite subtle, for instance, even in the case that two Du Val del Pezzo surfaces are of Picard rank one over a field $k$ whose base extensions over $\kc$ are mutually isomorphic, it may be that exactly only one of them contains a cylinder (see Example \ref{eg6}). 

The scheme of the article proceeds as follows: 
In \S \ref{2}, we shall prepare preliminaries, which are used in the later sections. 
In particular, the notation of Du Val singularities over a field $k$ of characteristic zero plays important roles, which are used in Theorems \ref{main(1-2)} and \ref{main(1-3)} in the article. 
We shall prove the main results in \S \ref{3} and \S \ref{5}. 
More precisely, Theorems \ref{main(1-1)} and \ref{main(1-2)} are proved in \S \ref{3} and Theorem \ref{main(1-3)} is proved in \S \ref{5}, respectively. 
Since the proof for Theorem \ref{main(1-3)} is somehow involved, we need to prepare a further result in \S \ref{4} in addition to those in \S \ref{2}. 
In the last section \S \ref{6}, we shall provide some examples of cylinders in Du Val del Pezzo surface of Picard rank one over a field $k$ of characteristic zero and those in canonical del Pezzo fibrations. 
\begin{Notation}
We will use the following notations: 
\begin{itemize}
\item $\bC$: The field of complex numbers. 
\item $\bR$: The field of real numbers. 
\item $\bQ$: The field of rational numbers. 
\item $k$: A field of characteristic zero. 
\item $\kc$: An algebraic closure of $k$, i.e., an algebraically closed field of characteristic zero. 
\item $\bF _n$: The Hirzebruch surface of degree $n$, i.e., $\bF _n := \bP (\sO _{\bP ^1} \oplus \sO _{\bP ^1}(n))$, where $n$ is a non-negative integer. 
\item $V_{\kc}$: The base extension over $\kc$ of an algebraic variety $V$ over $k$, i.e., $V_{\kc} := V \times _{\Spec (k)} \Spec (\kc )$. 
\item $\bA ^1 _{\ast ,k}$: The affine line over $k$ with one $k$-rational point removed, i.e., $\bA ^1_{\ast ,k} := \Spec (k[t^{\pm 1}])$. 
\item $C_{(n)}$: A $k$-form of the affine line $\bA ^1_{\kc}$ with $n$-times closed points removed. 
\item $\rho _k(V)$: The Picard rank of a projective variety $V$ over $k$. 
\item $(D \cdot D')$: The intersection number of two divisors $D$ and $D'$.  
\item $(D)^2$: The self-intersection number of a divisor $D$, i.e., $(D)^2 = (D \cdot D)$. 
\item $n$-curve: A smooth rational curve with self-intersection number $n$. 
\item $\Bs (\sL )$: A (set-theoretic sense) base locus of a linear system $\sL$. 
\item $\delta _{i,j}$: The Kronecker delta. 
\item $\lceil q \rceil$: The round up of a rational number $q$. 
\item Weak del Pezzo surface: A smooth projective surface such that its anti-canonical divisor is nef and big but not ample. 
\item $\bP ^1$-fibration (resp. $\bP ^1$-bundle) $\pi :V \to B$ (over $k$): A surjective morphism over $k$ such that for a general closed point (resp. any closed point) $b \in B$ the fiber $V_b$ over the residue field $k(b)$ is isomorphic to $\bP ^1_{k(b)}$. 
\item Mori conic bundle $\pi :V \to B$ (over $k$): A surjective morphism over $k$ such that for any closed point $b \in B$ the fiber $V_b$ over the residue field $k(b)$ is isomorphic to the plane conic, which is not necessarily irreducible. 
\end{itemize}
\end{Notation}
\begin{Acknowledgement}
The author is deeply grateful to his supervisor Professor Takashi Kishimoto for his useful advice. 
Also, he would like to thank the referees for suggesting many valuable comments that helped to improve this article. 
\end{Acknowledgement}

\section{Preliminaries}\label{2}
\subsection{Du Val singularities}\label{2-1}
In this subsection, we quickly review Du Val singularities over algebraically closed fields, and then define Du Val singularities over algebraically non-closed fields. 

The properties of Du Val singularities over an algebraically closed field are well known (see, e.g., {\cite{Dur79}}, for details). 
In particular, we recall that a Du Val singular point on a normal algebraic surface over $\bC$ is analytically equivalent to one of the following:  
\begin{align*}
A_n:\ &(x^2+y^2+z^{n+1}=0) \subseteq \bA ^3_{\bC} = \Spec (\bC [x,y,z])& &(n \ge 1);& \\
D_n:\ &(x^2+y^2z+z^{n-1}=0) \subseteq \bA ^3_{\bC} = \Spec (\bC [x,y,z])& &(n \ge 4);& \\
E_6:\ &(x^2+y^3+z^4=0) \subseteq \bA ^3_{\bC} = \Spec (\bC [x,y,z]);&&& \\
E_7:\ &(x^2+y^3+yz^3=0) \subseteq \bA ^3_{\bC} = \Spec (\bC [x,y,z]);&&& \\
E_8:\ &(x^2+y^3+z^5=0) \subseteq \bA ^3_{\bC} = \Spec (\bC [x,y,z]),&&&
\end{align*}
where we note that Du Val singularity types of $A_n$, $D_n$, $E_6$, $E_7$ and $E_8$ correspond to the Dynkin diagrams of types $A_n$, $D_n$, $E_6$, $E_7$ and $E_8$, respectively. 

From now on, we shall consider Du Val singularities over algebraically non-closed fields. 
Let $V$ be a normal algebraic surface over $k$ and let $p$ be a Du Val singular point on $V_{\kc}$, which is $k$-rational. 
Notice that the exceptional set of the minimal resolution at $p \in V_{\kc}$ is invariant under the action of the Galois group $\Gal$. 
Thus, depending on a fashion of the $\Gal$-action on the exceptional set, we shall divide the type of Du Val singularities in a more refined way as follows:
\begin{defin}\label{def:duval} 
Let $V$ be a normal algebraic surface over $k$, let $p$ be a Du Val singular point on $V_{\kc}$, which is $k$-rational, let $\sigma :\widetilde{V} \to V$ be the minimal resolution of $p$ over $k$ and let $N$ be the exceptional set of $\sigma$ on $\widetilde{V}$. 
\begin{enumerate}
\renewcommand{\labelenumii}{(\roman{enumii})}
\item Assume that the type of $p$ is an type $A_1$ on $V_{\kc}$. Then: 
\begin{enumerate}
\item $p$ is of {\it type $A_1^+$} on $V$ if $N(k) \not= \emptyset$. 
\item $p$ is of {\it type $A_1^{++}$} on $V$ if $N(k) = \emptyset$. 
\end{enumerate}
\item Assume that $p$ is an type $A_n$ for $n\ge 2$ on $V_{\kc}$. Then: 
\begin{enumerate}
\item $p$ is of {\it type $A_n^-$} on $V$ if $\rho _k(\widetilde{V}) - \rho _k(V) = n$. 
\item $p$ is of {\it type $A_n^+$} on $V$ if $\rho _k(\widetilde{V}) - \rho _k(V) < n$ and $N(k) \not= \emptyset$. 
\item $p$ is of {\it type $A_n^{++}$} on $V$ if $\rho _k(\widetilde{V}) - \rho _k(V) < n$ and $N(k) = \emptyset$. 
\end{enumerate}
\item Assume that $p$ is an type $X_n$ on $V_{\kc}$, where $X_n$ means $D_n$ for $n\ge 4$ or $E_n$ for $n=6$. Then: 
\begin{enumerate}
\item $p$ is of {\it type $X_n^-$} on $V$ if $\rho _k(\widetilde{V}) - \rho _k(V) = n$. 
\item $p$ is of {\it type $X_n^+$} on $V$ if $\rho _k(\widetilde{V}) - \rho _k(V) < n$. 
\end{enumerate}
\end{enumerate}
\end{defin}
\begin{rem}
If $k=\bR$, then all types of Du Val singularities over $k$ correspond to all types of {\it real Du Val singularities} in {\cite[\S 9]{Kol99}} except for type $A_1$. 
Meanwhile, although \cite{Kol99} defines both of Du Val singularities of type $A_1^+$ and type $A_1^-$, whereas in Definition \ref{def:duval}, we do not prepare the notation for type $A_1^-$ intentionally in consideration of the assertion (4) in Theorem \ref{main(1-3)}.  
\end{rem}
\begin{eg}
Let $\widetilde{V}$ be a $k$-form of the Hirzebruch surface $\bF _2$ of degree two over $k$. 
Notice that the minimal section $M$ on $\widetilde{V}_{\kc}$, which is a $(-2)$-curve, is defined over $k$. 
Hence, we obtain the contraction $\sigma :\widetilde{V} \to V$ of $M$ defined over $k$. 
Then $p := \sigma (M) \in V_{\kc}$ is a Du Val singular point of type $A_1$. 
Now, if $M$ is a non-trivial $k$-form of $\bP ^1_{\kc}$, then $M(k) = \emptyset$ (see Lemma \ref{Severi-Brauer} below). 
Namely, if $M \simeq \bP ^1_k$ (resp. $M \not\simeq \bP ^1_k$), then $p \in S$ is of type $A_1^+$ (resp. $A_1^{++}$). 
\end{eg}

\subsection{Types of weak del Pezzo surfaces}\label{2-2}
For any Du Val del Pezzo surface $S$ defined over $k$, recall that its minimal resolution $\wS$ is a weak del Pezzo surface over $k$ and satisfies $(-K_S)^2=(-K_{\wS})^2$, which is called the {\it degree} of $S$ (or $\wS$). 
Conversely, for any weak del Pezzo surface $\wS$ defined over $k$, note that the union of all $(-2)$-curves on $\wS _{\kc}$ is defined over $k$ and can be contracted, hence, we obtain the contraction $\sigma :\wS \to S$ over $k$ of this union, so that $S$ is a Du Val del Pezzo surface over $k$. 
Hence, Du Val del Pezzo surfaces over $k$ are in one-to-one correspondence with weak del Pezzo surfaces over $k$ via minimal resolutions. 
In this subsection, we shall recall a classification of the types of weak del Pezzo surfaces over a field of characteristic zero for later use. 
Moreover, we will introduce the notation for a special kind of singularities included in Du Val del Pezzo surfaces from a different point of view than in \S \S \ref{2-1}. 
\subsubsection{}\label{2-2-1}
Let $\wS$ be a weak del Pezzo surface defined over a field $k$ of characteristic zero. 
Then we associate to $\wS$ ``{\it Degree}'', which is the degree of $\wS$, ``{\it Singularities}'', which means singularities on the surface given by the blow-down of all $(-2)$-curves on $\wS _{\kc}$, and ``{\it \#\,Lines}'', which is the number of all $(-1)$-curves on $\wS _{\kc}$. 
The triplet (Degree, Singularities, \#\,Lines) is called the {\it type} of $\wS$ (see {\cite[\S\S 2.3]{Saw}}). 
The type of weak del Pezzo surfaces is classified, see for instance {\cite{BW79,CT88,Ura83}}, {\cite[\S \S 8.4--8.7 and \S \S 9.2]{Dol12}}, and {\cite[Table 3]{Saw}} for the list in the classification. 

In what follows, we follow the notation on the type of weak del Pezzo surfaces as in {\cite[\S \S 2.3]{Saw}}. 
For the readers' convenience, we give some comments on this notation. 
Almost all types of weak del Pezzo surfaces are determined by only ``Degree'' and ``Singularities'', namely the remaining information on ``\#\,Lines'' would be uniquely determined by the former two. 
In each of the other exceptional types, there are exactly two possibilities about ``\#\,Lines''. 
In order to distinguish such cases, we use the notation something like $(X)_<$ and $(X)_>$, where $X$ means ``Singularities'' and \#\,Lines of $(X)_<$ is less than \#\,Lines of $(X)_>$. 
For example, in case of $(\text{Degree}, \text{Singularities}) = (6,A_1)$, then ``\#\,Lines'' is either $3$ or $4$. 
Hence, if $\text{\#\,Lines}=3$ (resp. $\text{\#\,Lines}=4$), then this type is denoted by $(A_1)_<$ (resp. $(A_1)_>$). 
\subsubsection{}
Let $S$ be a Du Val del Pezzo surface of degree $d \le 2$ defined over $k$ and let $\sigma :\wS \to S$ be the minimal resolution, so that $\wS$ is a weak del Pezzo surface of degree $d$. 

By the classification of types of weak del Pezzo surfaces, assuming that $S_{\kc}$ admits at least one singular point of type $A_{9-2d}$ or at least two singular points, one of which is of type $A_{7-2d}$ and the other of which is of type $A_1$, the type of the weak del Pezzo surface $\wS$ is not uniquely determined by only ``Degree'' and ``Singularities'' if and only if ``Singularities'' of $\wS$ is one of the following: 
\begin{align}\label{(2.1)}
\begin{cases}
\ d=2:&A_5+A_1,\ A_5,\ A_3+2A_1\ \text{or}\ A_3+A_1.\\
\ d=1:&A_7\ \text{or}\ A_5+A_1.
\end{cases}
\end{align}
For the above mentioned cases (\ref{(2.1)}) only, we shall adopt the notation found in {\cite[\S \S 2.2]{CPW16b}} as follows to make the proof more transparent. We note that in (\ref{(2.2)}) each of the left hand side is the notation used in {\cite[\S \S 2.2]{CPW16b}}, meanwhile, each of the right hand side is the one defined in \ref{2-2-1}. For types in (\ref{(2.1)}), we will adopt the ones at the left hand side in (\ref{(2.2)}): 
\begin{align}\label{(2.2)}
\begin{cases}
\ d=2:&(A_5+A_1)'=(A_5+A_1)_<,\ (A_5+A_1)''=(A_5+A_1)_>,\ (A_5)'=(A_5)_<,\\
& (A_5)''=(A_5)_>,\ (A_3+2A_1)'=(A_3+2A_1)_<,\  (A_3+2A_1)''=(A_3+2A_1)_>,\\ 
&(A_3+A_1)'=(A_3+A_1)_<,\ (A_3+A_1)''=(A_3+A_1)_>. \\
\ d=1:&(A_7)'=(A_7)_>,\ (A_7)''=(A_7)_<,\ (A_5+A_1)'=(A_5+A_1)_>,\\
&(A_5+A_1)''=(A_5+A_1)_<. 
\end{cases}
\end{align}

On the other hand, to state our main result exactly, we shall divide the types of $k$-rational Du Val singularity $x \in S_{\kc}$ of type $A_{9-2d}$ as follows by making use of their notation: 
\begin{defin}[cf.\ {\cite{Ura83}}]\label{Urabe}
With the notation as above, the singular point $x$ is of type $(A_{9-2d})'$ (resp. type $(A_{9-2d})''$) if there exists a $(-1)$-curve on $\wS_{\kc}$ meeting the $(-2)$-curve corresponding to the central vertex on the dual graph of the minimal resolution (resp. there does not exist such a $(-1)$-curve on $\wS _{\kc}$). 
\end{defin}
\begin{rem}
If $S_{\kc}$ admits a singular point $x$ of type $A_{9-2d}$, then $\wS$ is of one of the following types: 
\begin{itemize}
\item $d=2$: $A_5+A_2$, $A_5+A_1$, $A_5$. 
\item $d=1$: $A_7+A_1$, $A_7$. 
\end{itemize}
Moreover, if $\wS$ is of $A_{9-2d}+A_d$-type, then the singular point $x$ is of type $(A_{9-2d})'$. 
\end{rem}

\subsection{Properties of weak del Pezzo surfaces}\label{2-3}
In this subsection, we shall recall some properties about mainly weak del Pezzo surfaces defined over algebraically non-closed fields. 
\begin{lem}\label{Severi-Brauer}
Let $V$ be a smooth algebraic variety over $k$ satisfying $V_{\kc} \simeq \bP ^n_{\kc}$. 
If $V$ has a $k$-rational point, then $V \simeq \bP ^n_k$. 
\end{lem}
\begin{proof}
See {\cite[Proposition 4.5.10]{Poo17}}. 
\end{proof}
In this article, we will treat $k$-minimal weak del Pezzo surfaces over $k$. 
Here, a smooth projective surface $W$ defined over $k$ is {\it $k$-minimal} if any birational morphism $W \to W'$ to a smooth projective surface $W'$ over $k$ is an isomorphism. 
Recently, the author studied $k$-minimal weak del Pezzo surfaces ({\cite{Saw}}). 
In particular, we will use the following result later: 
\begin{prop}\label{minimal}
Let $W$ be a weak del Pezzo surface of degree $d$ defined over $k$ satisfying one of the following conditions: 
\begin{itemize}
\item $d=4$ and $W$ is of $(2A_1)_<$-type. 
\item $d=2$ and $W$ is of $A_2$-type. 
\end{itemize}
Then the following assertion hold: 
\begin{enumerate}
\item $W_{\kc}$ is endowed with a structure of Mori conic bundle $\pi :W_{\kc} \to \bP ^1_{\kc}$ over $\kc$ with exactly $(8-d)$-times of singular fibers such that each $(-2)$-curve on $W_{\kc}$ is a section of $\pi$, where each singular fibers of $\pi$ consists of exactly two $(-1)$-curves meeting transversely. 
\item $W$ is $k$-minimal if and only if $\rho _k(W)=2$. 
\item If $W$ is $k$-minimal, then $W$ does not contain any cylinder. 
\end{enumerate}
\end{prop}
\begin{proof}
See {\cite{Saw}}. In particular, the assertion (1) follows from {\cite[Claim 3.8 and Remark 3.9]{Saw}}. 
\end{proof}
The following two lemmas are basic but will play important roles in \S \ref{4} and \S \ref{5}: 
\begin{lem}\label{(-1)-curve}
Let $W$ be a weak del Pezzo surface over $\kc$ and let $D$ be a divisor on $W$. 
If $(D )^2=-1$, $(D \cdot -K_W)=1$ and $(D \cdot M) \ge 0$ for any $(-2)$-curve $M$ on $W$, then there exists a $(-1)$-curve $E$ on $W$ such that $E \sim D$. 
\end{lem}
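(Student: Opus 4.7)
The plan is to produce an effective divisor $E\sim D$ via Riemann--Roch and then show that $E$ is forced to be an irreducible $(-1)$-curve by combining nefness of $-K_W$ with negative definiteness of the configuration of $(-2)$-curves on $W$.

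To produce $E$, I apply Riemann--Roch on the smooth rational surface $W$:
\[
\chi(\sO_W(D))=\chi(\sO_W)+\tfrac{1}{2}\bigl((D)^2-(K_W\cdot D)\bigr)=1+\tfrac{1}{2}(-1+1)=1.
\]
Setting $d:=(-K_W)^2\ge 1$, I have $((K_W-D)\cdot -K_W)=-d+(K_W\cdot D)=-d-1<0$; since $-K_W$ is nef, no effective divisor can intersect it negatively, so $h^0(W,K_W-D)=0$, and Serre duality gives $h^2(W,D)=0$. Hence $h^0(W,D)\ge 1$, and I pick any effective representative $E\in |D|$.

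Next I decompose $E=\sum a_iC_i$ into distinct irreducible components with $a_i\ge 1$. Nefness of $-K_W$ makes each summand of $\sum a_i(C_i\cdot -K_W)=(E\cdot -K_W)=1$ a nonnegative integer, so there is a unique component $C_0$ with $a_0=1$ and $(C_0\cdot -K_W)=1$, while every other component $M_j$ satisfies $(M_j\cdot -K_W)=0$ and is therefore a $(-2)$-curve (on a weak del Pezzo, any irreducible curve of anticanonical degree zero is a $(-2)$-curve, by adjunction together with the Hodge index theorem). Write $E=C_0+M$ with $M$ an effective sum of $(-2)$-curves.

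The crux is to rule out $M\neq 0$. Adjunction gives $(C_0)^2=2p_a(C_0)-1\ge -1$. The hypothesis $(D\cdot M_j)\ge 0$ extends to $(E\cdot M)\ge 0$, which rearranges to $(C_0\cdot M)\ge -(M)^2$. Substituting this into $(E)^2=(C_0)^2+2(C_0\cdot M)+(M)^2=-1$ and eliminating $(C_0\cdot M)$ yields $(M)^2\ge 1+(C_0)^2\ge 0$. On the other hand, the intersection form on the lattice spanned by the $(-2)$-curves of $W$ is negative definite (they form an ADE configuration contracted by the anticanonical morphism to Du Val singularities), so $(M)^2\le 0$ with equality iff $M=0$. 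Therefore $M=0$, whence $(C_0)^2=-1$ and $p_a(C_0)=0$, so $E=C_0$ is a $(-1)$-curve linearly equivalent to $D$, as required. I expect this last balancing act among adjunction, the hypothesis $(D\cdot M_j)\ge 0$, and negative definiteness to be the only delicate step; the Riemann--Roch vanishing and the component analysis are routine on a weak del Pezzo.
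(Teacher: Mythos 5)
Your proof is correct. The paper does not actually prove this lemma itself---it only cites Dolgachev's book (Lemma 8.2.22)---so your argument serves as a self-contained replacement for that reference, and the route you take is the standard one for statements of this type. All the individual steps check out: Riemann--Roch gives $\chi(\sO_W(D))=1$; the vanishing $h^2(W,\sO_W(D))=h^0(W,\sO_W(K_W-D))=0$ follows correctly from $((K_W-D)\cdot -K_W)<0$ and nefness of $-K_W$; the decomposition of an effective member $E\in|D|$ isolates a unique component $C_0$ of anticanonical degree one with multiplicity one, the remaining components being $(-2)$-curves by adjunction plus the Hodge index theorem; and the final balancing act---combining $(E\cdot M)\ge 0$ (which does follow from the hypothesis since $M$ is a nonnegative integral combination of $(-2)$-curves), $(C_0)^2\ge -1$ from adjunction, and $(E)^2=-1$ to get $(M)^2\ge (C_0)^2+1\ge 0$, against negative definiteness of the lattice spanned by the $(-2)$-curves---correctly forces $M=0$ and hence $(C_0)^2=-1$, $p_a(C_0)=0$. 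Nothing is missing.
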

\begin{proof}
See {\cite[Lemma 8.2.22]{Dol12}}. 
\end{proof}
\begin{lem}\label{(-2)-curve}
Let $W$ be a weak del Pezzo surface over $\kc$ of degree $d$. 
Then the number of $(-2)$-curves on $W$ is less than or equal to $9-d$. 
\end{lem}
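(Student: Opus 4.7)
The plan is to combine the Hodge index theorem with the observation that every $(-2)$-curve on $W$ is orthogonal to $K_W$. Since $-K_W$ is nef and big, we have $(-K_W)^2 = d > 0$, so the Hodge index theorem implies that the intersection pairing on $\Pic (W) \otimes \bQ$ has signature $(1, \rho (W)-1)$, with the positive direction spanned by $-K_W$. Consequently, the orthogonal complement $K_W^{\perp} \subset \Pic (W) \otimes \bQ$ is a negative-definite subspace of dimension $\rho (W)-1$.

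Next, I would recall that for a weak del Pezzo surface of degree $d$ over $\kc$ one has $\rho (W) = 10-d$; this is classical, since $W$ is either $\bP ^2_{\kc}$, a Hirzebruch surface $\bF _0$, $\bF _1$ or $\bF _2$, or the blow-up of $\bP ^2_{\kc}$ at $9-d$ points (possibly infinitely near) in almost general position. Hence $\dim K_W^{\perp} = 9-d$. On the other hand, for any $(-2)$-curve $M$ on $W$ the adjunction formula yields $(K_W \cdot M) = 0$, so the class of $M$ lies in $K_W^{\perp}$.

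To conclude, let $M_1, \dots, M_r$ be pairwise distinct $(-2)$-curves on $W$. Their pairwise intersection matrix, being the restriction of a negative definite form to the subspace they span, is itself negative definite, and in particular non-degenerate. This forces $M_1, \dots, M_r$ to be linearly independent in $K_W^{\perp}$, whence $r \le \dim K_W^{\perp} = 9-d$, as desired. There is no essential obstacle in the argument; the only non-trivial input is the identity $\rho (W) = 10-d$ for weak del Pezzo surfaces, which is well-known, together with the standard Hodge index consequence that $K_W^{\perp}$ is negative definite whenever $(-K_W)^2 > 0$.
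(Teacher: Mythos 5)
The paper itself offers no argument here; it simply cites {\cite[Proposition 8.2.25]{Dol12}}, so your attempt to give a self-contained proof is welcome, and the setup is sound: $-K_W$ is nef and big with $(K_W)^2 = d > 0$, the Hodge index theorem makes $K_W^{\perp}$ negative definite of dimension $\rho(W) - 1 = 9 - d$, and adjunction puts every $(-2)$-curve into $K_W^{\perp}$.

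The final step, however, is circular as written. You assert that the Gram matrix $\bigl( (M_i \cdot M_j) \bigr)_{1 \le i,j \le r}$ is negative definite ``being the restriction of a negative definite form to the subspace they span,'' and deduce linear independence from nondegeneracy. But the Gram matrix of a collection of vectors in a negative definite space is only negative \emph{semi}-definite in general; it is definite precisely when the vectors are linearly independent, which is what you are trying to prove. Identifying the $r \times r$ Gram matrix with ``the restriction of the form'' already presupposes that the $M_i$ are a basis of their span. To close the gap you need the extra geometric input that the $M_i$ are classes of \emph{distinct irreducible curves}, so that $(M_i \cdot M_j) \ge 0$ for $i \ne j$ while $(M_i)^2 = -2 < 0$. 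Then, given a relation $\sum a_i M_i = 0$, split it as $P := \sum_{a_i > 0} a_i M_i = \sum_{a_j < 0} (-a_j) M_j =: N$; since the curves occurring in $P$ and in $N$ are distinct, $(P)^2 = (P \cdot N) \ge 0$, while negative definiteness of $K_W^{\perp}$ forces $(P)^2 \le 0$ with equality only for $P = 0$; finally, a nonzero effective class has positive intersection with an ample divisor, so $P = N = 0$ and all $a_i$ vanish. (The same inequality $(M_i \cdot M_j) \ge 0$ also rules out two distinct $(-2)$-curves sharing a class, a point your argument silently uses.) With this repair the proof is complete.
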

\begin{proof}
See {\cite[Proposition 8.2.25]{Dol12}}. 
\end{proof}
Moreover, we also prepare a {\it variant of Corti's inequality}: 
\begin{prop}\label{Corti}
Let $V$ be a smooth algebraic surface over $\kc$, let $\Delta _1$ and $\Delta _2$ be two curves on $V$, which are meeting transversely at a point, say $p$, let $\sL$ be a mobile linear system on $V$, let $a_1$ and $a_2$ be two non-negative rational numbers and let $\mu$ be a positive rational number. 
If: 
\begin{align*}
\left( V,(1-a_1)\Delta _1 + (1-a_2)\Delta _2+\frac{1}{\mu}\sL \right)
\end{align*}
is not log canonical at $p$, then we have: 
\begin{align*}
i(L_1,L_2;p) > 4a_1a_2\mu ^2, 
\end{align*}
where $i(L_1,L_2;p)$ is the local intersection multiplicity at $p$ of two general members $L_1,L_2 \in \sL$. 
\end{prop}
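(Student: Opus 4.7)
The plan is to follow the standard blow-up and induction argument due to Corti: expand $i(L_1,L_2;p)$ additively along a log resolution of the non-log-canonical singularity at $p$, and constrain each term using the non-LC condition. The starting point is the mobility of $\sL$, which gives the lower bound $i(L_1,L_2;p) \geq m^2$ with $m := \mult_p(\sL)$, valid for two general members $L_1, L_2 \in \sL$. More precisely, if $\sigma\colon \widetilde{V}\to V$ is the blow-up at $p$ with exceptional divisor $E$ and the transformed system $\widetilde{\sL} := \sigma^{-1}_*\sL$ retains a base point $q \in E$, then
\[
i(L_1,L_2;p) = m^2 + i(\widetilde{L}_1,\widetilde{L}_2;q),
\]
so the intersection multiplicity decomposes additively over the log resolution sequence.

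Next, I would compute log discrepancies at this first blow-up. Since $\Delta_1$ and $\Delta_2$ meet transversely at $p$,
\[
\mult_p D = (1-a_1)+(1-a_2)+\tfrac{m}{\mu}, \qquad D := (1-a_1)\Delta_1+(1-a_2)\Delta_2+\tfrac{1}{\mu}\sL,
\]
so the log discrepancy of $E$ with respect to $(V,D)$ equals $a_1+a_2-m/\mu$. \emph{Case (i):} if this is negative, i.e.\ $m > (a_1+a_2)\mu$, then $i(L_1,L_2;p) \geq m^2 > (a_1+a_2)^2\mu^2 \geq 4 a_1 a_2 \mu^2$ by AM--GM, and we are done. \emph{Case (ii):} otherwise $a_1+a_2-m/\mu \geq 0$, and the non-LC singularity passes to a point $q \in E$ of the transformed pair. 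By transversality, $\widetilde{\Delta}_1\cap E$ and $\widetilde{\Delta}_2\cap E$ are distinct points of $E$, so either $q = \widetilde{\Delta}_i \cap E$ for a unique $i \in \{1,2\}$, or $q$ avoids both strict transforms. Applying the proposition inductively at $q$ to the transversal pair $(\widetilde{\Delta}_i, E)$ with coefficients $a_i$ and $a_1+a_2-m/\mu$, and combining with additivity, yields
\[
i(L_1,L_2;p) > m^2 + 4 a_i\bigl(a_1+a_2-\tfrac{m}{\mu}\bigr)\mu^2,
\]
and a direct optimization shows the right side is $\geq 4 a_1 a_2 \mu^2$ for every admissible $m$ (the minimum being attained at $m = 2\min(a_1,a_2)\mu$), closing the induction. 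The subcase $q \notin \widetilde{\Delta}_1 \cup \widetilde{\Delta}_2$ is handled separately by a one-curve version of the inequality, which produces a strictly stronger bound.

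The main obstacle is carrying the induction through cleanly: one must verify that $\widetilde{\sL}$ remains mobile, that the non-LC locus after blow-up consists of an isolated point, and that the recursion terminates after finitely many blow-ups---controlled by the finiteness of the log-discrepancy budget in any fixed log resolution. One must also confirm that the coefficient $a_1+a_2-m/\mu$ stays non-negative so that the inductive call is legitimate, which is exactly the defining feature of Case (ii). Tracking these arithmetic conditions through the chain of blow-ups is the bookkeeping heart of the proof, and it explains why transversality of $\Delta_1$ and $\Delta_2$ at $p$ is essential: without it, $\widetilde{\Delta}_1 \cap E$ and $\widetilde{\Delta}_2 \cap E$ could coincide, and the recursive pair would not consist of two distinct transversal curves.
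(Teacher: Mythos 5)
The paper offers no proof of this proposition: it is quoted verbatim from Corti and the ``proof'' is the citation to {\cite[Theorem 3.1(1)]{Cor00}}. What you have written is a sketch of Corti's own argument, so you are not taking a different route from the paper so much as supplying the argument the paper outsources. Your overall architecture is correct: the additivity $i(L_1,L_2;p)=m^2+i(\widetilde{L}_1,\widetilde{L}_2;q)$ under blow-up, the log discrepancy $a_1+a_2-m/\mu$ of the exceptional curve, the dichotomy according to its sign, the AM--GM step in Case (i), and the identity $m^2+4a_1\bigl(a_1+a_2-\tfrac{m}{\mu}\bigr)\mu^2-4a_1a_2\mu^2=(m-2a_1\mu)^2\ge 0$ in the subcase $q=\widetilde{\Delta}_1\cap E$ are all exactly as in Corti (the parenthetical ``minimum at $m=2\min(a_1,a_2)\mu$'' should read $m=2a_i\mu$ for the branch actually containing $q$, but this is cosmetic).

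The genuine soft spot is your last subcase, $q\notin\widetilde{\Delta}_1\cup\widetilde{\Delta}_2$, which you dismiss as producing ``a strictly stronger bound.'' It does not. The one-curve inequality applied to $(1-e)E+\tfrac{1}{\mu}\widetilde{\sL}$ with $e=a_1+a_2-m/\mu$ gives $i(L_1,L_2;p)>m^2+4e\mu^2$, and
\begin{align*}
m^2+4\Bigl(a_1+a_2-\frac{m}{\mu}\Bigr)\mu^2-4a_1a_2\mu^2=(m-2\mu)^2-4(1-a_1)(1-a_2)\mu^2,
\end{align*}
which is \emph{not} automatically non-negative. To close this case one must use the constraint $m\le(a_1+a_2)\mu$ coming from being in Case (ii) (so that $2\mu-m\ge\bigl((1-a_1)+(1-a_2)\bigr)\mu$ when $a_1,a_2\le 1$, and AM--GM finishes), or observe that $(1-a_1)(1-a_2)\le 0$ when exactly one $a_i$ exceeds $1$. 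This is precisely the point where the hypothesis of {\cite[Theorem 3.1(1)]{Cor00}} --- that $a_1\le 1$ or $a_2\le 1$ --- is indispensable; if both $a_1,a_2>1$ the inequality can genuinely fail (a chain of two infinitely near base points of multiplicity slightly above $(a_1+a_2)\mu/2$ each already violates it). Note that this hypothesis is silently dropped in the Proposition as reproduced in the paper, so your proof attempt cannot succeed for the statement exactly as written; you should restore the condition and then supply the missing AM--GM step in this subcase.
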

\begin{proof}
See {\cite[Theorem 3.1(1)]{Cor00}}. 
\end{proof}
A variant Corti's inequality is a useful tool for proving the absence of cylinders in smooth surfaces over algebraically non-closed fields (cf. {\cite{DK18, Saw}}). 
This article again uses this inequality in proving the absence of cylinders (see Lemmas \ref{Bs} and \ref{Corti(2)}). 

\subsection{Properties with respect to cylinders}\label{2-4}

In this subsection, we shall present some beneficial tools about cylinders for later use. 
\begin{defin}
Let $X$ be an algebraic variety over $k$ and let $U$ be a cylinder of $X$. 
Then the closed subset $X \backslash U$ on $X$ is called the {\it boundary} of $U$. 
\end{defin}
We think that the following lemma is well known but could not find a proof in the literature. 
Hence, we shall give the proof of this lemma for the readers' convenience. 
\begin{lem}\label{no cycle}
Let $V$ be a smooth projective surface over $k$ and let $U \simeq \bA ^1_k \times Z$ be a cylinder of $V$. 
Then the boundary divisor of $U$, say $D$, has no cycle. 
\end{lem}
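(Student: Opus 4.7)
The plan is to use the cylindrical projection $\mathrm{pr}_Z : U \to Z$ to realize $V$ as birational to a $\bP^1$-fibration over a smooth compactification of $Z$, and then read off the tree-like structure of the boundary $D$ from the geometry of that fibration.

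First I would choose a smooth projective model $\bar{Z}$ of $Z$ and extend $\mathrm{pr}_Z$ to a rational map $\pi : V \dashrightarrow \bar{Z}$. Its indeterminacy locus consists of finitely many points of $D$; blowing them (and any further indeterminacies appearing on successive exceptional loci) up produces a proper birational morphism $\sigma : \widetilde{V} \to V$ together with a morphism $\widetilde{\pi} : \widetilde{V} \to \bar{Z}$ whose generic fiber is the projective closure $\bP^1$ of the generic $\bA^1$-fiber of $\mathrm{pr}_Z$, so that $\widetilde{\pi}$ is a $\bP^1$-fibration. Since $\sigma$ blows up only points of $D$, the open set $U$ still embeds in $\widetilde{V}$ and the new boundary $\widetilde{B} := \widetilde{V} \setminus U$ coincides with $\sigma^{-1}(D)$.

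Next I would analyze $\widetilde{B}$ fibrewise with respect to $\widetilde{\pi}$. Its horizontal part reduces to a single irreducible curve $H$ --- the closure of the ``section at infinity'' of the $\bA^1$-bundle $U \to Z$ --- which is a section of $\widetilde{\pi}$, so in particular $H \cdot F = 1$ for every fiber $F$. Every such fiber is a tree of smooth rational curves. For $z \in Z$ exactly one component $F^{(0)}_z$ of $F_z$ meets $U$ (and does so in an $\bA^1$), while the remaining components lie in $\widetilde{B}$ and form a sub-forest of the fiber's tree after removal of $F^{(0)}_z$; the identity $H \cdot F_z = 1$ forces $H$ to meet $F_z$ transversally at a smooth point of $F^{(0)}_z$ and at no node of $F_z$. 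For $z \in \bar{Z} \setminus Z$ the entire fiber $F_z$ lies in $\widetilde{B}$ and $H$ meets it transversally at a single smooth point. Assembling these pieces, the dual graph $\Gamma(\widetilde{B})$ is a forest: the central tree built from $H$ and the fiber-trees over $\bar{Z} \setminus Z$, together with the disjoint sub-forests coming from the reducible fibers over $Z$.

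Finally I would descend to $V$: a cyclic chain of distinct components of $D$ meeting pairwise at distinct points lifts, via their strict transforms together with the exceptional curves of $\sigma$ that subdivide any intersection points of the cycle which happen to be blown up, to a non-trivial cycle in $\Gamma(\widetilde{B})$, contradicting the forest conclusion above. The delicate step, which I regard as the main obstacle, is to make this lifting rigorous: one must formulate a cycle in $D$ via pairwise distinct intersection points (so that the blowups merely subdivide edges rather than collapse them) and verify that no cancellation or backtracking occurs when the lifted path passes through exceptional curves sitting above triple or higher-multiplicity points of $D$.
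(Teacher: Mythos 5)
Your proposal is correct and follows essentially the same route as the paper: resolve the pencil of fiber-closures of $pr_Z$ to a $\bP^1$-fibration on a blow-up of $V$, observe that the boundary becomes a section plus fiber components, and invoke the tree structure of fibers of a $\bP^1$-fibration to rule out a cycle. The only (minor) point the paper makes that you omit is the initial reduction to $k=\kc$, which is needed since the assertion that every fiber is a tree of smooth rational curves is a geometric statement.
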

\begin{proof}
If $D$ has a cycle, then $D_{\kc}$ also has a cycle, hence, we may assume $k=\kc$. 
The closures in $V$ of fibers of the projection $pr_Z : U \simeq \bA ^1_k \times Z \to Z$ yields a linear system on $V$, say $\sL$, hence we have the rational map $\Phi _{\sL} : V \dashrightarrow \overline{Z}$ to a projective model $\overline{Z}$ of the closure of $Z$ in $V$. 
Note that $\Bs (\sL)$ consists of at most one point by the configuration of $\sL$. 
Let $\psi : \bar{V} \to V$ be the shortest succession of blow-ups the point on $\Bs (\sL )$ and its infinitely near points such that the proper transform of $\sL$ is free of base points to give rise to a morphism $\varphi := \Phi _{\sL} \circ \psi : \bar{V} \to \overline{Z}$, where we shall define $\varphi := \Phi _{\sL}$ if $\Bs (\sL ) = \emptyset$. 
Hence $\varphi$ is a $\bP ^1$-fibration, moreover, $\psi ^{\ast}(D)_{\rm red.}$ is the union of a section and all singular fibers of $\varphi$. 
Thus, if $D$ has a cycle, then some singular fibers of $\varphi$ also have a cycle. 
However it is impossible, because, in general, it is known that any singular fiber of $\bP ^1$-fibration does not have a cycle (see, e.g., {\cite[Lemma 12.5]{Miy94}}). 
This completes the proof. 
\end{proof}
The following two lemmas are not difficult but will play an important role in constructing the cylinders. 
\begin{lem}\label{cylinder F0}
Let $V$ be a $k$-form of $\bP ^1_{\kc} \times \bP ^1_{\kc}$ and let $F_1$ and $F_2$ be two $0$-curves on $V_{\kc} $ with $(F_1 \cdot F_2) =1$. 
Namely, $\Pic (V_{\kc}) = \bZ [F_1] \oplus \bZ [F_2]$. 
Let $C$ be a geometrically rational curve on $V$ with $C_{\kc} \sim F_1+F_2$. 
If $C$ has a $k$-rational point, say $p$, then there exists  a cylinder $U \simeq \bA ^1_k \times \bA ^1_{\ast ,k}$ on $V$ satisfying $C \cap U = \emptyset$. 
\end{lem}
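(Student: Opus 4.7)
The plan is to construct a $k$-rational pencil of $(1,1)$-curves containing $C$, resolve its base locus to obtain a $\bP^1$-fibration over $\bP^1_k$, and then carve out the cylinder as an open subset of the resolved surface whose image in $V$ avoids $C$.

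For $i=1,2$, let $L_i \in |F_i|$ denote the unique $0$-curve through $p$ on $V_{\kc}$. The divisor $D := L_1 + L_2 \sim F_1 + F_2$ is the unique member of $|F_1 + F_2|$ singular at $p$; since this characterization is defined over $k$, $D$ is $k$-rational. It has a node at $p$. I then take the $k$-rational pencil $\Lambda := \langle C, D \rangle \subseteq |F_1 + F_2|$. The identity $(C \cdot L_i) = 1$ together with $p \in C \cap L_i$ forces $C \cap L_i = \{p\}$ to be transverse, so $i_p(C, D) = i_p(C, L_1) + i_p(C, L_2) = 2 = C \cdot D$; hence the base locus of $\Lambda$ consists of the length-two subscheme supported at $p$. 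Moreover, because $D$ vanishes to order $2$ at $p$, every smooth member of $\Lambda$ is tangent to $T_p C$ at $p$.

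Resolving this base locus, let $\sigma : \widetilde{V} \to V$ be the blow-up at $p$ with exceptional $E$. The strict transforms of all smooth members of $\Lambda$ meet $E$ at the common $k$-rational point $p' := [T_p C] \in E$, and the resulting strict transform pencil $\widetilde{\Lambda}$ (generated by $\widetilde{C}$ and $\widetilde{L}_1 + \widetilde{L}_2 + E$, both of class $\sigma^*(F_1 + F_2) - E$) has $p'$ as its single reduced base point. A second blow-up $\sigma' : \widetilde{V}' \to \widetilde{V}$ at $p'$, with exceptional $E'$, clears the base locus and produces a $\bP^1$-fibration $\varphi : \widetilde{V}' \to \bP^1_k$ defined over $k$. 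A direct computation shows that $E'$ is a $(-1)$-section, that $\widetilde{\widetilde{C}}$ is a smooth fiber over a $k$-rational point $a \in \bP^1_k$, and that $\widetilde{\widetilde{L}}_1 + \widetilde{E} + \widetilde{\widetilde{L}}_2$ (a chain of self-intersections $(-1,-2,-1)$) is the unique reducible fiber, lying over another $k$-rational point $b \in \bP^1_k$.

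Finally, setting $\tau := \sigma \circ \sigma'$ and
\[
U := \widetilde{V}' \setminus \bigl( \varphi^{-1}\{a, b\} \cup E' \bigr),
\]
the restriction $\varphi|_U$ is a $\bP^1$-bundle over $\bA^1_{\ast, k} = \Spec k[t^{\pm 1}]$ with the section $E'$ removed. Since $\bA^1_{\ast, k}$ is a PID with trivial Picard group, every rank-two vector bundle over it is free, and any section of the trivial $\bP^1$-bundle can be moved to infinity by an element of $\mathrm{PGL}_2(k[t^{\pm 1}])$; hence $U \simeq \bA^1_k \times \bA^1_{\ast, k}$. As the exceptional locus $\widetilde{E} \cup E'$ of $\tau$ is contained in $\varphi^{-1}\{b\} \cup E'$, it is disjoint from $U$, so $\tau|_U$ is an isomorphism onto $V \setminus (C \cup L_1 \cup L_2) \subseteq V \setminus C$, which is the desired cylinder. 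The main delicate point will be ensuring the whole construction is $k$-rational---namely, that both blown-up points ($p$ and $p'$), both special values ($a$ and $b$), and the section $E'$ are all $k$-rational---and that removing the section of a $\bP^1$-bundle over $\bA^1_{\ast, k}$ yields a trivial cylinder over $k$ rather than merely over $\kc$.
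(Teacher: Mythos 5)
Your proposal is correct, and it arrives at exactly the same open set as the paper's proof, namely $U = V \setminus (C \cup L_1 \cup L_2)$ where $L_1, L_2$ are the two rulings through $p$ (whose union, as you note, is Galois-stable and hence defined over $k$). What differs is how the product structure on this open set is established. The paper's proof is much shorter: it invokes {\cite[Proposition 12]{DK18}} to get $V \setminus (L_1 \cup L_2) \simeq \bA^2_k$ directly from the existence of the $k$-rational point $p$, and then removes $C \cap \bA^2_k \simeq \bA^1_k$, asserting that the complement is $\bA^1_k \times \bA^1_{\ast,k}$ (which implicitly uses that $C$, being a $(1,1)$-curve through $p$, cuts out an actual coordinate line in that affine plane). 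You instead resolve the base locus of the $k$-rational pencil $\langle C, L_1+L_2 \rangle$ by two blow-ups at $k$-rational points, obtain a $\bP^1$-fibration over $\bP^1_k$ with $E'$ a $k$-rational section and with $C$ and $L_1+L_2$ sitting over two $k$-rational parameter values, and then trivialize the resulting $\bP^1$-bundle over $\bA^1_{\ast,k}$ using that $k[t^{\pm 1}]$ is a PID with trivial Picard group. Your route is longer but self-contained: it avoids the external citation and makes the $\bA^1$-fibration underlying the cylinder completely explicit, while also making transparent the step the paper glosses over (why removing $C$ from the affine plane yields a product). The only small point you state without proof is that the fiber over $b$ is the \emph{only} singular fiber; this follows at once from $\rho_{\kc}(\widetilde{V}'_{\kc}) = 4 = 2 + 2$, the two extra components being $\widetilde{\widetilde{L}}_2$ and $\widetilde{E}$, so it is worth one sentence but is not a gap.
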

\begin{proof}
Let $C_1$ and $C_2$ be two rational curves on $V_{\kc}$ satisfying $p \in C_i$ and $C_i \sim F_i$ for $i=1,2$. 
Note that each of these curves uniquely exists and the union $C_1 + C_2$ is defined over $k$. 
Let $\varphi :V' \to V$ be a blowing-up at $p$, let $E$ be the exceptional curve of $\varphi$, and let us put $C' := \varphi ^{-1}_{\ast}(C)$ and $C_i' := \varphi ^{-1}_{\ast}(C_i)$ for $i=1,2$. 
Noticing that $E \simeq \bP ^1_k$ and $C_1' + C_2'$ is defined over $k$, by Lemma \ref{Severi-Brauer} we obtain the contraction $\psi :V' \to \bP ^2_k$ of $C_1'+C_2'$ over $k$, which maps $C' \cup E$ onto a pairs of lines in $\bP ^2_k$. 
Hence, we have an isomorphism $U := V \backslash (C \cup C_1 \cup C_2) \simeq \bP ^2_k \backslash \psi (C' \cup E) \simeq \bA ^1_k \times \bA ^1_{\ast ,k}$. 
In particular, $C \cap U = \emptyset$ by construction of $U$. 
\end{proof}
\begin{lem}\label{cylinder F2}
Let $V$ be a $k$-form of $\bF _2$, let $M$ be a $(-2)$-curve on $V_{\kc}$, let $F$ be a $0$-curve on $V_{\kc}$. 
Namely, $\Pic (V_{\kc}) = \bZ [M] \oplus \bZ [F]$. 
Let $C$ be a geometrically rational curve on $V$ with $(C_{\kc} \cdot M) \le 1$ and $(C_{\kc} \cdot F)=1$. 
If $C$ has a $k$-rational point, say $p$, then there exists  a cylinder $U \simeq \bA ^1_k \times \bA ^1_{\ast ,k}$ on $V$ satisfying $(M \cup C) \cap U = \emptyset$. 
\end{lem}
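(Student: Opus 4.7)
The plan is to adapt the proof of Lemma \ref{cylinder F0} to the $\bF_2$-setting. Since $C$ is geometrically rational with the $k$-rational point $p$, Proposition \ref{Severi-Brauer} gives $C \simeq \bP^1_k$. Because $M$ is the unique $(-2)$-curve on $V_{\kc}$, it is $\Gal$-invariant and hence defined over $k$, so the pencil $|F|$ descends to a $\bP^1$-fibration $\pi : V \to B$ onto a $k$-form $B$ of $\bP^1$. Since $(C \cdot F) = 1$, the curve $C$ is a section of $\pi$, so $\pi|_C : C \to B$ is an isomorphism, endowing $B$ with the $k$-rational point $\pi(p)$ and yielding $B \simeq \bP^1_k$ by Proposition \ref{Severi-Brauer} once more. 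Assuming $C$ is irreducible and distinct from $M$, one has $(C \cdot M) \in \{0, 1\}$; when $(C \cdot M) = 1$, the unique intersection point $q := C \cap M$ is Galois-stable and hence $k$-rational.

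I then choose a $k$-rational fiber $F_0$ of $\pi$ so that $C \cap (M \cup F_0)$ reduces to a single $k$-rational point: take $F_0 := \pi^{-1}(\pi(p))$ when $(C \cdot M) = 0$ or when $p = q$, and $F_0 := \pi^{-1}(\pi(q))$ when $(C \cdot M) = 1$ with $p \ne q$. Set $U' := V \setminus (M \cup F_0)$. Geometrically $U'_{\kc} \simeq \bA^2_{\kc}$; to descend this to $\bA^2_k$ I exploit the ruling: $V \setminus M \to B$ is a Zariski-locally trivial $\bA^1$-bundle (complement of a section in a ruled surface), and its restriction to $B \setminus \{\pi(F_0)\} \simeq \bA^1_k$ is an $\bA^1$-bundle over the affine line, hence trivial by a standard cohomology vanishing. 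Thus $U' \simeq \bA^1_k \times \bA^1_k = \Spec k[x, y]$, with $x$ pulled back from the base of $\pi$. By the choice of $F_0$, $C \cap U'$ is $C$ minus exactly one $k$-rational point, so $C \cap U' \simeq \bA^1_k$, and $\pi|_{C \cap U'}$ identifies it with $\bA^1_k$. Consequently $C \cap U'$ is of the form $\{y = \sigma(x)\}$ for some $\sigma \in k[x]$, and the automorphism $y \mapsto y - \sigma(x)$ of $\bA^2_k$ straightens it to $\{y = 0\}$, yielding $U := U' \setminus (C \cap U') \simeq \bA^1_k \times \bA^1_{\ast , k}$, a cylinder on $V$ disjoint from $C$.

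The principal delicacy is the case $(C \cdot M) = 1$ with $p \ne q$: naively taking $F_0 = \pi^{-1}(\pi(p))$ would leave $C \cap U'$ with a ``missing'' point above $\pi(q)$ (where $C$ meets $M$) and realize it as the graph of a rational function with a pole, so the complement would fail to be a cylinder of the desired shape. The correct choice $F_0 = \pi^{-1}(\pi(q))$ merges the two deletion points of $C$ into one, circumventing this. The remaining technical input is the descent $U' \simeq \bA^2_k$, supplied by the Zariski-local triviality of $V \setminus M \to B$ together with the vanishing of $H^1$ of an $\bA^1$-bundle over $\bA^1_k$.
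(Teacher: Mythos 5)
Your proof is correct and follows the same overall strategy as the paper's: remove the $(-2)$-curve $M$ together with one well-chosen closed fiber $F_0$ to obtain $U' \simeq \bA^2_k$, observe that $C \cap U' \simeq \bA^1_k$, and delete it. There are two points where you go beyond the paper's own (very short) argument, both to your credit. First, the paper simply takes $F_0$ to be the fiber through $p$ and asserts $C \cap U' = C \setminus \{p\}$; this is only valid when $(C \cdot M) = 0$ or when $C \cap M = \{p\}$, and in the remaining case $(C \cdot M) = 1$ with $q := C \cap M$ distinct from $p$ the set $C \cap U'$ would be $C$ minus two points, whose complement in $\bA^2_k$ is not of the required form. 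Your choice $F_0 = \pi^{-1}(\pi(q))$ in that case (noting that $q$ is $k$-rational because the length-one scheme $C \cap M$ is defined over $k$) repairs this, so your proof actually covers the full range $(C \cdot M) \le 1$ allowed by the statement; in the paper's applications only $(C \cdot M) = 0$ occurs, which is presumably why the gap went unremarked. Second, where the paper cites an external result for $V \setminus (M \cup F_0) \simeq \bA^2_k$, you derive it from the triviality of the $\bA^1$-bundle $V \setminus M \to B$ over $B \setminus \{\pi(F_0)\} \simeq \bA^1_k$, and you make explicit the final step that $C \cap U'$ is the graph of a polynomial section of $\pi$, hence rectifiable by a shearing automorphism; both are standard and correct, and the latter is a detail the paper leaves implicit.
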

\begin{proof}
Since $V$ has the $k$-rational point $p$, we know that $V$ is the trivial $k$-form of $\bF _2$ by using Lemma \ref{Severi-Brauer}, i.e., $V \simeq \bP (\sO _{\bP ^1_k} \oplus \sO _{\bP ^1_k}(2))$. 
Meanwhile, $M$ is automatically defined over $k$ and there exists a unique rational curve $F_0$ on $V$ with $F_{0,\kc} \sim F$ passing through $p$. 
If $(C \cdot M) =0$, then $V \backslash (M \cup F_0 \cup C) \simeq \bA ^1_k \times \bA ^1_{\ast ,k}$ because the pair $(V,M+F_0+C)$ is a minimal normal compactification of $\bA ^1_k \times \bA ^1_{\ast ,k}$ (see {\cite{Suz79}} or {\cite{Koj02}}). 
In what follows, we consider the case $(C \cdot M) =1$. 
Then $p$ is the intersection point of $M$, $F_0$ and $C$. 
Let $\varphi :V' \to V$ be a blowing-up at $p$, let $E$ be the exceptional curve of $\varphi$, and let us put $M' := \varphi ^{-1}_{\ast}(M)$, $F' := \varphi ^{-1}_{\ast}(F_0)$ and $C' := \varphi ^{-1}_{\ast}(C)$. 
Since $F'$ is a $k$-form of a $(-1)$-curve and is defined over $k$, we obtain the contraction $\psi :V' \to V'' \simeq \bF _3$ of $F'$. 
Then $V \backslash (M \cup F_0 \cup C) \simeq V'' \backslash \psi (M' \cup C' \cup E) \simeq \bA ^1_k \times \bA ^1_{\ast ,k}$ because the pair $(V'',\psi _{\ast}(M'+C'+E))$ is a minimal normal compactification of $\bA ^1_k \times \bA ^1_{\ast ,k}$ (see {\cite{Suz79}} or {\cite{Koj02}}). 
\end{proof}
At the end of this subsection, we shall present the fact about cylinders in Du Val del Pezzo surfaces of Picard rank one. 
Let $S$ be a Du Val del Pezzo surface over $k$ with $\rho _k(S)=1$ and of degree $d$. 
Suppose that $S$ contains a cylinder, say $U \simeq \bA ^1_k \times Z$, where $Z$ is a smooth affine curve defined over $k$. 
The closures in $S$ of fibers of the projection $pr_Z : U \simeq \bA ^1_k \times Z \to Z$ yields a linear system, say $\sL$, on $S$. 
Then the following lemma can be shown by applying the proof of {\cite[Proposition 9]{DK18}}: 
\begin{lem}\label{Bs}
Let the notation be the same as above. 
Then $\Bs (\sL )$ consists of exactly a $k$-rational point, say $p$. 
Moreover, if $d \le 4$, then $p$ is a singular point on $S$. 
\end{lem}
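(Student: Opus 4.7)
The plan is to follow the strategy of \cite[Proposition 9]{DK18} and adapt it to the present Du Val setting. Since a general fiber of $pr_Z : U \to Z$ is an irreducible affine line, its closure in $S$ is an irreducible rational curve, so $\sL$ is a linear pencil without fixed components. Two distinct fibers of $pr_Z$ are disjoint inside $U$, hence $\Bs(\sL) \subseteq D := S \setminus U$. Because $\rho_k(S) = 1$, the class $[L]$ is a positive rational multiple of $[-K_S]$ in $\Pic(S)_{\bQ}$; in particular $(L)^2 > 0$, so $\Bs(\sL)$ is a non-empty zero-dimensional subscheme of $D$.

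To show that $\Bs(\sL)$ consists of a single $k$-rational point, I would pass to the minimal resolution $\sigma : \wS \to S$ and then to a minimal succession $\mu : \hat S \to \wS$ of blow-ups which resolves the base points of the proper transform of $\sL$. The resulting pencil is base-point free on $\hat S$ and defines a $\bP^1$-fibration $\varphi : \hat S \to \bar Z$ onto a smooth projective $k$-curve of genus zero. The total transform of $D$ on $\hat S$ decomposes into a horizontal section $H$ of $\varphi$ (recording the unique point at infinity of each fiber $L \cap U$) together with curves contained in reducible fibers of $\varphi$. Computing $\rho_k(\hat S)$ in two ways --- once via the chain $S \leftarrow \wS \leftarrow \hat S$ (tracking Galois orbits of $(-2)$-curves of $\sigma$ and of exceptional curves of $\mu$), and once via the fibration $\varphi$ (using orbits of components of reducible fibers) --- and invoking $\rho_k(S) = 1$, I conclude that the $\mu$-exceptional locus must sit over a single closed point. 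Hence $\Bs(\sL)$ is a single closed point which, being Galois-stable and supported on one $\kc$-point, is $k$-rational.

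For the claim that $p$ is singular when $d \leq 4$, I argue by contradiction using Corti's inequality, in the spirit of \cite{DK18, Saw}. Assuming $p$ smooth on $S$, write $[L] = \lambda\,[-K_S]$ in $\Pic(S)_{\bQ}$ with $\lambda \in \bQ_{>0}$. Since $\Bs(\sL) = \{p\}$, two general members $L_1, L_2 \in \sL$ satisfy $i_p(L_1, L_2) = (L_1 \cdot L_2) = \lambda^2 d$. Producing two transverse curve germs $\Delta_1, \Delta_2$ at $p$ from the boundary $D$ (or from auxiliary anticanonical divisors) and applying Proposition~\ref{Corti} to the pair $\bigl(S, (1-a_1)\Delta_1 + (1-a_2)\Delta_2 + \tfrac{1}{\lambda}\sL\bigr)$ for appropriate $a_1, a_2$ making the pair non-log-canonical at $p$, one obtains $\lambda^2 d > 4 a_1 a_2 \lambda^2$, i.e.\ $d > 4 a_1 a_2$, which contradicts $d \leq 4$ in the limit $a_i \to 1$. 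The main obstacle is the second stage --- showing $|\Bs(\sL)| = 1$ --- which in the present Du Val setting requires more delicate Picard-rank bookkeeping than in the terminal case of \cite[Proposition 9]{DK18}, where $\wS = S$.
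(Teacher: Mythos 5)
Your overall strategy is the paper's (which itself follows \cite[Proposition 9]{DK18}): use the configuration of the pencil together with $\rho_k(S)=1$ for the first assertion, and Corti's inequality on a resolution for the second. However, two of your steps have genuine gaps. For $|\Bs(\sL)|=1$ you propose computing $\rho_k(\hat S)$ in two ways to force the $\mu$-exceptional locus over a single point; I do not see how this works — if the pencil had two base points, both Picard-rank computations would simply come out larger, with no contradiction. The fact you actually need is the one you mention only parenthetically: each member $L$ of $\sL$ is the closure of a fiber $\simeq \bA^1_k$ of $pr_Z$, whose normalization is $\bP^1$, so $L\setminus U$ is a \emph{single} point; since two general members are disjoint inside $U$ and $\Bs(\sL)=L_1\cap L_2$ for a pencil, the base locus has at most one point. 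This is what the paper means by ``the configure of $\sL$''; it requires no Picard-rank bookkeeping and no resolution ($\rho_k(S)=1$ enters only for non-emptiness, as in your first paragraph). The step you flag as ``the main obstacle'' is in fact the easy one.

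More seriously, in the $d\le 4$ step the crux is to \emph{prove} that the pair $\bigl(\wS,\tfrac{1}{a}\wsL\bigr)$ is not log canonical at $\wtp$. You treat non-log-canonicity as something arranged by ``choosing appropriate $a_1,a_2$,'' but auxiliary divisors $\Delta_i$ with coefficients $(1-a_i)\to 0$ contribute nothing: the non-lc-ness must come from the pencil itself, and the paper (following \cite{DK18}) extracts it from the adjunction computation along the resolution of the base point — the last exceptional curve is a section of the induced $\bP^1$-fibration, whence $-2=(\wsL\cdot K_{\wS}+\tfrac{1}{a}\wsL)+c_n=0+c_n$, so $c_n=-2<-1$. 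Your limit $a_i\to 1$ also only yields $d\ge 4$, which does not contradict $d=4$; one must take $a_1=a_2=1$ outright and play the strict inequality $i(L_1,L_2;\wtp)>4a^2$ against $i(L_1,L_2;\wtp)=(\wsL)^2=da^2\le 4a^2$. Finally, Corti's inequality is stated for smooth surfaces, so the argument should be run on $\wS$ with $\wsL=\sigma^{*}\sL\sim_{\bQ}a(-K_{\wS})$ (valid precisely because $p$ is assumed smooth and $\sigma$ is crepant), as the paper does, rather than on the singular $S$.
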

\begin{proof}
Note that $\Bs (\sL)$ consists of at most one $k$-rational point by the configuration of $\sL$. 
Hence, we shall prove $\Bs (\sL) \not= \emptyset$. 
However, this assertion follows from $\rho _k(S)=1$. 
Indeed, any general members $L_1$ and $L_2$ of $\sL$ must intersect. 
Thus, there exists a unique $k$-rational point $p$ on $S_{\kc}$ such that $\Bs (\sL) = \{ p\}$. 

In order to prove the last assertion, suppose that $d \le 4$ and $p$ is a smooth point on $S$. 
Let $\sigma : \wS \to S$ be the minimal resolution over $k$. 
$\wS$ also contains a cylinder $\wU := \sigma ^{-1}(U) \simeq U$. 
The closures in $\wS$ of fibers of the projection $pr_Z : \widetilde{U} \simeq \bA ^1_k \times Z \to Z$ yields a linear system, say $\wsL$, on $\wS$. 
Then, $\sigma ^{-1}(p)$ is a $k$-rational point, say $\wtp$, on $\wS$. 
On the other hand, by $\rho _k(S)=1$, we have $\wsL \sim _{\bQ} a(-K_{\wS})$ for some $a \in \bQ _{>0}$. 
Indeed, we see $\wsL = \sigma ^{-1}_{\ast}(\sL ) = \sigma ^{\ast}(\sL )$, since $p$ is a smooth point on $S$. 
In addition to, we see $-K_{\wS} = \sigma ^{\ast}(-K_S)$ by construction of $\sigma$. 
Thus, we can obtain a contradiction by the argument similar to {\cite[Proposition 9]{DK18}}. 
In fact, show that $(\wS , \frac{1}{a}\wsL )$ is not log canonical at $\widetilde{p}$, moreover, derive contradiction by Corti's inequality (see Proposition \ref{Corti}). 
Therefore, $p$ must be a singular point on $S$ if $d \le 4$. 
\end{proof}

\section{Degree $3$ or higher}\label{3}

In this section, we shall prove Theorems \ref{main(1-1)} and \ref{main(1-2)}. 
Let $S$ be a Du Val del Pezzo surface over $k$ of degree $d \ge 3$ and let $\sigma : \wS \to S$ be the minimal resolution over $k$, so that $\wS$ is a weak del Pezzo surface. 
Hence, $\wS$ is a $k$-form of the Hirzebruch surface $\bF _2$ of degree $2$ or there exists the birational morphism $\wS _{\kc} \to \bP ^2_{\kc}$ over $\kc$. 
In particular, we can explicitly observe the configuration of $(-1)$-curves and $(-2)$-curves on $\wS _{\kc}$ (see {\cite{BW79,CT88}} or {\cite[\S \S 8.4--8.6 and \S \S 9.2]{Dol12}}). 

\subsection{Classification of Du Val del Pezzo surfaces of Picard rank one}\label{3-1}

The purpose of this subsection is that we shall classify Du Val del Pezzo surfaces of Picard rank one and of degree $\ge 3$. 

\begin{lem}\label{outside}
If there exists a $(-1)$-curve $E$ on $\wS _{\kc}$, which does not meet any $(-2)$-curve on $\wS _{\kc}$, such that either $E$ is defined over $k$ or the $\Gal$-orbit of $E$ is a disjoint union, then $\rho _k(S) > 1$. 
\end{lem}
\begin{proof}
We shall take this $(-1)$-curve $E$. 
By assumption, the direct image of the $\Gal$-orbit of $E$ via $\sigma$ can be contracted defined over $k$. 
Hence, there exists a blow-down $\tau : S \to S'$ over $k$, so that $\rho _k(S)>1$. 
\end{proof}
\begin{lem}\label{less (-2)-curves}
Assume that any $(-1)$-curve and $(-2)$-curve on $\wS _{\kc}$ are defined over $k$, respectively. 
If the number of all $(-2)$-curves on $\wS _{\kc}$ is less than $9-d$, then $\rho _k(S) >1$. 
\end{lem}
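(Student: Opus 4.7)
The plan is to compute $\rho_k(\wS)$ by Galois cohomology and then invoke the contraction formula $\rho_k(\wS) - \rho_k(S) = n$, which is valid here because each of the $n$ $(-2)$-curves on $\wS_{\kc}$ is defined over $k$ by hypothesis, hence each contributes an independent $k$-irreducible component of the exceptional divisor of $\sigma$. The target is therefore the equality $\rho_k(\wS) = 10 - d$, for then the assumption $n < 9 - d$ immediately yields $\rho_k(S) = 10 - d - n > 1$.

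I would first dispose of the high-degree cases. If $d = 9$ then $9 - d = 0$ and the hypothesis is vacuous. If $d = 8$, then in the paper's convention $\wS_{\kc}$ must be a $k$-form of $\bF _2$, which already carries $n = 1 = 9 - d$ $(-2)$-curves, contradicting the hypothesis. Hence one may assume $d \le 7$.

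For $d \le 7$ I would show that $\Gal$ acts trivially on $\Pic(\wS_{\kc})$. By hypothesis every $(-1)$-curve and every $(-2)$-curve on $\wS_{\kc}$ is $k$-defined, so each of their classes is Galois-invariant. The key lattice fact, which can be read off from the blowup tower $\wS_{\kc} \to \bP ^2_{\kc}$ at $r = 9 - d$ (possibly infinitely near) points together with the Weyl group action on the root lattice of type $E_r$ (cf.\ Dolgachev, Chapter 8), is that the classes of $(-1)$-curves and $(-2)$-curves together generate $\Pic(\wS_{\kc})$ as a $\bZ$-module. Granting this, $\Gal$ acts trivially on the whole of $\Pic(\wS_{\kc})$, and therefore $\rho_k(\wS) = \mathrm{rank}\bigl(\Pic(\wS_{\kc})^{\Gal}\bigr) = 10 - d$ (the identification of $\rho_k$ with the Galois-invariant rank is standard for smooth projective rational surfaces via Hilbert 90, since $H^1(\wS_{\kc}, \sO_{\wS_{\kc}}) = 0$).

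The main obstacle is justifying the generation statement. In the strict del Pezzo case (no $(-2)$-curves) this is the classical fact that $(-1)$-curves alone generate $\Pic$. In the genuinely weak case one must verify that in the standard basis $\ell_0, \ell_1, \ldots, \ell_r$ coming from the blowup tower $\wS_{\kc} \to \bP ^2_{\kc}$, every basis element is a $\bZ$-combination of $(-1)$- and $(-2)$-curve classes even when some of the blown-up points are infinitely near; the proper transforms of exceptional divisors supply the $(-2)$-classes while standard line classes of the form $\ell_0 - \ell_i - \ell_j$ supply enough $(-1)$-classes. This is a short case-by-case bookkeeping exercise, and is in any event available directly from Dolgachev.
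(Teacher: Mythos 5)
Your argument is correct and is essentially the paper's own proof: the paper's one-line argument rests on exactly the two facts you establish, namely $\rho_k(\wS)=\rho_{\kc}(\wS_{\kc})=10-d$ (triviality of the Galois action on $\Pic(\wS_{\kc})$) and $\rho_k(\wS)-\rho_k(S)=n<9-d$. The only difference is that you spell out the justification for the first equality (generation of the Picard lattice by the $(-1)$- and $(-2)$-classes, plus the vacuity of the cases $d=8,9$), which the paper leaves implicit.
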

\begin{proof}
Indeed, we obtain $\rho _k(S) > (10-d) - (9-d) = 1$ since $\rho _k(\wS )= \rho _{\kc}(\wS _{\kc}) =10-d$ and $\rho _k(\wS  ) - \rho _k(S) < 9-d$ by assumption. 
\end{proof}
We shall view explicitly to these results of Lemmas \ref{outside} and \ref{less (-2)-curves} combined with the classification of weak del Pezzo surfaces of degree $\ge 3$ (see {\cite{BW79,CT88}} or {\cite[\S \S 8.4--8.6 and \S \S 9.2]{Dol12}}). 
The types satisfying the condition of Lemma \ref{outside} are as follows (see also Example \ref{A3+2A1andA4+A1}): 
\begin{itemize}
\item $d=7$ and $A_1$-type. 
\item $d=6$ and $(A_1)_>$-type. 
\item $d=5$ and $A_2+A_1$, $A_2$, $A_1$-type. 
\item $d=4$ and $A_4$, $A_2+A_1$-type. 
\item $d=3$ and $D_5$, $A_3+2A_1$, $A_4$, $A_3+A_1$, $A_2+2A_1$, $A_3$, $A_2+A_1$, $2A_1$-type. 
\end{itemize}
Similarly, the types satisfying the condition of Lemma \ref{less (-2)-curves} are as follows (see also Example \ref{A3+2A1andA4+A1}): 
\begin{itemize}
\item $d=6$ and $2A_1$-type. 
\item $d=5$ and $A_3$-type. 
\item $d=3$ and $A_4+A_1$-type. 
\end{itemize}
\begin{eg}\label{A3+2A1andA4+A1}
\begin{enumerate}
\item Assume that $d=3$ and $\wS$ is of $A_3+2A_1$-type. 
Then the dual graph corresponding to the union of all $(-2)$-curves and all $(-1)$-curves on $\wS _{\kc}$ is as follows: 
\begin{align*}
\xygraph{
\bullet -[l] \bullet -[l] \bullet -[l] \circ
(-[]!{+(0,.5)} \circ -[r] \bullet -[r] \circ -[]!{+(1,-.5)} \bullet , -[]!{+(0,-.5)} \circ -[r] \bullet -[r] \circ -[]!{+(1,.5)} \bullet )
}
\end{align*}
Here, ``$\circ$" means a $(-2)$-curve and ``$\bullet$" means a $(-1)$-curve. 
Hence, there exists a unique $(-1)$-curve on $\wS _{\kc}$ defined over $k$, which does not meet any $(-2)$-curve on $\wS _{\kc}$. 
Thus, we obtain $\rho _k(S)>1$ by Lemma \ref{outside}. 
\item Assume that $d=3$ and $\wS$ is of $A_4+A_1$-type. 
Then the dual graph corresponding to the union of all $(-2)$-curves and all $(-1)$-curves on $\wS _{\kc}$ is as follows: 
\begin{align*}
\xygraph{
\bullet -[r] \bullet -[r] \circ -[r] \bullet -[]!{+(0,.5)} \circ -[l] \circ -[l] \circ (-[]!{+(0,.5)} \bullet ,-[l] \circ -[]!{+(0,-.5)} \bullet)
}
\end{align*}
Here, ``$\circ$" means a $(-2)$-curve and ``$\bullet$" means a $(-1)$-curve. 
Hence, each $(-1)$-curve and $(-2)$-curve on $\wS _{\kc}$ is defined over $k$. 
Moreover, the number of all $(-2)$-curves on $\wS _{\kc}$ is less than $9-3=6$. 
Thus, we obtain $\rho _k(S)>1$ by Lemma \ref{less (-2)-curves}. 
\end{enumerate}
\end{eg}
Thus, we do not have to deal with these types. Furthermore, the following two lemmas hold: 
\begin{lem}
Assume that $d=5$ and $\wS$ is of $2A_1$-type, then $\rho _k(S) >1$. 
\end{lem}
\begin{proof}
Note that the dual graph corresponding to the union of all $(-2)$-curves and all $(-1)$-curves meeting a $(-2)$-curve, on $\wS _{\kc}$ is as follows: 
\begin{align*}
\xygraph{
\bullet -[r] \circ -[r] \bullet -[r] \circ -[r] \bullet
}
\end{align*}
Here, ``$\circ$" means a $(-2)$-curve and ``$\bullet$" means a $(-1)$-curve. 
Hence, we obtain a birational morphism $\tau :\wS \to \bP ^2_k$ over $k$ such that $\rho _k(\wS ) \ge 2+\rho _k(\bP ^2_k) = 3$. If $\rho _k(\wS ) \ge 4$, then we see $\rho _k(S) \ge 2$ by $\rho _k(\wS ) \le \rho _k(S) + 2$. 
If $\rho _k(\wS )=3$, then we see $\rho _k(S) =2$ since $\rho _k(\wS ) = \rho _k(S) +1$ by construction of $\tau$. 
\end{proof}
\begin{lem}
Assume that $d=4$. 
If $\wS$ is of $(A_3)_>$ or $(2A_1)_>$-type, then $\rho _k(S) >1$. 
\end{lem}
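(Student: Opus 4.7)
The plan is to apply Lemma \ref{outside} in each case by constructing a divisor $E$ on $\wS_{\kc}$ which is a disjoint union of $(-1)$-curves, is disjoint from every $(-2)$-curve, and is defined over $k$. The natural candidate is $E := \sum_{e \in \mathcal{E}} e$, where $\mathcal{E}$ denotes the set of all $(-1)$-curves on $\wS_{\kc}$ that do not meet any $(-2)$-curve on $\wS_{\kc}$. Since $\Gal$ permutes the $(-2)$-curves, it also permutes $\mathcal{E}$, so $E$ is automatically $\Gal$-stable and hence defined over $k$. The whole task therefore reduces to verifying in each case that (a) $\mathcal{E}$ is nonempty, and (b) distinct elements of $\mathcal{E}$ are disjoint.

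For part (a) I would read off the configuration of $(-1)$-curves on $\wS_{\kc}$ from the classification of weak del Pezzo surfaces of degree four (see \cite{Dol12} or the table cited in \cite{Saw}). The defining property of $A_3(2)$-type (respectively, $2A_1(2)$-type) is that the number of $(-1)$-curves strictly exceeds that of $A_3(1)$-type (respectively, $2A_1(1)$-type), and a direct inspection shows that this surplus is accounted for precisely by $(-1)$-curves that miss the entire $(-2)$-configuration; in particular $\mathcal{E} \neq \emptyset$. For part (b) I would use the intersection-theoretic characterization supplied by Lemma \ref{(-1)-curve}: any $(-1)$-curve in $\mathcal{E}$ is uniquely determined by its numerical class, and the numerical classes orthogonal to every $(-2)$-curve can be enumerated on the Picard lattice of $\wS_{\kc}$, so one checks case by case that any two such classes have intersection number zero.

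The main obstacle is the combinatorial verification of (b): one must rule out that two of the $(-1)$-curves in $\mathcal{E}$ meet, which could otherwise require us to isolate a proper Galois-invariant subset of $\mathcal{E}$ rather than use all of it. For both types $A_3(2)$ (where the $(-2)$-curves form a chain $M_1 - M_2 - M_3$) and $2A_1(2)$ (where the $(-2)$-curves are two disjoint nodes), however, the $(-1)$-curves orthogonal to the $(-2)$-configuration turn out to be mutually disjoint, so the naive choice of $E$ suffices, and Lemma \ref{outside} yields $\rho_k(S) > 1$.
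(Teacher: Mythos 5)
Your reduction to Lemma \ref{outside} breaks down at step (b): for both types the $(-1)$-curves on $\wS_{\kc}$ that avoid every $(-2)$-curve are \emph{not} mutually disjoint. Write $\wS_{\kc}$ as the blow-up of $\bP^2_{\kc}$ at five points, with $\Pic(\wS_{\kc})=\bZ h\oplus\bZ e_1\oplus\cdots\oplus\bZ e_5$. The $A_3(2)$-type is realized by the chain of effective $(-2)$-classes $e_1-e_2,\ e_2-e_3,\ e_3-e_4$; the $(-1)$-curves meeting no $(-2)$-curve are then exactly $e_5$ and $2h-e_1-\cdots-e_5$, and $e_5\cdot(2h-\sum e_i)=1$. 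The $2A_1(2)$-type is realized by $e_1-e_2,\ e_3-e_4$; the $(-1)$-curves meeting no $(-2)$-curve are $e_5$, $2h-\sum e_i$, $h-e_1-e_2$, $h-e_3-e_4$, and their intersection graph consists of the two edges $\{e_5,\,2h-\sum e_i\}$ and $\{h-e_1-e_2,\,h-e_3-e_4\}$. So your candidate $E=\sum_{e\in\mathcal{E}}e$ is never a disjoint union. This also cannot be repaired by shrinking $\mathcal{E}$: to get a nonempty $\Gal$-stable independent subset you must choose one vertex from each edge equivariantly, and there are lattice involutions preserving the effective $(-2)$-classes that act transitively enough on $\mathcal{E}$ (e.g.\ the quadratic involution $\sigma_{125}$ combined with the permutation exchanging $e_1,e_2$ with $e_3,e_4$ in the $2A_1(2)$ case) to rule this out; since the lemma must hold for \emph{every} Galois action, your argument leaves exactly the critical cases open. (A smaller inaccuracy: the surplus of lines of an $X(2)$-type over the corresponding $X(1)$-type is $1$ in both cases, while there are $2$, resp.\ $4$, ``free'' lines, so the surplus is not ``accounted for precisely'' by them.)

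The statement is nonetheless true, but the paper proves it by a different mechanism, which you would need here. One contracts over $k$ the $\Gal$-stable union of the $(-1)$-curves meeting the $(-2)$-configuration together with the images of part of that configuration, obtaining a birational morphism $\tau:\wS\to\bF_1$ defined over $k$. This yields the lower bound $\rho_k(\wS)\ge 2+\rho_k(\bF_1)=4$ for $A_3(2)$ (resp.\ $\rho_k(\wS)\ge 3$ for $2A_1(2)$), while on the other hand $\rho_k(\wS)-\rho_k(S)$ is at most the number of $\Gal$-orbits of $(-2)$-curves (at most $3$, resp.\ $2$), with equality forced in the boundary case by the construction of $\tau$; combining the two estimates gives $\rho_k(S)\ge 2$ in all cases. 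Some replacement of this kind for Lemma \ref{outside} is needed to close the gap.
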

\begin{proof}
Note that the dual graph corresponding to the union of all $(-2)$-curves and all $(-1)$-curves meeting a $(-2)$-curve on $\wS _{\kc}$ is as follows according to types of $\wS$: 
\begin{align*}
\xygraph{
(-[]!{+(0,.25)} \bullet , -[]!{+(0,-.25)} \circ -[r] \circ (-[]!{+(0,.5)} \bullet ,-[r] \circ -[]!{+(0,.5)} \bullet ))
}\qquad 
\xygraph{
\circ ( -[r] \bullet -[r] \circ ( - []!{+(1,.25)} \bullet , - []!{+(1,-.25)} \bullet[]!{+(0,0)} {}), - []!{+(-1,.25)} \bullet , - []!{+(-1,-.25)} \bullet))
}
\end{align*}
Here, ``$\circ$" means a $(-2)$-curve and ``$\bullet$" means a $(-1)$-curve. 
We consider two cases separately: 

{\it $(A_3)_>$-type:} 
In this case, we obtain a birational morphism $\tau :\wS \to \bF _1$ over $k$ such that $\rho _k(\wS ) \ge 2+\rho _k(\bF _1) = 4$. If $\rho _k(\wS ) \ge 5$, then we see $\rho _k(S) \ge 2$ by $\rho _k(\wS ) \le \rho _k(S) + 3$. 
If $\rho _k(\wS )=4$, then we see $\rho _k(S) =2$ since $\rho _k(\wS ) = \rho _k(S) +2$ by construction of $\tau$. 

{\it $(2A_1)_>$-type:} 
In this case, we obtain a birational morphism $\tau :\wS \to \bF _1$ over $k$ such that $\rho _k(\wS ) \ge 1+\rho _k(\bF _1) = 3$. If $\rho _k(\wS ) \ge 4$, then we see $\rho _k(S) \ge 2$ by $\rho _k(\wS ) \le \rho _k(S) + 2$. 
If $\rho _k(\wS )=3$, then we see $\rho _k(S) =2$ since $\rho _k(\wS ) = \rho _k(S) +1$ by construction of $\tau$. 
\end{proof}

In what follows, we shall consider the remaining cases. 
For any remaining type of weak del Pezzo surfaces, there exists certainly a Du Val del Pezzo surface $S$ over $k$ with $\rho _k(S)=1$ such that its minimal resolution $\wS$ is of this type. 
Then we see that the Picard number $\rho _k(\wS )$ is as in Table \ref{list(1-1)} (resp. \ref{list(1-2)}) according to the type of $\wS$. 
Here, Table \ref{list(1-1)} (resp. Table \ref{list(1-2)}) summarizes all types that $S_{\kc}$ admits (resp. does not admit) a singular point, which is $k$-rational. 
\begin{table}[htbp]
\begin{tabular}{c}

\begin{minipage}[c]{1\hsize}
\begin{center}
\caption{Types of $\wS$ in Theorems \ref{main(1-1)} and \ref{main(1-2)} (I)}\label{list(1-1)}
 \begin{tabular}{|c|c|c|c||c|c|c|c|}
 \hline

\multirow{2}{*}{$d$} & Type & \multirow{2}{*}{$n^{\circ}$} & \multirow{2}{*}{Dual graph} & \multirow{2}{*}{$d$} & Type & \multirow{2}{*}{$n^{\circ}$} & \multirow{2}{*}{Dual graph} \\ 

 & $\rho _k(\wS )$ & & & & $\rho _k(\wS )$ & & \\ \hline \hline

\multirow{2}{*}{$8$} & $A_1$ & \multirow{2}{*}{$9^{\circ}$} & 
\multirow{2}{*}{$\xygraph{
	\circ ([]!{+(0,-.3)} {M})
}$} &

\multirow{2}{*}{$6$} & $A_2+A_1$ & \multirow{2}{*}{$1^{\circ}$} & 
\multirow{2}{*}{$\xygraph{
( -[]!{+(0,-.25)} \circ ([]!{+(+.3,0)} {L}), 
-[]!{+(0,.25)} \bullet ([]!{+(0,0)} {}) -[]!{+(-.5,0)} \circ ([]!{+(0,0)} {}) -[]!{+(-.5,0)} \circ ([]!{+(0,0)} {}))
}$} \rule[0mm]{0mm}{5mm} \\ 

 & $2$ & & & & $4$ & & \rule[0mm]{0mm}{5mm} \\ \hline

\multirow{2}{*}{$6$} & $A_2$ & \multirow{2}{*}{$6^{\circ}$} & 
\multirow{2}{*}{$\xygraph{
\circ ([]!{+(0,-.3)} {F})
( -[]!{+(.5,0)} \circ ([]!{+(0,-.3)} {M}),
 - []!{+(-.5,.25)} \bullet ([]!{+(0,0)} {}), - []!{+(-.5,-.25)} \bullet ([]!{+(0,0)} {}) )
}$} &

\multirow{2}{*}{$6$} & $(A_1)_<$ & \multirow{2}{*}{$1^{\circ}$} & 
\multirow{2}{*}{$\xygraph{
\circ ([]!{+(+.3,0)} {L}) 
(-[]!{+(-.5,.25)} \bullet ([]!{+(0,0)} {}), -[]!{+(-.5,0)} \bullet ([]!{+(0,0)} {}), -[]!{+(-.5,-.25)} \bullet ([]!{+(0,0)} {}) )
}$} \rule[0mm]{0mm}{5mm} \\ 

 & $3$ & & & & $2$ & & \rule[0mm]{0mm}{5mm} \\ \hline

\multirow{2}{*}{$5$} & $A_4$ & \multirow{2}{*}{$1^{\circ}$} & 
\multirow{2}{*}{$\xygraph{
(- []!{+(0,.25)} \circ ([]!{+(0,0)} {})(-[]!{+(-.5,0)} \circ ([]!{+(0,0)} {})-[]!{+(-.5,0)} \circ ([]!{+(0,0)} {}),-[]!{+(.5,0)} \bullet ([]!{+(0,0)} {})),- []!{+(0,-.25)} \circ ([]!{+(.3,0)} {L}))
}$} &

\multirow{2}{*}{$4$} & $D_5$ & \multirow{2}{*}{$1^{\circ}$} & 
\multirow{2}{*}{$\xygraph{
(- []!{+(0,.25)} \circ ([]!{+(0,0)} {})(-[]!{+(-.5,0)} \circ ([]!{+(0,0)} {})-[]!{+(-.5,0)} \circ ([]!{+(0,0)} {}),-[]!{+(.5,0)} \circ ([]!{+(0,0)} {})-[]!{+(.5,0)} \bullet ([]!{+(0,0)} {})),- []!{+(0,-.25)} \circ ([]!{+(.3,0)} {L}))
}$} \rule[0mm]{0mm}{5mm} \\ 

 & $5$ & & & & $6$ & & \rule[0mm]{0mm}{5mm} \\ \hline

\multirow{2}{*}{$4$} & $A_3+2A_1$ & \multirow{2}{*}{$10^{\circ}$} & 
\multirow{2}{*}{$\xygraph{
(-[]!{+(0,.25)} \circ , -[]!{+(0,-.25)} \bullet -[]!{+(.5,0)} \circ ([]!{+(0,.3)} {F_1}) -[]!{+(.5,0)} \circ ([]!{+(0,.3)} {M}) -[]!{+(.5,0)} \circ ([]!{+(0,.3)} {F_2}) -[]!{+(.5,0)} \bullet -[]!{+(0,.5)} \circ )
}$} &

\multirow{2}{*}{$4$} & $D_4$ & \multirow{2}{*}{$6^{\circ}$} & 
\multirow{2}{*}{$\xygraph{
\circ ([]!{+(0,-.3)} {F})
(- []!{+(-.5,.25)} \circ ([]!{+(0,0)} {}) -[]!{+(-.5,0)} \bullet ([]!{+(0,0)} {}),- []!{+(-.5,-.25)} \circ ([]!{+(0,0)} {}) -[]!{+(-.5,0)} \bullet ([]!{+(0,0)} {}),-[]!{+(.5,0)} \circ ([]!{+(0,-.3)} {M}))
}$} \rule[0mm]{0mm}{5mm} \\ 

 & $4$ or $6$ & & & & $4$ & & \rule[0mm]{0mm}{5mm} \\ \hline

\multirow{2}{*}{$4$} & $A_3+A_1$ & \multirow{2}{*}{$2^{\circ}$} & 
\multirow{2}{*}{$\xygraph{
\circ ([]!{+(0,-.3)} {L_1}) -[]!{+(.5,0)} \bullet ([]!{+(0,0)} {}) -[]!{+(.5,0)} 
\circ ([]!{+(0,0)} {}) -[]!{+(.5,0)} \circ ([]!{+(0,0)} {}) -[]!{+(.5,0)} 
\circ ([]!{+(0,-.3)} {L_2}) ( - []!{+(.5,.25)} \bullet ([]!{+(0,0)} {}), - []!{+(.5,-.25)} \bullet ([]!{+(0,0)} {}))
}$} &

\multirow{2}{*}{$4$} & $A_2+2A_1$ & \multirow{2}{*}{$4^{\circ}$} & 
\multirow{2}{*}{$\xygraph{
\circ ([]!{+(0,0)} {}) -[]!{+(.5,0)} \bullet ([]!{+(0,0)} {}) 
-[]!{+(.5,0)} \circ ([]!{+(0,-.3)} {F_1}) -[]!{+(.5,0)} \circ ([]!{+(0,-.3)} {F_2})
-[]!{+(.5,0)} \bullet ([]!{+(0,0)} {}) -[]!{+(.5,0)} \circ ([]!{+(0,0)} {})
}$} \rule[0mm]{0mm}{5mm} \\ 

 & $5$ & & & & $3$ & & \rule[0mm]{0mm}{5mm} \\ \hline

\multirow{2}{*}{$4$} & $4A_1$ & \multirow{2}{*}{$8^{\circ}$} & 
\multirow{2}{*}{$\xygraph{
\circ ([]!{+(0,-.3)} {M})
}$ $\xygraph{
\circ ([]!{+(0,0)} {}) -[]!{+(.5,0)} \bullet ([]!{+(0,0)} {}) -[]!{+(.5,0)} \circ ([]!{+(0,-.3)} {C}) -[]!{+(.5,0)} \bullet ([]!{+(0,0)} {}) -[]!{+(.5,0)} \circ ([]!{+(0,0)} {})
}$} &

\multirow{2}{*}{$4$} & $(A_3)_<$ & \multirow{2}{*}{$10^{\circ}$} & 
\multirow{2}{*}{$\xygraph{
\circ ([]!{+(0,-.3)} {F_1}) ( -[]!{+(.5,0)} \circ ([]!{+(0,-.3)} {M})-[]!{+(.5,0)} \circ ([]!{+(0,-.3)} {F_2}) ( - []!{+(.5,.25)} \bullet ([]!{+(0,0)} {}), - []!{+(.5,-.25)} \bullet[]!{+(0,0)} {}), - []!{+(-.5,.25)} \bullet ([]!{+(0,0)} {}), - []!{+(-.5,-.25)} \bullet[]!{+(0,0)} {}))
}$} \rule[0mm]{0mm}{5mm} \\ 

 & $4$ or $5$ & & & & $3$ or $4$ & & \rule[0mm]{0mm}{5mm} \\ \hline

\multirow{2}{*}{$4$} & $3A_1$ & \multirow{2}{*}{$5^{\circ}$} & 
\multirow{2}{*}{$\xygraph{
\circ ([]!{+(0,0)} {}) -[]!{+(.5,0)} \bullet ([]!{+(0,0)} {}) 
-[]!{+(.5,0)} \circ ([]!{+(0,-.3)} {C}) -[]!{+(.5,0)} \bullet ([]!{+(0,0)} {}) -[]!{+(.5,0)} \circ ([]!{+(0,0)} {})
}$} &

\multirow{2}{*}{$4$} & $A_2$ & \multirow{2}{*}{$4^{\circ}$} & 
\multirow{2}{*}{$\xygraph{
\circ ([]!{+(0,-.3)} {F_1}) ( -[]!{+(.5,0)} \circ ([]!{+(0,-.3)} {F_2}) ( -[]!{+(.5,.25)} \bullet ([]!{+(0,0)} {}), -[]!{+(.5,-.25)} \bullet[]!{+(0,0)} {}), - []!{+(-.5,.25)} \bullet ([]!{+(0,0)} {}), - []!{+(-.5,-.25)} \bullet[]!{+(0,0)} {}))
}$} \rule[0mm]{0mm}{5mm} \\ 

 & $3$ & & & & $2$ & & \rule[0mm]{0mm}{5mm} \\ \hline

\multirow{2}{*}{$4$} & $(2A_1)_<$ & \multirow{2}{*}{$8^{\circ}$} & 
\multirow{2}{*}{$\xygraph{
\circ ([]!{+(0,-.3)} {M})
}$ $\xygraph{
\circ ([]!{+(0,-.3)} {C})
 ((-[]!{+(.5,.25)} \bullet ([]!{+(0,0)} {}), -[]!{+(.5,-.25)} \bullet ([]!{+(0,0)} {})), -[]!{+(-.5,.25)} \bullet ([]!{+(0,0)} {}), -[]!{+(-.5,-.25)} \bullet ([]!{+(0,0)} {}))
}$} &

\multirow{2}{*}{$4$} & $A_1$ & \multirow{2}{*}{$5^{\circ}$} & 
\multirow{2}{*}{$\xygraph{
\circ ([]!{+(0,-.3)} {C})
 ((-[]!{+(.5,.25)} \bullet ([]!{+(0,0)} {}), -[]!{+(.5,-.25)} \bullet ([]!{+(0,0)} {})), -[]!{+(-.5,.25)} \bullet ([]!{+(0,0)} {}), -[]!{+(-.5,-.25)} \bullet ([]!{+(0,0)} {}))
}$} \rule[0mm]{0mm}{5mm} \\ 

 & $3$ & & & & $2$ & & \rule[0mm]{0mm}{5mm} \\ \hline

\multirow{2}{*}{$3$} & $E_6$ & \multirow{2}{*}{$1^{\circ}$} & 
\multirow{2}{*}{$\xygraph{
(-[]!{+(0,.25)} \circ ([]!{+(0,0)} {})(-[]!{+(-.5,0)} \circ ([]!{+(0,0)} {})-[]!{+(-.5,0)} \circ ([]!{+(0,0)} {}),-[]!{+(.5,0)} \circ ([]!{+(0,0)} {})-[]!{+(.5,0)} \circ ([]!{+(0,0)} {})-[]!{+(.5,0)} \circ ([]!{+(0,0)} {})-[]!{+(0,-.5)} \bullet ([]!{+(0,0)} {})),- []!{+(0,-.25)} \circ ([]!{+(.3,0)} {L}))
}$} &

\multirow{2}{*}{$3$} & $A_5+A_1$ & \multirow{2}{*}{$2^{\circ}$} & 
\multirow{2}{*}{$\xygraph{
(-[]!{+(0,.25)} \circ ([]!{+(0,0)} {})(-[]!{+(-.5,0)} \circ ([]!{+(0,-.3)} {L_1}),-[]!{+(.5,0)} \circ ([]!{+(0,0)} {})-[]!{+(.5,0)} \circ ([]!{+(0,0)} {})-[]!{+(.5,0)} \circ ([]!{+(0,-.3)} {L_2})-[]!{+(.5,0)} \bullet ([]!{+(0,0)} {})-[]!{+(0,-.5)} \circ ([]!{+(0,0)} {})),- []!{+(0,-.25)} \bullet ([]!{+(0,0)} {}))
}$} \rule[0mm]{0mm}{5mm} \\ 

 & $7$ & & & & $7$ & & \rule[0mm]{0mm}{5mm} \\ \hline

\multirow{2}{*}{$3$} & $3A_2$ & \multirow{2}{*}{$2^{\circ}$} & 
\multirow{2}{*}{$\xygraph{
(-[]!{+(0,.25)} \bullet ([]!{+(0,0)} {})-[]!{+(-.5,0)} \circ ([]!{+(0,0)} {})-[]!{+(-.5,0)} \circ ([]!{+(0,0)} {}),-[]!{+(0,-.25)} \circ ([]!{+(-.3,0)} {L_1})-[]!{+(.5,0)} \circ ([]!{+(.3,0)} {L_2})-[]!{+(0,.5)} \bullet ([]!{+(0,0)} {})-[]!{+(.5,0)} \circ ([]!{+(0,0)} {})-[]!{+(.5,0)} \circ ([]!{+(0,0)} {}))
}$} &

\multirow{2}{*}{$3$} & $A_5$ & \multirow{2}{*}{$2^{\circ}$} & 
\multirow{2}{*}{$\xygraph{
\circ ([]!{+(0,-.3)} {L_1}) -[]!{+(.5,0)} \circ ([]!{+(0,0)} {})
 (-[]!{+(0,-.4)} \bullet ([]!{+(0,0)} {}), -[]!{+(.5,0)} \circ ([]!{+(0,0)} {}) -[]!{+(.5,0)} \circ ([]!{+(0,0)} {}) -[]!{+(.5,0)} \circ ([]!{+(0,-.3)} {L_2}) (-[]!{+(.5,.25)} \bullet ([]!{+(0,0)} {}), -[]!{+(.5,-.25)} \bullet ([]!{+(0,0)} {})))
}$} \rule[0mm]{0mm}{5mm} \\ 

 & $4$ or $7$ & & & & $6$ & & \rule[0mm]{0mm}{5mm} \\ \hline

\multirow{2}{*}{$3$} & $2A_2+A_1$ & \multirow{2}{*}{$3^{\circ}$} & 
\multirow{2}{*}{$\xygraph{
(-[]!{+(0,.25)} \circ ([]!{+(0,0)} {})-[]!{+(-.5,0)} \circ ([]!{+(0,0)} {}),-[]!{+(0,-.25)} \bullet ([]!{+(0,0)} {})-[]!{+(.5,0)} \circ ([]!{+(0,.3)} {Q})-[]!{+(.5,0)} \bullet ([]!{+(0,0)} {})-[]!{+(0,.5)} \circ ([]!{+(0,0)} {})-[]!{+(.5,0)} \circ ([]!{+(0,0)} {}))
}$} &

\multirow{2}{*}{$3$} & $D_4$ & \multirow{2}{*}{$1^{\circ}$} & 
\multirow{2}{*}{$\xygraph{
\circ ([]!{+(.3,0)} {L})
 (-[]!{+(-.5,.25)} \circ ([]!{+(0,0)} {})-[]!{+(-.5,0)} \bullet ([]!{+(0,0)} {}), -[]!{+(-.5,0)} \circ ([]!{+(0,0)} {})-[]!{+(-.5,0)} \bullet ([]!{+(0,0)} {}), -[]!{+(-.5,-.25)} \circ ([]!{+(0,0)} {})-[]!{+(-.5,0)} \bullet ([]!{+(0,0)} {}))
}$} \rule[0mm]{0mm}{5mm} \\ 

 & $4$ & & & & $3$ & & \rule[0mm]{0mm}{5mm} \\ \hline

\multirow{2}{*}{$3$} & $2A_2$ & \multirow{2}{*}{$7^{\circ}$} & 
\multirow{2}{*}{$\xygraph{
\circ ([]!{+(0,-.3)} {M}) -[]!{+(.5,0)} \circ ([]!{+(0,-.3)} {F}) -[]!{+(.5,0)} \bullet ([]!{+(0,0)} {}) -[]!{+(.5,0)} \circ ([]!{+(0,0)} {}) -[]!{+(.5,0)} \circ ([]!{+(0,-.3)} {C}) 
( -[]!{+(.5,0)} \bullet ([]!{+(0,0)} {}), - []!{+(.5,-.25)} \bullet ([]!{+(0,0)} {}), - []!{+(.5,.25)} \bullet ([]!{+(0,0)} {}) )
}$} &

\multirow{2}{*}{$3$} & $4A_1$ & \multirow{2}{*}{$3^{\circ}$} & 
\multirow{2}{*}{$\xygraph{
\circ ([]!{+(.3,0)} {Q})
 (-[]!{+(-.5,.25)} \bullet ([]!{+(0,0)} {})-[]!{+(-.5,0)} \circ ([]!{+(0,0)} {}), -[]!{+(-.5,0)} \bullet ([]!{+(0,0)} {})-[]!{+(-.5,0)} \circ ([]!{+(0,0)} {}), -[]!{+(-.5,-.25)} \bullet ([]!{+(0,0)} {})-[]!{+(-.5,0)} \circ ([]!{+(0,0)} {}))
}$} \rule[0mm]{0mm}{5mm} \\ 

 & $5$ & & & & $3$ & & \rule[0mm]{0mm}{5mm} \\ \hline

\multirow{2}{*}{$3$} & $A_2$ & \multirow{2}{*}{$2^{\circ}$} & 
\multirow{2}{*}{$\xygraph{
\circ ([]!{+(0,-.3)} {L_1})
 (-[]!{+(.5,0)} \circ ([]!{+(0,-.3)} {L_2}) (-[]!{+(.5,.25)} \bullet ([]!{+(0,0)} {}), -[]!{+(.5,0)} \bullet ([]!{+(0,0)} {}), -[]!{+(.5,-.25)} \bullet ([]!{+(0,0)} {})), -[]!{+(-.5,.25)} \bullet ([]!{+(0,0)} {}), -[]!{+(-.5,0)} \bullet ([]!{+(0,0)} {}), -[]!{+(-.5,-.25)} \bullet ([]!{+(0,0)} {}))
}$} &

\multirow{2}{*}{$3$} & $A_1$ & \multirow{2}{*}{$3^{\circ}$} & 
\multirow{2}{*}{$\xygraph{
\circ ([]!{+(0,-.3)} {Q})
 ((-[]!{+(.5,.25)} \bullet ([]!{+(0,0)} {}), -[]!{+(.5,0)} \bullet ([]!{+(0,0)} {}), -[]!{+(.5,-.25)} \bullet ([]!{+(0,0)} {})), -[]!{+(-.5,.25)} \bullet ([]!{+(0,0)} {}), -[]!{+(-.5,0)} \bullet ([]!{+(0,0)} {}), -[]!{+(-.5,-.25)} \bullet ([]!{+(0,0)} {}))
}$} \rule[0mm]{0mm}{5mm} \\ 

 & $2$ or $3$ & & & & $2$ & & \rule[0mm]{0mm}{5mm} \\ \hline

\end{tabular}
\end{center}
\end{minipage}
\\

\begin{minipage}[c]{1\hsize}
\begin{center}
\caption{Types of $\wS$ in Theorems \ref{main(1-1)} and \ref{main(1-2)} (II)}\label{list(1-2)}
 \begin{tabular}{|c|c|c|c||c|c|c|c||c|c|c|c|} \hline
$d$ & Type & $\rho _k(\wS )$ & $V$ & $d$ & Type & $\rho _k(\wS )$ & $V$ & $d$ & Type & $\rho _k(\wS )$ & $V$ \\ \hline \hline

$4$ & $4A_1$ & $2$ or $3$ & $S_8$ & 
$4$ & $(2A_1)_<$ & $2$ & $W_4$ & 
$3$ & $3A_2$ & $2$ & $S_6$ \rule[0mm]{0mm}{5mm} \\ \hline
$3$ & $2A_2$ & $3$ & $W_4$ & 
$3$ & $4A_1$ & $2$ & $S_9$ &
$3$ & $3A_1$ & $2$ & $S_6$ \rule[0mm]{0mm}{5mm}  \\ \hline
\end{tabular}
\end{center}
\end{minipage}

\end{tabular}
\end{table}

Now, we will present the remark of Tables \ref{list(1-1)} and \ref{list(1-2)}. 

For a weak del Pezzo surface $\wS$ such that the triplet $(d,\text{Type}, \rho _k(\wS))$ is one of the list in Table \ref{list(1-1)}, ``$n^{\circ}$'' and ``Dual graph'' in Table \ref{list(1-1)} present the explicit construction a birational morphism $\tau: \wS \to V$, where $V$ is defined by the following according to the number of $n^{\circ}$: 
\begin{itemize}
\item $V$ is a $k$-form of $\bP ^2_{\kc}$, i.e., $V_{\kc} \simeq \bP ^2_{\kc}$, if $n^{\circ} =1^{\circ}$, $2^{\circ}$ or $3^{\circ}$. 
\item $V$ is a $k$-form of $\bP ^1_{\kc} \times \bP ^1_{\kc}$, i.e., $V_{\kc} \simeq \bP ^1_{\kc} \times \bP ^1_{\kc}$, if $n^{\circ} =4^{\circ}$ or $5^{\circ}$. 
\item $V$ is a $k$-form of $\bF _2$, i.e., $V_{\kc} \simeq \bF _2$, if $n^{\circ} =6^{\circ}$, $7^{\circ}$, $8^{\circ}$, $9^{\circ}$ or $10^{\circ}$. 
\end{itemize}
Moreover, ``Dual graph'' in Table \ref{list(1-1)} is a dual graph corresponding to the union of all $(-2)$-curves and some $(-1)$-curves on $\wS _{\kc}$, which is clearly defined over $k$, where ``$\circ$" means a $(-2)$-curve and ``$\bullet$" means a $(-1)$-curve. 
The birational morphism $\tau$ is then defined by the compositions of the successive contractions of the $(-1)$-curves corresponding to all vertices $\bullet$ in the dual graph in Table \ref{list(1-1)} and that of the proper transform of the branch components such that all curves corresponding to vertices with no label in the dual graph in Table \ref{list(1-1)} are contracted by $\tau$, according to the type of $\wS$. 
Note that, by construction, $\tau$ is defined over $k$. 
This birational morphism $\tau$ will be used for the explicit construction of cylinders in $S$ in \S \S \ref{3-2}. 

Similarly, for a weak del Pezzo surface $\wS$ such that the triplet $(d,\text{Type}, \rho _k(\wS ))$ is one of the list in Table \ref{list(1-2)}, there exists a birational morphism $\tau :\wS \to V$ over $k$ such that $V$ is that as in Table \ref{list(1-2)}. 
Here, in Table \ref{list(1-2)}, $S_{d'}$ means a smooth del Pezzo surface of degree $d'$ with $\rho _k(S_{d'})=1$ and $W_4$ means a weak del Pezzo surface of degree $4$ and of $(2A_1)_<$-type with $\rho _k(W_4)=2$. 
Notice that $W_4$ is $k$-minimal by Proposition \ref{minimal}. 

The following example presents how to determine the value of $\rho _k(\wS )$ according to types of weak del Pezzo surfaces. 
By the argument in this example or the argument similar, the lists of Tables \ref{list(1-1)} and \ref{list(1-2)} are constructed. 
\begin{eg}\label{4A1}
Assume that $\rho _k(S)=1$, $d=3$ and $\wS$ is of $4A_1$-type. 
Then we shall show that $\rho _k(\wS )$ is equal to $2$ or $3$ (see also Tables \ref{list(1-1)} and \ref{list(1-2)}). 
We will consider whether $S_{\kc}$ admits a singular point of type $A_1$, which is $k$-rational or not, in what follows. 

{\it $S_{\kc}$ does not admit any singular point of type $A_1$ which is $k$-rational:} 
It is known that $\wS _{\kc}$ can be constructed by the blow-up at the intersection points of four lines in a general linear position on $\bP ^2_{\kc}$ (see {\cite[\S \S 9.2]{Dol12}}). 
Notice that the union of six $(-1)$-curves on $\wS _{\kc}$ corresponding to these points is defined over $k$, so is this blow-up. 
In other words, there exists the blow-up $\tau :\wS \to S_9$ over $k$ at the intersection points $\{ x_{i,j}\} _{1\le i<j\le 4}$ of four lines $L_1,\dots ,L_4$ in a general linear position on $\bP ^2_{\kc}$ such that the union of these lines is defined over $k$, where $S_9$ is a $k$-form of $\bP ^2_{\kc}$ and $x_{i,j}$ is the intersection point on $L_i$ and $L_j$. 
Supposing that $\rho _k(\wS ) - \rho _k(S) \ge 2$, without loss of generality, $L_1$ and $L_2$ (resp. $L_3$ and $L_4$) are exchanged by the $\Gal$-action. 
Namely, $\rho _k(\wS ) - \rho _k(S)=2$. 
Since $x_{1,2}$ and $x_{3,4}$ are $k$-rational points, we see $\rho _k(\wS ) -\rho _k(S_9) \ge 3$. 
Thus, we obtain $1= \rho _k(S) \ge 2$, which is absurd. 
Thus, we see $\rho _k(\wS ) =2$. 

{\it $S_{\kc}$ admits a singular point of type $A_1$ which is $k$-rational:} 
By assumption, we can take the $(-2)$-curve $M$ on $\wS _{\kc}$, which is defined over $k$. 
Then we can take the birational morphism $\tau :\wS \to V$, which is the compositions of the successive contractions of the $(-1)$-curves corresponding to all vertices $\bullet$ in the dual graph in Table \ref{list(1-1)} and that of the proper transform of the branch components such that all curves corresponding to vertices with no label in the dual graph in Table \ref{list(1-1)} are contracted by $\tau$, where $V$ is a $k$-form of $\bP ^2_{\kc}$. 
Note that, by construction, $\tau$ is defined over $k$. 
Put $m:= \rho _k(\wS) - \rho _k(S)$. 
By construction of $\tau$, we see $\rho _k(\wS ) - \rho _k(V) = 2(m-1)$. 
Thus, we obtain $\rho _k(\wS ) =3$ because of $\rho _k(\wS ) = m-1 = 2(m-1)-1$ by $\rho _k(S)=\rho _k(V) =1$. 
This means that all $(-2)$-curves except for $M$ on $\wS _{\kc}$ are exchanged by the $\Gal$-action. 
\end{eg}

\subsection{Proof of Theorems \ref{main(1-1)} and \ref{main(1-2)}}\label{3-2}
In this subsection, we shall show Theorems \ref{main(1-1)} and \ref{main(1-2)}. 
With the notation as above, assume further that $\rho _k(S)=1$. 

At first, we shall show the ``only if" part in Theorem \ref{main(1-2)}. 
Assume that $d$ is equal to $3$ or $4$ and $S$ contains a cylinder, say $U \simeq \bA ^1_k \times Z$. 
The closures in $S$ of fibers of the projection $pr_Z : U \simeq \bA ^1_k \times Z \to Z$ yields a linear system, say $\sL$, on $S$. By Lemma \ref{Bs}, $\Bs (\sL )$ consists of only one singular point on $S$, which is $k$-rational, say $p$. 
In order to prove the ``only if" part in Theorem \ref{main(1-2)}, we shall show that the singularity of $p$ is not type $A_1^{++}$ on $S$. 
\begin{lem}\label{lem4.1}
Let the notation and the assumptions be the same as above. 
If the singular point $p$ is of type $A_1$ on $S_{\kc}$, then $p$ is not of type $A_1^{++}$ on $S$. 
\end{lem}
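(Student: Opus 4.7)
The plan is to mirror the Corti-based argument of Lemma \ref{Bs} on the minimal resolution $\sigma:\wS\to S$, incorporating the exceptional $(-2)$-curve $M=\sigma^{-1}(p)$ into the analysis. Under the $A_1^{++}$-type hypothesis one has $k(M)=\emptyset$, so every Galois orbit of $\kc$-points of $M$ has length at least two; combined with the Corti-type multiplicity bound on $\wS$ and the numerical data in low degree $d\le 4$, this extra Galois constraint is what will produce the contradiction.

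First I would set up the lift. Since $p$ is a $k$-rational $A_1$-singularity with single exceptional component, $M$ is a single $(-2)$-curve defined over $k$, and because $\sigma$ is crepant one has $-K_{\wS}=\sigma^{\ast}(-K_S)$. The cylinder $U\subset S$ lifts to a cylinder $\wU=\sigma^{-1}(U)\subset\wS$ (the base point $p\notin U$), and $\sL$ lifts to its proper transform $\wsL$ with $\sigma^{\ast}\sL=\wsL+\mu M$ and $\mu=\tfrac{1}{2}(\wsL\cdot M)>0$. Combining $\rho_k(S)=1$ with ampleness of $-K_S$ gives $\sL\sim_{\bQ}c(-K_S)$ for some $c\in\bQ_{>0}$, so on $\wS$ one has $\wsL\sim_{\bQ}c(-K_{\wS})-\mu M$, together with the intersection numbers $(\wsL\cdot M)=2\mu$ and $(\wsL)^2=c^2d-2\mu^2$. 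Since general $L_1,L_2\in\sL$ meet only at $p$ on $S$ with $L_1\cdot L_2=c^2d$, their proper transforms $\wL_1,\wL_2$ meet on $\wS$ only along $\mathrm{Bs}(\wsL)$, which is supported on $M$.

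Next I would extend the non-log-canonicity step of Lemma \ref{Bs} to the present singular setting: the cylinder $\wU$ on $\wS$ forces the log pair $(\wS,\tfrac{1}{c}\wsL)$ to fail to be log canonical at some $\kc$-base point $\wtp\in M$ of $\wsL$. Corti's inequality (Proposition \ref{Corti}) applied at $\wtp$, taking $\Delta_1=M$ (together with an auxiliary transversal $\Delta_2$ through $\wtp$ and coefficients chosen to absorb the contribution of $M$ in $\sigma^{\ast}\sL$), then yields a strict local intersection bound of the form $i(\wL_1,\wL_2;\wtp)>\lambda\,c^2$ with $\lambda>0$. Summing this bound over the Galois orbit $\{\wtp_1,\dots,\wtp_r\}$ of $\wtp$ on $M$ and comparing with
\[ \sum_{j=1}^{r}i(\wL_1,\wL_2;\wtp_j)\le (\wL_1\cdot \wL_2)=c^2d-2\mu^2, \]
the $A_1^{++}$ hypothesis $r\ge 2$ and the degree bound $d\le 4$ together overshoot the right-hand side, producing the desired contradiction.

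The main obstacle is the non-log-canonicity/multiplicity step in the singular setting: the original argument in Lemma \ref{Bs} operates at a smooth base point with $\wsL=\sigma^{\ast}\sL$, whereas here one has to account for the component $M$ appearing in both the boundary of $\wU$ and in the singular fibers of the $\bP^1$-fibration obtained after resolving $\mathrm{Bs}(\wsL)$. The dichotomy between $A_1^{++}$ (no cylinder) and $A_1^+$ (cylinder exists, as guaranteed by Theorem \ref{main(1-2)}) is encoded precisely in the Galois-orbit length: in the $A_1^+$ case a single $k$-rational $\wtp\in M$ reduces the orbit sum to a single term, which is balanced by the $-2\mu^2$ correction in $(\wsL)^2$ and leaves room for the cylinder, whereas in the $A_1^{++}$ case the orbit length $r\ge 2$ exceeds this numerical room, forcing the contradiction.
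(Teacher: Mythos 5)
Your setup is on the right track --- lifting the cylinder to $\wS$, writing $\wsL \sim_{\bQ} a(-K_{\wS}) - bM$ and computing $(\wsL)^2 = da^2 - 2b^2$ is exactly how the paper begins --- but the mechanism you propose for the contradiction does not work, and it bypasses the one observation that actually proves the lemma. The paper's proof is purely arithmetic: since $d \in \{3,4\}$ and $a,b \in \bQ$, the equation $da^2 = 2b^2$ has no nontrivial rational solution (it would make $2$ or $3/2$ a rational square), so $(\wsL)^2 \neq 0$ and hence $\Bs(\wsL) \neq \emptyset$. By the configuration of the linear system (each member meets the complement of the cylinder in a single point at infinity), $\Bs(\wsL)$ is a single $\kc$-point; being the base locus of a linear system defined over $k$, this point is Galois-stable, hence $k$-rational, and it lies on $M$ because $\Bs(\sL) = \{p\}$. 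That already gives $M(k) \neq \emptyset$ and finishes the proof --- no Corti inequality is needed.

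Your Corti-based route has a genuine gap at precisely this point. First, to launch the non-log-canonicity argument you must already have a base point $\wtp \in M$ of $\wsL$; but establishing $\Bs(\wsL) \neq \emptyset$ is the whole content of the lemma, and you never address the possibility $\Bs(\wsL) = \emptyset$ (equivalently $(\wsL)^2 = 0$), which is exactly the case that the irrationality argument above is needed to exclude. Second, the ``orbit sum'' step is incoherent: the base locus of $\wsL$ consists of at most one $\kc$-point, and since $\wsL$ is defined over $k$ that point is Galois-invariant, so the orbit $\{\wtp_1,\dots,\wtp_r\}$ you sum over necessarily has $r=1$. Under the $A_1^{++}$ hypothesis $M$ has no $k$-point, so if a base point on $M$ existed you would already have your contradiction with no intersection-theoretic estimate at all; there is never a situation in which $r \geq 2$ and the Corti bounds ``overshoot.'' Corti-type estimates of the kind you describe do appear in the paper, but only in \S 5 for degrees $d \leq 2$ (where $da^2-2b^2$ can vanish and a finer analysis is required), and even there the existence of the base point on $\wS$ is established first by the same $(\wsL)^2 \neq 0$ argument (Proposition \ref{Bs(3)}), not deduced from Corti's inequality.
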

\begin{proof}
Since $U _{\kc}$ is smooth, $\wU := \sigma ^{-1}(U) \simeq U$ is a cylinder on $\wS$. 
The closures in $\wS$ of fibers of the projection $pr_Z : \wU \simeq \bA ^1_k \times Z \to Z$ yields a linear system, say $\wsL$, on $\wS$. 
By the assumption, the reducible exceptional locus over $\kc$ of the minimal resolution at $p$ consists of only one $(-2)$-curve, say $M$. 
Notice that $M$ is defined over $k$. 
By construction of $\wsL$, we see that a general member of $\wsL _{\kc}$ does not meet any $(-2)$-curve other than $M$ on $\wS _{\kc}$. 
Hence, we can write $\wsL \sim _{\bQ} a(-K_{\wS}) - bM$ for some $a,b \in \bQ$. 
Noting that the degree $d$ of $S$ is equal to $3$ or $4$, we have $(\wsL )^2 = da^2-2b^2 \not= 0$ because of $a,b \in \bQ$. 
Thus, $\Bs (\wsL ) \not= \emptyset$. 
In particular, $\Bs (\wsL )$ consists of exactly one $k$-rational point lying on $M$. 
Thus, we obtain $M(k) \not= \emptyset$, which implies that $p$ is not of type $A_1^{++}$ on $S$. 
\end{proof}
By Lemma \ref{lem4.1}, this completes the proof of the ``only if" part in Theorem \ref{main(1-2)}. 

Next, in order to show Theorem \ref{main(1-1)} and the ``if" part in Theorem \ref{main(1-2)}, we shall assume that $S_{\kc}$ has a singular point, which is $k$-rational, such that it is not of type $A_1^{++}$ on $S$ if $d$ is equal to $3$ or $4$. 
By using Table \ref{list(1-1)}, we can construct the birational morphism $\tau :\wS \to V$ as in \S \S \ref{3-1}. 
Let $N$ be the divisor consisting of the union of all $(-2)$-curves on $\wS _{\kc}$ and let $E$ be the reduced exceptional divisor of $\tau$. 
Then the support $\Supp (N+E)$ corresponds to the dual graph as in Table \ref{list(1-1)} according to the type of $\wS$. 
Noting that the number of $n^{\circ}$ is determined depending on the type of $\wS$ by using Table \ref{list(1-1)}, we shall construct a cylinder $\wU$ on $\wS$ according to the number of $n^{\circ}$: 

{\it $n^{\circ} = 1^{\circ}$:} 
In this case, we see that $V \simeq \bP ^2_k$ and the image of the vertex with a label written $L$ via $\tau$ is a line on $V \simeq \bP ^2_k$, say $L$. 
Put $\wU := \wS \backslash \Supp (N+E)$. Then we see $\wU \simeq V \backslash L \simeq \bA ^2_k$. 

{\it $n^{\circ} = 2^{\circ}$:} 
In this case, we see that $V \simeq \bP ^2_k$ and the images of two vertices with labels written $L_1$ and $L_2$ via $\tau$ are distinct two lines on $V \simeq \bP ^2_k$, say $L_1$ and $L_2$. 
Put $\wU := \wS \backslash \Supp (N+E)$. 
Then $\wU \simeq V \backslash (L_1 \cup L_2) \simeq \bA ^1_k \times C_{(1)}$. 
Furthermore, $\wU \simeq \bA ^1_k \times \bA ^1_{\ast ,k}$ only if $L_1$ and $L_2$ are defined over $k$. 

{\it $n^{\circ} = 3^{\circ}$:} 
In this case, $V$ is a $k$-form of $\bP ^2_{\kc}$. 
Note that $S$ has a singular point of type $A_1^+$ by the configuration of curves in $\wS _{\kc}$ and assumption. 
Hence, $\wS _{\kc}$ has a $k$-rational point lying on $\Supp (N)$, so does $V$. 
Thus, $V \simeq \bP ^2_k$ by Lemma \ref{Severi-Brauer}. 
Meanwhile, the image of the vertex with a label written $Q$ via $\tau$ is an irreducible conic on $V \simeq \bP ^2_k$, say $Q$. 
Notice that $Q$ admits a $k$-rational point, so that $Q \simeq \bP ^1_k$ by Lemma \ref{Severi-Brauer}. 
Let $L$ be a line on $V$ such that $L$ and $Q$ tangentially meet at a general $k$-rational point. 
Noting that $\tau ^{-1}_{\ast}(L)$ is defined over $k$, set $\wU := \wS \backslash \Supp (N+E+\tau ^{-1}_{\ast}(L))$. 
Then $\wU$ is certainly the cylinder on $\wS$ since $\wU \simeq V \backslash (Q \cup L) \simeq \bA ^1_k \times \bA ^1_{\ast ,k}$. 

{\it $n^{\circ} = 4^{\circ}$:} 
In this case, $V$ is a $k$-form of $\bP ^1_{\kc} \times \bP ^1_{\kc}$. 
Note that two curves corresponding to the vertices with labels written $F_1$ and $F_2$ meet transversely at a point. 
Since this point is a $k$-rational point, so is the image, say $x$, via $\tau _{\kc}$. 
Moreover, the images of two vertices with labels written $F_1$ and $F_2$ via $\tau _{\kc}$ are distinct two curves such that they pass through $x$ and are closed fibers of the first and second projection $V_{\kc} \simeq \bP ^1_{\kc} \times \bP ^1_{\kc} \to \bP ^1_{\kc}$, say $F_1$ and $F_2$, respectively. 
Note that the union $F_1 + F_2$ is defined over $k$. 
Set $\wU := \wS \backslash \Supp (N+E)$. 
Then $\wU$ is certainly the cylinder on $\wS$ since $\wU \simeq V \backslash (F_1 \cup F_2) \simeq \bA ^2_k$ (see {\cite[Proposition 12]{DK18}}). 

{\it $n^{\circ} = 5^{\circ}$:} 
In this case, $V$ is a $k$-form of $\bP ^1_{\kc} \times \bP ^1_{\kc}$. 
Namely, $\Pic (V_{\kc}) = \bZ [F_1] \oplus \bZ [F_2]$, where $F_1$ and $F_2$ are general fibers of the first and second projection $V_{\kc} \simeq \bP ^1_{\kc} \times \bP ^1_{\kc} \to \bP ^1_{\kc}$, respectively. 
Meanwhile, the image of the vertex with a label written $C$ via $\tau _{\kc}$ is a geometrically rational curve, say $C$, with $C \sim F_1+F_2$. 
Note that $S$ has a singular point of type $A_1^+$ by the assumption. 
Hence, $\Supp (N)$ admits a $k$-rational point, say $\wtp$, so does $C$. 
By Lemma \ref{cylinder F0}, $V$ contains a cylinder, whose boundary includes $C$. 
We shall take $\wU$ to be the pullback of this cylinder by $\tau$. 
Then $\wU$ is a cylinder on $\wS$ satisfying $\wU \cap \Supp (N+E) = \emptyset$. 

{\it $n^{\circ} = 6^{\circ}$:} 
In this case, we see that $V \simeq \bF _2$ and the images of two vertices with labels written $M$ and $F$ via $\tau$ are the minimal section and a closed fiber of the $\bP ^1$-bundle $V \simeq \bF _2 \to \bP ^1_k$, say $M$ and $F$, respectively. 
Set $\wU := \wS \backslash \Supp (N+E)$. 
Then $\wU$ is certainly the cylinder on $\wS$ since $\wU \simeq V \backslash (M \cup F) \simeq \bA ^2_k$. 

{\it $n^{\circ} = 7^{\circ}$:} 
In this case, $V$ is a $k$-form of $\bF _2$. 
Note that $S_{\kc}$ has a singular point of type $A_2$, which is $k$-rational, by the configuration of curves on $\wS _{\kc}$ and assumption. 
Hence, $S$ has a $k$-rational point lying on $\Supp (N)$, so does $V$. 
Thus, $V \simeq \bF _2$ by using Lemma \ref{Severi-Brauer}. 
Meanwhile, the images of two vertices with labels written $M$ and $F$ are those as in $6^{\circ}$, say $M$ and $F$, moreover, the image of the vertex with a label written $C$ via $\tau$ is a rational curve on $V$, say $C$, with $C \sim M+2F$. 
By Lemma \ref{cylinder F2}, $V$ contains a cylinder, whose boundary includes $M \cup F \cup C$. 
We shall take $\wU$ to be the pullback of this cylinder by $\tau$. 
Then $\wU$ is a cylinder on $\wS$ satisfying $\wU \cap \Supp (N+E) = \emptyset$. 

{\it $n^{\circ} = 8^{\circ}$:} 
In this case, $V$ is a $k$-form of $\bF _2$. 
Note that $S$ has a singular point of type $A_1^+$ by the configuration of curves on $\wS _{\kc}$ and assumption. 
Thus, $V \simeq \bF _2$ by an argument similar to the case of $7^{\circ}$. 
Moreover, we can assume that the image of the vertex with a label written $M$ via $\sigma$ is a singular point of type $A_1^+$. 
Meanwhile, the images of two vertices with labels written $M$ and $C$ via $\tau$ are those as in $7^{\circ}$, say $M$ and $C$, respectively. 
Then $C$ admits a $k$-rational point. 
By Lemma \ref{cylinder F2}, $V$ contains a cylinder, whose boundary includes $M \cup C$. 
We shall take $\wU$ to be the pullback of this cylinder by $\tau$. 
Then $\wU$ is a cylinder on $\wS$ satisfying $\wU \cap \Supp (N+E) = \emptyset$. 

{\it $n^{\circ} = 9^{\circ}$:} 
In this case, $V=\wS$ and $V$ is a $k$-form of $\bF _2$. 
Hence, $\wS$ contains a cylinder $\wU$, so that $\wU \cap \Supp (N) = \emptyset$ (see {\cite[Corollary 4.5]{Saw}}). 

{\it $n^{\circ} = 10^{\circ}$:} 
In this case, $V$ is a $k$-form of $\bF _2$ and  the images of the vertices with labels written $M$ and $F_i$ via $\tau$ are a $k$-form of the minimal section and $k$-forms of closed fibers of the $\bP ^1$-bundle $V_{\kc} \simeq \bF _2 \to \bP ^1_{\kc}$, say $M$ and $F_i$, respectively. 
Then $V$ contains a cylinder, whose boundary includes $M \cup F_1 \cup F_2$ (see {\cite[Corollary 4.5]{Saw}}). 
We shall take $\wU$ to be the pullback of this cylinder by $\tau$. 
Then $\wU$ is a cylinder on $\wS$ satisfying $\wU \cap \Supp (N+E) = \emptyset$. 

For all cases, we obtain a cylinder $\wU $ on $\wS$ such that $\wU \cap \Supp (N) = \emptyset$. 
Therefore, $S$ contains the cylinder $\sigma (\wU ) \simeq \wU$. 
This completes the proof of Theorem \ref{main(1-1)} and the ``if" part in Theorem \ref{main(1-2)}. 
\begin{rem}\label{cylinder rem}
We shall state some remarks on the above argument. 
\begin{enumerate}
\item In these cases $n^{\circ} = 1^{\circ}$, $4^{\circ}$ or $6^{\circ}$, then $S$ always contains the affine plane $\bA ^2_k$ (compare the fact that the Du Val del Pezzo surface over $\bC$ with $\rho _{\bC}(S)=1$ and of degree $d \ge 3$ contains $\bC ^2$ if and only if the pair of the degree and the singularities of this surface is $(8,A_1)$, $(6,A_2+A_1)$, $(5,A_4)$, $(4,D_5)$ or $(3,E_6)$, see {\cite{MZ88}}). 
\item In these cases $n^{\circ} = 9^{\circ}$ or $10^{\circ}$, then $\wS$ need not to have a $k$-rational point, where note that $\wS$ has a $k$-rational point if and only if $V$ is a trivial $k$-form. 
However, $S$ always contains a cylinder, say $U \simeq \bA ^1_k \times Z$ (compare the fact that any smooth del Pezzo surface over $k$ with $\rho _k(S)=1$ containing a cylinder admits $k$-rational points, see {\cite{DK18}}). 
This implies that $Z$ need not be $k$-rational. 
\end{enumerate}
\end{rem}

\section{Divisors on weak del Pezzo surfaces}\label{4}

Let $S$ be a Du Val del Pezzo surface over $k$ and let $\sigma :\wS \to S$ be the minimal resolution over $k$, so that $\wS$ is a weak del Pezzo surface over $k$. 
In this section, we will study the property of some $\bQ$-divisors on $\wS _{\kc}$ and look for some $(-1)$-curves on $\wS _{\kc}$, which are $\bQ$-linearly equivalent to some divisors generated by the anti-canonical divisor and some $(-2)$-curves. 

\subsection{Properties of $\bQ$-divisors composed of $(-2)$-curves}\label{4-1}

In this subsection, let $x$ be a singular point of type $A_n$, $D_5$ or $E_6$ on $S_{\kc}$, which is $k$-rational, let $M_1,\dots ,M_n$ be the irreducible components of the exceptional set on $\wS _{\kc}$ by the minimal resolution at $x$ on $S_{\kc}$. 
Assume that the dual graph of $\sum _{j=1}^nM_j$ is the following graph according to the singularity type of $x$ on $S_{\kc}$: 
\begin{itemize}
\item Type $A_n$: 
\begin{align}\label{graphA}
\begin{split}
\xygraph{
     \circ ([]!{+(0,+.3)} {M_1}) 
 -[r] \circ ([]!{+(0,+.3)} {M_2}) 
 -[r] \cdots 
 -[r] \circ ([]!{+(0,+.3)} {M_n})}
\end{split}
\end{align}
\item Type $D_5$: 
\begin{align}\label{graphD}
\begin{split}
\xygraph{
	\circ ([]!{+(-.4,0)} {M_1})
 - []!{+(1,-.5)} \circ ([]!{+(0,+.3)} {M_3})
( -[r] \circ ([]!{+(0,+.3)} {M_4})
 -[r] \circ ([]!{+(0,+.3)} {M_5}) ,
 - []!{+(-1,-.5)} \circ ([]!{+(-.4,0)} {M_2}) )
}
\end{split}
\end{align}
\item Type $E_6$: 
\begin{align}\label{graphE}
\begin{split}
\xygraph{
	\circ ([]!{+(0,+.3)} {M_1})
 -[r] \circ ([]!{+(0,+.3)} {M_3})
 - []!{+(1,-.5)} \circ ([]!{+(0,+.3)} {M_5})
( -[r] \circ ([]!{+(0,+.3)} {M_6}) ,
 - []!{+(-1,-.5)} \circ ([]!{+(0,+.3)} {M_4}) -[l] \circ ([]!{+(0,+.3)} {M_2}) )
}
\end{split}
\end{align}
\end{itemize}
Let $M$ be a $\bQ$-divisor on $\wS _{\kc}$, which is generated by $M_1,\dots ,M_n$, so that: 
\begin{align*}
M = \sum _{j=1}^nb_jM_j
\end{align*}
for some $b_1,\dots ,b_n \in \bQ$. 
\begin{lem}\label{A-1}
Assume that the singular point $x$ is of type $A_n$ on $S_{\kc}$. 
Let $j_0$ be an integer with $1 \le j_0 \le n$. 
If $(-M \cdot M_j) = \delta _{j_0,j}$, then we have:   
\begin{align*}
M = \frac{n-j_0+1}{n+1} \sum _{j=1}^{j_0}jM_j + \frac{j_0}{n+1} \sum _{j=1}^{n-j_0}jM_{n-j+1}
\end{align*} 
and: 
\begin{align*}
(M)^2 = -\frac{(n-j_0+1)j_0}{n+1}. 
\end{align*}
\end{lem}
\begin{proof}
For all cases, we can easily show because it is enough to directly compute some intersection numbers. 
\end{proof}
In Lemma \ref{A-1}, if $(-M \cdot M_j) = \delta _{j_0,j}$, then the value of $(M)^2$ is explicitly summarized in Table \ref{A-list} depending on the values of $n$ and $j_0$: 
\begin{table}[htbp]
\caption{The value of $(M)^2$ in Lemma \ref{A-1}}\label{A-list} 
\begin{center}
\begin{tabular}{|c||c|c|c|c|c|c|c|c|} \hline

$n \backslash j_0$ & $1$ & $2$ & $3$ & $4$ & $5$ & $6$ & $7$ & $8$ \\ \hline \hline

$1$ & $-\frac{1}{2}$ & & & & & & & \\ \hline
$2$ & $-\frac{2}{3}$ & $-\frac{2}{3}$ & & & & & & \\ \hline
$3$ & $-\frac{3}{4}$ & $-1$ & $-\frac{3}{4}$ & & & & & \\ \hline
$4$ & $-\frac{4}{5}$ & $-\frac{6}{5}$ & $-\frac{6}{5}$ & $-\frac{4}{5}$ & & & & \\ \hline
$5$ & $-\frac{5}{6}$ & $-\frac{4}{3}$ & $-\frac{3}{2}$ & $-\frac{4}{3}$ & $-\frac{5}{6}$ & & & \\ \hline
$6$ & $-\frac{6}{7}$ & $-\frac{10}{7}$ & $-\frac{12}{7}$ & $-\frac{12}{7}$ & $-\frac{10}{7}$ & $-\frac{6}{7}$ & & \\ \hline
$7$ & $-\frac{7}{8}$ & $-\frac{3}{2}$ & $-\frac{15}{8}$ & $-2$ & $-\frac{15}{8}$ & $-\frac{3}{2}$ & $-\frac{7}{8}$ & \\ \hline
$8$ & $-\frac{8}{9}$ & $-\frac{14}{9}$ & $-2$ & $-\frac{20}{9}$ & $-\frac{20}{9}$ & $-2$ & $-\frac{14}{9}$ & $-\frac{8}{9}$ \\ \hline
\end{tabular}
\end{center}
\end{table}
\begin{lem}\label{D-1}
Assume that the singular point $x$ is of type $D_5$ on $S_{\kc}$. 
\begin{enumerate}
\item If $(-M \cdot M_j) = \delta _{1,j} + \delta _{2,j}$, then we have: 
\begin{align*}
M = 2M_1+2M_2+3M_3+2M_4+M_5
\end{align*} 
and $(M)^2 = -4$.
\item If $(-M \cdot M_j) = \delta _{1,j}$, then we have: 
\begin{align*}
M = \frac{5}{4}M_1+\frac{3}{4}M_2+\frac{3}{2}M_3+M_4+\frac{1}{2}M_5
\end{align*}
and $(M)^2 = -\frac{5}{4}$. 
\item If $(-M \cdot M_j) = \delta _{3,j}$, then we have: 
\begin{align*}
M = \frac{3}{2}M_1+\frac{3}{2}M_2+3M_3+2M_4+M_5
\end{align*}
and $(M)^2 = -3$. 
\item If $(-M \cdot M_j) = \delta _{4,j}$, then we have: 
\begin{align*}
M = M_1+M_2+2M_3+2M_4+M_5
\end{align*}
and $(M)^2 = -2$. 
\item If $(-M \cdot M_j) = \delta _{5,j}$, then we have: 
\begin{align*}
M = \frac{1}{2}M_1+\frac{1}{2}M_2+M_3+M_4+M_5
\end{align*}
and $(M)^2 = -1$. 
\end{enumerate}
\end{lem}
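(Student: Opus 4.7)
The plan is a direct linear algebra computation: in each of the five cases the equations $(-M\cdot M_j)=c_j$ form a $5\times 5$ linear system in the unknown coefficients $b_1,\dots,b_5$ of $M=\sum b_jM_j$, and the system is non-singular because the intersection form of the chain of $(-2)$-curves in the $D_5$ graph \eqref{graphD} is negative definite.

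First I would write down the universal formulas once and for all. From the dual graph \eqref{graphD}, $M_1$ and $M_2$ are leaves both attached to $M_3$, and $M_3$--$M_4$--$M_5$ is a chain, so
\begin{align*}
(-M\cdot M_1) &= 2b_1 - b_3,\\
(-M\cdot M_2) &= 2b_2 - b_3,\\
(-M\cdot M_3) &= 2b_3 - b_1 - b_2 - b_4,\\
(-M\cdot M_4) &= 2b_4 - b_3 - b_5,\\
(-M\cdot M_5) &= 2b_5 - b_4.
\end{align*}
For each prescribed right-hand side (the five cases of the lemma) one solves this system by back-substitution along the chain $M_5\to M_4\to M_3$, using the relation $b_1=b_2$ forced by the symmetry of the two leaves whenever the leaves receive equal data. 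For instance, in case (1) the equations $2b_1-b_3=2b_2-b_3=1$ give $b_1=b_2$; the tail yields $b_4=2b_5$ and then $b_3=3b_5$; substitution into the third equation determines $b_5=1$, producing $M=2M_1+2M_2+3M_3+2M_4+M_5$. Cases (2)--(5) are analogous and yield the stated fractions.

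Next I would compute the self-intersection using
\begin{align*}
(M)^2 \;=\; \sum_{j=1}^5 b_j\,(M\cdot M_j) \;=\; -\sum_{j=1}^5 b_j\,(-M\cdot M_j).
\end{align*}
Because the right-hand side $(-M\cdot M_j)$ is supported on a single index (or two indices in case (1)), the sum collapses to a single term (or the sum of two equal terms), giving $(M)^2=-4,-5/4,-3,-2,-1$ respectively, matching the stated values.

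There is no real obstacle in this argument; the work is bookkeeping. The only mild subtlety worth noting is that the coefficients obtained are rational but not integral in cases (2) and (5), which is consistent with the fact that we are solving over $\bQ$ and that the negative of the inverse of the $D_5$-intersection matrix has fractional entries with denominators dividing the discriminant $4=\det|M_i\cdot M_j|$ of the $D_5$ lattice.
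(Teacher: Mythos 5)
Your proposal is correct and matches the paper's approach: the paper's proof of this lemma is simply the remark that one verifies the identities by directly computing intersection numbers, which is exactly the linear-system computation you carry out (and your solved coefficients and the collapse $(M)^2=-\sum_j b_j(-M\cdot M_j)$ all check out).
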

\begin{proof}
For all cases, we can easily show because it is enough to directly compute some intersection numbers. 
\end{proof}
\begin{lem}\label{E-1}
Assume that the singular point $x$ is of type $E_6$ on $S_{\kc}$. 
\begin{enumerate}
\item If $(-M \cdot M_j) = \delta _{1,j} + \delta _{2,j}$, then we have: 
\begin{align*}
M = 2M_1+2M_2+3M_3+3M_4+4M_5+M_6
\end{align*} 
and $(M)^2 = -4$. 
\item If $(-M \cdot M_j) = \delta _{3,j} + \delta _{4,j}$, then we have: 
\begin{align*}
M = 3M_1+3M_2+6M_3+6M_4+8M_5+4M_6
\end{align*} 
and $(M)^2 = -12$. 
\item If $(-M \cdot M_j) = \delta _{1,j}$, then we have: 
\begin{align*}
M = \frac{4}{3}M_1+\frac{2}{3}M_2+\frac{5}{3}M_3+\frac{4}{3}M_4+2M_5+M_6
\end{align*} 
and $(M)^2 = -\frac{4}{3}$. 
\item If $(-M \cdot M_j) = \delta _{3,j}$, then we have: 
\begin{align*}
M = \frac{5}{3}M_1+\frac{4}{3}M_2+\frac{10}{3}M_3+\frac{8}{3}M_4+4M_5+2M_6
\end{align*} 
and $(M)^2 = -\frac{10}{3}$. 
\item If $(-M \cdot M_j) = \delta _{5,j}$, then we have: 
\begin{align*}
M =2M_1+2M_2+4M_3+4M_4+6M_5+3M_6
\end{align*} 
and $(M)^2 = -6$. 
\item If $(-M \cdot M_j) = \delta _{6,j}$, then we have: 
\begin{align*}
M =M_1+M_2+2M_3+2M_4+3M_5+2M_6
\end{align*} 
and $(M)^2 = -2$. 
\end{enumerate}
\end{lem}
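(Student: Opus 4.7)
The plan is to treat this as a purely linear-algebraic exercise on the rank-$6$ sublattice $\bigoplus_{j=1}^{6} \bZ M_j \subset \Pic(\wS_{\kc})$, in complete analogy with Lemmas \ref{A-1} and \ref{D-1}. Writing $M = \sum_{j=1}^{6} b_j M_j$, the intersection numbers $(M \cdot M_i)$ are the six linear forms in the $b_j$ whose coefficient matrix is the negative of the Cartan matrix of $E_6$, read off the dual graph \eqref{graphE}. Since $-C_{E_6}$ is non-degenerate (indeed negative definite, by the standard theory of Du Val singularities), each of the six prescriptions of $(-M \cdot M_j)$ in (1)--(6) has a unique rational solution $(b_1,\dots,b_6)$.

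Concretely, I would first write out the six linear forms
\begin{align*}
(M \cdot M_1) &= -2b_1 + b_3, & (M \cdot M_2) &= -2b_2 + b_4, \\
(M \cdot M_3) &= b_1 - 2b_3 + b_5, & (M \cdot M_4) &= b_2 - 2b_4 + b_5, \\
(M \cdot M_5) &= b_3 + b_4 - 2b_5 + b_6, & (M \cdot M_6) &= b_5 - 2b_6,
\end{align*}
and then in each of the six parts solve the corresponding system by back-substitution, starting from the leaves $M_1, M_2, M_6$ of the dual graph and working toward the branch vertex $M_5$. In the symmetric cases (1) and (2), the involution $M_1 \leftrightarrow M_2$, $M_3 \leftrightarrow M_4$ halves the bookkeeping; in the remaining four cases one simply chases the equations along the three arms emanating from $M_5$. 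The stated formulas are then the unique solutions so obtained.

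The self-intersections are then read off from
\[
(M)^2 \;=\; \sum_{j=1}^{6} b_j (M \cdot M_j),
\]
which in each case collapses to a one- or two-term sum, since the hypothesis forces $(M \cdot M_j)$ to vanish at all but one or two prescribed indices. No genuine obstacle is anticipated: the argument is mechanical and entirely parallel to Lemmas \ref{A-1} and \ref{D-1}, the only care required being the non-chain shape of \eqref{graphE}. Alternatively, one could perform the calculation once and for all by inverting $-C_{E_6}$ and reading off its rows as rescaled versions of the six desired solutions.
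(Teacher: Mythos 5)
Your approach is exactly the paper's: the author's entire proof of this lemma is ``directly compute some intersection numbers,'' and your explicit setup --- reading the six linear forms off the dual graph (\ref{graphE}), noting that the coefficient matrix is the negative of the (nondegenerate, negative definite) Cartan matrix of $E_6$, solving by back-substitution from the leaves $M_1,M_2,M_6$ toward the branch vertex $M_5$, and then computing $(M)^2=\sum_j b_j(M\cdot M_j)$ --- is the correct and intended mechanical verification. Your six linear forms are read off the graph correctly.

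However, your closing assertion that ``the stated formulas are then the unique solutions so obtained'' fails for item (1), and actually running your own algorithm would have revealed this. Solving $-2b_1+b_3=-1$, $-2b_2+b_4=-1$, with the remaining four forms equal to $0$, back-substitution gives $b_5=2b_6$, $2b_3=3b_6$, $b_1=b_6$, hence $-\tfrac{1}{2}b_6=-1$, i.e.
\begin{align*}
M \;=\; 2M_1+2M_2+3M_3+3M_4+4M_5+2M_6,
\end{align*}
whereas the lemma prints the coefficient of $M_6$ as $1$. The printed divisor satisfies $(M\cdot M_5)=-1$ and $(M\cdot M_6)=2$, violating the hypothesis, and has $(M)^2=-6$ rather than the claimed $-4$; the corrected coefficients do give $(M)^2=2\cdot(-1)+2\cdot(-1)=-4$ and agree with Lemma \ref{ADE-1}(6). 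So this is a typo in the statement (which propagates to case (g) of Table \ref{div D}), not a defect of your method --- but a proof that claims to verify each case must catch it, and as written yours endorses an identity that the computation refutes. Items (2)--(6) all check out against your linear forms.
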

\begin{proof}
For all cases, we can easily show because it is enough to directly compute some intersection numbers. 
\end{proof}
\begin{lem}\label{ADE-(-1)}
Assume that $(-K_{\wS})^2 = 1$ and one of the following conditions holds: 
\begin{enumerate}
\item $M = \sum _{j=1}^nM_j$ and the dual graph of $M$ is the same as in (\ref{graphA}). 
\item $n=5$, $M = M_1+M_2+2M_3+2M_4+M_5$ and the dual graph of $M$ is the same as in (\ref{graphD}). 
\item $n=6$, $M = M_1+M_2+2M_3+2M_4+3M_5+2M_6$ and the dual graph of $M$ is the same as in (\ref{graphE}). 
\end{enumerate}
Then there exists a $(-1)$-curve $E$ on $\wS _{\kc}$ such that $E \sim -K_{\wS _{\kc}} - M$ and $E$ is defined over $k$. 
\end{lem}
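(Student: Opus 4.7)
The plan is to apply Lemma \ref{(-1)-curve} to the divisor $D := -K_{\wS} - M$ on $\wS_{\kc}$ to obtain a $(-1)$-curve $E$ with $E \sim D$, and then to upgrade this geometric curve to one defined over $k$ by a uniqueness argument.

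First I would verify the three numerical hypotheses of Lemma \ref{(-1)-curve}. Observe that in each of the three cases $M$ is already one of the divisors analyzed in \S\S\ref{4-1}: for $A_n$ it matches Lemma \ref{A-1}(1) with $j_0 = 1$, for $D_5$ it matches Lemma \ref{D-1}(4), and for $E_6$ it matches Lemma \ref{E-1}(6). In all three cases $(M)^2 = -2$, and the only $M_j$ with $(M \cdot M_j) \ne 0$ satisfy $(-M \cdot M_j) = 1$. Since the $M_j$ are $(-2)$-curves, $(-K_{\wS} \cdot M) = 0$, and combined with $(-K_{\wS})^2 = 1$ this yields $(D)^2 = -1$ and $(-K_{\wS} \cdot D) = 1$. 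For any $(-2)$-curve $M'$ on $\wS_{\kc}$, either $M'$ lies over a different singular point of $S_{\kc}$ and is disjoint from $\Supp(M)$, so that $(D \cdot M') = 0$, or $M' = M_j$ for some $j$, in which case $(D \cdot M_j) = -(M \cdot M_j) \in \{0, 1\}$. Lemma \ref{(-1)-curve} then produces a $(-1)$-curve $E \subset \wS_{\kc}$ with $E \sim D$.

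Next I would show that $D$ is Galois-invariant as a divisor. Since $-K_{\wS}$ is already defined over $k$, it suffices to check this for $M$. As $x$ is $k$-rational, $\Gal$ permutes $\{M_1, \ldots, M_n\}$ through the group of dual-graph automorphisms, which is trivial for $A_1$ and cyclic of order two otherwise. The coefficient vectors $-$ all ones for $A_n$, $(1,1,2,2,1)$ on $(M_1, M_2, M_3, M_4, M_5)$ for $D_5$, and $(1,1,2,2,3,2)$ on $(M_1, \ldots, M_6)$ for $E_6$ $-$ are each preserved by the involution that swaps the two symmetric arms of the diagram, which is all that is needed.

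To finish, I would invoke the uniqueness of the effective representative: any irreducible $(-1)$-curve $E$ on a smooth projective surface satisfies $h^0(\sO(E)) = 1$, so $E$ is the only effective divisor in $|D|$. Any Galois conjugate $E^{\sigma}$ is again an irreducible $(-1)$-curve linearly equivalent to $D^{\sigma} = D$, and therefore coincides with $E$; hence $E$ descends to $k$. I expect the main (mild) obstacle to be a clean treatment of the case distinction when checking the numerical hypotheses, but Lemmas \ref{A-1}, \ref{D-1}, \ref{E-1} essentially reduce this to reading off intersection numbers.
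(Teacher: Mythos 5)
Your proposal is correct and takes essentially the same route as the paper: it verifies the numerical hypotheses of Lemma \ref{(-1)-curve} via Lemmas \ref{A-1}(1), \ref{D-1}(4) and \ref{E-1}(6), and then descends $E$ to $k$ from the Galois-invariance of $-K_{\wS}-M$ together with $(E)^2<0$ (the paper phrases this as the $\Gal$-orbit of $E$ being linearly equivalent to $aE$ with $a=1$ forced by negativity, you phrase it as $E$ being the unique effective member of $|{-K_{\wS}-M}|$ --- the same point). One tiny inaccuracy: in case (1) with $n=1$ one has $(D\cdot M_1)=2$ rather than a value in $\{0,1\}$, but since only $(D\cdot M_j)\ge 0$ is needed the argument is unaffected.
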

\begin{proof}
It is easily seen that $(-K_{\wS _{\kc}} - M )^2 = -1$ and $(-K_{\wS _{\kc}} - M \cdot -K_{\wS _{\kc}}) = 1$. 
Moreover, in the cases of (1), (2) and (3), we obtain $(-K_{\wS _{\kc}} - M \cdot M_j) = \delta _{j,1} + \delta _{j,n}$, $(-K_{\wS _{\kc}} - M \cdot M_j) = \delta _{j,4}$ and $(-K_{\wS _{\kc}} - M \cdot M_j) = \delta _{j,6}$, respectively (cf. Lemmas \ref{A-1}, \ref{D-1}(4) and \ref{E-1}(6)). 
Meanwhile, $(-K_{\wS _{\kc}} - M \cdot M')=0$ for every $(-2)$-curve $M'$ on $\wS _{\kc}$ other than the irreducible components of $M$. 
Hence, by Lemma \ref{(-1)-curve}, there exists a $(-1)$-curve $E$ on $\wS _{\kc}$ such that $E \sim -K_{\wS _{\kc}} - M$. 
In what follows, we shall show that $E$ is defined over $k$. 
Suppose that there exists an irreducible curve $E'$ other than $E$ on $\wS _{\kc}$ such that $E$ and $E'$ lie in the same $\Gal$-orbit. 
Then $E' \sim -K_{\wS _{\kc}} -M$ since the divisor $-K_{\wS _{\kc}} - M$ is defined over $k$ by the configuration of irreducible components of $M$. 
Hence, we have $0 \le (E \cdot E') =(-K_{\wS _{\kc}}-M)^2=-1$, which is absurd. 
Thus, $E$ must be defined over $k$. 
This completes the proof. 
\end{proof}
\begin{lem}\label{ADE-1}
Assume that $b_1,\dots ,b_n \in \bZ$. 
Then the following assertions hold: 
\begin{enumerate}
\item $(M)^2$ is a non-positive even integer. 
\item If the singular point $x$ is of type $A_n$ on $S$ and $b_j \ge 1$ for any $j$, then $(M)^2 \le -2$, moreover, $(M)^2 = -2$ if and only if $b_j=1$ for any $j=1,\dots ,n$. 
\item If the singular point $x$ is of type $A_n$ on $S$ with $n \ge 3$, $b_1,b_n \ge 1$ and $b_j \ge 2$ for any $j=2,\dots ,n-1$, then $(M)^2 \le -4$, moreover, $(M)^2 = -4$ if and only if $b_1,b_n = 1$ and $b_j = 2$ for any $j=2,\dots ,n-1$. 
\item If the singular point $x$ is of type $A_n$ on $S$ with $n \ge 5$, $b_1,b_n \ge 1$, $b_2,b_{n-1} \ge 2$ and $b_j \ge 3$ for any $j=3,\dots ,n-2$, then $(M)^2 \le -6$, moreover, $(M)^2 = -6$ if and only if $b_1,b_n = 1$, $b_2,b_{n-1} = 2$ and $b_j = 3$ for any $j=3,\dots ,n-2$.  
\item If the singular point $x$ is of type $D_5$ on $S_{\kc}$ and $b_1,b_2,b_4 \ge 2$, $b_3 \ge 3$ and $b_5 \ge 1$, then $(M)^2 \le -4$, moreover, $(M)^2 = -4$ if and only if $b_1,b_2,b_4=2$, $b_3=3$ and $b_5 =1$. 
\item If the singular point $x$ is of type $E_6$ on $S_{\kc}$ and $b_1,b_2,b_6 \ge 2$, $b_3,b_4 \ge 3$ and $b_5 \ge 4$, then $(M)^2 \le -4$, moreover, $(M)^2 = -4$ if and only if $b_1,b_2,b_6=2$, $b_3,b_4=3$ and $b_5 =4$. 
\end{enumerate}
\end{lem}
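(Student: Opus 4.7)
The plan is to handle $(1)$ via negative definiteness of the intersection form on the Du Val exceptional set, and then to reduce each of parts $(2)$--$(6)$ to an inequality of the form $(M)^2 \le (M^*)^2$ for a suitable extremal divisor $M^*$.

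For $(1)$, expanding $(M)^2 = -2\sum_j b_j^2 + 2\sum_{i<j} b_i b_j (M_i \cdot M_j)$ immediately shows that $(M)^2$ is an even integer, since $(M_i \cdot M_j) \in \{0,1\}$ for $i \ne j$. Non-positivity follows from the fact that the intersection form restricted to the $\bZ$-span of $M_1,\ldots,M_n$ is negative definite, this being the intersection form on the exceptional divisor of a Du Val singularity contracted by $\sigma$.

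For parts $(2)$--$(6)$ my approach is uniform. In each case I would introduce the \emph{minimal} divisor $M^*$ obtained by assigning to each $M_j$ the smallest coefficient allowed by the hypothesis:
\[
M^* = \begin{cases} M_1 + M_2 + \cdots + M_n & \text{in } (2), \\ M_1 + 2M_2 + \cdots + 2M_{n-1} + M_n & \text{in } (3), \\ M_1 + 2M_2 + 3M_3 + \cdots + 3M_{n-2} + 2M_{n-1} + M_n & \text{in } (4), \\ 2M_1 + 2M_2 + 3M_3 + 2M_4 + M_5 & \text{in } (5), \\ 2M_1 + 2M_2 + 3M_3 + 3M_4 + 4M_5 + 2M_6 & \text{in } (6). \end{cases}
\]
A direct intersection computation (or inspection of the ``$(1)$'' items of Lemmas \ref{A-1}, \ref{D-1}, \ref{E-1}) verifies in each case that $(M^*)^2$ equals the target value $-2,-4,-6,-4,-4$ respectively, and that $(-M^* \cdot M_j) \ge 0$ holds at every vertex $j$, vanishing except at one or two distinguished vertices where it equals $1$.

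Writing $M = M^* + C$ with $C = \sum_j c_j M_j$ and $c_j \in \bZ_{\ge 0}$, one obtains
\[
(M)^2 = (M^*)^2 + 2(M^* \cdot C) + (C)^2.
\]
From $(M^* \cdot M_j) \le 0$ we get $2(M^* \cdot C) \le 0$, while $(C)^2 \le 0$ by $(1)$. Hence $(M)^2 \le (M^*)^2$, and equality forces both $(M^* \cdot C) = 0$ and $(C)^2 = 0$; the latter gives $C = 0$ by negative definiteness, which is exactly the stated characterization of the equality case. The argument is essentially mechanical once the divisors $M^*$ are in hand, so the only real bookkeeping obstacle is just the vertex-by-vertex verification of $(-M^* \cdot M_j) \ge 0$ in parts $(5)$ and $(6)$, where the Dynkin graph is branched.
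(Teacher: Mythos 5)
Your proof is correct, but it takes a genuinely different route from the paper's for the main parts. The paper proves (1) by exhibiting explicit completed‑square identities for $(M)^2$ in each graph type (e.g.\ $(M)^2=-(b_1^2+b_n^2)-\sum_{j=1}^{n-1}(b_j-b_{j+1})^2$ for $A_n$, and analogues (\ref{D-self}), (\ref{E-self})), then reads (2)--(4) off the $A_n$ identity and proves (5)--(6) by a sequential coefficient‑by‑coefficient reduction. You instead obtain non‑positivity in (1) from negative definiteness of the exceptional intersection form (which the paper itself invokes via \cite{Mum61} in Lemma \ref{(3)lem-1}), and treat (2)--(6) uniformly via the perturbation $M=M^*+C$ around the extremal divisor $M^*$, using that $(-M^*\cdot M_j)\ge 0$ at every vertex and that $C$ is effective; equality then forces $C=0$ by negative definiteness. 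This is cleaner and avoids the branched‑graph case analysis in (5)--(6), at the cost of appealing to Mumford's theorem where the paper's identities keep everything self‑contained. Two small remarks. First, in part (6) you should indeed rely on the direct computation rather than on Lemma \ref{E-1}(1) as printed: the extremal divisor you need is $2M_1+2M_2+3M_3+3M_4+4M_5+2M_6$, for which $(M^*)^2=-4$ and $(-M^*\cdot M_j)=\delta_{1,j}+\delta_{2,j}$, whereas Lemma \ref{E-1}(1) lists the coefficient of $M_6$ as $1$ (a divisor which does not satisfy that intersection condition). Second, your parenthetical that $(-M^*\cdot M_j)$ ``equals $1$'' at the distinguished vertices fails in the degenerate chains (e.g.\ $n=3$ in (3), where the central value is $2$), but only the sign matters for your argument, so this is harmless.
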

\begin{proof}
In (1), since any irreducible component of $M$ is a $(-2)$-curve and any coefficient of $M$ is an integer, it is clear that $(M)^2$ is an even number. 
We shall show that $(M)^2$ is non-positive according to the singularity type of $x$ on $S_{\kc}$: 
\begin{itemize}
\item {\it Type $A_n$:} we have: 
\begin{align}\label{A-self}
(M)^2 = -(b_1^2+b_n^2) - \sum _{j=1}^{n-1}(b_j-b_{j+1})^2. 
\end{align}
\item {\it Type $D_5$:} we have: 
\begin{align}\label{D-self}
(M)^2 = -\frac{1}{2}(2b_1-b_3)^2 -\frac{1}{2}(2b_2-b_3)^2 -(b_3-b_4)^2 -(b_4-b_5)^2 -b_5^2. 
\end{align}
\item {\it Type $E_6$:} we have: 
\begin{align}\label{E-self}
\begin{aligned}
(M)^2 = -\frac{1}{2}(2b_1-b_2)^2 &-\frac{1}{2}(2b_2-b_4)^2 \\
&-\frac{1}{6}(3b_3-2b_5)^2  -\frac{1}{6}(3b_4-2b_5)^2 -\frac{1}{6}(2b_5-3b_6)^2-\frac{1}{2}b_6^2. 
\end{aligned}
\end{align}
\end{itemize}
Therefore, for all cases, we see that $(M)^2$ is non-positive. 
This completes the proof of (1). 

In (2), (3) and (4), it is easy to show by (\ref{A-self}). 

In (5), if $b_5 > 1$ then it is easy to see $(M)^2 <-4$ by assumption and (\ref{D-self}). 
Hence, we assume $b_5=1$ in what follows. 
Now, if $b_4 >2$, then we also see $(M)^2 <-4$ by an argument similar to the above. 
Hence, we also assume $b_4=2$ in what follows. 
By sequentially replacing $b_4$ in the argument by $b_3$, $b_2$ and $b_1$, we obtain the assertion. 

In (6), it can be shown by an argument similar to (5) using (\ref{E-self}) instead of (\ref{D-self}). 
\end{proof}

\subsection{Construction of $(-1)$-curves on weak del Pezzo surface}\label{4-2}

In this subsection, let $d$ be the degree of $\wS$, let $x_1,\dots ,x_{r'}$ be all singular points on $S_{\kc}$ let $M_{i,1},\dots ,M_{i,n(i)}$ be all irreducible components of the exceptional set $\sigma ^{-1}(x_i)$ for $i=1,\dots ,r'$. 
Here, we assume that $x_1 \in S_{\kc}$ is of type $A_{n(1)}$ with $n(1) \ge 2$ (resp. either $x_1 \in S_{\kc}$ is of type $A_{n(1)}$ with $n(1) \ge 4$ or of type $D_5$ or $E_6$) if $d=2$ (resp. $d=1$). 
Moreover, letting $r$ be a positive integer with $r \le r'$, we also assume that the dual graph of $\sum _{i=1}^r\sum_{j=1}^{n(i)}M_{i,j}$ is one of the following graphs (\ref{graphAA}), (\ref{graphDA}) and (\ref{graphEA}): 
\begin{align}\label{graphAA}
\begin{split}
\xygraph{
     \circ ([]!{+(0,+.3)} {M_{i,1}}) 
 -[r] \circ ([]!{+(0,+.3)} {M_{i,2}}) 
 -[r] \cdots 
 -[r] \circ ([]!{+(0,+.3)} {M_{i,n(i)}})}\quad \text{for}\ i=1,\dots ,r 
\end{split}
\end{align}
\begin{align}\label{graphDA}
\begin{split}
\xygraph{ !~:{@{}}
	\circ ([]!{+(-.5,0)} {M_{1,1}})
 - []!{+(1,-.5)} \circ ([]!{+(0,+.3)} {M_{1,3}})
( -[r] \circ ([]!{+(0,+.3)} {M_{1,4}})
 -[r] \circ ([]!{+(0,+.3)} {M_{1,5}})
 : [r] \circ ([]!{+(0,+.3)} {M_{2,1}}) 
 -[r] \circ ([]!{+(0,+.3)} {M_{2,2}}) 
 -[r] \cdots 
 -[r] \circ ([]!{+(0,+.3)} {M_{2,n}}) ,
 - []!{+(-1,-.5)} \circ ([]!{+(-.5,0)} {M_{1,2}}) )
}
\end{split}
\end{align}
\begin{align}\label{graphEA}
\begin{split}
\xygraph{ !~:{@{}}
	\circ ([]!{+(0,+.3)} {M_{1,1}})
 -[r] \circ ([]!{+(0,+.3)} {M_{1,3}})
 - []!{+(1,-.5)} \circ ([]!{+(0,+.3)} {M_{1,5}})
( -[r] \circ ([]!{+(0,+.3)} {M_{1,6}})
 : [r] \circ ([]!{+(0,+.3)} {M_{2,1}}) 
 -[r] \circ ([]!{+(0,+.3)} {M_{2,2}}) 
 -[r] \cdots 
 -[r] \circ ([]!{+(0,+.3)} {M_{2,n}}),
 - []!{+(-1,-.5)} \circ ([]!{+(0,+.3)} {M_{1,4}}) -[l] \circ ([]!{+(0,+.3)} {M_{1,2}}) )}
\end{split}
\end{align}
Here, in (\ref{graphAA}), we shall assume $(d,r) = (2,2),(2,1),(1,3),(1,2)$ or $(1,1)$. 
Furthermore, in (\ref{graphDA}) (resp. (\ref{graphEA})), we immediately obtain $r=2$ and $n(1)=5$ (resp. $n(1)=6$) by the configuration of curves, moreover, we shall assume $d=1$ and put $n(2):=n$. 

Let $D$ be the divisor on $\wS _{\kc}$ given by one of the lists in Table \ref{div D} according to the above cases of the dual graph and the pair $(d,r)$. 
\begin{table}[t]
\caption{Divisor $D$ in \S \S \ref{4-2}}\label{div D}
\begin{center}
\begin{tabular}{|c|c|c||l|} \hline
Case & $d$ & $r$ & \multicolumn{1}{|c|}{Irreducible decomposition of $D$} \\ \hline \hline

(a) & $2$ & $2$ & $(-K_{\wS _{\kc}}) - \sum _{i=1}^2 \sum _{j=1}^{n(i)}M_{i,j}$ \\ \hline
(b) & $2$ & $1$ & $(-K_{\wS _{\kc}}) + (M_{1,1} + M_{1,n(1)}) - 2\sum _{j=1}^{n(1)}M_{1,j}$.  \\ \hline
(c) & $1$ & $3$ & $2(-K_{\wS _{\kc}}) - \sum _{i=1}^3 \sum _{j=1}^{n(i)}M_{i,j}$ \\ \hline
(d) & $1$ & $2$ & $2(-K_{\wS _{\kc}}) + (M_{1,1} + M_{1,n(1)}) - 2\sum _{j=1}^{n(1)}M_{1,j} - \sum _{j=1}^{n(2)}M_{2,j}$ \\ \hline
(e) & $1$ & $1$ & $2(-K_{\wS _{\kc}}) + 2(M_{1,1} + M_{1,n(1)}) + (M_{1,2} + M_{1,n(1)-1}) - 3\sum _{j=1}^{n(1)}M_{1,j}$ \\ \hline
(f) & $1$ & $2$ & $2(-K_{\wS _{\kc}}) -(2M_{1,1}+2M_{1,2}+3M_{1,3}+2M_{1,4}+M_{1,5})-\sum_{j=1}^nM_{2,j}$ \\ \hline
(g) & $1$ & $2$ & $2(-K_{\wS _{\kc}}) -(2M_{1,1}+2M_{1,2}+3M_{1,3}+3M_{1,4}+4M_{1,5}+M_{1,6})-\sum_{j=1}^nM_{2,j}$ \\ \hline
\end{tabular}
\end{center}
\end{table}
Here, the dual graph of $\sum _{i=1}^r\sum_{j=1}^{n(i)}M_{i,j}$ is as in (\ref{graphAA}) (resp. (\ref{graphDA}), (\ref{graphEA})) if the case of $D$ is either (a), (b), (c), (d) or (e) (resp. (f), (g)). 
Moreover, we assume $n(1) \ge 4$ (resp. $n(1) \ge 6$) if the case of $D$ is either (b) or (d) (resp. (e)). 

For all cases, we see $(D)^2 =-2$ and $(D \cdot -K_{\wS _{\kc}}) =2$ by construction, moreover, we have the value of $(D 
\cdot M_{i,j})$, which is the following according to the cases: 
\begin{enumerate}
\item[{(a):}] $(D \cdot M_{i,j})= \delta _{j,1} + \delta _{j,n(i)}$ for $i=1,2$. 
\item[{(b):}] $(D \cdot M_{i,j})= \delta _{j,2} + \delta _{j,n(1)-1}$. 
\item[{(c):}] $(D \cdot M_{i,j})= \delta _{j,1} + \delta _{j,n(i)}$ for $i=1,2,3$. 
\item[{(d):}] $(D \cdot M_{i,j})= \delta _{i,1}(\delta _{j,2} + \delta _{j,n(1)-1}) + \delta _{i,2}(\delta _{j,1} + \delta _{j,n(2)})$ for $i=1,2$. 
\item[{(e):}] $(D \cdot M_{i,j})= \delta _{j,3} + \delta _{j,n(1)-2}$. 
\item[{(f):}] $(D \cdot M_{i,j})= \delta _{i,1}(\delta _{j,1} + \delta _{j,2}) + \delta _{i,2}(\delta _{j,1} + \delta _{j,n})$ for $i=1,2$. 
\item[{(g):}] $(D \cdot M_{i,j})= \delta _{i,1}(\delta _{j,1} + \delta _{j,2}) + \delta _{i,2}(\delta _{j,1} + \delta _{j,n})$ for $i=1,2$. 
\end{enumerate}
The purpose of this subsection is that we show Proposition \ref{ADE-prop}. 
For the following two lemmas, we only treat the case {\rm (a)} since other cases can be shown by a similar argument. 
\begin{lem}\label{A-lem(1)}
$\dim |D| \ge 0$. 
\end{lem}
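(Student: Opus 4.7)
My plan is to prove Lemma \ref{A-lem(1)} in case (a) by a direct Riemann--Roch computation combined with Serre duality and the nefness of the anticanonical class of a weak del Pezzo surface.

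First, I would record the numerical invariants of $D$ that are built into the setup: we are told $(D)^2 = -2$ and $(D \cdot -K_{\wS_{\kc}}) = 2$. Since $\wS_{\kc}$ is a rational surface, $\chi(\sO_{\wS_{\kc}}) = 1$. Riemann--Roch on a smooth projective surface then gives
\begin{align*}
\chi(\wS_{\kc}, D) \;=\; \chi(\sO_{\wS_{\kc}}) + \tfrac{1}{2}\bigl((D)^2 - (D \cdot K_{\wS_{\kc}})\bigr) \;=\; 1 + \tfrac{1}{2}\bigl(-2 -(-2)\bigr) \;=\; 1.
\end{align*}

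The key step is then to show that $h^2(\wS_{\kc}, D) = 0$, so that Riemann--Roch immediately yields $h^0(\wS_{\kc}, D) \ge 1$, i.e., $\dim|D| \ge 0$. By Serre duality, $h^2(\wS_{\kc}, D) = h^0(\wS_{\kc}, K_{\wS_{\kc}} - D)$, so it suffices to show that $K_{\wS_{\kc}} - D$ is not linearly equivalent to an effective divisor. For this I would use that $-K_{\wS_{\kc}}$ is nef on the weak del Pezzo surface $\wS_{\kc}$, so any effective divisor has non-negative intersection with $-K_{\wS_{\kc}}$. Computing,
\begin{align*}
\bigl((K_{\wS_{\kc}} - D) \cdot -K_{\wS_{\kc}}\bigr) \;=\; -(-K_{\wS_{\kc}})^2 + (D \cdot K_{\wS_{\kc}}) \;=\; -2 + (-2) \;=\; -4 \;<\; 0,
\end{align*}
so $K_{\wS_{\kc}} - D$ cannot be effective, giving $h^2(\wS_{\kc}, D) = 0$.

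Combining the two steps gives $h^0(\wS_{\kc}, D) \ge \chi(\wS_{\kc}, D) = 1$, hence $\dim |D| \ge 0$, as desired. I do not expect any real obstacle here: the entire argument is forced once one trusts the given numerical data $(D)^2 = -2$ and $(D \cdot -K_{\wS_{\kc}}) = 2$, and the same template (Riemann--Roch plus nefness of $-K_{\wS_{\kc}}$) will transplant verbatim to cases (b)--(g), where one would only need to re-verify the analogous intersection numbers from the irreducible decompositions of $D$ in Table \ref{div D}.
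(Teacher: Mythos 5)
Your proof is correct and follows essentially the same route as the paper: Riemann--Roch gives $\chi(D)=\chi(\sO_{\wS_{\kc}})=1$ (using $(D)^2-(D\cdot K_{\wS_{\kc}})=0$ and rationality of $\wS_{\kc}$), and Serre duality reduces the claim to $h^0(K_{\wS_{\kc}}-D)=0$. The only difference is in that last step, where the paper pairs $K_{\wS_{\kc}}-D$ against the $(-2)$-curve $M_{1,1}$ (getting $-1<0$), whereas you pair against the nef divisor $-K_{\wS_{\kc}}$ (getting $-4<0$); your variant is, if anything, the cleaner one, since a negative intersection with a nef class immediately rules out effectivity, and it transfers verbatim to cases (b)--(g) as you note.
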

\begin{proof}
By the Riemann-Roch theorem and $(D \cdot D -K_{\wS _{\kc}})=0$, we have $\chi (\wS _{\kc} , \sO _{\wS _{\kc}}(D)) = \chi (\wS _{\kc} , \sO _{\wS _{\kc}})$. 
Moreover, by the Serre duality theorem and the rationality of $\wS _{\kc}$, we obtain $h^2(\wS _{\kc} , \sO _{\wS _{\kc}}(D)) = h^0(\wS _{\kc} , \sO _{\wS _{\kc}}(K_{\wS _{\kc}} -D)) = 0$. 
Thus, we have $\dim |D| = h^0(\wS _{\kc} , \sO _{\wS _{\kc}}(D)) -1 \ge \chi (\wS _{\kc} , \sO _{\wS _{\kc}}(D)) -1 = \chi (\wS _{\kc} , \sO _{\wS _{\kc}})-1 = 0$ because of the rationality of $\wS _{\kc}$. 
\end{proof}
By Lemma \ref{A-lem(1)}, there exist two effective divisors $D^{(1)}$ and $D^{(2)}$ on $\wS _{\kc}$ such that $D \sim D^{(1)}+D^{(2)}$
and  each irreducible component $C_1$ (resp. $C_2$) of $D^{(1)}$ (resp. $D^{(2)}$) satisfies $(C_1 \cdot -K_{\wS _{\kc}})>0$ (resp. $(C_2 \cdot -K_{\wS _{\kc}})=0$). 
Note that $D^{(2)}$ is an effective divisor, which consists of $(-2)$-curves on $\wS _{\kc}$, since $\wS _{\kc}$ is a weak del Pezzo surface. 
\begin{lem}\label{A-lem(2)}
$(D^{(1)})^2 \le -2$. 
\end{lem}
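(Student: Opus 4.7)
The plan is to express $(C_1)^2$ via the decomposition $C_1 = D - C_2$ and then control the error terms using the fact that $C_1$ and $C_2$ live in disjoint parts of the configuration of curves.

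First I would record three basic facts. Since $(D\cdot -K_{\wS_{\kc}})=2$ and each component of $C_2$ is a $(-2)$-curve (satisfying $(M_{i,j}\cdot -K_{\wS_{\kc}})=0$), we have $(C_2\cdot -K_{\wS_{\kc}})=0$ and hence $(C_1\cdot -K_{\wS_{\kc}})=2$. Second, writing $C_2=\sum a_{i,j}M_{i,j}$ with $a_{i,j}\in \bZ_{\ge 0}$, the assumption $(D\cdot M_{i,j})=\delta_{j,1}+\delta_{j,n(i)}\ge 0$ (case (a)) gives
\[
(D\cdot C_2)=\sum_{i,j}a_{i,j}(D\cdot M_{i,j})\ge 0.
\]
Third, since every irreducible component of $C_1$ meets $-K_{\wS_{\kc}}$, whereas every component of $C_2$ is a $(-2)$-curve (orthogonal to $-K_{\wS_{\kc}}$), the divisors $C_1$ and $C_2$ share no common component, so effectivity of $C_1$ yields $(C_1\cdot C_2)\ge 0$.

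Next I would compute
\[
(C_1)^2=(D-C_2)^2=(D)^2-2(D\cdot C_2)+(C_2)^2=-2-2(D\cdot C_2)+(C_2)^2.
\]
To bound $(C_2)^2$, write $(C_2)^2=(D-C_1)\cdot C_2=(D\cdot C_2)-(C_1\cdot C_2)\le (D\cdot C_2)$, using the third fact above. Substituting,
\[
(C_1)^2\le -2-2(D\cdot C_2)+(D\cdot C_2)=-2-(D\cdot C_2)\le -2,
\]
the last inequality being the second fact. This completes the argument.

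I do not anticipate a genuine obstacle here: the only subtlety is the verification that $C_1$ and $C_2$ share no components, which follows automatically from the defining property of the decomposition (the components of $C_1$ meet a member of $|-K_{\wS_{\kc}}|$ while those of $C_2$ do not, and $-K_{\wS_{\kc}}$ is nef). The same chain of inequalities goes through unchanged for cases (b)--(g) because each of those also satisfies $(D)^2=-2$, $(D\cdot -K_{\wS_{\kc}})=2$, and $(D\cdot M_{i,j})\ge 0$ on every listed $(-2)$-curve.
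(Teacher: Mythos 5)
Your proof is correct, and it takes a genuinely different route from the paper's. The paper writes $C_1 \sim (-K_{\wS_{\kc}}) - \sum_{i,j} b_{i,j}M_{i,j}$, deduces $b_{i,j}\ge 1$ from the effectivity of $C_2 \sim \sum_{i,j}(b_{i,j}-1)M_{i,j}$, and then invokes the explicit quadratic-form estimate of Lemma \ref{ADE-1}(2) (and, per Remark \ref{rem of A-lem(2)}, the other parts of Lemma \ref{ADE-1} for cases (b)--(g)) to bound $(\sum b_{i,j}M_{i,j})^2$. You instead avoid Lemma \ref{ADE-1} entirely: the chain $(C_1)^2 = (D)^2 - 2(D\cdot C_2) + (C_2)^2 \le -2 - (D\cdot C_2) \le -2$ uses only that $(D)^2=-2$, that $(D\cdot M_{i,j})\ge 0$ on the listed $(-2)$-curves (which the paper records explicitly for all cases (a)--(g)), and that $C_1$ and $C_2$ are effective with no common component (the components of $C_2$ are anticanonically trivial while those of $C_1$ are not). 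Your argument is more uniform --- it treats all seven cases identically with no case analysis --- and it is the more robust of the two in that it uses only the sign data of $(D\cdot M_{i,j})$ rather than the precise lattice computations. As a bonus, tracking the equality case in your inequalities gives $(C_1)^2=-2$ only if $(D\cdot C_2)=(C_1\cdot C_2)=0$, whence $(C_2)^2=0$ and, by negative definiteness of the $(-2)$-curve lattice, $C_2=0$; this recovers Proposition \ref{ADE-prop}(2) directly. What the paper's heavier approach buys is the explicit description of the coefficients $b_{i,j}$ at equality, which feeds into the later classification in Proposition \ref{ADE-prop}(4) and (5).
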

\begin{proof}
By $D^{(1)} \sim D - D^{(2)}$, we can write $D^{(1)} \sim (-K_{\wS _{\kc}}) - \sum _{i=1}^2 \sum _{j=1}^{n(i)} b_{i,j}M_{i,j} -M'$, where $b_{i,j}$ is an integer for $i=1,2$ and $j=1,\dots ,n(i)$, and $M'$ is an effective divisor consisting of $(-2)$-curves $\{ M_{i,j}\} _{r < i \le r',\ 1 \le j \le n(i)}$.  
By $D^{(2)} \sim D -D^{(1)}$, we have $D^{(2)} \sim \sum _{i=1}^2 \sum _{j=1}^{n(i)} (b_{i,j}-1)M_{i,j}+M'$. 
Hence, we see $b_{i,j} \ge 1$ for $i=1,2$ and $j=1,\dots ,n(i)$ since $D^{(2)}$ is an effective divisor. 
Thus, we obtain $(D^{(1)})^2 \le (-K_{\wS _{\kc}})^2+\sum _{i=1}^2\left( \sum _{j=1}^{n(i)}b_{i,j}M_{i,j} \right) ^2\le 2 +2 \cdot (-2) = -2$ by Lemma \ref{ADE-1}(2). 
\end{proof}
\begin{rem}\label{rem of A-lem(2)}
The proof of Lemma \ref{A-lem(2)} uses Lemma \ref{ADE-1}(2). 
On the other hand, in the case of (b) (resp. (d), (e), (f), (g)), we should use Lemma \ref{ADE-1}(3) (resp. both (2) and (3), (4), both (2) and (5), both (2) and (6)) instead of Lemma \ref{ADE-1}(2). 
\end{rem}
The following proposition is the main result in \S \ref{4} and will play an important role in \S \ref{5}: 
\begin{prop}\label{ADE-prop}
With the notation as above, the following assertions hold: 
\begin{enumerate}
\item $D$ satisfies one of the following two conditions: 
\begin{enumerate}
\item[{\rm (A)}:] There exist two $(-1)$-curves $E_1$ and $E_2$ on $\wS _{\kc}$ satisfying $D^{(1)} =E_1+E_2$ and $(E_1 \cdot E_2) = 0$. 
\item[{\rm (B)}:] There exists a $(-1)$-curve $E$ on $\wS _{\kc}$ satisfying $D^{(1)} = 2E$. 
\end{enumerate}
\item If $D$ satisfies the condition {\rm (A)}, then we have $D \sim D^{(1)}$. 
\item We write $D^{(1)} \sim \frac{2}{d}(-K_{\wS _{\kc}}) - \sum _{i=1}^{r'}\sum _{j=1}^{n(i)}b_{i,j}M_{i,j}$, where each $b_{i,j}$ is a non-negative integer. 
Then: 
\begin{itemize}
\item For any $i \le r$, $b_{i,j} \not= 0$ for some $j$. 
\item For each $i$, if $b_{i,j} \not= 0$ for some $j$, then $(E \cdot M_{i,1}+\dots +M_{i,n(i)})=1$. 
\end{itemize}
\item If $D$ is of the case ${\rm (f)}$ or ${\rm (g)}$, then $D$ satisfies the condition ${\rm (A)}$. 
\item Assume that $D$ satisfies {\rm (B)}, and write $E \sim _{\bQ} \frac{1}{d}(-K_{\wS _{\kc}}) - \sum _{i=1}^{r'}M_i$, where $M_i$ is an effective $\bQ$-divisor consisting of $M_{i,1},\dots ,M_{i,n(i)}$. 
Letting $s$ be the number of $\bQ$-divisors $M_i$ as $M_i \not= 0$, then $s \le 2$. 
Hence, if $D$ is of the case ${\rm (c)}$, then $D$ satisfies the condition ${\rm (A)}$. 
\item Assume that $D$ satisfies the condition {\rm (A)}. If any irreducible component $E$ of $D^{(1)}$ is contained in $\bQ [-K_{\wS _{\kc}}] \oplus \left( \bigoplus _{i=1}^{r} \bigoplus _{j=1}^{n(i)} \bQ [M_{i,j}] \right)$, then each $n(i)$ is one of the following according to the case of $D$: 
\begin{itemize}
\item In the case of {\rm (a)}, then $\{ n(1),n(2)\} = \{ 5,2\}$ or $\{ 3,3\}$. 
\item In the case of {\rm (b)}, then $n(1)=7$. 
\item In the case of {\rm (c)}, then $n(1)=5$ and $\{ n(2),n(3)\} = \{ 2,1\}$. 
\item In the case of {\rm (d)}, then $(n(1),n(2)) = (7,1)$, $(5,2)$ or $(4,4)$. 
\item In the case of {\rm (e)}, then $n(1)=8$. 
\item In the case of {\rm (f)}, then $n(2)=3$ (it is clear that $n(1)=5$). 
\item In the case of {\rm (g)}, then $n(2)=2$ (it is clear that $n(1)=6$). 
\end{itemize}
\item Assume that $D$ satisfies the condition {\rm (B)}. If the case of $D$ is {\rm (a)} or {\rm (d)}, i.e., $r=2$, then the irreducible component $E$ of $D^{(1)}$ is contained in $\bQ [-K_{\wS _{\kc}}] \oplus \left( \bigoplus _{i=1}^{2} \bigoplus _{j=1}^{n(i)} \bQ [M_{i,j}] \right)$ and each $n(i)$ is as follows according to the case of $D$: 
\begin{itemize}
\item In the case of {\rm (a)}, then $(n(1),n(2))=(3,1)$. 
\item In the case of {\rm (d)}, then $(n(1),n(2))=(5,1)$. 
\end{itemize}
\item Assume that $D$ satisfies the condition {\rm (B)}. If the case of $D$ is {\rm (b)} or {\rm (e)}, i.e., $r=1$, and the irreducible component $E$ of $D^{(1)}$ is contained in $\bQ [-K_{\wS _{\kc}}] \oplus \left( \bigoplus _{j=1}^{n(1)} \bQ [M_{1,j}] \right)$, then each $n(i)$ is as follows according to the case of $D$: 
\begin{itemize}
\item In the case of {\rm (b)}, then $n(1)=5$. 
\item In the case of {\rm (e)}, then $n(1)=7$. 
\end{itemize}
\end{enumerate}
\end{prop}
\begin{proof}
In (1), note that $D^{(1)}$ consists of at most two irreducible components by $(D^{(1)} \cdot -K_{\wS _{\kc}})=2$. 
Hence, we see that this assertion follows from Lemma \ref{A-lem(2)}. 

In (2), assuming that $D$ satisfies the condition {\rm (A)}, we have $(D^{(1)})^2 = -2$. 
Hence, we see that this assertion follows from Lemma \ref{ADE-1} according to the case of $D$ (cf. Remark \ref{rem of A-lem(2)}). 

In (3), this proof is a bit long and is needed a technical argument. 
Hence, we will present this proof in \S \S \ref{4-3}. 

In what follows, we present the proof under the assumption that (3) is valid. 

In (4), we only treat the case where $D$ is of ${\rm (f)}$, the other cases are similar and left to the reader.
Suppose on the contrary that $D$ satisfies the condition ${\rm (B)}$. 
In other words, there exists a $(-1)$-curve $E$ on $\wS _{\kc}$ such that $D^{(1)} = 2E$. 
Then by (3) there uniquely exists $j' \in \{ 1,\dots ,5\}$ and $j'' \in \{ 1,\dots ,n(2)\}$ such that $(E \cdot M_{1,j}) = \delta _{j,j'}$ and $(E \cdot M_{2,j}) = \delta _{j,j''}$, respectively. 
Since $D^{(1)}$ is a $\bZ$-divisor, $j' \not= 1,2$ by Lemma \ref{D-1}. 
(Note that we shall use Lemma \ref{E-1}, when we treat the case ${\rm (g)}$ instead of the case $(f)$. )
On the other hand, we write $E \sim _{\bQ} (-K_{\wS _{\kc}}) - \sum _{i=1}^{r'}M_i$, where $M_i$ is an effective $\bQ$-divisor consisting of $M_{i,1},\dots ,M_{i,n(i)}$. 
Then $(M_i)^2 \le 0$ by using Lemma \ref{ADE-1}(1), moreover, $(M_1)^2 \le -2$ and $(M_2)^2<0$ by Lemmas \ref{A-1} and \ref{D-1}. 
Hence, we have $-1 = (E)^2  < 1 + (-2) = -1$, which is absurd. 

In (5), by the assumption of $E$, we have: 
\begin{align}\label{(5)-1}
-1 = (E)^2 = \frac{1}{d} + \sum _{i=1}^{r'}(M_i)^2
\end{align}
Here, if $M_i \not= 0$, we see $(M_i)^2 \le -\frac{1}{2}$ by (3) and Lemmas \ref{A-1}, \ref{D-1} and \ref{E-1} (see also Table \ref{A-list}). 
Furthermore, $(M_1)^2 \le -\frac{2}{3}$ by virtue of $n(1)>1$. 
Hence, we have: 
\begin{align}\label{(5)-2}
\frac{1}{d} + \sum _{i=1}^{r'}(M_i)^2 \le \frac{1}{d} - \frac{2}{3} -(s-1)\cdot \frac{1}{2}
\end{align}
Two formulas (\ref{(5)-1}) and (\ref{(5)-2}) imply $s \le \frac{2}{d}+\frac{5}{3}$. 
Since $s$ is an integer, we thus obtain $s \le 2$ and $s \le 3$ if $d=2$ and $d=1$, respectively. 
In what follows, we consider the case $d=1$ and suppose $s=3$. 
Then we may assume $M_i \not= 0$ for $i=1,2,3$. 
Notice that each singularity on $S_{\kc}$ corresponding to $\sum _{j=1}^{n(i)}M_{i,j}$ is of type $A_{n(i)}$ for $i=1,2,3$ by virtue of (1) and (4), moreover, note $n(1) \ge 4$. 
By looking for the triplet $\{(M_1)^2,(M_2)^2,(M_3)^2\}$ with $(M_1)^2+(M_2)^2+(M_3)^2 = -2$ in Table \ref{A-list}, the triplet is only $\{ -\frac{5}{6},-\frac{2}{3},-\frac{1}{2}\}$, moreover, $n(1)=5$ and $\{ n(2),n(3)\}=\{2,1\}$. 
Hence, we may assume: 
\begin{align*}
E \sim _{\bQ} (-K_{\wS _{\kc}}) - \sum _{j=1}^5\frac{6-j}{6}M_{1,j} - \sum _{j=1}^2\frac{3-j}{3}M_{2,j} - \frac{1}{2}M_{3,1}. 
\end{align*}
However, this contradicts that $D^{(1)}=2E$ is a $\bZ$-divisor. 

In (6), assume that $D$ satisfies the condition {\rm (A)} and $E \in \bQ [-K_{\wS _{\kc}}] \oplus \left( \bigoplus _{i=1}^{r} \bigoplus _{j=1}^{n(i)} \bQ [M_{i,j}] \right)$. 
Hence, we can write $E \sim _{\bQ} \frac{1}{2}(-K_{\wS _{\kc}}) -\sum _{i=1}^rM_i$ by noticing $(E \cdot -K_{\wS _{\kc}}) = 1$, where $M_i$ is an effective $\bQ$-divisor generated by $M_{i,1},\dots ,M_{i,n(i)}$ for $i=1,\dots ,r$. 
Then we have: 
\begin{align}\label{(A)}
-1 = (E)^2 = \frac{1}{d} + \sum _{i=1}^{r}(M_i)^2. 
\end{align}
We shall look for the combination of the values of $(M_1)^2,\dots ,(M_{r})^2$ such that the equality (\ref{(A)}) holds, using directly Table \ref{A-list} and Lemmas \ref{D-1} and \ref{E-1} according to each case. 
As an example, we will explain the case of (a). 
Note that the equality (\ref{(A)}) means $(M_1)^2 + (M_2)^2 = -\frac{3}{2}$ by $d=2$ and $r=2$. 
Since $(D \cdot M_{i,j})=\delta _{j,1}+\delta_{j,n(i)}$, we may assume that $(E_1 \cdot M_{i,j})=\delta _{j,1}$ for $i=1,2$ by virtue of (2) and (3). 
Hence, we shall look at the row of $j_0=1$ in Table \ref{A-list}. 
Then it is easy to see that the equality (\ref{(A)}) holds if and only if $\{ (M_1)^2,(M_2)^2\} =\{ -\frac{5}{6},-\frac{2}{3}\}$ or $\{ -\frac{3}{4},-\frac{3}{4}\}$. 
This means that $\{ n(1),n(2)\} =\{ 5,2\}$ or $\{ 3,3\}$ by Table \ref{A-list}. 
The other cases are left to the reader because these can be shown by an argument similar to the above argument. 

In (7), assume that $D$ satisfies the condition {\rm (B)}, in other words, there exists a $(-1)$-curve $E$ on $\wS _{\kc}$ such that $D^{(1)} = 2E$. 
Then $E \in \bQ [-K_{\wS _{\kc}}] \oplus \left( \bigoplus _{i=1}^2\bigoplus _{j=1}^{n(i)}\bQ [M_{i,j}] \right)$ by virtue of (3) and (5). 
In particular, we write $E \sim _{\bQ} (-K_{\wS _{\kc}}) - \sum _{i=1}^2M_i$, where $M_i$ is an effective $\bQ$-divisor generated by $M_{i,1},\dots ,M_{i,n(i)}$. 
Hence, we have: 
\begin{align}\label{(B)}
-1 = (E)^2 = \frac{1}{d} + \sum _{i=1}^2(M_i)^2. 
\end{align}
We shall look for the combination of the values of $(M_1)^2$ and $(M_2)^2$ such that the equality (\ref{(B)}) holds and $|D-2E| \not= \emptyset$, according to each case. 
However, this argument can be shown by an argument similar to (6) and is left to the reader. 

In (8), this proof can be shown by an argument similar to (7) and is left to the reader. 
\end{proof}
Now, we shall present the following example about the application of Proposition \ref{ADE-prop}: 
\begin{eg}\label{ex of prop(4-2)}
With the notation as above, assume further that $d=2$ and $\wS$ is of $A_5+A_2$-type. 
Let $M_{1,1},\dots ,M_{1,5}$, $M_{2,1}$ and $M_{2,2}$ be all $(-2)$-curves on $\wS _{\kc}$ with the configuration as in (\ref{graphAA}). 
Then we shall consider two divisors $D_{1,5}$ and $D_3$ on $\wS _{\kc}$ given by: 
\begin{align*}
D_{1,5} &:= -K_{\wS _{\kc}} - \sum _{j=1}^5M_{1,j} - \sum _{j=1}^2M_{2,j},\\
D_3 &:= -K_{\wS _{\kc}} -M_{1,1}-2(M_{1,2}+M_{1,3}+M_{1,4})-M_{1,5}. 
\end{align*}
Notice that $D_{1,5}$ and $D_3$ are divisors as in ${\rm (a)}$ and ${\rm (b)}$ in Table \ref{div D}, respectively. 
Hence, since $D_{1,5}$ satisfies the condition ${\rm (A)}$ by Proposition \ref{ADE-prop}(1),(2) and (7), there exist two $(-1)$-curves $E_1$ and $E_5$ on $\wS _{\kc}$ such that $D_{1,5} \sim D_{1,5}^{(1)}:=E_1+E_5$. 
Moreover, $D_3$ satisfies the condition either ${\rm (A)}$ or ${\rm (B)}$. 
However, $D_3$ does not satisfy the condition ${\rm (A)}$. 
Indeed, otherwise, since there exist two $(-1)$-curves $E_2$ and $E_4$ on $\wS _{\kc}$ such that $D_3 \sim D_3^{(1)}:=E_2+E_4$. 
Hence, we obtain the compositions $\tau :\wS _{\kc} \to V$ of successive contractions of $E_2+E_4$, that of the images of $M_{1,2}+M_{1,4}$ and finally that of the images of $M_{1,1}+M_{1,5}$ over $\kc$, so that the weighted dual graphs of $\sum _{j=1}^5M_{1,j}+\sum _{j=1}^2M_{2,j}+D_{1,5}^{(1)}+D_3^{(1)}$ and its image via $\tau$ are as follows, where ``$\circ$", ``$\bullet$" and ``$\diamond$" mean a $(-2)$-curve, a $(-1)$-curve and a $0$-curve, respectively: 
\begin{align*}
\xygraph{
\bullet ([]!{+(0,-.3)} {E_1}) - []!{+(0,.5)} \circ ([]!{+(0,+.3)} {M_{1,1}}) -[r] \circ ([]!{+(0,+.3)} {M_{1,2}})
(- []!{+(0,-.5)} \bullet ([]!{+(+.3,0)} {E_2}), -[r] \circ ([]!{+(0,+.3)} {M_{1,3}}) -[r] \circ ([]!{+(0,+.3)} {M_{1,4}})
(- []!{+(0,-.5)} \bullet ([]!{+(-.3,0)} {E_4}), -[r] \circ ([]!{+(0,+.3)} {M_{1,5}}) - []!{+(0,-.5)} \bullet ([]!{+(0,-.3)} {E_5}) - []!{+(-1.5,-.5)} \circ ([]!{+(0,-.3)} {M_{2,2}})
-[l] \circ ([]!{+(0,-.3)} {M_{2,1}}) - []!{+(-1.5,.5)} \circ 
))}
\quad \overset{\tau}{\longrightarrow} \quad
\xygraph{
\diamond ([]!{+(0,-.3)} {\tau_{\ast}(E_1)}) - []!{+(2,.5)} \circ ([]!{+(0,+.3)} {\tau _{\ast}(M_{1,3})}) ([]!{+(0,-.3)} {2}) - []!{+(2,-.5)} \diamond ([]!{+(0,-.3)} {\tau _{\ast}(E_5)}) - []!{+(-1.5,-.5)} \circ ([]!{+(+.3,-.3)} {\tau _{\ast}(M_{2,2})})
-[l] \circ ([]!{+(-.3,-.3)} {\tau _{\ast}(M_{2,1})}) - []!{+(-1.5,.5)} \ 
}
\end{align*}
Then $(-K_V)^2=8$ and $V$ contains two $(-2)$-curves $\tau _{\ast}(M_{2,1})$ and $\tau _{\ast}(M_{2,2})$. 
This is a contradiction. 
Thus, $D_3$ satisfies the condition ${\rm (B)}$. 
In other words, there exists a $(-1)$-curve $E_3$ on $\wS _{\kc}$ such that $D^{(1)}_3=2E_3$. 
In particular, we know $(E_3 \cdot M_{i,j}) = \delta _{1,i} \delta _{3,j}$. 
Since $E_1+E_5$ and $E_3$ are defined over $k$, we see that $\wS _{\kc}$ contains a union defined over $k$ of curves corresponding to the following dual graph, where ``$\circ$"and  ``$\bullet$" mean a $(-2)$-curve and a $(-1)$-curve, respectively: 
\begin{align*}
\xygraph{\bullet ([]!{+(-.4,0)} {E_1}) (
        - []!{+(-.75,.5)} \circ ([]!{+(0,+.3)} {M_{1,1}}) -[r] \circ ([]!{+(0,+.3)} {M_{1,2}}) -[r] \circ ([]!{+(0,+.3)} {M_{1,3}}) (- []!{+(0,-.5)} \bullet ([]!{+(+.3,0)} {E_3}),-[r]  \circ ([]!{+(0,+.3)} {M_{1,4}}) -[r] \circ ([]!{+(0,+.3)} {M_{1,5}}) - []!{+(-.75,-.5)} \bullet ([]!{+(+.4,0)} {E_5})),
        - []!{+(.75,-.5)} \circ ([]!{+(-.5,0)} {M_{2,1}}) -[r] \circ ([]!{+(+.5,0)} {M_{2,2}}) - []!{+(.75,.5)} \circ )}
\end{align*}
\end{eg}

\subsection{Proof of Proposition \ref{ADE-prop}(3)}\label{4-3}

In this subsection, we shall prove Proposition \ref{ADE-prop}(3). 
With the notation as in Proposition \ref{ADE-prop}(3), notice that $E$ is a $(-1)$-curve on $\wS _{\kc}$ by Proposition \ref{ADE-prop}(1). 
Since $D^{(1)} \sim D -D^{(2)}$, we can write $E \sim _{\bQ} \frac{1}{d}(-K_{\wS _{\kc}}) -\sum _{i=1}^{r'}M_i$, where each $M_i$ is an effective $\bQ$-divisor generated by $M_{i,1},\dots ,M_{i,n(i)}$. 
In particular, we note $M_i \not= 0$ for every $i=1,\dots ,r$. 
\begin{lem}\label{(3)lem-1}
Let $D_1$ and $D_2$ be two $\bQ$-divisors on $\wS _{\kc}$ generated by $M_{i,1},\dots ,M_{i,n(i)}$. 
If $(D_1 \cdot M_{i,j})=(D_2 \cdot M_{i,j})$ for any $j=1,\dots ,n(i)$, then $D_1=D_2$. 
\end{lem}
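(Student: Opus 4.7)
The plan is to reduce the statement to the fact that the intersection form on the exceptional divisor of the minimal resolution of a Du Val singularity is negative definite, hence non-degenerate. I would begin by setting $D := D_1 - D_2$, which is again a $\bQ$-divisor generated by the $\{M_{i,j}\}$, and which by hypothesis satisfies $(D \cdot M_{i,j}) = 0$ for every $i,j$. The goal then reduces to showing $D = 0$.

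Next, I would write $D = \sum_{i,j} c_{i,j} M_{i,j}$ with $c_{i,j} \in \bQ$ and consider the intersection matrix $A = \bigl( (M_{i,j} \cdot M_{i',j'}) \bigr)$. Observe that for different indices $i \neq i'$ the curves $M_{i,j}$ and $M_{i',j'}$ do not meet, so $A$ is block-diagonal, with each block corresponding to one of the configurations described in Case 1, Case 2, or Case 3 of \S\S \ref{4-2}. Each such block is (minus) the Cartan matrix of a Dynkin diagram of type $A$, $D$, or $E$, and is therefore negative definite; equivalently, this is the classical result that the intersection form on the resolution of a Du Val singularity is negative definite (so in particular invertible). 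Hence $A$ itself is invertible.

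The condition $(D \cdot M_{i,j}) = 0$ for all $(i,j)$ reads $A \mathbf{c} = \mathbf{0}$, where $\mathbf{c}$ is the vector of coefficients $c_{i,j}$. Invertibility of $A$ forces $\mathbf{c} = \mathbf{0}$, so $D = 0$ and $D_1 = D_2$, as required.

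I do not expect any substantive obstacle: the only nontrivial ingredient is the negative-definiteness of the Dynkin-type intersection matrices, which is standard and can alternatively be verified directly using the explicit coefficients already computed in Lemmas \ref{A-1}, \ref{D-1}, \ref{E-1} (each of which exhibits the inverse of the corresponding block applied to a basis vector).
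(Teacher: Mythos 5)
Your proposal is correct and follows essentially the same route as the paper: reduce to showing that a $\bQ$-combination of the $M_{i,j}$ with zero intersection against each $M_{i,j}$ must vanish, encode this as a linear system with the intersection matrix, and invoke negative definiteness (the paper cites Mumford) to conclude the coefficient vector is zero. The only cosmetic difference is that you subtract $D_1-D_2$ while the paper normalizes $D_2=0$, and you add the (harmless, standard) remark that the matrix is block-diagonal with Dynkin-type blocks.
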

\begin{proof}
It is enough to show when we assume $D_2=0$. 
We shall write $D_1 = \sum _{j=1}^{n(i)} b_{i,j}M_{i,j}$ for some $b_{i,j} \in \bQ$. 
By assumption, we have the following linear simultaneous equation: 
\begin{align*}
\left[ \begin{array}{c} (D_1 \cdot M_{i,1}) \\ \vdots \\ (D_1 \cdot M_{i,n(i)}) \end{array} \right]
=A
\left[ \begin{array}{c} b_{i,1} \\ \vdots \\ b_{i,n(i)} \end{array} \right]
=
\left[ \begin{array}{c} 0 \\ \vdots \\ 0 \end{array} \right]
,
\end{align*}
where $A$ is the intersection matrix with respect to $M_{i,1},\dots ,M_{i,n(i)}$, i.e., $A = ((M_{i,j} \cdot M_{i,j'}))_{1\le j,\, j' \le n(i)}$. 
Since it is well known that intersection matrix is negative definite ({\cite{Mum61}}), we obtain $b_{i,j}=0$ for any $j=1,\dots ,n(i)$, which means $D_1=0$. 
\end{proof}
\begin{lem}\label{(3)lem-2}
$(E \cdot M_{i,1}+ \dots +M_{i,n(i)}) \le 1$ for $i=1,\dots ,r'$. 
\end{lem}
\begin{proof}
Let $\Delta _{i,j}$ be the $\bQ$-divisor, which is a $\bQ$-linear combination of $M_{i,1},\dots ,M_{i,n(i)}$, with $(\Delta _{i,j} \cdot M_{i,j'})=\delta _{j,j'}$ for $j,j'=1,\dots ,n(i)$ on $\wS _{\kc}$. 
Note that such a $\bQ$-divisor $\Delta _{i,j}$ is certainly exists and each coefficient of $\Delta _{i,j}$ can be determined by Lemmas \ref{A-1}, \ref{D-1} and \ref{E-1}. 
In particular, any coefficient of $\Delta _{i,j}$ is less than or equal to $-\frac{1}{2}$. 
Hence, we have $(\Delta _{i,j} \cdot \Delta _{i,j'}) \le -\frac{1}{2}$ for any $j,j'=1,\dots ,n(i)$, where the equal sign holds if and only if $n(i)=1$. 
On the other hand, by Lemma \ref{(3)lem-1}, we obtain: 
\begin{align*}
M_i = (M_i \cdot M_{i,j})\Delta _{i,j} = (E \cdot M_{i,j})\Delta _{i,j}
\end{align*}
for any $j$ by virtue of $(E-M_i \cdot M_{i,j})=0$. 
Meanwhile, by using Lemma \ref{ADE-1}(1), we note $(M_i)^2<0$ if $M_i \not= 0$. 
Suppose that $(E \cdot M_{i,1}+ \dots +M_{i,n(i)}) \ge 2$ for some $i \in \{ 1,\dots ,r'\}$. 
Notice $(E \cdot M_{i,j}) \ge 0$ for any $j$. 
If there exists $j_0$ such that $(E \cdot M_{i,j_0}) \ge 2$, then we have: 
\begin{align*}
(E)^2 \le \frac{1}{d} +(E \cdot M_{i,j_0})^2(\Delta _{i,j_0})^2 \le 1 -2 = -1,
\end{align*}
furthermore, we see $(E)^2<-1$ by virtue of $n(i)\ge 2$ or both $n(i)=1$ and $i>1$. 
This is absurd as $(E)^2=-1$. 
Otherwise, by hypothesis there exist two integers $j_1$ and $j_2$ such that $(E \cdot M_{i,j_1})=(E \cdot M_{i,j_2})=1$. 
By virtue of $n(i) \ge 2$, we have: 
\begin{align*}
(E)^2 \le \frac{1}{d} +(\Delta _{i,j_1})^2 + (\Delta _{i,j_2})^2 + 2(\Delta _{i,j_1} \cdot \Delta _{i,j_2}) < 1 -\frac{1}{2} -\frac{1}{2} -1= -1,
\end{align*}
which is absurd as $(E)^2=-1$. 
\end{proof}
\begin{lem}\label{(3)lem-3}
Assume that $D$ satisfies the condition {\rm (B)}. 
For $i =1,\dots ,r'$, $(E \cdot M_{i,1}+ \dots +M_{i,n(i)}) \ge 1$ if $M_i \not= 0$. 
\end{lem}
\begin{proof}
Suppose $(E \cdot M_{i,1}+ \dots +M_{i,n(i)}) =0$ for some $i \in \{1,\dots ,r'\}$. 
Then we note $(E \cdot M_{i,j})=0$ for any $j=1,\dots ,n(i)$. 
Hence, we obtain $M_i = 0$ by Lemma \ref{(3)lem-1}. 
\end{proof}
Proposition \ref{ADE-prop}(3) can be shown as follows:  
\begin{proof}[Proof of Proposition \ref{ADE-prop}(3)]
The first assertion of Proposition \ref{ADE-prop}(3) follows immediately from the beginning of \S \S \ref{4-3}. 
Hence, we shall prove the second assertion of this in what follows. 
In this proof, we will consider two cases separately: 

In the case that $D$ satisfies the condition {\rm (A)}. 
In other words, there exists a $(-1)$-curve $E'$ on $\wS _{\kc}$ such that $D^{(1)}=E+E'$ and $E \not= E'$. 
Furthermore, we see $D \sim E+E'$ by Proposition \ref{ADE-prop}(2). 
By construction of $D$, we see $(E+E' \cdot M_{i,1}+ \dots +M_{i,n(i)}) = (D \cdot M_{i,1}+ \dots +M_{i,n(i)})=2$. 
Hence, we obtain $(E \cdot M_{i,1}+ \dots +M_{i,n(i)}) = (E' \cdot M_{i,1}+ \dots +M_{i,n(i)})=1$ for $i=1,\dots ,r$ by Lemma \ref{(3)lem-2}. 

In the case that $D$ satisfies the condition {\rm (B)}. 
In other words, we can write $D^{(1)}=2E$. 
Hence, we obtain $(E \cdot M_{i,1}+ \dots +M_{i,n(i)})=1$ for $i=1,\dots ,r$ by Lemmas \ref{(3)lem-2} and \ref{(3)lem-3}. 
\end{proof}

\section{Degree $2$ or lower}\label{5}
In this section, we shall show Theorem \ref{main(1-3)}. 
Unless otherwise stated, let $S$ be a Du Val del Pezzo surface over $k$ with $\rho _k(S)=1$ and of degree $d \le 2$ and let $\sigma : \wS \to S$ be the minimal resolution over $k$, so that $\wS$ is a weak del Pezzo surface of degree $d$ over $k$. 

\subsection{Base locus with respect to cylinder}\label{5-1}

In this subsection, we shall study the base locus with respect to a cylinder on $S$. 

Supposing that $S$ contains a cylinder, say $U \simeq \bA ^1_k \times Z$, where $Z$ is a smooth affine curve defined over $k$, the closures in $S$ of fibers of the projection $pr_Z : U \simeq \bA ^1_k \times Z \to Z$ yields a linear system, say $\sL$, on $S$. 
By Lemma \ref{Bs} we see that $\Bs (\sL )$ consists of exactly one $k$-rational point, say $p$, which is a singular point on $S_{\kc}$. 
On the other hand, $\wU := \sigma ^{-1}(U) \simeq U$ is a cylinder on $\wS$ since $U _{\kc}$ is smooth. 
The closures in $\wS$ of fibers of the projection $pr_Z : \wU \simeq \bA ^1_k \times Z \to Z$ yields a linear system, say $\wsL$, on $\wS$. 
The purpose of this subsection is to show the following proposition: 
\begin{prop}\label{Bs(3)}
With the notation and the assumptions as above, assume further that one of the following conditions holds: 
\begin{enumerate}
\item $d=2$ and the singular point $p$ is of type $A_n$ on $S_{\kc}$ but not of type $A_n^-$ on $S$ for some $n=1,\dots ,6$. 
\item $d=1$ and the singular point $p$ is of type $A_n$ on $S_{\kc}$ but not of type $A_n^-$ on $S$ for some $n=1,\dots ,8$. 
\item $d=1$ and the singular point $p$ is of type $D_5^+$ on $S_{\kc}$. 
\item $d=1$ and the singular point $p$ is of type $E_6^+$ on $S_{\kc}$. 
\end{enumerate}
Then $\Bs (\wsL )$ consists of only one $k$-rational point. 
In particular, $p$ is not of type $A_n^{++}$ on $S$ except for only one case $(d,n)=(2,7)$. 
\end{prop}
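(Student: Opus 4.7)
The plan is to argue by contradiction: assume $\Bs(\wsL) = \emptyset$. Since $\rho_k(S) = 1$ and $-K_{\wS} = \sigma^{\ast}(-K_S)$, one may write
\[
\wsL \sim_{\bQ} a(-K_{\wS}) - M, \qquad M = \sum_{j=1}^{n} b_{j} M_{j}, \quad b_j \in \bQ_{\ge 0},
\]
with $a \in \bQ_{>0}$ and $M$ an effective Galois-invariant $\bQ$-divisor supported on the $(-2)$-curves $M_1, \ldots, M_n$ of $\sigma$ over $p$ (the contribution of $(-2)$-curves over other singularities of $S$ vanishes because $\Bs(\sL) = \{p\}$, so a general member of $\sL$ avoids those points). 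Base-point-freeness forces $(\wsL)^2 = 0$ and $\wsL \cdot M_j \ge 0$ for every $j$, equivalently $(M)^2 = -a^2 d$ and $(-M) \cdot M_j \ge 0$. Moreover, since $\wsL$ is a pencil whose general member is the closure in $\wS$ of an $\bA^1_k$-fiber of $\wU \to Z$, Bertini ensures that member is a smooth irreducible rational curve; adjunction then gives $p_a(\wsL) = 1 - \tfrac{1}{2} ad = 0$, so $ad = 2$. Hence $(a, (M)^2) = (1, -2)$ if $d = 2$ and $(2, -4)$ if $d = 1$; in particular $a \in \bZ$, and integrality of $\wsL \in \Pic(\wS_{\kc})$ then forces $b_j \in \bZ$.

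Second, using Table \ref{A-list} and Lemmas \ref{A-1}, \ref{D-1}, \ref{E-1}, \ref{ADE-1}, I would enumerate all effective integer-coefficient divisors $M$ supported on the exceptional configuration over $p$ satisfying $(M)^2 \in \{-2, -4\}$ and $(-M) \cdot M_j \ge 0$. The non-$A_n^-$ hypothesis in (1)--(2) (together with the hypotheses of (3) and (4)) forces a nontrivial Galois action on the dual graph of the exceptional set over $p$, typically the chain-flip $M_j \leftrightarrow M_{n+1-j}$, so $M$ satisfies $b_j = b_{n+1-j}$; this symmetry combined with the parity/divisibility restrictions in Lemma \ref{ADE-1} reduces the list of candidate $M$ to a short list in each case.

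For each surviving candidate $(a, M)$, I would derive a contradiction by applying Corti's inequality (Proposition \ref{Corti}) at a $k$-rational transversal intersection $\widetilde{p} \in M_{j_0} \cap M_{j_0+1}$ on $\wS_{\kc}$, with $\Delta_i = M_{j_0 + i - 1}$, $a_i = 1 - b_{j_0 + i - 1}/a$, and $\mu = a$; the $\bQ$-equivalence
\[
-K_{\wS} \sim_{\bQ} (1 - a_1) M_{j_0} + (1 - a_2) M_{j_0+1} + \tfrac{1}{a}\wsL + R,
\]
(with $R$ supported away from $\widetilde{p}$) together with a direct local multiplicity check would show the pair fails to be log canonical at $\widetilde{p}$, and Corti's bound then contradicts the intersection number of two general members of $\wsL_{\kc}$ at $\widetilde{p}$ dictated by the already-pinned-down shape of $M$. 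This mirrors the smooth-point case handled at the end of Lemma \ref{Bs}. When Corti's inequality is not sharp enough for a specific candidate, I would instead invoke Proposition \ref{ADE-prop} to produce a $(-1)$-curve on $\wS_{\kc}$ whose intersection with $\wsL$ is incompatible with the base-point-free hypothesis.

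The main obstacle is precisely this final step: the singularity types in (1)--(4) split into many subcases and the enumeration of candidate $M$'s must be carried out carefully; the $A_1$ case at $d = 2$ and the $D_5$, $E_6$ cases at $d = 1$ are particularly tight and demand the most delicate multiplicity tracking. Once $\Bs(\wsL) \ne \emptyset$ is established, the \emph{In particular} clause is immediate: $\Bs(\wsL)$ is a $k$-defined nonempty finite subscheme of $N = \sigma^{-1}(p)$, hence carries a $k$-rational point, so $k(N) \ne \emptyset$, and by Definition \ref{def:duval} $p$ cannot be of $A_n^{++}$-type on $S$.
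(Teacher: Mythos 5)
Your setup is sound and close to the paper's: you correctly reduce to showing $(\wsL)^2 \neq 0$, and the normalization $\wsL \sim_{\bQ} a(-K_{\wS}) - M$ with $M$ effective and supported over $p$ is the same as the paper's (the paper keeps the two scalars free rather than pinning $a$ down by adjunction, but that difference is harmless). The first divergence is that you under-constrain $M$. The paper's key geometric observation is that a general member $\wL$, being the closure of an $\bA^1$-fiber of the cylinder, meets the boundary of $\wU$ in a single point; hence, if $\Bs(\wsL)=\emptyset$, it meets exactly one exceptional curve $M_{i_0}$ (had it met two, their intersection point would already be a base point and the proof would be over). This forces $(-M\cdot M_j)=b\,\delta_{j,i_0}$, which via Lemmas \ref{A-1}--\ref{E-1} determines $M$ and $(M)^2$ on the nose, after which $(\wsL)^2=da^2+(M)^2b^2=0$ fails for pure rationality reasons ($2i_0/d$, resp.\ $-(M)^2$, not a square) in all but three configurations. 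Your weaker constraints ($(-M)\cdot M_j\ge 0$, Galois symmetry, integrality) leave a larger, less tractable candidate list and, for $n=1$, no symmetry constraint at all.

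The genuine gap is in your closing step. In the surviving configurations --- $(d,i_0)=(2,1)$ (i.e.\ $A_1$ with $d=2$), $(d,i_0)=(1,2)$ (i.e.\ $A_3$ with $d=1$), and $i_0=5$ for $D_5$ with $d=1$ --- the coefficients $b_j/a$ are all at most $1$, so the log pair you propose is log canonical at any transversal node $M_{j_0}\cap M_{j_0+1}$ and Corti's inequality yields nothing; in the $A_1$ case there is not even such a node to work at. Your fallback via Proposition \ref{ADE-prop} also fails: the $(-1)$-curves it produces satisfy $(E\cdot M_{i,1}+\dots+M_{i,n(i)})=1$, hence $(\wsL\cdot E)=0$, which is perfectly compatible with base-point-freeness. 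The paper closes exactly these cases by a different mechanism: Lemmas \ref{0-curve(1)} and \ref{0-curve(2)} exhibit a curve $C$ that forms a cycle with part of the exceptional configuration and satisfies $(\wsL\cdot C)=0$; such a $C$ must lie in the boundary of the cylinder $\wU$, so the boundary contains a cycle, contradicting Lemma \ref{no cycle}. Without this idea (or a genuine substitute for it) your argument does not close.
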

In what follows, we shall prove Proposition \ref{Bs(3)}. 
Let $M_1,\dots ,M_n$ be all irreducible components of the exceptional set over $\kc$ of $\sigma _{\kc}$ at $p$ such that the dual graph of $M_1,\dots ,M_n$ is that as in (\ref{graphA}), (\ref{graphD}) or (\ref{graphE}) according to the singularity type of $p$ on $S_{\kc}$. 
Now, the following two lemmas hold: 
\begin{lem}\label{0-curve(1)}
Assume that the singular point $p$ is of type $A_n$ on $S_{\kc}$. Then: 
\begin{enumerate}
\item If $d=2$, then there exists a curve $C$ on $\wS$ such that $C \sim (-K_{\wS}) - (M_1+\dots +M_n)$. 
Hence, $M_1+ \cdots + M_n +C$ is a cycle. 
\item If $d=1$ and $n \ge 3$, then there exists a curve $C$ on $\wS$ such that $C \sim 2(-K_{\wS}) - 2(M_1+\dots +M_n)+(M_1+M_n)$. 
Hence, $M_2+ \cdots + M_{n-1} +C$ is a cycle. 
\end{enumerate}
\end{lem}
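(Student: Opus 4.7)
The strategy is to realize $C$ as an element of the linear system $|D|$ for the stated divisor class $D$, and to establish non-emptiness of $|D|$ via Riemann--Roch and Serre duality on the weak del Pezzo surface $\wS _{\kc}$.

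For part (1), I would set $D := -K_{\wS} - (M_1 + \cdots + M_n)$. A routine intersection computation using $(-K_{\wS} \cdot M_j) = 0$ and the $A_n$ incidence data yields $(D)^2 = 0$, $(D \cdot (-K_{\wS})) = 2$, and $(D \cdot M_j) = \delta _{j,1} + \delta _{j,n}$ for $n \ge 2$ (respectively $(D \cdot M_1) = 2$ for $n = 1$). Riemann--Roch on the rational surface $\wS _{\kc}$ then gives $\chi (\sO _{\wS _{\kc}}(D)) = 1 + \tfrac{1}{2}(D \cdot (D - K_{\wS})) = 2$, and Serre duality together with $((K_{\wS} - D) \cdot (-K_{\wS})) = -4 < 0$ and the nef-bigness of $-K_{\wS}$ give $h^2(D) = h^0(K_{\wS} - D) = 0$. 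Hence $h^0(D) \ge 2$, the system $|D|$ is at least a pencil, and one may take $C$ to be any effective member.

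For the cycle property, the intersection numbers do all the work: $(C \cdot M_1) \ge 1$ and $(C \cdot M_n) \ge 1$ force $C$ to meet both endpoints of the chain $M_1 - M_2 - \cdots - M_n$, so the dual graph of $\Supp (M_1 + \cdots + M_n + C)$ has a cycle formed by the chain closed through $C$. As a sanity check, $M_1 + \cdots + M_n + C \sim -K_{\wS}$ has arithmetic genus $1$, compatible with the presence of a single cycle. The argument for part (2) proceeds in parallel with $D := 2(-K_{\wS}) - 2(M_1 + \cdots + M_n) + (M_1 + M_n)$: the analogous computation gives $(D)^2 = 0$, $(D \cdot (-K_{\wS})) = 2$, $(D \cdot M_1) = (D \cdot M_n) = 0$, and $(D \cdot M_j) = \delta _{j,2} + \delta _{j,n-1}$ for $n \ge 4$ (with $(D \cdot M_2) = 2$ in the degenerate case $n = 3$). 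Riemann--Roch and Serre duality again yield $h^0(D) \ge 2$, and the cycle is now carried by $M_2 - \cdots - M_{n-1}$ closed through $C$, again consistent with $p_a (M_2 + \cdots + M_{n-1} + C) = 1$.

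The main subtle point I expect is guaranteeing that $C$ can be chosen so that none of the $M_j$'s appears as a component of $C$; otherwise the intersection number $(C \cdot M_j)$ would no longer record the geometric meeting that underlies the cycle traversal. To rule this out, I would verify that for every $j$ in the relevant range the class $D - M_j$ has $p_a (D - M_j) < 0$, hence is not effective, so $M_j$ is not a fixed component of $|D|$. Choosing $C$ to be a general member of the moving part of $|D|$ then legitimizes the cycle argument in both parts.
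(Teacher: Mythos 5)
Your proof is essentially the paper's own: the paper takes the same divisor $D$, computes $(D)^2 = 0$ and $(D \cdot -K_{\wS _{\kc}}) = 2$, obtains $\dim |D| \ge 1$ by exactly the Riemann--Roch and Serre-duality computation you describe (it cites its Lemma \ref{A-lem(1)} for this), and reads the cycle off the intersection numbers $(C \cdot M_j) = \delta _{j,1} + \delta _{j,n}$; part (2) is dispatched as ``similar''. The one point where you go beyond the paper --- ruling out the $M_j$ as components of $C$ --- is a legitimate concern that the paper silently skips, but your proposed criterion is not sound: $p_a(D - M_j) < 0$ does not imply that the class $D - M_j$ is non-effective, since reducible or non-reduced effective divisors can have arbitrarily negative arithmetic genus (already a disjoint pair of $(-1)$-curves has $p_a = -1$). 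This does not damage the main line of the argument, which coincides with the paper's, but that auxiliary step would need a different justification, for instance subtracting a putative fixed component from $D$ and deriving a contradiction from the resulting intersection numbers, in the spirit of the decomposition $D \sim C_1 + C_2$ used in \S \ref{4}.
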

\begin{proof}
In (1), let $D$ be the divisor on $\wS _{\kc}$ defined by $D := (-K_{\wS _{\kc}}) - (M_1+\dots +M_n)$. 
By construction, we have $(D)^2 =0$ and $(D \cdot -K_{\wS _{\kc}}) =2$. 
Hence, we see $\dim |D| \ge 1$ by the Riemann-Roch theorem. 
Thus, there exists a curve $C$ on $\wS _{\kc}$ such that $C \sim D$ and $C$ is defined over $k$. 
Namely, $(C \cdot M_j) = (D \cdot M_j) = \delta _{j,1} + \delta _{j,n}$. 
This completes the proof of (1). 

In (2), it can be shown by the argument similar to (1). 
\end{proof}
\begin{lem}\label{0-curve(2)}
Assume that $d=1$ and the singular point $p$ is of type $D_5$ on $S_{\kc}$. 
Then there exists a curve $C$ on $\wS _{\kc}$ such that $C \sim 2(-K_{\wS _{\kc}}) -(2M_1+2M_2+3M_3+2M_4+M_5)$. 
Hence, $M_1+M_2+M_3+C$ is a cycle. 
\end{lem}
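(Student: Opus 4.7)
My approach will parallel the proof of Lemma \ref{0-curve(1)}: I will set $D := 2(-K_{\wS_{\kc}}) - (2M_1+2M_2+3M_3+2M_4+M_5)$, use Riemann--Roch on the rational weak del Pezzo surface $\wS_{\kc}$ to exhibit an effective divisor $C \sim D$, and then read off the cycle structure from the intersection numbers $(C \cdot M_j)$.

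First I would record the numerical data of $D$. Writing $M := 2M_1+2M_2+3M_3+2M_4+M_5$, Lemma \ref{D-1}(1) gives $(M)^2 = -4$ and $(M \cdot M_j) = -(\delta_{1,j}+\delta_{2,j})$. Using $(-K_{\wS_{\kc}}) \cdot M_j = 0$ for all $j$, these identities immediately yield $(D)^2 = 4 \cdot 1 + (-4) - 0 = 0$, $(D \cdot -K_{\wS_{\kc}}) = 2$, and $(D \cdot M_j) = \delta_{1,j}+\delta_{2,j}$.

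Next I would establish $\dim|D| \ge 1$ by the same argument used in Lemma \ref{A-lem(1)}. By Riemann--Roch together with the rationality of $\wS_{\kc}$ (giving $\chi(\sO_{\wS_{\kc}}) = 1$), one has $\chi(\sO_{\wS_{\kc}}(D)) = 1 + \frac{1}{2}(D^2 - D \cdot K_{\wS_{\kc}}) = 2$. For the $h^2$-term, Serre duality reduces it to $h^0(\sO_{\wS_{\kc}}(K_{\wS_{\kc}} - D))$. Since $(K_{\wS_{\kc}} - D) \cdot (-K_{\wS_{\kc}}) = -1 - 2 = -3 < 0$ while $-K_{\wS_{\kc}}$ is nef, no effective divisor can lie in $|K_{\wS_{\kc}} - D|$; hence $h^2(\sO_{\wS_{\kc}}(D)) = 0$ and an effective $C \sim D$ exists.

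Finally, from the intersection numbers above, $C$ meets each of $M_1$ and $M_2$ transversely in a single point and is disjoint from $M_3$, $M_4$, $M_5$. Combined with the two edges $M_1 - M_3$ and $M_2 - M_3$ from the $D_5$ dual graph in (\ref{graphD}), this closes up the $4$-cycle $M_1 \to M_3 \to M_2 \to C \to M_1$ in the support of $M_1+M_2+M_3+C$, as required. I do not anticipate a significant obstacle: the argument is entirely parallel to Lemma \ref{0-curve(1)}(2), and the only ingredient specific to the weak del Pezzo setting is the nefness of $-K_{\wS_{\kc}}$, which is precisely what kills $h^2(\sO_{\wS_{\kc}}(D))$.
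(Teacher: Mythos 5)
Your proposal is correct and follows essentially the same route as the paper, which simply remarks that Lemma \ref{0-curve(2)} is proved by the argument of Lemma \ref{0-curve(1)}: compute $(D)^2=0$, $(D\cdot -K_{\wS_{\kc}})=2$ and $(D\cdot M_j)=\delta_{1,j}+\delta_{2,j}$ via Lemma \ref{D-1}(1), obtain an effective member of $|D|$ by Riemann--Roch (with $h^2=0$ from nefness of $-K_{\wS_{\kc}}$ as in Lemma \ref{A-lem(1)}), and read the $4$-cycle $M_1$--$M_3$--$M_2$--$C$--$M_1$ off the intersection numbers.
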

\begin{proof}
This lemma can be shown by the argument similar to Lemma \ref{0-curve(1)}. 
\end{proof}
\begin{proof}[Proof of Proposition \ref{Bs(3)}]
Let $\wL$ be a general member of $\wsL$. 
Since $\Bs ( \sL ) = \{ p\}$, we see that $\wL$ meets $M_i$ for some $1 \le i \le n$. 
By construction of $\wsL$, if $\wL$ meets two distinct irreducible components $M_i$ and $M_j$, then $\Bs (\wsL ) = M_i \cap M_j \not= \emptyset$. 
In what follows, we thus assume that $\wL$ meets exactly one irreducible component, say $M_{i_0}$. 
Notice that $M_{i_0}$ is defined over $k$. 
Let $a$ and $b$ be two positive rational numbers such that $da = ( \wsL \cdot -K_{\wS})$ and $2b = (\wsL \cdot M_{i_0})$. 

Now, we notice that $\Bs (\wsL ) \not= \emptyset$ if $(\wsL )^2 \not= 0$. 
Hence, we shall show $(\wsL )^2 \not= 0$ according to the conditions (1)--(4) in Proposition \ref{Bs(3)} in what follows: 

In (1) or (2), by the configuration of a dual graph of  $M_1+\dots +M_n$, we see that $n$ is odd and $i_0$ is equal to $\mn$. 
In particular, $M_{i_0}$ corresponds to the central vertex in this dual graph. 
Thus, by Lemma \ref{A-1}, we have $\wsL \sim _{\bQ} a(-K_{\wS})- bM$, where $M = \sum _{j=1}^{i_0 -1}j(M_j+M_{n+j-1}) + i_0 M_{i_0} $. 
Moreover, we obtain $(\wsL )^2 = da^2-2 i_0b^2$. 
Suppose that $(\wsL )^2 = 0$. 
Note that $i_0 \le 4$ because of $n \le 8$. 
Hence, we obtain $(d,i_0)=(1,2)$ or $(2,1)$ since $a,b$ are rational numbers. 
In particular, we have $a=(3-d)b$. 
However, the curve $C$ on $\wS _{\kc}$, which is that as in Lemma \ref{0-curve(1)}, then satisfies $(\wsL \cdot C)=0$. 
This implies that the boundary of $\wU _{\kc}$ contains a cycle, which contradicts Lemma \ref{no cycle}. 
Therefore, we see $(\wsL )^2 \not= 0$. 

In (3), since $p$ is of type $D_5^+$, note that $M_1$ and $M_2$ lie in the same $\Gal$-orbit, on the other hand, $M_3,M_4$ and $M_5$ are defined over $k$, respectively. 
Hence, $i_0$ is equal to $3$, $4$ or $5$. 
Thus, by Lemma \ref{D-1}, we have $\wsL \sim _{\bQ} a(-K_{\wS})- bM$ and $(\wsL )^2 = a^2 + (M)^2b^2$, where $M$ is that as in Lemma \ref{D-1}(3), (4) or (5) according to the number of $i_0$. 
In particular, $(M)^2=-3$, $-2$ and $-1$ if $i_0=3$, $4$ and $5$, respectively. 
Suppose that $(\wsL )^2 = 0$. 
Then $(M)^2=-1$ since $a,b$ are rational numbers. 
Hence, we see $i_0=5$ and $a=b$ by Lemma \ref{D-1}(3), (4) and (5). 
However, the curve $C$ on $\wS$, which is that as in Lemma \ref{0-curve(2)}(1), then satisfies $(\wsL \cdot C)=0$. 
This implies that the boundary of $\wU _{\kc}$ contains a cycle, which contradicts Lemma \ref{no cycle}. 
Therefore, we see $(\wsL )^2 \not= 0$. 

In (4), since $p$ is of type $E_6^+$, note that $M_1$ and $M_2$ (resp. $M_3$ and $M_4$) lie in the same $\Gal$-orbit, on the other hand, $M_5$ and $M_6$ are defined over $k$, respectively. 
Hence, $i_0$ is equal to $5$ or $6$. 
Thus, by Lemma \ref{E-1}, we have $\wsL \sim _{\bQ} a(-K_{\wS})- bM$ and $(\wsL )^2 = a^2 + (M)^2b^2$, where $M$ is that as in Lemma \ref{E-1}(5) or (6) according to the number of $i_0$. 
In particular, $(M)^2=-6$ and $-2$ if $i_0=5$ and $6$, respectively. 
Thus, we see $(\wsL )^2 \not= 0$ since $a,b$ are rational numbers. 
\end{proof}

\subsection{Proof of Theorem \ref{main(1-3)}(1)--(3)}\label{5-2}
In this subsection, we shall show Theorem \ref{main(1-3)}(1)--(3). 
In order to prove Theorem \ref{main(1-3)}(1) and (2), we will use Table \ref{list(2)}. 
In fact, in this proof, we mainly consider the two morphisms over $k$. 
One is the minimal resolution $\sigma :\wS \to S$ over $k$ and the other is the contraction $\tau : \wS \to W_{d'}$ over $k$ of the union of $(-1)$-curves, which can be determined by the dual graph in Table \ref{list(2)} according to the type of $\wS$ (the detailed configuration of $\tau$ will be treated in the following Lemmas \ref{construct cylinder(1)}, \ref{construct cylinder(2)} and \ref{lem(5-2-2)-1}). 
By construction of $\tau$, we will know that $W_{d'}$ contains a cylinder, whose boundary includes the exceptional set of $\tau$. 
Hence, the pullback of this cylinder by $\tau$ is also a cylinder in $\wS$, moreover, we will see that this boundary includes the union of all $(-2)$-curves, which is clearly defined over $k$. 
Thus, the image of this cylinder via $\sigma$ is a cylinder in $S$, namely, we see that $S$ certainly contains a cylinder. 

In what follows, we shall state the notation in Table \ref{list(2)}. 
Letting $\tau :\wS \to W_{d'}$ be the morphism as above depending on the type of $\wS _{\kc}$, we then see that $W_{d'}$ is a weak del Pezzo surface. 
Then ``$d'$" and ``Type of $W_{d'}$" in Table \ref{list(2)} mean the degree and the type of $W_{d'}$ according to the type of $\wS$, respectively. 
On the other hand, ``$\rho _k(\wS )$" in Table \ref{list(2)} means the Picard number of $\wS$ according to the type of $\wS$. 
Notice that this can be obtained by the Picard number of $W_{d'}$, which is explicitly given (see Table \ref{list(1-1)}), and the construction of $\tau$. 
Moreover, ``Dual graph" in Table \ref{list(2)} means a dual graph on $\wS _{\kc}$ according to the type of $\wS$, where ``$\circ$" means a $(-2)$-curve, ``$\bullet$" means a $(-1)$-curve and ``$\xygraph{(-[]!{+(.5,0)} \bigodot ([]!{+(0,0)} {})}$" means either ``$\xygraph{-[]!{+(.5,0)} \bullet ([]!{+(0,0)} {})-[]!{+(.5,0)} \circ ([]!{+(0,0)} {})}$" or ``$\xygraph{((-[]!{+(.5,.15)} \bullet ([]!{+(0,0)} {}),-[]!{+(.5,-.15)} \bullet ([]!{+(0,0)} {})}$", which can be determined according to the type of $\wS$. 
Note that the union of curves on $\wS _{\kc}$ corresponding to all vertices on this dual graph is certainly defined over $k$ by the configuration of $W_{d'}$ (see Table \ref{list(1-1)}) and the construction of $\tau$. 
\begin{table}[htbp]
\caption{Types of $\wS$ in Theorem \ref{main(1-3)}(1) and (2)}\label{list(2)}
\begin{center}
 \begin{tabular}{|c|c|c|c|c|c|}
 \hline
$d$ & Type of $\wS$ & $\rho _k(\wS )$ & Dual graph & $d'$ & Type of $W_{d'}$ \\ \hline \hline

$2$ & $D_4$ & $3$, $4$ or $5$ & \multirow{4}{*}{
$\xygraph{
\circ ([]!{+(.3,0)})
 (-[]!{+(-.5,.5)} \circ ([]!{+(+.3,0)} {}) -[]!{+(-.5,0)} \bigodot ([]!{+(+.3,0)} {}), -[]!{+(-.5,0)} \circ ([]!{+(+.3,0)} {}) -[]!{+(-.5,0)} \bigodot ([]!{+(+.3,0)} {}), -[]!{+(-.5,-.5)} \circ ([]!{+(+.3,0)} {}) -[]!{+(-.5,0)} \bigodot ([]!{+(+.3,0)} {}))
}$}
 & \multirow{4}{*}{8} & \multirow{4}{*}{$A_1$} \\ \cline{1-3}

$2$ & $D_4+A_1$ & $5$ or $6$ & 
 & & \\ \cline{1-3}

$2$ & $D_4+2A_1$ & $5$ or $7$ & 
 & & \\ \cline{1-3}

$2$ & $D_4+3A_1$ & $4$, $6$ or $8$ & 
 & & \\ \hline \hline

$2$ & $A_6$ & $4$ & 
$\xygraph{
(-[]!{+(0,.25)} \circ ([]!{+(0,0)} {}) (-[]!{+(-.5,0)} \circ ([]!{+(0,0)} {}),-[]!{+(.5,0)} \circ ([]!{+(0,0)} {})-[]!{+(.5,0)} \circ ([]!{+(0,0)} {})-[]!{+(.5,0)} \circ ([]!{+(0,0)} {})(-[]!{+(0,-.5)} \bullet ([]!{+(0,0)} {}),-[]!{+(.5,0)} \circ ([]!{+(0,0)} {}))),-[]!{+(0,-.25)} \bullet ([]!{+(0,0)} {}))
}$
 & $4$ & $A_2+2A_1$ \\ \hline

$2$ & $A_7$ & $5$ or $8$ & 
$\xygraph{
(-[]!{+(0,.25)} \circ ([]!{+(0,0)} {}) (-[]!{+(-.5,0)} \circ ([]!{+(0,0)} {}),-[]!{+(.5,0)} \circ ([]!{+(0,0)} {})-[]!{+(.5,0)} \circ ([]!{+(0,0)} {})-[]!{+(.5,0)} \circ ([]!{+(0,0)} {})-[]!{+(.5,0)} \circ ([]!{+(0,0)} {})(-[]!{+(0,-.5)} \bullet ([]!{+(0,0)} {}),-[]!{+(.5,0)} \circ ([]!{+(0,0)} {}))),-[]!{+(0,-.25)} \bullet ([]!{+(0,0)} {}))
}$
 & $4$ & $A_3+2A_1$ \\ \hline

$2$ & $D_5$ & $5$ & \multirow{2}{*}{
$\xygraph{
\circ ([]!{+(.3,0)})
 (-[]!{+(.5,0)} \circ ([]!{+(0,0)} {}) -[]!{+(.5,0)} \circ ([]!{+(0,0)} {}) -[]!{+(.5,0)} \bigodot ([]!{+(0,0)} {}),
 -[]!{+(-.5,.25)} \circ ([]!{+(0,0)} {}) -[]!{+(-.5,0)} \bullet ([]!{+(0,0)} {}),
 -[]!{+(-.5,-.25)} \circ ([]!{+(0,0)} {}) -[]!{+(-.5,0)} \bullet ([]!{+(0,0)} {}))
}$}
 & $4$ & $(A_3)_<$ \\ \cline{1-3} \cline{5-6}

$2$ & $D_5+A_1$ & $6$ & 
 & $4$ & $A_3+A_1$ \\ \hline

$2$ & $D_6$ & $7$ & \multirow{2}{*}{
$\xygraph{
\circ ([]!{+(.3,0)})
 (-[]!{+(.5,0)} \circ ([]!{+(0,0)} {}) -[]!{+(.5,0)} \circ ([]!{+(0,0)} {}) -[]!{+(.5,0)} \circ ([]!{+(0,0)} {}) -[]!{+(.5,0)} \bigodot ([]!{+(0,0)} {}),
 -[]!{+(-.5,.25)} \circ ([]!{+(0,0)} {}) -[]!{+(-.5,0)} \bullet ([]!{+(0,0)} {}), -[]!{+(-.5,-.25)} \circ ([]!{+(0,0)} {}))
}$}
 & $3$ & $A_5$ \\ \cline{1-3} \cline{5-6}

$2$ & $D_6+A_1$ & $8$ & 
 & $3$ & $A_5+A_1$ \\ \hline

$2$ & $E_6$ & $5$ & 
$\xygraph{
\circ ([]!{+(.3,0)})
 (-[]!{+(.5,0)} \circ ([]!{+(0,0)} {}),
 -[]!{+(-.5,.25)} \circ ([]!{+(0,0)} {}) -[]!{+(-.5,0)} \circ ([]!{+(0,0)} {})-[]!{+(-.5,0)} \bullet ([]!{+(0,0)} {}), -[]!{+(-.5,-.25)} \circ ([]!{+(0,0)} {})-[]!{+(-.5,0)} \circ ([]!{+(0,0)} {})-[]!{+(-.5,0)} \bullet ([]!{+(0,0)} {}))
}$
 & $4$ & $D_4$ \\ \hline

$2$ & $E_7$ & $8$ & 
$\xygraph{
(-[]!{+(0,.25)} \circ ([]!{+(0,0)} {}) (-[]!{+(-.5,0)} \circ ([]!{+(0,0)} {})-[]!{+(-.5,0)} \circ ([]!{+(0,0)} {}),-[]!{+(.5,0)} \circ ([]!{+(0,0)} {})-[]!{+(.5,0)} \circ ([]!{+(0,0)} {})-[]!{+(.5,0)} \circ ([]!{+(0,0)} {})-[]!{+(0,-.5)} \bullet ([]!{+(0,0)} {})),-[]!{+(0,-.25)} \circ ([]!{+(0,0)} {}))
}$
 & $3$ & $E_6$ \\ \hline \hline

$1$ & $A_8$ & $5$ or $9$ & 
$\xygraph{
(-[]!{+(0,.25)} \circ ([]!{+(0,0)} {}) (-[]!{+(-.5,0)} \circ ([]!{+(0,0)} {})-[]!{+(-.5,0)} \circ ([]!{+(0,0)} {}),-[]!{+(.5,0)} \circ ([]!{+(0,0)} {})-[]!{+(.5,0)} \circ ([]!{+(0,0)} {})-[]!{+(.5,0)} \circ ([]!{+(0,0)} {})(-[]!{+(0,-.5)} \bullet ([]!{+(0,0)} {}),-[]!{+(.5,0)} \circ ([]!{+(0,0)} {})-[]!{+(.5,0)} \circ ([]!{+(0,0)} {}))),-[]!{+(0,-.25)} \bullet ([]!{+(0,0)} {}))
}$
 & $3$ & $3A_2$ \\ \hline

$1$ & $D_6$ & $6$ or $7$ & \multirow{3}{*}{
$\xygraph{
\circ ([]!{+(.3,0)})
 (-[]!{+(-.5,.5)} \circ ([]!{+(0,0)} {}) -[]!{+(-1,0)} \circ ([]!{+(0,0)} {})(-[]!{+(0,-.5)} \bullet ([]!{+(0,0)} {}),-[]!{+(-.5,0)} \circ ([]!{+(0,0)} {})), -[]!{+(-.5,0)} \circ ([]!{+(0,0)} {}) -[]!{+(-.5,0)} \bigodot ([]!{+(0,0)} {}), -[]!{+(-.5,-.5)} \circ ([]!{+(0,0)} {}) -[]!{+(-.5,0)} \bigodot ([]!{+(0,0)} {}))
}$}
 & $2$ & $D_4+A_1$ \rule[0mm]{0mm}{5mm} \\ \cline{1-3} \cline{5-6}

$1$ & $D_6+A_1$ & $8$ & 
 & $2$ & $D_4+2A_1$ \rule[0mm]{0mm}{5mm} \\ \cline{1-3} \cline{5-6}

$1$ & $D_6+2A_1$ & $7$ or $9$ & 
 & $2$ & $D_4+3A_1$ \rule[0mm]{0mm}{5mm} \\ \hline

$1$ & $D_7$ & $7$ & 
$\xygraph{
\circ ([]!{+(.3,0)})
 (-[]!{+(.5,0)} \circ ([]!{+(0,0)} {}) -[]!{+(.5,0)} \circ ([]!{+(0,0)} {}) -[]!{+(.5,0)} \circ ([]!{+(0,0)} {}) (-[]!{+(.5,.25)} \circ ([]!{+(0,0)} {}),-[]!{+(.5,-.25)} \bullet ([]!{+(0,0)} {})),
 -[]!{+(-.5,.25)} \circ ([]!{+(0,0)} {}), -[]!{+(-.5,-.25)} \circ ([]!{+(0,0)} {}))
}$
 & $2$ & $D_5+A_1$ \\ \hline

$1$ & $D_8$ & $9$ & 
$\xygraph{
\circ ([]!{+(.3,0)})
 (-[]!{+(.5,0)} \circ ([]!{+(0,0)} {}) -[]!{+(.5,0)} \circ ([]!{+(0,0)} {}) -[]!{+(.5,0)} \circ ([]!{+(0,0)} {}) -[]!{+(.5,0)} \circ ([]!{+(0,0)} {})(-[]!{+(.5,.25)} \circ ([]!{+(0,0)} {}),-[]!{+(.5,-.25)} \bullet ([]!{+(0,0)} {})),
 -[]!{+(-.5,.25)} \circ ([]!{+(0,0)} {}), -[]!{+(-.5,-.25)} \circ ([]!{+(0,0)} {}))
}$
 & $2$ & $D_6+A_1$ \\ \hline

$1$ & $E_7$ & $8$ & \multirow{2}{*}{
$\xygraph{
\circ ([]!{+(.3,0)})
 (-[]!{+(.5,0)} \circ ([]!{+(0,0)} {}) -[]!{+(.5,0)} \circ ([]!{+(+.3,0)} {}) -[]!{+(0,0)} \circ ([]!{+(+.3,0)} {}) -[]!{+(.5,0)} \bigodot ([]!{+(0,0)} {}),
 -[]!{+(-.5,.25)} \circ ([]!{+(0,0)} {}) -[]!{+(-.5,0)} \circ ([]!{+(0,0)} {}) -[]!{+(-.5,0)} \bullet ([]!{+(0,0)} {}), -[]!{+(-.5,-.25)} \circ ([]!{+(0,0)} {}))
}$}
 & $2$ & $D_6$ \\ \cline{1-3} \cline{5-6}

$1$ & $E_7+A_1$ & $9$ & 
 & $2$ & $D_6+A_1$ \\ \hline

$1$ & $E_8$ & $9$ & 
$\xygraph{
(-[]!{+(0,.25)} \circ ([]!{+(0,0)} {}) (-[]!{+(-.5,0)} \circ ([]!{+(0,0)} {})-[]!{+(-.5,0)} \circ ([]!{+(0,0)} {}),-[]!{+(.5,0)} \circ ([]!{+(0,0)} {})-[]!{+(.5,0)} \circ ([]!{+(0,0)} {})-[]!{+(.5,0)} \circ ([]!{+(0,0)} {})-[]!{+(.5,0)} \circ ([]!{+(0,0)} {})-[]!{+(0,-.5)} \bullet ([]!{+(0,0)} {})),-[]!{+(0,-.25)} \circ ([]!{+(0,0)} {}))
}$
 & $2$ & $E_7$ \\ \hline \hline

$2$ & $(A_5)''$ & $4$ & 
$\xygraph{
(-[]!{+(0,.25)} \circ ([]!{+(0,0)} {}) (-[]!{+(-.5,0)} \circ ([]!{+(0,0)} {}),-[]!{+(.5,0)} \circ ([]!{+(0,0)} {})-[]!{+(.5,0)} \circ ([]!{+(0,0)} {})(-[]!{+(0,-.5)} \bullet ([]!{+(0,0)} {}),-[]!{+(.5,0)} \circ ([]!{+(0,0)} {}))),-[]!{+(0,-.25)} \bullet ([]!{+(0,0)} {}))
}$
 & $4$ & $3A_1$ \\ \hline

$1$ & $(A_7)''$ & $5$ & 
$\xygraph{
(-[]!{+(0,.25)} \circ ([]!{+(0,0)} {}) (-[]!{+(-.5,0)} \circ ([]!{+(0,0)} {})-[]!{+(-.5,0)} \circ ([]!{+(0,0)} {}),-[]!{+(.5,0)} \circ ([]!{+(0,0)} {})-[]!{+(.5,0)} \circ ([]!{+(0,0)} {})(-[]!{+(0,-.5)} \bullet ([]!{+(0,0)} {}),-[]!{+(.5,0)} \circ ([]!{+(0,0)} {})-[]!{+(.5,0)} \circ ([]!{+(0,0)} {}))),-[]!{+(0,-.25)} \bullet ([]!{+(0,0)} {}))
}$
 & $3$ & $2A_2+A_1$ \\ \hline \hline

$2$ & $(A_5+A_1)''$ & $5$ & 
$\xygraph{
(-[]!{+(0,.25)} \circ ([]!{+(0,0)} {}) (-[]!{+(-.5,0)} \circ ([]!{+(0,0)} {}) []!{+(-.5,0)} \circ,-[]!{+(.5,0)} \circ ([]!{+(0,0)} {})-[]!{+(.5,0)} \circ ([]!{+(0,0)} {})(-[]!{+(0,-.5)} \bullet ([]!{+(0,0)} {}),-[]!{+(.5,0)} \circ ([]!{+(0,0)} {}))),-[]!{+(0,-.25)} \bullet ([]!{+(0,0)} {}))
}$
 & $4$ & $4A_1$ \\ \hline
\end{tabular}
 \end{center}
\end{table}
\subsubsection{}
At first, we shall show Theorem \ref{main(1-3)}(1). 
\begin{lem}\label{construct cylinder(1)}
Let the notation be the same as above. 
If $d=2$ and $S_{\kc}$ has a singular point of type $D_4$, then $S$ contains a cylinder. 
\end{lem}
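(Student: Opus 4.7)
The plan mirrors the argument in \S \S \ref{3-2} (especially the case $n^{\circ}=10^{\circ}$). By Table \ref{list(2)}, there is a birational morphism $\tau:\wS\to W_8$ defined over $k$, where $W_8$ is a weak del Pezzo surface of degree $8$ and of $A_1$-type, i.e.\ a $k$-form of the Hirzebruch surface $\bF_2$. The central $(-2)$-curve $M_0$ of the $D_4$-configuration---distinguished among the four $(-2)$-curves as the unique trivalent vertex of the dual graph and therefore defined over $k$ once $x$ is $k$-rational---maps isomorphically onto the unique $(-2)$-curve $M$ of $W_8$. The three branches of the $D_4$-configuration (together with the $\bigodot$-clusters, which in the subtypes $D_4+nA_1$ accommodate the additional $A_1$-singularities) are contracted by $\tau$ to three points $p_1,p_2,p_3\in M$; the reduced divisor $p_1+p_2+p_3$ on $M$ is $\Gal$-invariant, since $\Gal$ permutes the three branches of the $D_4$, and hence is defined over $k$.

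Next I would let $\pi:W_8\to B$ be the $\bP^1$-fibration over $k$ for which $M$ is a section (so $B$ is a $k$-form of $\bP^1_{\kc}$), take $F_1,F_2,F_3$ to be the fibres of $\pi$ through $p_1,p_2,p_3$, and invoke \cite[Corollary 4.5]{Saw} (in the form used for $n^{\circ}=10^{\circ}$ in \S \S \ref{3-2}) to produce a cylinder $U$ on $W_8$ whose boundary contains $M\cup F_1\cup F_2\cup F_3$; over $\kc$ this is the restriction of the $\bA^1$-bundle $\bF_2\setminus M\to\bP^1_{\kc}$ to the affine base $\bP^1_{\kc}\setminus\{\pi(p_1),\pi(p_2),\pi(p_3)\}$, on which it becomes trivial. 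Pulling back to $\wS$, the open subset $\wU:=\tau^{-1}(U)$ is a cylinder on $\wS$ disjoint from every $(-2)$-curve of $\wS_{\kc}$: the central $M_0$ lies in $\tau^{-1}(M)$, while each peripheral $(-2)$-curve---and any residual $(-2)$-curve coming from a $\bigodot$-cluster---is contracted by $\tau$ to some $p_i$, hence contained in $\tau^{-1}(F_i)$. Therefore $\sigma(\wU)\simeq\wU$ descends to a cylinder on $S$.

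The principal technical point is the invocation of \cite[Corollary 4.5]{Saw} with three fibres removed (rather than one or two as in the cases already handled): one must verify that $W_8\setminus(M\cup F_1\cup F_2\cup F_3)$ carries a genuine cylinder structure over $k$, which reduces to triviality of the restricted $\bA^1$-bundle on the affine curve $B\setminus\pi(\{p_1,p_2,p_3\})$ together with $\Gal$-descent. Once this is in hand, the construction works uniformly for all four subtypes $D_4$, $D_4+A_1$, $D_4+2A_1$, $D_4+3A_1$ and for every value of $\rho_k(\wS)$ recorded in Table \ref{list(2)}, since the only Galois-theoretic input needed is the $\Gal$-invariance of the set of branches of the $D_4$.
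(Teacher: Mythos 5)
Your overall strategy coincides with the paper's: contract the branches of the $D_4+nA_1$ configuration to obtain $\tau:\wS\to W_8$ with $W_8$ a $k$-form of $\bF_2$, and realize the complement of the whole configuration as the complement of the $(-2)$-curve and three fibres. But the step you yourself label ``the principal technical point'' --- that $W_8\setminus(M\cup F_1\cup F_2\cup F_3)$ is a cylinder over $k$ --- is exactly where the content of the lemma lies, and you leave it unproved, deferring it to an extension of {\cite[Corollary 4.5]{Saw}} beyond the one- and two-fibre situations in which the paper actually invokes that result. The paper closes this step by a different mechanism: it first observes that $\wS(k)\neq\emptyset$ (the central component of the $D_4$ is defined over $k$ and carries the $\Gal$-stable degree-$3$ divisor cut out by the three branches, hence is isomorphic to $\bP^1_k$), deduces via Proposition \ref{Severi-Brauer} that $W_8\simeq\bF_2$ is the \emph{split} form, and only then identifies the complement with $\bA^1_k\times\bA^1_{\ast\ast,k}$. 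You never split $W_8$, so your base $B$ remains an unspecified form of $\bP^1_{\kc}$ and the descent problem you reduce to is strictly harder than anything treated elsewhere in the paper. Moreover, ``triviality of the restricted $\bA^1$-bundle'' is not automatic even after splitting: over $\kc$ the complement $\bF_2\setminus M$ is the total space of $\sO_{\bP^1}(\pm2)$, and in the case $\rho_k(\wS)=3$ of Table \ref{list(2)} the three branches form a single $\Gal$-orbit, so the three fibres are removed over a single closed point $P$ of degree $3$ and the restriction of $\sO(\pm2)$ has a priori non-zero class in $\Pic(\bP^1_k\setminus P)\cong\bZ/3\bZ$. Some further input (locating a $k$-rational point among the relevant points, or deleting an additional fibre over a rational point of the base) is needed here, and you supply none; flagging the gap is not the same as closing it.

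A smaller inaccuracy: you describe $\tau$ as contracting the three branches ``to three points $p_1,p_2,p_3\in M$.'' That cannot be literally correct: contracting all nine non-central components would drop the geometric Picard rank from $8$ below $0$, and contracting a curve meeting $M_0$ would raise its self-intersection, so $M_0$ could not map isomorphically onto a $(-2)$-curve of $W_8$. In the paper's construction each branch is contracted only until its peripheral $(-2)$-curve becomes a $0$-curve, namely the fibre $F_i$ itself. This does not affect your conclusion that the preimage of the complement avoids all $(-2)$-curves, but the description of the morphism should be corrected.
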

\begin{proof}
Let $x$ be a singular point of type $D_4$ on $S_{\kc}$. 
Note that $x$ is $k$-rational on $S _{\kc}$ by Lemma \ref{(-2)-curve}.
Moreover, we see that $\wS$ is of $D_4+nA_1$-type for $n=0,1,2,3$ and $\wS (k) \not= \emptyset$ by the configuration of curves in $\wS _{\kc}$ (see also Table \ref{list(2)}). 
Let $\widetilde{E}$ be the union of reduced curves corresponding to three subgraphs $\xygraph{\bigodot ([]!{+(0,0)} {}) -[]!{+(.75,0)} \circ ([]!{+(0,0)} {})}$ in the dual graph in Table \ref{list(2)}. 
Notice that $\widetilde{E}$ is defined over $k$. 
Then we obtain the birational morphism $\tau :\wS \to W_8$ over $k$ such that $W_8$ is a $k$-form of the Hirzebruch surface $\bF _2$ of degree $2$ and the direct image $\tau _{\ast}(\widetilde{E})_{\kc}$ is the disjoint union of three closed fibers, say $F_1$, $F_2$ and $F_3$, of the $\bP ^1$-bundle $W_{8,\kc} \simeq \bF _2 \to \bP ^1_{\kc}$. 
In particular, we see $W_8 \simeq \bF _2$ by using Lemma \ref{Severi-Brauer} because of $\wS (k) \not= \emptyset$ (see also \S \ref{3}). 
Hence, $\wU := \wS \backslash \widetilde{E}$ is certainly the cylinder on $\wS$ since $\wU \simeq \bF _2 \backslash  (M \cup F_1 \cup F_2 \cup F_3) \simeq \bA ^1_k \times C_{(2)}$ (for the definition of $C_{(2)}$, see Notation and Conventions), where $M$ is the $(-2)$-curve on $\bF _2$. 
Therefore, we see that $S$ contains a cylinder $\sigma (\wU ) \simeq \wU$. 
\end{proof}
\begin{lem}\label{construct cylinder(2)}
Let the notation be the same as above. 
If $d=2$ (resp. $d=1$) and $S_{\kc}$ has a singular point of type $A_6$, $A_7$, $D_5$, $D_6$, $E_6$ or $E_7$ (resp. type $A_8$, $D_6$, $D_7$, $D_8$, $E_7$ or $E_8$), then $S$ contains a cylinder. 
\end{lem}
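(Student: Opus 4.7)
The plan is to adapt the strategy of Lemma \ref{construct cylinder(1)}, using Table \ref{list(2)} as a case-by-case dictionary: for each singularity type of $x$ listed in the statement, the table records a target weak del Pezzo surface $W_{d'}$ of degree $d' > d$ together with a specific configuration of $(-1)$- and $(-2)$-curves on $\wS_{\kc}$ whose successive contractions yield a birational morphism $\tau:\wS \to W_{d'}$ defined over $k$. Because $x$ is $k$-rational and the Dynkin diagram of its resolution has the shape depicted in the table (with a central or boundary vertex fixed by $\Gal$), the subconfiguration to be contracted is Galois-stable, so $\tau$ descends to $k$. The type of $W_{d'}$ is then chosen precisely so that $W_{d'}$ is already known to contain a suitable cylinder.

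More precisely, for $d=2$ the target $W_{d'}$ has degree $3$ or $4$ and its associated Du Val contraction was shown to contain a cylinder during the proof of Theorems \ref{main(1-1)} and \ref{main(1-2)} in \S \ref{3-2}; for $d=1$ the target $W_{d'}$ has degree $2$ and is of one of the types $D_4+jA_1$ ($0\le j\le 3$), $D_5+A_1$, $D_6$, $D_6+A_1$, or $E_7$, each of which is handled either by Lemma \ref{construct cylinder(1)} or by the $d=2$ cases of the present lemma proved just above (so the argument cascades from $d'=4,3$ down to $d'=2$). In each case I would select a cylinder $U'$ on the Du Val contraction of $W_{d'}$ whose boundary absorbs the $\tau$-image of every $(-2)$-curve on $\wS_{\kc}$ and of every exceptional component of $\tau$; the dual graph in Table \ref{list(2)} is laid out exactly so as to make this possible. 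The strict transform $\wU := \tau^{-1}(U')$ is then a cylinder on $\wS$ disjoint from the full reduced exceptional divisor of $\sigma$, so $\sigma(\wU) \simeq \wU$ is a cylinder of $S$, as desired.

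The main technical step will be checking case by case that the cylinder on $W_{d'}$ can indeed be chosen so that its boundary contains the image of \emph{all} $(-2)$-curves of $\wS_{\kc}$; this reduces to matching the exceptional configuration drawn in Table \ref{list(2)} against the explicit cylinders built in \S \ref{3-2} and in Lemma \ref{construct cylinder(1)}. A secondary point is $k$-rationality: the cylinders of \S \ref{3-2} require $k$-points on certain auxiliary curves, while Lemma \ref{construct cylinder(1)} needs $\wS(k)\neq \emptyset$ to invoke Proposition \ref{Severi-Brauer}. In both situations the hypothesis that $x \in S$ is a $k$-rational singularity of the prescribed type forces the relevant curves (for instance, the central $(-2)$-curve of the $D_n$ or $E_n$ diagram, or the $(-1)$-curves meeting it as recorded in the table) to carry $k$-rational points, so the construction goes through uniformly across all twelve cases.
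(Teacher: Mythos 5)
Your proposal follows essentially the same route as the paper: read off from Table \ref{list(2)} a Galois-stable exceptional configuration, contract it to obtain $\tau:\wS\to W_{d'}$ over $k$, and pull back a cylinder on $W_{d'}$ whose boundary absorbs $\tau(\widetilde{E})$ together with the images of all $(-2)$-curves, the degree-$1$ cases cascading through the degree-$2$ cases and Lemma \ref{construct cylinder(1)} exactly as you describe. The only inaccuracy is minor: for $d=1$ the $A_8$ case contracts to a weak del Pezzo surface of degree $3$ and type $3A_2$ (handled in \S\ref{3-2}), not to a degree-$2$ surface, but this does not affect the argument.
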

\begin{proof}
Let $x$ be a singular point of the type of the one of the above list on $S_{\kc}$. 
Note that $x$ is $k$-rational on $S _{\kc}$ by Lemma \ref{(-2)-curve}. 
Let $\widetilde{E}$ be the union of the $(-1)$-curves corresponding to all vertices $\bullet$ in the Table \ref{list(2)} according to the type of $\wS$. 
Notice that $\widetilde{E}$ is defined over $k$ and $\widetilde{E}_{\kc}$ is either irreducible or disjoint. 
Hence, we obtain the contraction $\tau :\wS \to W_{d'}$ of $\widetilde{E}$ defined over $k$, so that $W_{d'}$ is a weak del Pezzo surface of degree $d' \in \{ 2,3,4\}$, where $d'$ is determined according to the type of $\wS$. 
If $d' \in \{ 3,4\}$, then $W_{d'}$ contains a cylinder, whose boundary includes $\tau _{\ast}(\widetilde{E})$, by the argument in \S \ref{3} (see also Table \ref{list(1-1)}). 
Thus, the pullback of this cylinder by $\tau$, say $\wU$, is a cylinder in $\wS$ such that this boundary includes the union of all $(-2)$-curves on $\wS _{\kc}$, which is defined over $k$. 
Therefore, we see that $S$ contains a cylinder $\sigma (\wU ) \simeq \wU$. 
If $d'=2$, then $W_{d'}$ is one of the list in Table \ref{list(2)} and contains a cylinder, whose boundary includes $\tau _{\ast}(\widetilde{E})$, by the above argument. 
Namely, the above argument can work as well even if $d'=2$. 
This completes the proof. 
\end{proof}
\begin{rem}
We shall state some remarks on Lemma \ref{construct cylinder(2)} (cf. Remark \ref{cylinder rem}). 
Let $x$ be the same as in Lemma \ref{construct cylinder(2)} and assume $d=2$. Then: 
\begin{enumerate}
\item If the singular point $x$ is of type $A_6$, $E_6$ or $E_7$ on $S_{\kc}$, then $S$ always contains the affine plane $\bA ^2_k$ (compare the fact that the Du Val del Pezzo surface over $\bC$ with Picard rank one and of degree $2$ contains $\bC ^2$ if and only if this surface has a singular point of type $E_7$, see {\cite{MZ88}}). 
\item If the singular point $x$ is of type $A_7$ on $S_{\kc}$, then $\wS$ need not be $k$-rational but always contains a cylinder (compare the fact in {\cite{DK18}}). 
\end{enumerate}
\end{rem}
Theorem \ref{main(1-3)}(1) follows from Lemmas \ref{construct cylinder(1)} and \ref{construct cylinder(2)}. 
\subsubsection{}
Secondly, we shall show Theorem \ref{main(1-3)}(2). 
With the notation as above, assume further that $\wS _{\kc}$ has a singular point $x$ of type $(A_{9-2d})''$ (see \S \S \ref{2-1}, for this definition). 
Note that $x$ is $k$-rational on $S _{\kc}$ by Lemma \ref{(-2)-curve}. 
Notice that $\wS$ is only $(A_5)''$ or $(A_5+A_1)''$-type (resp. $(A_7)''$-type) if $d=2$ (resp. $d=1$). 
\begin{lem}\label{lem(5-2-2)-1}
With the notation and the assumptions as above, assume further that $\wS$ is of $(A_{9-2d})''$-type. 
Then $S$ contains a cylinder if and only if $x$ not of type $A_{9-2d}^{++}$ on $S$. 
\end{lem}
\begin{proof}
Assume that $S$ contains a cylinder $U \simeq \bA ^1_k \times Z$, where $Z$ is a smooth affine curve defined over $k$. 
Then $\wS$ contains a cylinder $\wU := \sigma ^{-1}(U) \simeq U$. 
The closures in $\wS$ of fibers of the projection $pr_Z : \wU \simeq \bA ^1_k \times Z \to Z$ yields a linear system, say $\wsL$, on $\wS$. 
By Proposition \ref{Bs(3)}, $\Bs (\wsL ) \not= \emptyset$. 
Thus, $x$ is not of type $A_{9-2d}^{++}$ on $S$ by the assumption and Lemma \ref{Bs}. 

Conversely, assume that $x$ is not of type $A_{9-2d}^{++}$ on $S$. 
Let $M$ be the $(-2)$-curve on $\wS _{\kc}$ corresponding to the central vertex on the dual graph with the minimal resolution at $x$. 
Notice that $M$ is defined over $k$, moreover, $M$ has a $k$-rational point by the assumption. 
Let $\widetilde{E}$ be the union of the $(-1)$-curves corresponding to two vertices $\bullet$ in the Table \ref{list(2)} according to the type of $\wS$. 
Notice that $\widetilde{E}$ is defined over $k$ and $\widetilde{E}_{\kc}$ is disjoint. 
Hence, we obtain the contraction $\tau :\wS \to W_{d+2}$ of $\widetilde{E}$ defined over $k$, so that $W_{d+2}$ is a weak del Pezzo surface of degree $d+2$ and $\tau _{\ast}(M)_{\kc}$ is a $(-2)$-curve. 
Moreover, since $M$ has a $k$-rational point, so does the image via $\tau$. 
Hence, $W_{d+2,\kc}$ contains a $(-2)$-curve with a $k$-rational point. 
This implies that $W_{d+2}$ contains a cylinder, whose boundary includes $\tau _{\ast}(\widetilde{E})$, by using Theorem \ref{main(1-2)} (see also Table \ref{list(1-1)}). 
Thus, the pullback of this cylinder by $\tau$, say $\wU$, is a cylinder in $\wS$ such that this boundary includes the union of all $(-2)$-curves on $\wS _{\kc}$, which is defined over $k$. 
Therefore, we see that $S$ contains a cylinder $\sigma (\wU ) \simeq \wU$. 
\end{proof}
In what follows, we deal with the case that $\wS$ is not of $(A_{9-2d})''$-type. 
Then notice that $d=2$ and $\wS$ is of $(A_5+A_1)''$-type. 
\begin{lem}\label{lem(5-2-2)-2}
With the notation and the assumptions as above, assume further that $d=2$ and $\wS$ is of $(A_5+A_1)''$-type. 
Then $S$ contains a cylinder if and only if $x$ not of type $A_5^{++}$ on $S$. 
\end{lem}
\begin{proof}
Let $M_{1,1},\dots, M_{1,5}$ and $M_{2,1}$ be the $(-2)$-curves on $\wS _{\kc}$ with the configuration as in (\ref{graphAA}). 
By the configuration, $M_{1,3}$ and $M_{2,1}$ are defined over $k$. 
By using Proposition \ref{ADE-prop}, there exist two $(-1)$-curves $E_2$ and $E_4$ on $\wS _{\kc}$ such that $(E_i \cdot M_{1,j}) = \delta _{i,j}$ and $(E_i \cdot M_{2,1})=0$ for $i=2,4$ and $j=1,\dots ,5$, moreover, the union and $E_2+E_4$ are defined over $k$ (cf. Example \ref{ex of prop(4-2)}). 
Let $\tau :\wS \to W_8$ be the compositions of successive contractions of a disjoint union $E_2 + E_4$, that of the images of $M_{1,2} + M_{1,4}$ and finally that of the images of $M_{1,1} + M_{1,5}$. 
By construction, $\tau$ is defined over $k$ and $W_8$ is a $k$-form of the Hirzebruch surface $\bF _2$ of degree $2$. 

From now on, we prove this lemma. 
Assume that $S$ contains a cylinder. 
Let $y$ be the singular point of type $A_1$ on $S_{\kc}$. 
Then we know that either $x$ is not of type $A_5^{++}$ on $S$ or $y$ is not of type $A_1^{++}$ on $S$ by the similar argument to Lemma \ref{lem(5-2-2)-1}. 
In what follows, we may assume that $y$ is not of type $A_1^{++}$ on $S$. 
In other words, $M_{2,1}$ has a $k$-rational point, hence, so does $\tau _{\ast}(M_{2,1})$. 
Namely, $W_8 \simeq \bF _2$. 
Hence, there exists a unique closed fiber of the $\bP ^1$-bundle $\bF _2 \to \bP ^1_k$ passing through this $k$-rational point. 
Let $F$ be the pullback of this fiber by $\tau$. 
Note that the configuration of the dual graph of $\sum _{j=1}^5M_{1,j}+M_{2,1}+E_2+E_4+F$ is as follows, where ``$\circ$", ``$\bullet$" and ``$\diamond$" mean a $(-2)$-curve, a $(-1)$-curve and a $0$-curve, respectively: 
\begin{align*}
\xygraph{
\diamond ([]!{+(-.3,0)} {F}) (- []!{+(0,-.5)} \circ ([]!{+(-.4,0)} {M_{2,1}}), - []!{+(0,.5)} \circ ([]!{+(0,.3)} {M_{1,3}}) 
((-[l] \circ ([]!{+(0,+.3)} {M_{1,2}}) (-[l] \circ ([]!{+(0,+.3)} {M_{1,1}}), - []!{+(0,-.5)} \bullet ([]!{+(-.3,0)} {E_2})),
(-[r] \circ ([]!{+(0,+.3)} {M_{1,4}}) (-[r] \circ ([]!{+(0,+.3)} {M_{1,5}}), - []!{+(0,-.5)} \bullet ([]!{+(-.3,0)} {E_4}))
))}
\end{align*}
In particular, the intersection point of $M_{1,3}$ and $F$ is $k$-rational, namely, $M_{1,3}(k) \not= \emptyset$. 
This implies that $x$ is not of type $A_5^{++}$ on $S$. 

Conversely, assume that $x$ is not of type $A_5^{++}$ on $S$. 
By putting $M := M_{1,3}$, we see that $S$ contains a cylinder by the similar argument to Lemma \ref{lem(5-2-2)-1}. 
\end{proof}
Theorem \ref{main(1-3)}(2) follows from Lemmas \ref{lem(5-2-2)-1} and \ref{lem(5-2-2)-2}. 

\subsubsection{}
Finally, we shall show Theorem \ref{main(1-3)}(3) by using {\cite{CPW16b}}: 
\begin{proof}[Proof of Theorem \ref{main(1-3)}(3)]
This proof can be shown by an argument similar to {\cite[Remark 10]{DK18}}. 
Indeed, supposing that $S$ contains a cylinder $U$, by $\rho _k(S)=1$ we see that $S_{\kc}$ admits an $(-K_{S_{\kc}})$-polar cylinder $U_{\kc}$ (see, e.g., {\cite[Definition 1.3]{CPW16b}}, for the definition), which is a contradiction to {\cite[Theorem 1.5]{CPW16b}}. 
\end{proof}
\begin{rem}
If $d=1$ and $S_{\kc}$ has a singular point of type $D_4$, then we see that $S$ does not contain any cylinder by Theorem \ref{main(1-3)}(3) since $\wS$ is only of $2D_4$, $D_4+A_3$, $D_4+3A_1$, $D_4+A_2$, $D_4+2A_1$, $D_4+A_1$ or $D_4$-type. 
\end{rem}

\subsection{Proof for the ``only if'' part of Theorem \ref{main(1-3)}(4)}\label{5-3}

In this subsection, we shall show the ``only if" part of Theorem \ref{main(1-3)}(4). 
Assume that $S$ does not satisfy any condition on singularities of (1), (2) nor (3) in Theorem \ref{main(1-3)} and contains a cylinder, say $U$. 
Letting $\sL$ be the linear system on $S$, which is the same as in \S \S \ref{5-1}, by Lemma \ref{Bs} we then see that $\Bs (\sL ) = \{ p\}$ such that $p$ is a singular point on $S$, which is $k$-rational. 
In order to show the ``only if" part of Theorem \ref{main(1-3)}(4), we shall prove that the singular point $p$ is of type $A_n^-$, $D_n^-$ or $E_n^-$. 
Letting $\wsL$ be the linear system on $\wS$, which is the same as in \S \S \ref{5-1}, by Proposition \ref{Bs(3)} we see that $\Bs (\wsL )$ consists of only one $k$-rational point, say $\wtp$. 
In other words, the singular point $p$ is not of type $A_n^{++}$ on $S$ for any $n$. 
In what follows, suppose that the singular point $p$ on $S$ is one of the following according to the degree $d$: 
\begin{itemize}
\item $d=2$: type $A_1^+$, $A_2^+$, $A_3^+$, $A_4^+$ or $(A_5^+)'$, 
\item $d=1$: type $A_1^+$, $A_2^+$, $A_3^+$, $A_4^+$, $A_5^+$, $A_6^+$, $(A_7^+)'$, $D_5^+$ or $E_6^+$. 
\end{itemize}
Meanwhile, we will prove Lemmas \ref{sym(1)}, \ref{sym(2)} and \ref{sym(3)}, which contradict the above hypothesis. 
Now, we shall treat the following Lemmas \ref{Corti(2)} and \ref{Corti(3)}, which will play a crucial role to show Lemmas \ref{sym(1)} and \ref{sym(2)}: 
\begin{lem}\label{Corti(2)}
Assume that $\wsL \sim _{\bQ} a(-K_{\wS})-bM$ for some positive rational numbers $a$ and $b$, where $M$ is an effective $\bQ$-divisor on $\wS$ and consists of some irreducible components of exceptional set of $\sigma$. 
Let $\alpha ,\beta$ and $\gamma$ be three positive rational numbers satisfying $a \ge \alpha b$, $\beta = -(M \cdot M_0)$, and $\gamma = -(M)^2$, where $M_0$ is an irreducible component of $M_{\kc}$ passing through $\wtp$. 
Then the following hold: 
\begin{enumerate}
\item If $d=2$, then the following four inequalities do not hold simultaneously: 
\begin{align}\label{(4.1)}
\begin{cases}
\alpha - u > 0 \\
\alpha - u - v \ge 0\\
2 \alpha u + \beta v - \gamma \ge 0 \\
4 u^2 + 4 uv + 2 v^2 - \gamma \le 0
\end{cases}
\end{align}
for any rational numbers $u,v$ with $u \ge 0$. 
\item If $d=1$, then the following four inequalities do not hold simultaneously: 
\begin{align}\label{(4.2)}
\begin{cases}
\alpha - u > 0 \\
\alpha - u - v \ge 0\\
 \alpha u + \beta v - \gamma \ge 0 \\
4 u^2 + 4 uv + 4 v^2 - 3\gamma \le 0
\end{cases}
\end{align}
for any rational numbers $u,v$ with $u \ge 0$. 
\end{enumerate}
\end{lem}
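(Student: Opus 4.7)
The plan is to argue by contradiction via Corti's inequality (Proposition~\ref{Corti}). Suppose $u \ge 0$ and $v \in \bQ$ satisfy all four inequalities in (\ref{(4.1)}) (respectively (\ref{(4.2)})). By Proposition~\ref{Bs(3)} we have $\Bs(\wsL) = \{\wtp\}$, and since $\wU \simeq \bA^1_k \times Z$ is fibered over $Z$ via $pr_Z$, two distinct general members $L_1, L_2 \in \wsL$ are disjoint inside $\wU$, so they meet set-theoretically only at $\wtp$; hence
\begin{equation*}
i(L_1, L_2; \wtp) \;=\; (\wsL_{\kc})^2 \;=\; a^2 d - b^2 \gamma.
\end{equation*}
After arranging $\wtp \in M_0$ as a smooth point of $\Supp(M)$, the general member $L$ satisfies $(L \cdot M_0) = b\beta$ with support entirely at $\wtp$, so $L$ has tangential contact of order $b\beta$ with $M_0$ at $\wtp$.

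Next, I would invoke Corti's inequality at $\wtp$ with $\Delta_2 = M_0$ and $\Delta_1$ chosen to be a general smooth curve through $\wtp$ meeting $M_0$ transversally there: a natural choice is a general member of $|-K_{\wS}|$ (for $d = 2$) or $|-2K_{\wS}|$ (for $d = 1$) through $\wtp$, whose existence and smoothness follow by genericity. The non-negative rationals $a_1, a_2$ and $\mu > 0$ are then defined as explicit expressions in $u, v$ and the fixed data $a, b, \alpha, \beta$, calibrated so that (i) and (ii) ensure $a_1 > 0$ and $a_2 \ge 0$, (iii) guarantees that the log pair
\begin{equation*}
\left( \wS_{\kc}, \; (1 - a_1) \Delta_1 + (1 - a_2) \Delta_2 + \tfrac{1}{\mu} \wsL \right)
\end{equation*}
fails to be log canonical at $\wtp$ (via a key auxiliary positivity propagating along the chain of infinitely near points), and (iv) is tuned to force $4 a_1 a_2 \mu^2 \ge a^2 d - b^2 \gamma$. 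Then Corti's inequality yields the strict reverse inequality $i(L_1, L_2; \wtp) > 4 a_1 a_2 \mu^2$, contradicting the equality displayed above.

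The main obstacle is verifying the failure of log canonicity at $\wtp$. Since general $L \in \wsL$ is smooth at $\wtp$ with $\mult_{\wtp} L = 1$, the mobile part $\tfrac{1}{\mu} \wsL$ alone cannot force non-log-canonicity, so the argument must exploit the tangential contact $(L \cdot M_0)_{\wtp} = b\beta$ together with the coefficient $(1 - a_2)$ on $M_0$. The delicate task is tracking how the log discrepancies evolve under the sequence of blow-ups at $\wtp$ and its infinitely near base points lying along the common tangent direction of $L$ and $M_0$; inequality (iii) plays the critical role here by keeping the relevant auxiliary divisor effective throughout this tower of blow-ups. The case $d = 1$ proceeds analogously but uses $|-2K_{\wS}|$ in place of $|-K_{\wS}|$, which accounts for the different quadratic form $4u^2 + 4uv + 4v^2 \le 3\gamma$ appearing in (\ref{(4.2)}).
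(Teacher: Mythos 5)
Your overall strategy---contradiction via Corti's inequality at the base point $\wtp$, with $\Delta_2=M_0$, inequality (iii) responsible for the failure of log canonicity and inequality (iv) for contradicting the resulting bound---is the same as the paper's. But the proposal has a genuine gap exactly where you flag ``the main obstacle'': you never prove that the relevant log pair fails to be log canonical at $\wtp$, and you never specify $a_1$, $a_2$, $\mu$, so inequality (iv) is never actually connected to the quantity $4a_1a_2\mu^2$. The paper takes $a_1=1$ (so $\Delta_1$ contributes nothing to the boundary and no auxiliary anticanonical member is needed; your suggestion that using $|-2K_{\wS}|$ versus $|-K_{\wS}|$ accounts for the different quadratic forms in (\ref{(4.1)}) and (\ref{(4.2)}) is a red herring---the difference comes solely from $(-K_{\wS})^2=d$ entering $(\wsL)^2=da^2-\gamma b^2$ and $(\wsL\cdot -K_{\wS})=da$), together with $a_2=1-\frac{vb}{a-ub}$ and $\mu=a-ub$; inequalities (i) and (ii) guarantee $\mu>0$ and $a_2\ge 0$, and (iv) then turns the Corti bound into $0>2\{a-(2u+v)b\}^2-(4u^2+4uv+2v^2-\gamma)b^2\ge 0$ when $d=2$.

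The missing idea for non-log-canonicity is not a delicate tracking of discrepancies along the tower of infinitely near points. The paper settles it in one stroke: let $\psi:\bar{S}\to\wS$ be the shortest sequence of blow-ups over $\wtp$ making $\bar{\sL}:=\psi^{-1}_{\ast}\wsL$ base point free; a general member of $\bar{\sL}$ is then a fiber of the induced $\bP^1$-fibration, so $(\bar{\sL}\cdot K_{\bar{S}})=-2$ and $(\bar{\sL}\cdot\bar{E}_i)=\delta_{i,n}$ for the exceptional divisors $\bar{E}_1,\dots,\bar{E}_n$. Pairing $\bar{\sL}$ with the log pullback formula $K_{\bar{S}}+\frac{vb}{a-ub}\bar{M}_0+\frac{1}{a-ub}\bar{\sL}=\psi^{\ast}\bigl(K_{\wS}+\frac{vb}{a-ub}M_0+\frac{1}{a-ub}\wsL\bigr)+\sum_i c_i\bar{E}_i$ yields $-2=\bigl(\wsL\cdot K_{\wS}+\frac{vb}{a-ub}M_0+\frac{1}{a-ub}\wsL\bigr)+c_n$, and a direct computation shows that the intersection number on the right equals $\frac{b}{a-ub}(2ua+\beta vb-\gamma b)$, which is $\ge 0$ precisely because of inequality (iii) (together with $a\ge\alpha b$ and $u\ge 0$); hence $c_n\le -2<-1$. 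Without this (or an equivalent) argument the proof does not close, since, as you yourself observe, the mobile part alone need not force non-log-canonicity at a point where the general member is smooth.
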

\begin{proof}
We only show (1), because (2) can be shown by the argument similar to (1). 

Suppose that there exist $u \in \bQ _{\ge 0}$ and $v \in \bQ$ such that the all inequalities (\ref{(4.1)}) hold simultaneously. 
By virtue of $\alpha - u > 0$, $\alpha - u - v \ge 0$, $b>0$ and $a \ge \alpha b$, we then see $a-ub>0$ and $1 - \frac{vb}{a-ub} \ge 0$. 
Hence, we have: 
\begin{align}\label{(4,3)}
\begin{split}
\left(\wsL \cdot K_{\wS} + \frac{vb}{a-ub}M_0 + \frac{1}{a-ub}\wsL\right) &= \frac{1}{a-ub}\left\{ -2a(a -ub) + \beta vb^2 +(2a^2-\gamma b^2) \right\} \\
&= \frac{b}{a-ub} (2ua + \beta vb -\gamma b). 
\end{split}
\end{align}
By virtue of $au \ge \alpha ub$ and $2 \alpha u + \beta v - \gamma \ge 0$, we have: 
\begin{align}\label{(4,4)}
\frac{b}{a-ub} (2ua + \beta vb -\gamma b) &\ge \frac{b^2}{a-ub} (2\alpha u + \beta v -\gamma ) \ge 0. 
\end{align}

Notice that the rational map $\Phi _{\wsL}:\wS \dashrightarrow \overline{Z}$ is not a morphism since $\Bs (\wsL ) = \{ \wtp \}$, where $\overline{Z}$ is the smooth projective model of $Z$. 
Let $\psi : \bar{S} \to \wS$ be the shortest succession of blow-ups of $\wtp$ and its infinitely near points such that the proper transform $\bar{\sL} := \psi ^{-1}_{\ast}(\wsL )$ of $\wsL$ is free of base points. 
Note that, $\wtp \in M_0$ and $(\bar{\sL} \cdot \bar{M}_0)=0$ by construction of $\wsL$, where $\bar{M}_0$ is the proper transform $\psi ^{-1}_{\ast}(M_0)$ of $M_0$. 
Letting ${\{ \bar{E}_i \}}_{1\le i \le n}$ be the exceptional divisors of $\psi$ with $\bar{E}_n$ the last exceptional one, which is a section of $\bar{\varphi} := \Phi _{\wsL} \circ \psi$, we have: 
\begin{align}\label{lc}
K_{\bar{S}} + \frac{vb}{a-ub}\bar{M}_0 +  \frac{1}{a-ub}\bar{\sL} = \psi ^{\ast} \left( K_{\wS} +\frac{vb}{a-ub}M_0+ \frac{1}{a-ub}\wsL \right) + \sum _{i=1}^nc_i \bar{E}_i
\end{align}
and
\begin{align}\label{en}
(\bar{\sL} \cdot \bar{E}_i) = \left\{ \begin{array}{ll} 0 & (1 \le i \le n-1) \\ 1 & (i=n) \end{array} \right. 
\end{align}
for some rational numbers $c_1,\dots ,c_n$. 
Note that the general member of $\bar{\sL}$ is a general fiber of the $\bP ^1$-fibration $\bar{\varphi}$. 
Hence, we have: 
\begin{align*}
-2 &= (\bar{\sL} \cdot K_{\bar{S}}) \\
&= \left(\bar{\sL} \cdot K_{\bar{S}} + \frac{vb}{a-ub}\bar{M}_0 + \frac{1}{a-ub}\bar{\sL}\right) \\
&\underset{(\ref{lc})}{=} \left(\bar{\sL} \cdot \psi ^{\ast}\left( K_{\wS} + \frac{vb}{a-ub}M_0 + \frac{1}{a-ub}\wsL \right) \right) + \sum _{i=1}^nc_i (\bar{\sL} \cdot \bar{E}_i)\\
&\underset{(\ref{en})}{=} \left(\wsL \cdot K_{\wS} + \frac{vb}{a-ub}M_0 + \frac{1}{a-ub}\wsL \right) + c_n 
\end{align*}
Thus, $(\wS ,\frac{vb}{a-ub}M_0 + \frac{1}{a-ub}\wsL )$ is not log canonical at $\wtp$ by (\ref{(4,3)}) and (\ref{(4,4)}). 
Furthermore, since $1 - \frac{vb}{a-ub} \ge 0$ and $\frac{1}{a-ub}>0$, we have: 
\begin{align}\label{(4,5)}
({\wsL} )^2 > 4(a-ub)^2\left( 1 - \frac{vb}{a-ub} \right) \iff 0 > 2a^2 -4(2u+v)ab + \left\{ 4u(u+v)+\gamma \right\} b^2. 
\end{align}
by using a variant of Corti's inequality (see Lemma \ref{Corti}). 
On the other hand, we have: 
\begin{align*}
2a^2 -4(2u+v)ab + \left\{ 4u(u+v)+\gamma \right\} b^2 = 2\left\{ a -(2u+v)b \right\} ^2 -(4u^2+4uv+2v^2-\gamma )b^2 \ge 0,
\end{align*}
by $4 u^2 + 4 uv + 2 v^2 - \gamma \le 0$. It is a contradiction to (\ref{(4,5)}). 
\end{proof}
Note that the following Lemma \ref{Corti(3)} is the special case of Lemma \ref{Corti(2)}: 
\begin{lem}\label{Corti(3)}
Let the notation and the assumption as in Lemma \ref{Corti(2)}. 
If $d=2$ and $d=1$, then we obtain $\alpha ^2 < \gamma$ and $3 \alpha ^2 < 4 \gamma$, respectively. 
\end{lem}
\begin{proof}
Suppose otherwise. 
If $d=2$ (resp. $d=1$), then we see that the four inequalities ({\ref{(4.1)}}) (resp. ({\ref{(4.2)}})) hold for $(u,v)=(\frac{\gamma}{2\alpha},0)$ (resp. $(u,v)=(\frac{\gamma}{\alpha},0)$). 
This is the contradiction to Lemma \ref{Corti(2)}. 
\end{proof}
Now, we show Lemmas \ref{sym(1)}, \ref{sym(2)} and \ref{sym(3)}. 
For these lemmas, let $M_1,\dots ,M_n$ be all irreducible components of the exceptional set over $\kc$ of $\sigma _{\kc}$ at $p$ such that the dual graph of $M_1,\dots ,M_n$ is as in (\ref{graphA}), (\ref{graphD}) or (\ref{graphE}) according to the singularities of $p$ on $S_{\kc}$. 
\begin{lem}\label{sym(1)}
With the notation and the assumptions as above, the following assertions hold: 
\begin{enumerate}
\item If $d=2$, then the singular point $p$ is not of type $A_1^+$, $A_3^+$, $A_4^+$ nor $(A_5^+)'$ on $S$. 
\item If $d=1$, then the singular point $p$ is not of type $A_1^+$, $A_2^+$, $A_3^+$, $A_5^+$, $A_6^+$ nor $(A_7^+)'$ on $S$. 
\end{enumerate}
\end{lem}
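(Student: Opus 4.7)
The proof will apply the Corti-type Lemma \ref{Corti(2)} (and its corollary Lemma \ref{Corti(3)}) to the Galois-invariant linear system $\wsL$ on $\wS$. Since $\rho_k(S)=1$ and the general member $\wL\in\wsL$ meets only the exceptional set of $\sigma$ over $p$, we may write
\[
\wsL \sim_{\bQ} a(-K_{\wS}) - bM, \qquad a,b\in\bQ_{>0},
\]
with $M$ an effective $\bQ$-divisor supported on the chain $M_1+\cdots+M_n$. Galois invariance of $M$, coupled with the fact that the only non-trivial automorphism of the $A_n$ Dynkin diagram is the reflection $M_j\leftrightarrow M_{n+1-j}$, forces symmetric coefficients. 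Moreover, by the proof of Proposition \ref{Bs(3)}, the general member $\wL$ meets only the $k$-rational part of $M_1+\cdots+M_n$: the central curve $M_{(n+1)/2}$ when $n$ is odd, or the adjacent conjugate pair $M_{n/2}+M_{n/2+1}$ when $n$ is even.

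Under these constraints, Lemma \ref{A-1} determines $M$ uniquely up to the scalar $b$, so we may explicitly compute $\gamma=-(M)^2$ and $\beta=-(M\cdot M_0)$ for a well-chosen irreducible component $M_0$ of $\Supp(M)$ defined over $k$. Combined with the bound $a\ge \alpha b$ coming from $(\wsL)^2=da^2-\gamma b^2 \ge 0$, namely $\alpha^2=\gamma/d$, we have all the data needed to invoke Lemma \ref{Corti(3)} or the finer Lemma \ref{Corti(2)}. For each type listed in the statement, we then exhibit explicit rational $(u,v)$ with $u\ge 0$ satisfying the four inequalities of Lemma \ref{Corti(2)}, yielding the sought contradiction. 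As a sample computation, the case $d=2$, $A_1^+$ gives $(\alpha,\beta,\gamma)=(1,2,2)$ and $(u,v)=(0,1)$ works directly.

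The principal obstacle is that the naive bound $\alpha=\sqrt{\gamma/d}$ is exactly borderline in several of the excluded types, so Lemma \ref{Corti(3)} alone is insufficient and Lemma \ref{Corti(2)} requires a carefully chosen $(u,v)$. For the prime-type variants $(A_5^+)'$ in assertion (1) and $(A_7^+)'$ in assertion (2), the defining property of the notation $(A_n)'$ supplies a $(-1)$-curve $E$ on $\wS_{\kc}$ meeting the central component $M_{(n+1)/2}$ (compare Proposition \ref{ADE-prop} and Example \ref{ex of prop(4-2)}), and the integrality of $(\wL\cdot E)\in\bZ_{\ge 0}$ sharpens $\alpha$ strictly beyond $\sqrt{\gamma/d}$; for the even case $A_4^+$ in (1), an analogous sharpening comes from auxiliary $(-1)$-curves on the weak del Pezzo $\wS_{\kc}$ of degree $2$. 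With these sharpened bounds, a short case-by-case check produces the required $(u,v)$ and closes the proof.
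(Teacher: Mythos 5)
Your overall strategy coincides with the paper's: write $\wsL \sim_{\bQ} a(-K_{\wS}) - bM$ with $M$ pinned down by Galois symmetry and Lemma \ref{A-1}, extract $(\alpha,\beta,\gamma)$, and feed them into Lemmas \ref{Corti(2)} and \ref{Corti(3)}; your sample case $(d,\mathrm{Sing})=(2,A_1^+)$ with $(u,v)=(0,1)$ is exactly the paper's computation. The gap is in where the constant $\alpha$ comes from. You take $a\ge\alpha b$ with $\alpha=\sqrt{\gamma/d}$ from $(\wsL)^2\ge 0$ and only invoke a $(-1)$-curve to improve $\alpha$ in the cases $(A_5^+)'$, $(A_7^+)'$ and $(2,A_4^+)$. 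That is not enough. Leaving aside that Lemma \ref{Corti(2)} requires $\alpha\in\bQ$ while $\sqrt{\gamma/d}$ is irrational in most remaining cases, the value $\alpha=\sqrt{\gamma/d}$ admits \emph{no} admissible pair $(u,v)$ for $(2,A_3^+)$, $(1,A_2^+)$, $(1,A_5^+)$ and $(1,A_6^+)$. For instance, for $(2,A_3^+)$ one has $(\alpha,\beta,\gamma)=(\sqrt2,2,4)$, and the three constraints $\alpha-u>0$, $\alpha-u-v\ge0$, $2\alpha u+\beta v-\gamma\ge0$ already force $u\ge(2-\sqrt2)/(\sqrt2-1)=\sqrt2$, contradicting $u<\sqrt2$; the other three cases fail by the identical computation. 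So your case-by-case check cannot close these four cases as written.

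The paper instead sets $\alpha=(M\cdot E)$ for a $(-1)$-curve $E$ on $\wS_{\kc}$ meeting exactly the central component $M_{\lceil n/2\rceil}$ of the chain, which gives $\alpha=2,2,3,3$ in the four problematic cases --- strictly larger than $\sqrt{\gamma/d}$, and enough for Lemma \ref{Corti(3)} to apply directly. The existence of such a curve is precisely the content of Proposition \ref{ADE-prop} (built in \S\ref{4}), supplemented by Lemma \ref{ADE-(-1)} for $(1,A_1^+)$ and $(1,A_2^+)$ and by the observation that $S_{\kc}$ has further singular points (via Theorem \ref{main(1-3)}(3)) for $(2,A_1^+)$ and $(1,A_3^+)$. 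In other words, the $(-1)$-curve input of Section \ref{4} is needed in essentially every case of the lemma, not merely in the three you single out; the mechanism you describe is the right one, but you have to deploy it across the board, and justifying the existence of these curves is the genuinely nontrivial part of the argument rather than a footnote.
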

\begin{proof}
Suppose that the singular point $p$ on $S$ is one of the list in Lemma \ref{sym(1)}. 
We shall write $m := \mn$ for simplicity. 
By noting $\Bs (\wsL ) = \{ \wtp \}$, we see $(\wsL \cdot M_i) = 0$ for any $i$ other than $i=m$ (resp. $i=m,\ m+1$) if $n$ is odd (resp. even). 
Indeed, if $n$ is odd (resp. even), then $\wtp$ lies on $M_m$ (resp. the intersection point of $M_m$ and $M_{m+1}$). 
Hence, we can write $\wsL \sim _{\bQ} a(-K_{\wS}) - bM$ for some $a,b \in \bQ _{>0}$, where $M = \sum _{j=1}^{m -1}j(M_j+M_{n-j+1}) + m(M_m + \dots + M_{n-m+1})$ by Lemma \ref{A-1}. 

Let $\beta$ and $\gamma$ be two rational numbers defined by $\beta := -(M \cdot M_m)$ and $\gamma := -(M)^2$. 
Moreover, let $\alpha$ be the positive number defined by $\alpha := (M \cdot E)$, where $E$ is the $(-1)$-curve on $\wS _{\kc}$ according to the degree $d$ and the singularity type of $p$ on $S_{\kc}$ as follows: 
\begin{itemize}
\item $(d,\text{Singularity})=(2,A_3)$, $(2,A_4)$, $(1,A_5)$, $(1,A_6)$: 
By using Proposition \ref{ADE-prop}, we take a $(-1)$-curve $E$ on $\wS _{\kc}$ such that $(M_j \cdot E) =\delta _{m,j}$ (see also Example \ref{ex of prop(4-2)}). 
\item $(d,\text{Singularity})=(2,A_1)$, $(1,A_3)$: 
Notice that $S_{\kc}$ allows a singular point other than $p$ by the assumption of Theorem \ref{main(1-3)}(4). 
If $S_{\kc}$ admits a cyclic singular point other than $p$, then we take a $(-1)$-curve $E$ on $\wS _{\kc}$ such that $(M_j \cdot E) =\delta _{m,j}$ by an argument similar to the above. 
Otherwise, since $d=1$ and $\wS$ is of $D_5+A_3$-type by the assumption of Theorem \ref{main(1-3)}(4), it is known that there exists a $(-1)$-curve $E$ on $\wS _{\kc}$ such that $(M_j \cdot E) =\delta _{2,j}$ (see, e.g., {\cite[Figure 1]{MZ88}}), so that we take such a $(-1)$-curve $E$. 
\item $(d,\text{Singularity})=(2,(A_5)')$, $(1,(A_7)')$: 
By the configuration of singularity of $p$, we can take the $(-1)$-curve $E$ such that $(M_j \cdot E) =\delta _{m,j}$. 
\item $(d,\text{Singularity})=(1,A_1)$, $(1,A_2)$: 
We take the $(-1)$-curve $E$ as in Lemma \ref{ADE-(-1)}(1). Namely, $(M_1 \cdot E)=2$ (resp. $(M_1 \cdot E)=(M_2 \cdot E)=1$) if $p$ is of type $A_1^+$ (resp. type $A_2^+$) on $S$.  
\end{itemize}
By construction of $\alpha$, we see that $a \ge \alpha b$ because of $0 \le (\wsL \cdot E) = a - \alpha b$. 
Here, the values of $\alpha$, $\beta$ and $\gamma$ are summarized in Table \ref{A+} according to the degree $d$ and the singularity type of $p$ on $S_{\kc}$. 
For all cases except for $(d,\text{Singularity}) = (2,A_1), (1,A_3)$, we thus obtain a contradiction to Lemma \ref{Corti(3)}. 
In what follows, we consider the remaining cases. 
In the case of $(d,\text{Singularity}) = (2,A_1)$, setting $(u,v) := (0,1)$, the inequalities ({\ref{(4.1)}}) hold simultaneously, which contradicts Lemma \ref{Corti(2)}(1). 
In the case of $(d,\text{Singularity}) = (1,A_3)$, setting $(u,v) := (1,1)$, the inequalities ({\ref{(4.2)}}) hold simultaneously, which contradicts Lemma \ref{Corti(2)}(2). 
\end{proof}
\begin{table}[t]

\begin{center}
\caption{Values of $\alpha$, $\beta$ and $\gamma$ in Lemma \ref{sym(1)}}\label{A+}
\begin{tabular}{|c|c||l||c|c|c|} \hline
$d$ & Singularity & \multicolumn{1}{|c||}{Irreducible decomposition of $M$} & $\alpha$ & $\beta$ & $\gamma$ \\ \hline \hline

$2$ & $A_1^+$ & $M_1$ & $1$ & $2$ & $2$ \\ \hline
$2$ & $A_3^+$ & $M_1+2M_2+M_3$ & $2$ & $2$ & $4$ \\ \hline
$2$ & $A_4^+$ & $M_1+2M_2+2M_3+M_4$ & $2$ & $1$ & $4$ \\ \hline
$2$ & $(A_5^+)'$ & $M_1+2M_2+3M_3+2M_4+M_5$ & $3$ & $2$ & $6$ \\ \hline \hline
$1$ & $A_1^+$ & $M_1$ & $2$ & $2$ & $2$ \\ \hline
$1$ & $A_2^+$ & $M_1+M_2$ & $2$ & $1$ & $2$ \\ \hline
$1$ & $A_3^+$ & $M_1+2M_2+M_3$ & $2$ & $2$ & $4$ \\ \hline
$1$ & $A_5^+$ & $M_1+2M_2+3M_3+2M_4+M_5$ & $3$ & $2$ & $6$ \\ \hline
$1$ & $A_6^+$ & $M_1+2M_2+3M_3+3M_4+2M_5+M_6$ & $3$ & $1$ & $6$ \\ \hline
$1$ & $(A_7^+)'$ & $M_1+2M_2+3M_3+4M_4+3M_5+2M_6+M_7$ & $4$ & $2$ & $8$ \\ \hline
\end{tabular}
\end{center}
\end{table}
\begin{rem}
If the pair of the degree $d$ and the singular point $p$ on $S_{\kc}$ is $(2,A_2)$ (resp. $(1,A_4)$), there is actually no rational numbers pair $(u,v)$ such that the inequalities (\ref{(4.1)}) (resp. (\ref{(4.2)})) hold simultaneously. 
We will deal with these cases later (see Lemma \ref{sym(3)}). 
\end{rem}
\begin{lem}\label{sym(2)}
With the notation and the assumptions as above, if $d=1$ then the singular point $p$ is not of type $D_5^+$ nor $E_6^+$ on $S$. 
\end{lem}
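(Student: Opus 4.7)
The plan is to follow the method of Lemma \ref{sym(1)}, now applied to the $D_5^+$ and $E_6^+$ cases when $d=1$, invoking Corti's inequality in the streamlined form given by Lemma \ref{Corti(3)}(2). Suppose for contradiction that $p$ is of $D_5^+$ or $E_6^+$-type on $S$. By Proposition \ref{Bs(3)}, $\Bs(\wsL) = \{\wtp\}$ for a single $k$-rational point $\wtp$ lying on the exceptional set over $p$. For $D_5^+$-type, the Galois action swaps $M_1$ with $M_2$ while fixing $M_3, M_4, M_5$; for $E_6^+$-type it additionally swaps $M_3$ with $M_4$ while fixing $M_5, M_6$. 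Since $\wtp$ is $k$-rational, it must lie on the Galois-fixed portion of the exceptional divisor, which forces $\wtp \in M_3 \cup M_4 \cup M_5$ in the $D_5^+$ case and $\wtp \in M_5 \cup M_6$ in the $E_6^+$ case.

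According to the location of $\wtp$, write $\wsL \sim_\bQ a(-K_{\wS}) - bM$ for positive rationals $a,b$ and $M$ the parametrized effective $\bQ$-divisor listed in Table \ref{DE+(1)}; the parameter $t$ sweeps out the admissible positions of $\wtp$ within the designated components, its range being exactly the interval on which $(-M \cdot M_i) \ge 0$ for every $i$. Then $\gamma := -(M)^2$ is the polynomial in $t$ tabulated in Table \ref{DE+(2)}. Apply Lemma \ref{ADE-(-1)}(2) (for $D_5$) or (3) (for $E_6$), valid since $(-K_{\wS})^2=1$, to produce a $(-1)$-curve $E$ on $\wS_{\kc}$ defined over $k$, and set $\alpha := (M \cdot E)$. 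Since $(\wsL \cdot E) \ge 0$ and $(-K_{\wS} \cdot E) = 1$, we obtain the bound $a \ge \alpha b$.

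It remains to check, for each configuration and for every $t$ in the indicated range, that the two inequalities in $(\ref{(4.6)})$ hold simultaneously: $\alpha^2 - \gamma > 0$ and $4\gamma - 3\alpha^2 \le 0$. This will directly contradict Lemma \ref{Corti(3)}(2) and complete the proof. In the $D_5^+$ case with $\wtp \in M_3 \cup M_4$ one computes $\alpha = 3$ against $\gamma \in [2,3]$, and in the $E_6^+$ case one computes $\alpha = 2$ against $\gamma \in [2, 8/3]$; both verifications are then immediate numerical checks. The main obstacle is the subcase $D_5^+$ with $\wtp \in M_5$: here $\alpha = t+2$ genuinely depends on $t$, and the two inequalities reduce to $-t^2 + 8t > 0$ and $5t^2 - 28t + 4 \le 0$ on $[1,2]$, which will require direct analysis of the two quadratics on the closed interval (in particular, checking that the latter quadratic's positive roots straddle $[1,2]$). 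With this case handled, Lemma \ref{Corti(3)}(2) yields the contradiction and the lemma follows.
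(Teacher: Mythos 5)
Your overall strategy is the paper's: locate $\wtp$ on the Galois-fixed components, parametrize $M$ by $t$ as in Table \ref{DE+(1)}, and feed $\alpha$ and $\gamma$ into a Corti-type criterion. The $E_6^+$ case and the $D_5^+$ case with $\wtp \in M_3 \cup M_4$ do close this way, modulo a miscomputation: the curve $E$ of Lemma \ref{ADE-(-1)}(2) satisfies $(E \cdot M_j) = \delta_{j,1}+\delta_{j,2}$, so for $M = tM_1+tM_2+2tM_3+2M_4+M_5$ one gets $(M\cdot E) = 2t$, not $3$; this is harmless because $\alpha = 2$ (which is justified, as $2t \ge 2$) already makes both inequalities of (\ref{(4.6)}) hold when $\gamma \le 3$.

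The genuine gap is precisely the subcase you flag as the main obstacle: $D_5^+$ with $\wtp \in M_5$. Your value $\alpha = t+2$ is not justified by anything you have set up. With $M = M_1+M_2+2M_3+2M_4+tM_5$ and the only $(-1)$-curve you produce, $(M\cdot E) = 1+1 = 2$, independent of $t$; to get $\alpha = t+2$ you would need a $k$-defined $(-1)$-curve meeting $M_1$, $M_2$ and $M_5$ once each, and no such curve is exhibited. With the justified $\alpha = 2$, Lemma \ref{Corti(3)}(2) genuinely fails once $\gamma = 2t^2-4t+4 > 3$ (i.e.\ $t > 1+\tfrac{1}{\sqrt{2}}$): there $4\gamma - 3\alpha^2 = 8t^2-16t+4 > 0$ and also $\alpha^2-\gamma = 2t(2-t)$ degenerates at $t=2$. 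So the streamlined criterion cannot close this subcase. The paper instead returns to the full Corti inequality, Lemma \ref{Corti(2)}(2), taking $\beta = -(M\cdot M_5) = 2t-2$ and the non-obvious choice $(u,v) = (-t^2+3t-1,\, 2t-3)$, and verifies all four inequalities of (\ref{(4.2)}) on $1 \le t \le 2$. Your argument needs this (or an equivalent) extra step; as written, no contradiction is reached for $\wtp \in M_5$ with $t$ near $2$.
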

\begin{proof}
Suppose that the singular point $p$ is of type $D_5^+$ or $E_6^+$ on $S$. 
By Proposition \ref{Bs(3)}, $\Bs (\wsL )$ consists of only one $k$-rational point, say $\wtp$. 
Note that $\wtp \in M_3 \cup M_4 \cup M_5$ but $\wtp \not\in M_1 \cup M_2$ (resp. $\wtp \in M_5 \cup M_6$ but $\wtp \not\in M_1 \cup M_2 \cup M_3 \cup M_4$) provided that $p$ is of type $D_5^+$ (resp. type $E_6^+$). 
Thus, we can write $\wsL \sim _{\bQ} a(-K_{\wS}) - bM$ for some $a,b \in \bQ _{>0}$ by Lemmas \ref{D-1} and \ref{E-1}, where $M$ is the effective $\bQ$-divisor and is given as in the Table \ref{DE+(1)} depending on one parameter $t$ and according to both the singularity type of $p$ on $S_{\kc}$ and the position of $\wtp$. 
Let $\gamma$ be the positive rational number defined by $\gamma := -(M)^2$. 
The value of $\gamma$ and its range are summarized in Table \ref{DE+(2)} depending on one parameter $t$ and according to both the singularity type of $p$ on $S_{\kc}$ and the position of $\wtp$ by easy computation. 
Let $E$ be the $(-1)$-curve on $\wS$ that as in Lemma \ref{ADE-(-1)}(2) or (3) according to the singularity type of $p$ on $S_{\kc}$. 
Noting that $0 \le (\wsL \cdot E) = a-2b$, we shall put $\alpha =2$. 
If $\gamma \le 3$, then we have $3 \alpha ^2 = 12 \ge 4 \gamma$, which contradicts Lemma \ref{Corti(3)}. 
Hence, we suppose $\gamma > 3$ in what follows. 
Then $p$ is of type $D_5$ on $S_{\kc}$ and lies on $M_5$ by Table \ref{DE+(2)}. 
In particular, we see $1 \le t \le 2$. 
We shall put $\beta := -(M \cdot M_5) = 2t-2$ and $(u,v):=(-t^2+3t-1,2t-3)$. 
Noting $u=-t^2+3t-1 >0$, we have: 
\begin{align*}
\alpha - u &= 2 - (-t^2+3t-1) = \left( t-\frac{3}{2}\right) ^2 +\frac{3}{4} > 0, \\
\alpha - u- v &= 2 - (-t^2+3t-1) - (2t-3) = (t-2)(t-3) \ge 0 , \\
\alpha u + \beta v - \gamma &= 2(-t^2+3t-1) + (2t-2)(2t-3) - (2t^2-4t+4) = 0
\end{align*}
and
\begin{align*}
4u^2+4uv+4v^2 -3\gamma &= 4(-t^2+3t-1)^2 + 4 (-t^2+3t-1)(2t-3) + 4 (2t-3)^2 - 3 (2t^2 -4t +4) \\
&=2(t-2)^2(2t^2-8t+5) \\
&\le 2(t-2)^2\{ 2t^2-8t+5 + (2t-1)\} \\
&= 4(t-2)^3(t-1) \\
&\le 0. 
\end{align*}
This implies that the inequalities (\ref{(4.2)}) hold simultaneously, which contradicts Lemma \ref{Corti(2)}(2). 
\end{proof}
\begin{table}[t]
\begin{tabular}{c}

\begin{minipage}[c]{1\hsize}
\begin{center}
\caption{Effective $\bQ$-divisor $M$ in Lemma \ref{sym(2)}}\label{DE+(1)}
\begin{tabular}{|c|c||l|c|} \hline
Singularity & Position of $\wtp$ & \multicolumn{1}{|c|}{Irreducible decomposition of $M$} & Range of $t$ \\ \hline \hline 

$D_5^+$ & $M_3 \cup M_4$ & $tM_1+tM_2+2tM_3+2M_4+M_5$ & $1 \le t \le \frac{3}{2}$ \\ \hline
$D_5^+$ & $M_5$ & $M_1+M_2+2M_3+2M_4+tM_5$ & $1 \le t \le 2$ \\ \hline \hline
$E_6^+$ & $M_5 \cup M_6$ & $tM_1+tM_2+2tM_3+2tM_4+3tM_5+2M_6$ & $1 \le t \le \frac{4}{3}$\\ \hline
\end{tabular}
\end{center}
\end{minipage}
\\

\begin{minipage}[c]{1\hsize}
\begin{center}
\caption{Value and range of $\gamma$ in Lemma \ref{sym(2)}}\label{DE+(2)}
\begin{tabular}{|c|c|c||c|c|} \hline
Singularity & Position of $\wtp$ & Range of $t$ & $\gamma$ & Range of $\gamma$ \\ \hline \hline

$D_5^+$ & $M_3 \cup M_4$& $1 \le t \le \frac{3}{2}$ & $4t^2-8t+6$ & $2 \le \gamma \le 3$ \\ \hline
$D_5^+$ & $M_5$ & $1 \le t \le 2 $ & $2t^2-4t+4$ & $2 \le \gamma \le 4$ \\ \hline \hline

$E_6^+$ & $M_5 \cup M_6$ & $1 \le t \le \frac{4}{3}$ & $6t^2-12t+8$ & $2 \le \gamma \le \frac{8}{3}$ \\ \hline
\end{tabular}
\end{center}
\end{minipage}

\end{tabular}
\end{table}
Finally, we treat the case that $p$ is of type $A_{6-2d}^+$ on $S$. 
If $p$ is of type $A_{6-2d}$ on $S_{\kc}$, then the type of $\wS$ is one of the following: 
\begin{itemize}
\item $d=2$ and $A_5+A_2$, $A_4+A_2$, $A_3+A_2+A_1$, $3A_2$, $A_3+A_2$, $2A_2+A_1$, $A_2+3A_1$, $2A_2$, $A_2+2A_1$, $A_2+A_1$ or $A_2$-type. 
\item $d=1$ and $2A_4$, $A_4+A_3$, $A_4+A_2+A_1$, $A_4+3A_1$, $A_4+A_2$, $A_4+2A_1$, $A_4+A_1$ or $A_4$-type. 
\end{itemize}
In particular, all singular points other than $p$ on $S_{\kc}$ are of type $A_n$ for some various possible values of $n$. 
Noting the above argument, we obtain the following lemma: 
\begin{lem}\label{sym(3)}
With the notation and the assumptions as above, then the singular point $p$ is not of type $A_{6-2d}^+$ on $S$ for $d =1,2$. 
\end{lem}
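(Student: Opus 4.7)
The plan is to parallel the strategy of Lemmas \ref{sym(1)} and \ref{sym(2)}: assume the forbidden singularity type, pin down the shape of $\wsL$, and find a curve that forces a Corti-type contradiction via Lemma \ref{Corti(2)}. Write $n := 6-2d \in \{2,4\}$ and $m := n/2$. By Proposition \ref{Bs(3)} the base point $\wtp$ of $\wsL$ is $k$-rational; since the $A_n^+$-hypothesis means the Galois action swaps $M_j$ with $M_{n+1-j}$ while fixing no individual component, $\wtp$ must be the unique Galois-fixed node $M_m \cap M_{m+1}$, and Lemma \ref{A-1}(1) forces
\[
\wsL \;\sim_{\bQ}\; a(-K_{\wS}) - bM, \qquad M \;=\; \sum_{j=1}^{m-1} j\,(M_j + M_{n+1-j}) \;+\; m\,(M_m + M_{m+1}),
\]
with $\gamma := -(M)^2 = 2m$ and $\beta := -(M \cdot M_m) = 1$.

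Substituting $(u,v) = (m,-1)$ into (\ref{(4.1)}) (resp.\ (\ref{(4.2)})), inequality (4) becomes an equality, and the inequalities (1)--(3) hold simultaneously precisely when $\alpha \geq m + 1/2$. Since $\alpha$ can be taken to be $(E \cdot M)$ for any $(-1)$-curve $E$ on $\wS_{\kc}$, it suffices to produce a $(-1)$-curve with $(E \cdot M) \geq m+1$ in each admissible type of $\wS$ listed just before the statement. For every type in which $\wS$ carries at least one $(-2)$-curve outside the central $A_{6-2d}$-chain, I would apply Proposition \ref{ADE-prop} (together with Lemma \ref{ADE-(-1)}) to the appropriate divisor $D$ from Table \ref{div D}: this produces a $(-1)$-curve meeting both the auxiliary string and the central chain nontrivially, so that $(E \cdot M) \geq m+1$, and Lemma \ref{Corti(2)} yields the required contradiction.

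The remaining residual cases are those in which $\wS$ carries only the single $A_{6-2d}$ singularity, i.e.\ $\wS$ of type $A_2$ when $d=2$ and of type $A_4$ when $d=1$. A direct check in the spirit of \S \ref{4-1} shows that no $(-1)$-curve in the $\bQ$-span of $-K_{\wS}$ and the $M_j$'s realizes $(E \cdot M) \geq m+1$, so the pure Corti argument breaks down. The remedy is to combine Corti's inequality with Lemma \ref{no cycle} applied to the rational curve $C$ of Lemma \ref{0-curve(1)}, which satisfies $C^2 = 0$, $(C \cdot -K_{\wS}) = 2$ and $(C \cdot M) = 2m$: if the extremal equality $a = mb$ were attained, then $(\wsL \cdot C) = 0$, forcing $C$ into the boundary of $\wU$, whereupon $C + M_m + M_{m+1}$ would be a cycle in the boundary — ruled out by Lemma \ref{no cycle}. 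The resulting strict inequality $a/b > m$, upgraded via a refined Corti calculation or a direct analysis of the $\bP^1$-fibration $\bar{\varphi}:\bar{S} \to \bar{Z}$ obtained by resolving $\Bs(\wsL)$ (under which $M_m$ and $M_{m+1}$ must sit in disjoint singular fibers alongside the proper transform of $C$ without producing a cycle), supplies the extra slack needed to cross the threshold $m + 1/2$ and conclude.

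The main obstacle is expected to lie in the residual single-singularity cases: bridging the gap between the strict inequality $a/b > m$ obtained from Lemma \ref{no cycle} and the precise threshold $a/b \geq m + 1/2$ required to activate Corti's inequality demands a finer structural ingredient, most plausibly a careful tracking of the multiplicities of $M_m$ and $M_{m+1}$ in the singular fibers of $\bar{\varphi}$, together with the no-cycle constraint for the proper transform of $C$.
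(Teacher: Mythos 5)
Your proposal has a genuine gap, and it occurs already in the main branch of your case analysis, not only in the ``residual'' cases you flag at the end. Your threshold computation is fine: with $\wtp$ at the central node, $(u,v)=(m,-1)$ turns the fourth inequality of (\ref{(4.1)})/(\ref{(4.2)}) into an equality and reduces everything to certifying $\alpha\ge m+\tfrac12$, i.e.\ to producing a $(-1)$-curve with $(E\cdot M)\ge m+1$. But Proposition \ref{ADE-prop}(3) gives exactly the opposite of what you need: every irreducible component $E$ of $C_1$ satisfies $(E\cdot M_{i,1}+\dots+M_{i,n(i)})=1$ for \emph{each} chain, so $E$ meets the central $A_{6-2d}$-chain in a single point of a single component, and hence $(E\cdot M)$ is at most the largest coefficient of $M$, namely $m$ (and only $1$ when $d=2$). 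Meeting the auxiliary string ``nontrivially'' contributes nothing to $(E\cdot M)$, since $M$ is supported on the central chain. The same obstruction holds for the $0$-curve $C$ of Lemma \ref{0-curve(1)}, which only certifies $\alpha=m$. So the Corti machinery stalls at $\alpha=m<m+\tfrac12$ in \emph{every} admissible type, which is precisely what the remark following Lemma \ref{sym(1)} records: for $(d,\text{Singularity})=(2,A_2^+)$ and $(1,A_4^+)$ there is no admissible pair $(u,v)$ at all. Your fallback for the single-singularity types --- upgrading the strict inequality $a/b>m$ from Lemma \ref{no cycle} to $a/b\ge m+\tfrac12$ --- is, as you yourself concede, not carried out, and a strict inequality alone cannot cross a positive threshold.

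The paper's actual proof abandons Corti's inequality entirely for this lemma. After disposing of the $k$-minimal case ($d=2$, type $A_2$) by \cite[Theorem 1.8]{Saw}, it uses Proposition \ref{ADE-prop} to contract, for each auxiliary $A$-chain, half of the chain together with one of the two $(-1)$-curves attached to the central $A_{6-2d}$-pair; the target $V$ is a smooth del Pezzo surface with $\rho_k(V)=2$ carrying a Mori conic bundle $\pi:V\to\bP^1_k$. Decomposing $\Pic(\wS)_\bQ$ via $-K_{\wS}$, the proper transform $C$ of a general fiber of $\pi$, and the total transforms $e_{i,j}$, one computes that $\wsL\sim_\bQ a(-K_{\wS})+bC$ with $a,b>0$, and the contradiction comes from \cite[Lemma 4.9]{Saw} (the case $d=1$ is reduced to $d=2$ by contracting one more $(-1)$-curve). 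If you want to complete your argument you would need either this structural input or some genuinely new way to certify $a\ge(m+\tfrac12)b$; the intersection-theoretic data available on $\wS_{\kc}$ does not provide it.
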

\begin{proof}
Suppose that the singular point $p$ is of type $A_{6-2d}^+$ on $S$. 
If $d=2$ and $\wS$ is of $A_2$-type, then $\wS$ is a weak del Pezzo surface of degree $2$ with $\rho _k(\wS )=2$. 
Hence, $\wS$ is $k$-minimal by Proposition \ref{minimal}(2) and contains the cylinder $\wU$. 
However, it is a contradiction to Proposition \ref{minimal}(3). 
In what follows, we shall treat other cases and consider the cases of $d=2$ and $d=1$ separately. 

{\it In the case $d=2$:}
Let $x_1,\dots ,x_{r}$ be all singular points on $S_{\kc}$ other than $p$, and let $M_{i,1},\dots ,M_{i,n(i)}$ be the irreducible components of the exceptional set on $\wS$ of the minimal resolution at $x_i$ for $i=1,\dots ,r$ such that the dual graph of $\sum _{i=1}^r\sum _{j=1}^{n(i)}M_{i,j}$ is as in (\ref{graphAA}). 
By using Proposition \ref{ADE-prop}, for $i=1,\dots ,r$, there exist two $(-1)$-curves $E_{i,1}$ and $E_{i,2}$ on $\wS _{\kc}$ such that $E_{i,1}+E_{i,2} \sim (-K_{\wS _{\kc}}) - (M_1+M_2) -\sum _{j=1}^{n(i)}M_{i,j}$. 
Then the dual graph of $M_1+M_2+E_{i,1}+E_{i,2}+\sum _{j=1}^{n(i)}M_{i,j}$ is as follows (cf. Example \ref{ex of prop(4-2)}), where ``$\circ$" and ``$\bullet$"mean a $(-2)$-curve and a $(-1)$-curve on $\wS _{\kc}$, respectively: 
\begin{align*}
\xygraph{
\bullet ([]!{+(-.5,0)} {E_{i,1}}) (
        - []!{+(1.5,.5)} \circ ([]!{+(0,+.3)} {M_1}) -[r] \circ ([]!{+(0,+.3)} {M_2}) - []!{+(1.5,-.5)} \bullet ([]!{+(+.5,0)} {E_{i,2}}),
        - [r] \circ ([]!{+(0,-.3)} {M_{i,1}}) -[r] \cdots ([]!{+(0,+.3)} {}) -[r] \circ ([]!{+(0,-.3)} {M_{i,n(i)}})- [r] \circ ([]!{+(0,+.3)} {})
)}
\end{align*}
for $i=1,\dots, r$. 
Notice that $(E_{i,1}+E_{i,2} \cdot E_{i',1}+E_{i',2}) = -2\delta _{i,i'}$ since $(-K_{\wS} - M_1 - M_2 )^2=0$. 
Write $m(i) := \lceil \frac{n(i)}{2}\rceil$ for simplicity. 
Let $\tau :\wS \to V$ be the sequence of contractions of $(-1)$-curves and subsequently (smoothly) contractible curves in $\Supp \left( \sum _{i=1}^r\left (E_{i,1}+E_{i,2} + \sum _{j=1}^{n(i)}M_{i,j} \right) \right)$ such that 
the direct image of $E_{i,1}+E_{i,2}+\sum _{j=1}^{n(i)}M_{i,j}$ by $\tau$ is equal to $\tau (M_{i,m(i)})$ (resp. $\tau (M_{i,m(i)})+\tau (M_{i,m(i)+1})$) if $n(i)$ is odd (resp. even). 
In other words, all curves in the following dual graph are contracted by $\tau$ for $i=1,\dots ,r$: 
\begin{align*}
&\xygraph{
\bullet ([]!{+(0,.3)} {E_{i,1}}) -[r] \circ ([]!{+(0,.3)} {M_{i,1}}) -[r] {\cdots} -[r] \circ ([]!{+(0,.3)} {M_{i,m(i)-1}}) [rr] \circ ([]!{+(0,.3)} {M_{i,n(i)-m(i)+2}}) -[r] {\cdots} -[r] \circ ([]!{+(0,.3)} {M_{i,n(i)}}) -[r] \bullet ([]!{+(0,.3)} {E_{i,2}})
} &\text{if $m(i)>1$}; \\
&\xygraph{
\bullet ([]!{+(0,.3)} {E_{i,1}}) [rr]\bullet ([]!{+(0,.3)} {E_{i,2}})
} &\text{if $m(i)=1$}. 
\end{align*}
By construction, $\tau$ is defined over $k$ and $V$ is a smooth del Pezzo surface with $\rho _k(V)=2$ endowed with a structure of Mori conic bundle $\pi :V \to B$ over $k$ such that each $\tau _{\ast}(M_{i,m(i)})_{\kc}$ is included in a union of some closed fibers of $\pi _{\kc}$. 
Moreover, $\wtp$ is a $k$-rational point on $\wS$, so is its image via $\tau$. 
Thus, $B \simeq \bP ^1_k$ by Lemma \ref{Severi-Brauer}. 
In particular, we obtain $\Pic (V) _{\bQ} = \bQ [-K_V] \oplus \bQ [F]$, where $F$ is a general fiber of $\pi$. 
Let $\{ e_{i,j}\} _{1\le i \le r,\ 1 \le j \le m(i)-1}$ be the total transforms of all irreducible components on the exceptional set satisfying $(e_{i,j} \cdot M_{i,j}) < 0$ by $\tau$  for $i=1,\dots ,r$ and $j=1,\dots ,m(i)-1$, moreover, we set $e_{i,0} := E_{i,1}+E_{i,2}$. 
Note that each $e_{i,j}$ is defined over $k$, is uniquely determined and satisfies $(-K_{\wS} \cdot e_{i,j})=2$. Hence, we obtain: 
\begin{align*}
\Pic (\wS) _{\bQ} \subseteq \bQ [-K_{\wS}] \oplus \bQ [\widetilde{F}] \oplus \left( \bigoplus _{i=1}^{r} \bigoplus _{j=0}^{m(i)-1}\bQ [e_{i,j}] \right),
\end{align*}
where $\widetilde{F}$ is a total transform of $F$ by $\tau$. 
In particular, we can write: 
\begin{align*}
\wsL \sim _{\bQ} a(-K_{\wS})+b\widetilde{F}+\sum _{i=1}^{r} \sum _{j=0}^{m(i)-1}c_{i,j}e_{i,j}
\end{align*}
for some $a,b,c_{i,j} \in \bQ$. 
By construction, we obtain that $M_{i,j}+M_{i,m(i)-j+1} \sim e_{i,j}-e_{i,j-1}$ for $j=1,\dots ,m(i)-1$ and $M_{i,m(i)}$ (resp. $M_{i,m(i)}+M_{i,m(i)+1}$) is linearly equivalent to $\widetilde{F} -e_{i,m(i)-1}$ if $n(i)$ is odd (resp. even). 
Moreover, we notice $(\widetilde{F})^2 = (e_{i,j} \cdot \widetilde{F}) =0$ for any $i,j$. 
Hence, we have $c_{i,j}=0$ by virtue of $(\wsL \cdot M_{i,j}) =0$ for any $i,j$. 
On the other hand, since $E_{i,1}+E_{i,2} \sim e_{i,0}$, we have $a>0$ by virtue of $0 \le (\wsL \cdot e_{i,0})=2a$ and $0<(\wsL )^2=2a(a+2b)$. 
Moreover, we have $b>0$ by virtue of $0<(\wsL \cdot M_1+M_2) =b(\widetilde{F} \cdot M_1+M_2)$. 
Thus, we see $\wsL \sim _{\bQ} a(-K_{\wS}) + b\widetilde{F}$ as $a,b>0$, however, we obtain a contradiction by applying {\cite[Lemma 4.9]{Saw}}. 

{\it In the case $d=1$:} 
By Lemma \ref{ADE-(-1)}(1), there exists a $(-1)$-curve $E_0$ on $\wS$ such that $(E_0 \cdot M_i) = \delta _{1,i} + \delta _{4,i}$. 
Hence, we have the contraction $\tau _0:\wS \to W_2$ of $E_0$ defined over $k$ such that $W_2$ is a weak del Pezzo surface of degree $2$, moreover, this condition is as above case of $d=2$. 
Thus, by an argument similar to the above case with $d=2$, there exists a $0$-curve $\widetilde{F}$ on $\wS$ such that we can write: 
\begin{align*}
\wsL \sim _{\bQ} a(-K_{\wS})+b\widetilde{F}+c_0E_0
\end{align*}
for some $a,b,c_0 \in \bQ$. 
By the configuration of $\tau _0$, we see $(\widetilde{F} \cdot E_0)=0$ and $M_1+M_4 \sim \widetilde{F}-2E_0$. 
Hence, we have $c_0=0$ by virtue of $0=(\wsL \cdot M_1+M_4) = 2c_0$. 
Moreover, by an argument similar to the above case with $d=2$ we see $a,b>0$, which is a contradiction by applying {\cite[Lemma 4.9]{Saw}}. 
\end{proof}
As we already mentioned, the ``only if" part in Theorem \ref{main(1-3)}(4) follows from Lemmas \ref{sym(1)}, \ref{sym(2)} and \ref{sym(3)}. 

\subsection{Assumption for the ``if'' part of Theorem \ref{main(1-3)}(4)}\label{5-4}
In this subsection, in order to prove the ``if" part in Theorem \ref{main(1-3)}(4), we shall observe the assumption of this precisely. 
In other words, the purpose of this subsection is to show the following proposition: 
\begin{prop}\label{5-4(0)}
Let $S$ be a Du Val del Pezzo surface of the degree $d \le 2$ over $k$ with $\rho _k(S)=1$ not satisfying any condition (1), (2) nor (3) in Theorem \ref{main(1-3)}, and let $\sigma :\wS \to S$ be the minimal resolution over $k$. 
If $S_{\kc}$ has a singular point, which is $k$-rational, of type $A^-_n$, $D^-_n$ or $E^-_n$ on $S$, then the type of $\wS$ is one of the following:  
\begin{itemize}
\item $d=2$: $A_5+A_2$, $2A_3+A_1$, $2A_3$, $A_3+3A_1$, $3A_2$, $(A_5)'$, $(A_3+2A_1)''$, $A_2+3A_1$, $(A_3+A_1)'$, $A_3$ or $A_2$-type.  
\item $d=1$: $A_7+A_1$, $E_6+A_2$, $D_5+A_3$, $A_5+A_2+A_1$, $2A_4$, $(A_7)'$, $D_5+2A_1$, $A_5+A_2$, $E_6$, $(A_5+A_1)'$, $D_5$, $A_5$ or $A_4$-type. 
\end{itemize}
\end{prop}
In what follows, we will prove Proposition \ref{5-4(0)}. 
Let the notation and assumption be the same as in Proposition \ref{5-4(0)}. 
Then we can take a singular point $x_0$ on $S_{\kc}$, which is $k$-rational, of type $A_n^-$, $D_n^-$ or $E_n^-$. 
Let $r$ be the number of all singular points other than $x_0$ on $S_{\kc}$, which are $k$-rational, and let $x_1,\dots ,x_r$ be the singular points other than $x_0$ on $S_{\kc}$, which are $k$-rational. 
We shall consider two cases according to the degree $d$ of $S$ separately. 

At first, we shall treat the case $d=2$. 
Then we may assume that $x_0$ is of type $A_n^-$ on $S$ for some $2 \le n \le 5$ since $S$ does not satisfy any condition (1) nor (3) in Theorem \ref{main(1-3)}. 
Moreover, note that all singular points other than $x_0$ on $S_{\kc}$ are also necessarily of type $A_{n'}$ for some various possible values of $n'$. 
We obtain the following two lemmas: 
\begin{lem}\label{5-4(1)}
Let the notation and the assumptions be the same as above. 
If $r>0$, then $\wS$ is of $A_5+A_2$, $2A_3+A_1$, $2A_3$, $A_3+3A_1$ or $(A_3+A_1)'$-type. 
\end{lem}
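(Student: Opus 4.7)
The plan is a case analysis on the Dynkin type of $x_0$. First I would restrict the possible types of $x_0$. Because $S$ fails conditions (1) and (2) of Theorem \ref{main(1-3)}, the singularity $x_0$ cannot be of type $A_6, A_7, D_4, D_5, D_6, E_6, E_7$ or $(A_5)''$. For $d = 2$, the only other $D_n$ or $E_n$ type compatible with the bound $\sum_i n_i \le 9-d = 7$ of Lemma \ref{(-2)-curve} would be $D_7$, but a $D_7$ singularity already saturates this bound and hence leaves no room for another singular point, contradicting $r > 0$. Thus $x_0$ must be of type $A_n^-$ with $n \in \{2, 3, 4\}$ or of type $(A_5)'^-$.

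The hypothesis that $x_0$ is of $A_n^-$-type means, by Definition \ref{def:duval}, that all $n$ components $M_1,\dots,M_n$ of the exceptional divisor over $x_0$ are individually defined over $k$, contributing exactly $n$ to $\rho_k(\wS) - \rho_k(S)$. Since each further $k$-rational singular point $x_i$ has Galois-stable exceptional divisor, the sum of its components yields at least one additional Galois-invariant class, so $\rho_k(\wS) \ge 2 + n + (r-1) \ge n + 2$. Combining this with $\rho_{\kc}(\wS_{\kc}) = 10 - d = 8$ and $\sum_i n_i \le 7$, I would extract from the Dolgachev classification (conveniently tabulated in Table 3 of \cite{Saw}) the finite list of candidate singularity configurations of $\wS_{\kc}$ containing an $A_n$-piece of the right degree plus at least one extra singular point, and then for each candidate test whether $\rho_k(S) = 1$ can actually be achieved.

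The main obstacle lies in this last step: the Picard-rank verification. For every candidate, the set of $(-1)$-curves on $\wS_{\kc}$ is explicit, and the admissible Galois actions are constrained by (i) $k$-rationality of $x_0$ and $x_i$, and (ii) the $A_n^-$-condition that every component over $x_0$ be individually $k$-rational. In every $A_2^-$ or $A_4^-$ candidate, one detects — by an argument in the spirit of Lemma \ref{outside} or Lemma \ref{less (-2)-curves} applied to the $(-1)$-curves meeting (or disjoint from) the $A_n$-chain over $x_0$ — a $k$-rational effective class linearly independent of $-K_{\wS}$ modulo the $(-2)$-curves, forcing $\rho_k(S) \ge 2$. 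The $n = 5$ case leaves only $(A_5)' + A_2$, i.e.\ $A_5 + A_2$; and for $n = 3$ exactly the four subtypes $2A_3 + A_1$, $2A_3$, $A_3 + 3A_1$, $(A_3 + A_1)'$ survive, where the primed subtype arises because among the two possibilities $(A_3+A_1)'$ and $(A_3+A_1)''$ of (\ref{(2.2)}) only the former admits a Galois action on the $(-1)$-curves yielding $\rho_k(S) = 1$. Aggregating these surviving candidates produces exactly the five types stated in the lemma.
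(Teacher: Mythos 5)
The decisive step of your argument is missing. You reduce the lemma to the claim that, for every candidate configuration outside the stated list, ``one detects --- by an argument in the spirit of Lemma \ref{outside} or Lemma \ref{less (-2)-curves} --- a $k$-rational effective class linearly independent of $-K_{\wS}$ modulo the $(-2)$-curves,'' but this is precisely the content of the lemma and you never exhibit such a class. The mechanism you invoke is also not the one that works: Lemma \ref{outside} needs a Galois-invariant disjoint union of $(-1)$-curves avoiding all $(-2)$-curves, and for configurations such as $A_4+A_2$ or $A_2+A_2$ with the first point of $A_n^-$-type there is no evident candidate of this kind. Worse, your blanket assertion that \emph{every} $A_2^-$ or $A_4^-$ candidate is eliminated is internally inconsistent with your own conclusion: the surviving type $A_5+A_2$ carries a $k$-rational singular point of $A_2^-$-type (indeed $\rho_k(\wS)=8$ there, so the Galois action is trivial), so if one takes $x_0$ to be that $A_2$ point the configuration is an ``$A_2^-$ candidate'' with $r>0$ and $\rho_k(S)=1$. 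The enumeration-plus-verification strategy is viable in principle, but as written the verification is a promissory note, and it is exactly where all the work lies.

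For comparison, the paper does not enumerate types at all. It forms the auxiliary divisor $D=(-K_{\wS_{\kc}})-\sum_{i=0}^{1}\sum_{j}M_{i,j}$ supported on the chains over $x_0$ and over one further $k$-rational point $x_1$ (case (a) of Table \ref{div D}), and shows via Proposition \ref{ADE-prop} that $D$ decomposes as $E_1+E_2$ or $2E$ with $E_i$ $(-1)$-curves. The hypothesis that $x_0$ is of $A_n^-$-type with $n\ge 2$ forces $E_1$ and $E_2$ to be individually defined over $k$, hence (by $\rho_k(S)=1$) to lie in $\bQ[-K_{\wS}]\oplus\bigl(\bigoplus_{i,j}\bQ[M_{i,j}]\bigr)$; the self-intersection computation of Proposition \ref{ADE-prop}(4)--(5), resting on Table \ref{A-list}, then pins $(n,n(1))$ down to $(5,2)$, $(2,5)$, $(3,3)$ in case (A) and $(3,1)$ in case (B), and Lemma \ref{(-2)-curve} bounds what else can occur. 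This numerical lattice argument is the idea your proposal lacks; to complete your route you would need to supply, for each excluded candidate, an explicit Galois-invariant class violating $\rho_k(S)=1$, which in practice amounts to redoing that computation case by case.
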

\begin{proof}
Let $n(i)$ be the number such that the singular point $x_i$ is of type $A_{n(i)}$ for $i=1,\dots ,r$. 
Here, we may assume $n(1) \ge n(2) \ge \dots \ge n(r)$ by replacing the subscripts $i=1,\dots ,r$ as needed. 
Let $\{ M_{i,j}\} _{1 \le j \le n(i)}$ be all irreducible components of the exceptional set of the minimal resolution at $x_i$ for $i=0,1,\dots ,r$ with the configuration as in (\ref{graphAA}), where $n(0) := n$, and let $D$ be the divisor on $\wS _{\kc}$ defined by $D := (-K_{\wS _{\kc}}) - \sum _{i=0}^1 \sum _{j=1}^{n(i)} M_{i,j}$. 
Notice that $D$ is defined over $k$. 
Since the divisor $D$ is as in (a) in Table \ref{div D}, we see that $D$ satisfies the condition on divisors of either (A) or (B) by Proposition \ref{ADE-prop}(1). 

Assume that $D$ satisfies the condition (A). 
In other words, there exist two $(-1)$-curves $E_1$ and $E_2$ on $\wS _{\kc}$ such that $D \sim E_1+E_2$ (see Proposition \ref{ADE-prop}(2)). 
Hence, the dual graph of $\sum _{i=0}^1\sum _{j=1}^{n(i)}M_{i,j}+E_1+E_2$ is as follows, where ``$\circ$'' and ``$\bullet$'' mean a $(-2)$-curve and a $(-1)$-curve, respectively: 
\begin{align*}
\xygraph{
\bullet ([]!{+(-.5,0)} {E_1}) (
        - []!{+(1,.5)} \circ ([]!{+(-.3,+.3)} {M_{0,1}}) -[r] \cdots -[r] \circ ([]!{+(+.3,+.3)} {M_{0,n}})- []!{+(1,-.5)} \bullet ([]!{+(+.5,0)} {E_2}),
        - []!{+(1,-.5)} \circ ([]!{+(-.3,-.3)} {M_{1,1}}) -[r] \cdots -[r] \circ ([]!{+(+.3,-.3)} {M_{1,n(1)}})- []!{+(1,.5)} \circ 
)}
\end{align*}
Since the singular point $x_0$ is of type $A_n^-$ on $S$ with $n \ge 2$, we see that $E_1$ and $E_2$ are defined over $k$, respectively. 
This implies that the two $\bQ$-divisors $E_1$ and $E_2$ are included in $\Pic (\wS )_{\bQ} = \bQ [-K_{\wS}] \oplus \left( \bigoplus _{i=0}^1 \bigoplus _{j=1}^{n(i)}\bQ [M_{i,j}] \right)$ since $\rho _k(S)=1$. 
Hence, the pair $(n,n(1))$ is $(5,2)$, $(2,5)$ or $(3,3)$ by Proposition \ref{ADE-prop}(6). 
If $(n,n(1))=(5,2)$ or $(2,5)$, then all singular points on $S_{\kc}$ are only $x_0$ and $x_1$ since there are at most seven $(-2)$-curves on $\wS _{\kc}$ by Lemma \ref{(-2)-curve}. 
Namely, $\wS$ is of $A_5+A_2$-type. 
If $(n,n(1))=(3,3)$, then there exists at most a singular point of type $A_1$ on $S_{\kc}$ other than $x_0$ and $x_1$ by a similar argument using Lemma \ref{(-2)-curve}. 
Namely, $\wS$ is then of $2A_3$ or $2A_3+A_1$-type. 

Assume that $D$ satisfies the condition (B). 
Then the pair $(n,n(1))$ is $(3,1)$, by Proposition \ref{ADE-prop}(7). 
In particular, we see $r=1$. 
Otherwise, supposing $r \ge 2$ and taking the divisor $(-K_{\wS _{\kc}}) - \sum _{i=1}^2 \sum _{j=1}^{n(i)} M_{i,j}$ on $\wS _{\kc}$, which is the divisor as in (a) in Table \ref{div D}, we have $n(2)=3$ by the argument similar to the above, however, it is a contradiction to $n(1) \ge n(2)$. 
Hence, if there exists a singular point on $S_{\kc}$ other than $x_0$ and $x_1$, then there exist exactly two singular points of type $A_1$ on $S_{\kc}$, which lie in the same $\Gal$-orbit. 
Indeed, there is no $A_3+mA_1$-type of $\wS$ for $m \ge 4$ by the classification of types of weak del Pezzo surfaces. 
Namely, $\wS$ is then of $A_3+A_1$ or $A_3+3A_1$-type. 
\end{proof}
\begin{lem}\label{5-4(2)}
Let the notation and the assumptions be the same as above. 
If $r=0$, then the following assertions hold: 
\begin{enumerate}
\item $x_0$ is not of type $A_4$ on $S_{\kc}$. Namely, $n=2,3$ or $5$. 
\item $\wS$ is not of $A_2+2A_1$-type. 
\item $\wS$ is of $(A_5)'$, $(A_3+2A_1)''$, $A_3$, $3A_2$, $A_2+3A_1$ or $A_2$-type. 
\end{enumerate}
\end{lem}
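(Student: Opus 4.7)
The proof is a case analysis over the types of weak del Pezzo surfaces $\wS$ of degree $2$ compatible with the hypotheses: $\rho_k(S)=1$, the singularity at $x_0$ is of $A_n^-$-type with $n\in\{2,3,4,5\}$ (and when $n=5$, necessarily of $(A_5)'$-flavor, since the $(A_5)''$-case is excluded by the failure of condition~(2) of Theorem~\ref{main(1-3)}), and $r=0$. First I would translate $r=0$ into a combinatorial constraint on the type of $\wS$: every singular point of $S_{\kc}$ other than $x_0$ must lie in a $\Gal$-orbit of size $\ge 2$. In particular, any singularity type that appears exactly once in $\wS$ other than at $x_0$ is excluded, because such a unique singular point would be $k$-rational and would contribute to $r$. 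Combined with the classification of types of weak del Pezzo surfaces of degree $2$ (see \cite[\S 8.2]{Dol12} and \cite[Table 3]{Saw}) and the bound of Lemma~\ref{(-2)-curve}, this yields a short finite list of candidate types for each value of $n$; for example, $n=4$ reduces to the two candidates $A_4$ and $A_4+2A_1$, while for $n=2$ one immediately narrows down to $A_2$, $A_2+2A_1$, $A_2+3A_1$, $3A_2$ together with a few larger types that are later excluded.

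For each surviving candidate, I would exploit the $A_n^-$-hypothesis, which forces all $n$ irreducible components of the chain of $(-2)$-curves above $x_0$ to be individually defined over $k$, together with the Galois action on the remaining $(-2)$-curves, to compute $\rho_k(\wS)$ from $\rho_k(S)=1$. To rule out a candidate, I would invoke Proposition~\ref{ADE-prop} applied to the appropriate divisor $D$ from Table~\ref{div D} (typically case~(a) with $n(2)$ small, or case~(b)) to locate $(-1)$-curves on $\wS_{\kc}$ whose $\Gal$-orbit forms a disjoint union missing every $(-2)$-curve. Then Lemma~\ref{outside} would force $\rho_k(S)>1$, contradicting the standing hypothesis. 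For the types $A_4$, $A_4+2A_1$, and $A_2+2A_1$, this strategy produces the required $\Gal$-invariant disjoint union of $(-1)$-curves (note that for $n=4$ the exceptional values $n(1)=5,7$ in Proposition~\ref{ADE-prop}(4),(5) do \emph{not} apply, which is precisely why the splitting of the associated divisor $D$ yields extra $k$-rational $(-1)$-curves outside the $(-2)$-locus), proving (1) and (2). For the candidates remaining in (3), the parallel analysis shows that every $(-1)$-curve produced by Proposition~\ref{ADE-prop} either meets some $(-2)$-curve or is not $\Gal$-invariant, so no contradiction with $\rho_k(S)=1$ can be extracted; this leaves exactly the six types $(A_5)'$, $(A_3+2A_1)''$, $A_3$, $3A_2$, $A_2+3A_1$, $A_2$.

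The main obstacle is the bookkeeping of the $\Gal$-action on the (finitely many) $(-1)$-curves and $(-2)$-curves for each candidate type, especially verifying the disjointness and $\Gal$-invariance of the configurations produced by Proposition~\ref{ADE-prop}. One further subtlety is to ensure that for the six types surviving into (3), no alternative structural obstruction (for example, a forced additional $k$-rational point on a non-$\Gal$-invariant $(-2)$-curve, or a $(-1)$-curve produced via Lemma~\ref{ADE-(-1)}) also forces $\rho_k(S)>1$; this verification is routine type-by-type but must be carried out exhaustively to establish the sharp list in (3).
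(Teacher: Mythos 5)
Your reduction of the hypothesis $r=0$ to the statement that every singular point of $S_{\kc}$ other than $x_0$ lies in a $\Gal$-orbit of size at least two, combined with Lemma \ref{(-2)-curve} and the classification of types, is the right first step and is essentially how the paper handles part (3). The genuine gap is in your exclusion mechanism for parts (1) and (2). You propose to use Proposition \ref{ADE-prop} to produce a $\Gal$-invariant disjoint union of $(-1)$-curves ``missing every $(-2)$-curve'' and then to invoke Lemma \ref{outside}. This cannot work: by Proposition \ref{ADE-prop}(3), every irreducible component $E$ of $C_1$ satisfies $(E\cdot M_{i,1}+\dots+M_{i,n(i)})=1$ for each $i$, so every $(-1)$-curve that proposition produces necessarily meets the exceptional locus, and Lemma \ref{outside} is never applicable to them. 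Nor can you manufacture some other $\Gal$-invariant disjoint union of $(-1)$-curves away from the $(-2)$-curves: the $\Gal$-action is not determined by the type of $\wS$, and under the standing assumption $\rho_k(S)=1$ no such invariant union can exist (that is precisely the content of Lemma \ref{outside}), so producing one would amount to assuming what you are trying to contradict. Your parenthetical claim that the failure of $n(1)\in\{5,7\}$ ``yields extra $k$-rational $(-1)$-curves outside the $(-2)$-locus'' is therefore incorrect.

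The mechanism your proposal is missing is the following. For (1), with $n=4$ the divisor $D$ of type (b) in Table \ref{div D} must satisfy condition (A) (condition (B) would force $n(1)=5$ by Proposition \ref{ADE-prop}(5)), so $D\sim E_2+E_3$; the $A_4^-$ hypothesis makes $M_1,\dots,M_4$ individually defined over $k$, hence $E_2$ and $E_3$, being distinguished by which $M_j$ they meet, are each defined over $k$. Since $\rho_k(S)=1$, a $k$-rational class on $\wS$ must lie in $\bQ[-K_{\wS}]\oplus\bigoplus_j\bQ[M_j]$, and Proposition \ref{ADE-prop}(4) then forces $n(1)=7$, a contradiction. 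For (2) the same idea is carried out by an explicit intersection computation: the $k$-rational union $E_{1,1}+E_{2,1}$ would have to equal a specific $\bQ$-combination of $-K_{\wS}$ and the $(-2)$-curves whose self-intersection is $-\tfrac{5}{3}$ rather than the required $-2$. In short, the contradiction comes from the numerical constraints of Proposition \ref{ADE-prop}(4) (equivalently, from intersection numbers of $k$-rational classes), not from Lemma \ref{outside}. You also do not address why $(A_3+2A_1)'$ is excluded from the list in (3) while $(A_3+2A_1)''$ survives; this requires the refined classification by number of lines together with part (2), not only the singularity data.
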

\begin{proof}
In (1), supposing that the singular point $x_0$ is of type $A_4$ on $S_{\kc}$, let $\{ M_j\} _{1 \le j \le 4}$ be all irreducible components of the exceptional set of the minimal resolution at $x_0$ with the configuration as in (\ref{graphAA}) and let $D$ be the divisor on $\wS _{\kc}$ defined by $D := (-K_{\wS _{\kc}})-(M_1+2M_2+2M_3+M_4)$, which is the divisor as in (b) in Table \ref{div D}. 
Notice that $D$ is defined over $k$. 
By Proposition \ref{ADE-prop}(1) and (8), we see that $D$ satisfies the condition (A). 
In particular, by Proposition \ref{ADE-prop}(2), there exist two $(-1)$-curves $E_2$ and $E_3$ on $\wS _{\kc}$ such that $D \sim E_2+E_3$. 
Hence, the dual graph of $\sum _{j=1}^4M_j+E_1+E_2$ is as follows, where ``$\circ$'' and ``$\bullet$'' mean a $(-2)$-curve and a $(-1)$-curve, respectively: 
\begin{align*}
\xygraph{
     \circ ([]!{+(0,+.3)} {M_1}) 
 -[r] \circ ([]!{+(0,+.3)} {M_2}) 
( - []!{+(-1,-.5)} \bullet ([]!{+(-.3,0)} {E_2}), 
 -[r] \circ ([]!{+(0,+.3)} {M_3}) 
(  - []!{+(+1,-.5)} \bullet ([]!{+(+.3,0)} {E_3}), 
 -[r] \circ ([]!{+(0,+.3)} {M_4}) 
))}
\end{align*}
Since the singular point $x_0$ is of type $A_4^-$ on $S$ by assumption, $E_2$ and $E_3$ are defined over $k$, respectively. 
This implies that the two $\bQ$-divisors $E_1$ and $E_2$ are included in $\Pic (\wS )_{\bQ} = \bQ [-K_{\wS}] \oplus \left( \bigoplus _{j=1}^4\bQ [M_j] \right)$ since $\rho _k(S)=1$. 
However, it is a contradiction to Proposition \ref{ADE-prop}(6). 

In (2), supposing that $\wS$ is of $A_2+2A_1$-type, let $y_1$ and $y_2$ be two singular points of type $A_1$ on $S_{\kc}$, let $M_{0,1}$ and $M_{0,2}$ (resp. $M_{1,1}$, $M_{2,1}$) be all irreducible components of the exceptional set of the minimal resolution at $x_0$ (resp. $y_1$, $y_2$) with the configuration as in (\ref{graphAA}) and let $D_i$ be the divisor on $\wS _{\kc}$ defined by $D_i := (-K_{\wS _{\kc}})-(M_{0,1}+M_{0,2})-M_{i,1}$, which is the divisor as in (a) in Table \ref{div D} for $i=1,2$. 
By Proposition \ref{ADE-prop}(1) and (7), we see that $D_i$ satisfies the condition (A), for $i=1,2$. 
In particular, by Proposition \ref{ADE-prop}(2), there exist two $(-1)$-curves $E_{i,1}$ and $E_{i,2}$ on $\wS _{\kc}$ such that $D_i \sim E_{i,1}+E_{i,2}$. 
Hence, the dual graph of $M_{0,1}+M_{0,2}+M_{1,1}+M_{2,1}+ \sum _{i=1}^2\sum _{j=1}^2E_{i,j}$ is as follows, where ``$\circ$'' and ``$\bullet$'' mean a $(-2)$-curve and a $(-1)$-curve, respectively: 
\begin{align*}
\xygraph{
\circ ([]!{+(-.5,0)} {M_{1,1}}) (
        - []!{+(1,.5)} \bullet ([]!{+(-.3,+.3)} {E_{1,1}}) -[r] \circ ([]!{+(0,+.3)} {M_{0,1}})( -[d] \circ ([]!{+(0,-.3)} {M_{0,2}}),
-[r] \bullet ([]!{+(+.3,+.3)} {E_{2,1}})- []!{+(1,-.5)} \circ ([]!{+(+.5,0)} {M_{2,1}})),
        - []!{+(1,-.5)} \bullet ([]!{+(-.3,-.3)} {E_{1,2}}) -[r] \circ ([]!{+(0,+.3)} {}) -[r] \bullet ([]!{+(+.3,-.3)} {E_{2,2}})- []!{+(1,.5)} \circ ([]!{+(0,+.3)} {})
)}
\end{align*}
Since the singular point $x_0$ is of type $A_2^-$ on $S$ by assumption, $M_{0,1}$ is defined over $k$. 
Hence, so is the union $E_{1,1} + E_{2,1}$. 
This implies that the $\bQ$-divisor $E_{1,1}+E_{2,1}$ is contained in $\Pic (\wS )_{\bQ} = \bQ [-K_{\wS}] \oplus \left( \bigoplus _{j=1}^2 \bQ [M_{0,j}] \right) \oplus  \left( \bigoplus _{i=1}^2 \bQ [M_{i,1}] \right)$ since $\rho _k(S)=1$. 
Hence, we have: 
\begin{align*}
E_{1,1}+E_{2,1} \sim _{\bQ} (-K_{\wS}) - \frac{1}{3}(2M_{0,1}+M_{0,2}) - \frac{1}{2}M_{1,1} - \frac{1}{2} M_{2,1} 
\end{align*}
by Lemma \ref{A-1} combined with the above graph, however, by the above formula, we then obtain $-2 = (E_{1,1}+E_{2,1})^2 = -\frac{5}{3}$, which is absurd. 

In (3), if $x_0$ is the only singular point of $S_{\kc}$, then we see that $\wS$ is of $(A_5)'$, $A_3$ or $A_2$-type by the assumptions and (1). 
In what follows, assume that there exists a singular point $y$ on $S_{\kc}$ other than $x_0$. 
Since $r=0$, there exists a singular point $y'$ other than $y$ on $S_{\kc}$ such that $y$ and $y'$ are included in the same $\Gal$-orbit. 
Moreover, since there are at most seven $(-2)$-curves on $\wS _{\kc}$ by Lemma \ref{(-2)-curve}, the singular point $y$ is of type $A_1$ or $A_2$ on $S_{\kc}$. 
If $y$ is of type $A_2$ on $S_{\kc}$, then all singular points on $S_{\kc}$ are only $x_0$, $y$ and $y'$, namely, $\wS$ is then of $3A_2$-type. 
In what follows, we can thus assume that any singular point on $\wS _{\kc}$ other than $x_0$ is of type $A_1$. 
Then $\wS$ is of $A_n+sA_1$-type for some integer $s$. 
In particular, we precisely see that $\wS$ is then of $(A_3+2A_1)''$ or $A_2+3A_1$-type by the classification of types of weak del Pezzo surfaces combined with (2). 
\end{proof}
Next, we shall treat the case of $d=1$. 
Notice that the singular point $x_0$ is of type $D_5^-$, $E_6^-$ or $A_n^-$ for some $2 \le n \le 7$, since $S$ does not satisfy any condition on singularities of (1) and (3) in Theorem \ref{main(1-3)}. 
If the singular point $x_0$ is of type $D_5^-$ or $E_6^-$ on $S$, then we obtain the following lemma by the argument similar to Lemma \ref{5-4(1)}: 
\begin{lem}\label{5-4(3)}
With the notation and the assumptions as above, assume further that $S_{\kc}$ has a singular point, which is $k$-rational, of type $D_5^-$ or $E_6^-$ on $S$, then the type of $\wS$ is one of the following according to the number of $r$: 
\begin{enumerate}
\item $r>0$: $D_5+A_3$ or $E_6+A_2$-type. 
\item $r=0$: $D_5+2A_1$, $D_5$ or $E_6$-type. 
\end{enumerate}
\end{lem}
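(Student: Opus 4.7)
The strategy parallels the argument of Lemma~\ref{5-4(1)}, applying Proposition~\ref{ADE-prop} to the divisor $D$ of type~(f) when $x_0$ is of $D_5^-$-type and of type~(g) when $x_0$ is of $E_6^-$-type, in the notation of Table~\ref{div D}. In either situation the $D_5$ (resp.~$E_6$) configuration at $x_0$ provides the components $M_{1,j}$ as in Case~2 (resp.~Case~3) of \S\S\ref{4-2}, and because $x_0$ is of minus-type these components are individually defined over $k$.

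For assertion~(1), I would pick any singular point $x_1\neq x_0$ of $S$. By Lemma~\ref{(-2)-curve} the total number of $(-2)$-curves on $\wS_{\kc}$ is at most $9-1=8$, while $x_0$ alone contributes $5$ (resp.~$6$); this excludes $x_1$ from being of any $D_m$- or $E_m$-type, so $x_1$ must be of type $A_{n(1)}$ with $1\le n(1)\le 3$ (resp.~$1\le n(1)\le 2$). The components $\{M_{2,j}\}_{1\le j\le n(1)}$ are each defined over $k$ because $x_1$ is $k$-rational. Assembling $D$ as in row~(f) (resp.~(g)) of Table~\ref{div D}, Proposition~\ref{ADE-prop}(1) gives either (A) or~(B); the list in~(5) excludes (f) and (g) from case~(B), hence (A) must hold and $C_1=E_1+E_2$ with two $(-1)$-curves on $\wS_{\kc}$. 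The divisor $E_1+E_2=C_1=D-C_2$ is defined over $k$, and combined with $\rho_k(S)=1$ and the $k$-definedness of every $M_{i,j}$ in $D$, each irreducible component of $C_1$ lies in $\bQ[-K_{\wS}]\oplus\bigoplus_{i,j}\bQ[M_{i,j}]$. Proposition~\ref{ADE-prop}(4) then forces $n(1)=3$ in the $D_5^-$-case and $n(1)=2$ in the $E_6^-$-case. Since the resulting configuration of $(-2)$-curves already saturates the bound $5+3=8$ (resp.~$6+2=8$) of Lemma~\ref{(-2)-curve}, no further singular points on $\wS_{\kc}$ can appear and $r=1$, yielding exactly $D_5+A_3$ and $E_6+A_2$ as claimed.

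For assertion~(2), assume $r=0$ and that $\wS_{\kc}$ carries at least one singular point other than $x_0$. Every such point is non-$k$-rational and therefore lies in a $\Gal$-orbit of size $\ge 2$ consisting of singularities of a common type. The $(-2)$-curve bound rules out any $D_m$ or $E_m$ appearing in these orbits, as well as any $A_m$ with $m\ge 2$, since $5+2m\le 8$ (resp.~$6+2m\le 8$) forces $m\le 1$. Hence only Galois orbits of $A_1$-singularities may survive. A direct inspection of the classification of degree-$1$ weak del Pezzo surfaces containing a $D_5$- or $E_6$-configuration (cf.~\cite{Dol12}) shows that the only admissible types compatible with $r=0$ are $D_5+2A_1$ (with the two $A_1$'s swapped by $\Gal$), $D_5$, and $E_6$, as claimed.

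The principal obstacle I anticipate is the final classification step in~(2), namely explicitly excluding configurations such as $D_5+3A_1$ with the three $A_1$-points forming a single $\Gal$-orbit of size~$3$. I expect this to follow immediately from Dolgachev's tables; if not, a supplementary argument using the $E_8$-sublattice structure of the intersection lattice together with the admissible permutation representations of $\Gal$ on the $(-2)$-curves will be required to rule out such orbit sizes.
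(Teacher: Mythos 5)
Your proposal is correct and follows essentially the same route as the paper: form the divisor of type (f) (resp.\ (g)) from Table \ref{div D}, use Proposition \ref{ADE-prop}(1),(4),(5) together with the $k$-definedness of the exceptional components at the minus-type point to pin down $n(1)=3$ (resp.\ $n(1)=2$), and invoke Lemma \ref{(-2)-curve} plus the classification of weak del Pezzo types to rule out everything else, including $D_5+3A_1$ and $E_6+2A_1$. The final exclusion you flag as a possible obstacle is handled in the paper exactly as you anticipate, by simply noting that no such types occur in the classification.
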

\begin{proof}
By assumption of this lemma, we may assume that the singular point $x_0$ is of type $D_5^-$ or $E_6^-$ on $S$. 
We only treat the case where the singular point $x_0$ is of type $D_5^-$, the other cases are similar and left to the reader. 

In (1), let $\{ M_{i,j}\} _{1 \le j \le n(i)}$ be all irreducible components of the exceptional set of the minimal resolution at $x_i$ for $i=0,1$ with the configuration as in (\ref{graphDA}), where $n(0) := 5$, and let $D$ be the divisor on $\wS _{\kc}$ defined by $D := 2(-K_{\wS _{\kc}})-(2M_{0,1}+2M_{0,2}+3M_{0,3}+2M_{0,4}+M_{0,5})-\sum _{j=1}^{n(1)}M_{1,j}$, which is the divisor as in (f) in Table \ref{div D}. 
Notice that $D$ is defined over $k$. 
By the argument similar to Lemma \ref{5-4(1)}, we see that $n(1)=3$. 
In particular, all singular points on $S_{\kc}$ are only $x_0$ and $x_1$ since there are at most eight $(-2)$-curves on $\wS _{\kc}$ by Lemma \ref{(-2)-curve}. 
Namely, $\wS$ is then of $D_5+A_3$-type. 

In (2), if there exists a singular point other than $x_0$ on $S_{\kc}$, then there exist exactly two singular points of type $A_1$ on $S_{\kc}$, which lie in the same $\Gal$-orbit, by a similar argument using Lemma \ref{(-2)-curve}. 
Namely, $\wS$ is then of $D_5$ or $D_5+2A_1$-type. 
Indeed, there is no $D_5+3A_1$-type of $\wS$. (We also note that there is no $E_6+2A_1$-type of $\wS$. )
\end{proof}
In what follows, we shall treat the case that $S_{\kc}$ does not allow any singular point, which is $k$-rational, of type $D_5^-$ or $E_6^-$. 
Thus, the singular point $x_0$ is of type $A_n^-$ for some $2 \le n \le 7$. 
By the argument similar to Lemmas \ref{5-4(1)} and \ref{5-4(2)}, we obtain the following two lemmas: 
\begin{lem}\label{5-4(4)}
Let the notation and the assumptions be the same as above. 
If $r>0$, then the type of $\wS$ is one of the following according to the number of $r$: 
\begin{enumerate}
\item $r \ge 2$: $A_5+A_2+A_1$-type. 
\item $r=1$: $A_7+A_1$, $A_5+A_2$, $2A_4$ or $(A_5+A_1)'$-type. 
\end{enumerate}
\end{lem}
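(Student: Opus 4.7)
The plan is to mirror the proof of Lemma \ref{5-4(1)}, adapting it to degree $d=1$ by applying Proposition \ref{ADE-prop} to divisors of types (c) and (d) from Table \ref{div D}, and discarding extraneous configurations via the assumption that $S$ satisfies none of (1), (2), (3) of Theorem \ref{main(1-3)}.

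For part (1), $r \ge 2$: pick three distinct $k$-rational singularities $x_0, x_1, x_2$ with $(-2)$-chains $\{M_{i,j}\}_{1 \le j \le n(i)}$, where $n(0) = n \ge 2$, and form
\[
D := 2(-K_{\wS_{\kc}}) - \sum_{i=0}^{2}\sum_{j=1}^{n(i)} M_{i,j},
\]
a divisor of type (c) in Table \ref{div D}. By Proposition \ref{ADE-prop}(1),(5), $D$ satisfies condition (A) and decomposes as $D \sim E_1 + E_2$ for two $(-1)$-curves. Since $x_0$ is $A_n^-$-type, the endpoints $M_{0,1}, M_{0,n}$ are individually $k$-rational; this distinguishes $E_1$ from $E_2$ under the Galois action, making each individually $k$-rational, and combined with $\rho_k(S)=1$ places each $E_s$ in $\bQ[-K_{\wS}] \oplus \bigoplus_{i,j}\bQ[M_{i,j}]$. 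Proposition \ref{ADE-prop}(4) then forces $\{n(0),n(1),n(2)\}$ to be $\{5,2,1\}$ or $\{3,3,1\}$. In the first case $\sum n(i) = 8 = 9-d$, so Lemma \ref{(-2)-curve} rules out further $(-2)$-curves and $\wS$ is of $A_5+A_2+A_1$-type. In the second case, $\wS$ would be of $2A_3+A_1$ or $2A_3+2A_1$-type, whose singularities lie entirely in $\{A_1,A_2,A_3,D_4\}$, thereby placing $S$ under the hypothesis of Theorem \ref{main(1-3)}(3), contradicting our standing assumption.

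For part (2), $r = 1$: let $x_1$ denote the other $k$-rational singularity. When $n \ge 4$, use the divisor of type (d) with $x_0$ playing the role of the doubled chain,
\[
D := 2(-K_{\wS_{\kc}}) + (M_{0,1}+M_{0,n}) - 2\sum_{j=1}^{n} M_{0,j} - \sum_{j=1}^{n(1)} M_{1,j}.
\]
Condition (A) combined with Proposition \ref{ADE-prop}(4) gives $(n,n(1)) \in \{(7,1),(5,2),(4,4)\}$, producing $\wS$ of type $A_7+A_1$, $A_5+A_2$, or $2A_4$; condition (B) combined with Proposition \ref{ADE-prop}(5) gives $(n,n(1))=(5,1)$, producing an $A_5+A_1$-type, which I would verify to be specifically $(A_5+A_1)'$ by exhibiting the $(-1)$-curve from condition (B) as the extra line characterising $A_5+A_1(2)$. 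When $n \in \{2,3\}$, $x_0$ cannot be the doubled chain, and I swap the roles to place $x_1$ in the doubled slot (forcing $n(1) \ge 4$); Proposition \ref{ADE-prop}(4),(5) combined with the exclusion of types caught by Theorem \ref{main(1-3)}(1) or (3) leaves only $(n(1),n) = (5,2)$, giving $A_5+A_2$-type. A $D$- or $E$-type alternative for $x_1$ is similarly eliminated: such a singularity would be either $D^-$ or $E^-$ (contradicting the sub-assumption), or match Theorem \ref{main(1-3)}(1), or else leave the overall profile within $\{A_1,A_2,A_3,D_4\}$ matching (3).

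The main obstacle is the $r=1$ case with small $n \in \{2,3\}$: here the individual $k$-rationality of the $(-1)$-curves arising from $D$ is no longer guaranteed by the structure of $x_0$, so one must combine the Galois-invariance of the sum $E_1+E_2$ with the count $\rho_k(\wS) = 1 + (\text{number of Galois orbits of } (-2)\text{-curves})$ to force each $E_s$ into the Picard sublattice required by Proposition \ref{ADE-prop}(4). The further verification that the condition-(B) case at $n=5$ produces $(A_5+A_1)'$ rather than $(A_5+A_1)''$ is a secondary delicate point.
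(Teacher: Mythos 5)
Your proposal is correct and follows essentially the same route as the paper: the same divisors of types (c) and (d) from Table~\ref{div D}, the same appeal to Proposition~\ref{ADE-prop}(1),(4),(5) with Lemma~\ref{(-2)-curve}, the same role-swap when $n<4$, and the same exclusions via Theorem~\ref{main(1-3)}(3). The ``main obstacle'' you flag for $n\in\{2,3\}$ is in fact handled by the same endpoint argument as before, since $x_0$ being of $A_n^-$-type with $n\ge 2$ still forces the two $(-1)$-curves to meet distinct $k$-rational components $M_{0,1}$ and $M_{0,n}$ of its chain and hence to be individually defined over $k$.
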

\begin{proof}
Let $\{ M_{i,j}\} _{1 \le j \le n(i)}$ be all irreducible components of the exceptional set of the minimal resolution at $x_i$ for $i=0,1,\dots ,r$ with the configuration as in (\ref{graphAA}), where $n(0) := n$. 

In (1), let $D$ be the divisor on $\wS _{\kc}$ defined by $D := 2(-K_{\wS _{\kc}})-\sum _{i=0}^2\sum _{j=1}^{n(i)}M_{i,j}$, which is the divisor as in (c) in Table \ref{div D}. 
Notice that $D$ is defined over $k$. 
By the argument similar to Lemma \ref{5-4(1)}, we see that $(n,n(1),n(2))=(5,2,1)$ or $(2,5,1)$. 
In particular, all singular points on $S_{\kc}$ are only $x_0$, $x_1$ and $x_2$ since there are at most eight $(-2)$-curves on $\wS _{\kc}$ by Lemma \ref{(-2)-curve}. 
Namely, $\wS$ is then of $A_5+A_2+A_1$-type. 

In (2), at first, we assume that $n \ge 4$. 
Let $D$ be the divisor on $\wS _{\kc}$ defined by $D := 2(-K_{\wS _{\kc}})+(M_{0,1}+M_{0,n}) -2\sum _{j=1}^n M_{0,j} -\sum _{j=1}^{n(1)}M_{1,j}$, which is the divisor as in (d) in Table \ref{div D}. 
Notice that $D$ is defined over $k$. 
By the argument similar to Lemma \ref{5-4(1)}, we see that $(n,n(1))=(7,1)$, $(5,2)$ or $(4,4)$ (resp. $(5,1)$) if $D$ satisfies the condition (A) (resp. (B)). 
In particular, all singular points on $S_{\kc}$ defined over $k$ are only $x_0$ and $x_1$ by a similar argument using Lemma \ref{(-2)-curve}. 
Namely, $\wS$ is then of $A_7+A_1$, $A_5+A_2$, $2A_4$ or $(A_5+A_1)'$-type. 
Here, note that there is no $A_5+3A_1$-type of $\wS$ by the classification of types of weak del Pezzo surfaces. 

On the other hand, if $n<4$, then we have $n(1) \ge 4$ since $S$ does not satisfy the condition on singularities of (3) in Theorem \ref{main(1-3)}. 
The same argument as above applies with the role of $i=0$ and $i=1$ exchanged. 
\end{proof}
\begin{lem}\label{5-4(5)}
Let the notation and the assumptions be the same as above. 
If $r=0$, then the following assertions hold: 
\begin{enumerate}
\item $x_0$ is not of type $A_2$, $A_3$ nor $A_6$ on $S_{\kc}$. Namely, $n=4,5$ or $7$. 
\item $\wS$ is not of $A_5+2A_1$ nor $A_4+2A_1$-type. 
\item $\wS$ is of $(A_7)'$, $A_5$ or $A_4$-type. 
\end{enumerate}
\end{lem}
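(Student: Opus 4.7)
The plan is to adapt the template of Lemma \ref{5-4(2)} to the $d=1$ setting, replacing the divisors (a), (b) of Table \ref{div D} used there by their degree-one analogues (d), (e), and then to combine the three assertions with the classification of weak del Pezzo surfaces of degree one. For assertion (1), the cases $n \in \{2, 3\}$ are handled by a counting argument: since $x_0 = A_n$ already lies in $\{A_1, A_2, A_3, D_4\}$ and the hypothesis $r = 0$ forces every other singular point on $S_{\kc}$ to lie in a Galois orbit of cardinality at least two, Lemma \ref{(-2)-curve} bounds the total number of $(-2)$-curves by $8$, and hence forces every additional singularity to have type in $\{A_1, A_2, A_3, D_4\}$ as well. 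Thus $S$ satisfies the hypothesis of condition (3) of Theorem \ref{main(1-3)}, contradicting the standing assumption of Section \ref{5-4} that $S$ fits none of the conditions (1)--(3) of that theorem. The case $n = 6$ uses the divisor $D$ of type (e) in Table \ref{div D} with $n(1) = 6$: Proposition \ref{ADE-prop}(5) excludes case (B) (which would require $n(1) = 7$), so we are in case (A) with $C_1 = E_1 + E_2$, and since the components $M_{0,3}, M_{0,4}$ met by the $E_j$ are fixed by the $A_6^-$-hypothesis, both $E_j$ are individually Galois-fixed. When $\wS$ is of type $A_6$, Proposition \ref{ADE-prop}(4) then forces $n(1) = 8$, a contradiction; when $\wS$ is of type $A_6+2A_1$, expanding $E_1$ in the Galois-invariant basis $\bQ[-K_{\wS}] \oplus \bigoplus_j \bQ[M_{0,j}] \oplus \bQ[M_{1,1}+M_{2,1}]$ and enforcing $(E_1)^2 = -1$ via Lemma \ref{A-1}(2) (with $(n, j_0) = (6, 3)$) forces the coefficient of $[M_{1,1}+M_{2,1}]$ to have square $1/14$, which has no rational solution.

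For (2), let $M_{0,1}, \ldots, M_{0,n}$ be the $A_n^-$-chain of $x_0$ (with $n \in \{4, 5\}$) and $M_{1,1}, M_{2,1}$ the two Galois-swapped $A_1$-curves, and consider the divisors $D_i := 2(-K_{\wS}) + (M_{0,1}+M_{0,n}) - 2\sum_{j=1}^n M_{0,j} - M_{i,1}$ of type (d) with $(n(1), n(2)) = (n, 1)$. Proposition \ref{ADE-prop}(5) places $n = 5$ in case (B), yielding $C_1 = 2E_i$; the pair $(4, 1)$ appears in neither (A)'s list $\{(7,1), (5,2), (4,4)\}$ nor (B)'s list $\{(5,1)\}$, so Proposition \ref{ADE-prop}(1) forces case (A) with the $(-1)$-curves of $C_1$ lying outside the chain $\bQ$-span. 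In both situations, Galois swaps $D_1$ with $D_2$ and the corresponding $(-1)$-curves, producing a Galois-invariant sum lying in $\Pic_k(\wS)_{\bQ}$. Expressing this sum in the $k$-rational basis $\bQ[-K_{\wS}] \oplus \bigoplus_j \bQ[M_{0,j}] \oplus \bQ[M_{1,1}+M_{2,1}]$, using Lemma \ref{A-1}(2) at the unique chain position $j_0$ where the curves meet, and equating the algebraic self-intersection with the geometric value $(\textrm{sum})^2 = -2 + 2(E \cdot \sigma(E))$, I obtain a rational value with denominator $3$ (for $n = 5$, $j_0 \in \{2, 4\}$) or $5$ (for $n = 4$), contradicting the integrality of $(E \cdot \sigma(E))$; the central subcase $j_0 = 3$ for $n = 5$ instead forces $(1+\epsilon)^2 = 2$ for an auxiliary non-negative integer $\epsilon$ accounting for $(E \cdot \sigma(M_{1,1}))$, which is equally impossible.

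For (3), combining (1) with the classification of weak del Pezzo surfaces of degree one and the hypothesis $r = 0$, the remaining candidates are $A_4, A_4+2A_1, A_4+3A_1$ (for $n = 4$), $A_5, A_5+2A_1$ (for $n = 5$), and $(A_7)', (A_7)''$ (for $n = 7$). The type $(A_7)''$ is excluded by the hypothesis that $S$ satisfies none of (1)--(3) of Theorem \ref{main(1-3)}; $A_5+2A_1$ and $A_4+2A_1$ are excluded by (2); and the remaining case $A_4+3A_1$, with the three $A_1$-curves forming a single Galois orbit of size three, is ruled out by the analogous divisor-(d) argument applied to the three Galois-conjugate divisors $D_1, D_2, D_3$ and their cyclic sum $E_{1,1}+E_{2,1}+E_{3,1}$, whose algebraic self-intersection again produces a denominator (namely $5$) incompatible with integer pairwise intersections. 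Thus $\wS$ is one of $(A_7)', A_5, A_4$. The main obstacle is the intersection bookkeeping in (2) and in the $A_4+3A_1$ exclusion, since Proposition \ref{ADE-prop}(3) only controls the intersections of the $(-1)$-curves of $C_1$ with the chains appearing in the divisor $D$, so the auxiliary intersections $(E_{i,j} \cdot M_{i',1})$ with $i \ne i'$ must be tracked via non-negative integer parameters and the contradiction must be shown to hold uniformly in them.
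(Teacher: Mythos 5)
Your proposal is correct in substance and follows the paper's overall skeleton (the counting argument for $n\le 3$, the divisor of type (e) plus Proposition \ref{ADE-prop}(4) for $n=6$, and the reduction of (3) to a short candidate list), but in part (2) you take a genuinely different route. The paper attaches to the pair of conjugate $A_1$-points the single \emph{Galois-invariant} divisor of type (c) in Table \ref{div D}, namely $2(-K_{\wS_{\kc}})-\sum_j M_{0,j}-M_{1,1}-M_{2,1}$; its two $(-1)$-components meet the Galois-fixed end curves $M_{0,1},M_{0,n}$ of the $A_n^-$-chain, hence are individually defined over $k$, and Proposition \ref{ADE-prop}(4) excludes $(n,1,1)$ at once — no orbit-sum bookkeeping is needed. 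Your type-(d) divisors $D_1,D_2$ are not Galois-invariant, which forces the heavier computation with orbit sums and the auxiliary parameters $(E\cdot\sigma(M_{1,1}))$, $(E\cdot\sigma(E))$; I checked the resulting arithmetic (denominator $5$ for $n=4$, denominator $3$ for $n=5$ with $j_0\in\{2,4\}$, the coefficient-square $1/14$ in the $A_6$ computation) and it does produce the claimed contradictions uniformly in those parameters, though in the central subcase $j_0=3$, $n=5$ the equation I obtain is $2(E\cdot\sigma(E))=-(1+\epsilon)^2$ rather than $(1+\epsilon)^2=2$ — still impossible, so nothing is lost. What your extra effort buys is real: the paper's type-(c) argument cannot be applied to $A_4+3A_1$ with the three $A_1$-curves in a single Galois orbit (the divisor using only two of them is not Galois-invariant), and the paper's proof of (3) simply asserts $s\in\{0,2\}$ even though $A_4+3A_1$ appears in its own list of admissible types in \S\ref{5-3}; your three-fold orbit-sum computation closes that case explicitly. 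Conversely, your $A_6+2A_1$ subcase in (1) is vacuous: the lattice generated by $A_6+2A_1$ has discriminant $28$, not a perfect square, so it is not a rank-$8$ root subsystem of $E_8$ and no such weak del Pezzo surface of degree one exists; with $r=0$ and $n=6$ the only type is $A_6$ itself, which is why the paper's appeal to Proposition \ref{ADE-prop}(4) alone suffices there.
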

\begin{proof}
In (1), since $r=0$, for any singular point $y$ other than $x_0$ on $S_{\kc}$, there exists a singular point $y'$ other than $y$ on $S_{\kc}$ such that $y$ and $y'$ are included in the same $\Gal$-orbit. 
Moreover, since there are at most eight $(-2)$-curves on $\wS _{\kc}$ by Lemma \ref{(-2)-curve}, the singular point $y$ is of type $A_1$, $A_2$ or $A_3$ on $S_{\kc}$. 
Hence, we see that $n \ge 4$ since $S$ does not satisfy the condition of (3) in Theorem \ref{main(1-3)}. 

Supposing that the singular point $x_0$ is of type $A_6$ on $S_{\kc}$, let $\{ M_j\} _{1 \le j \le 6}$ be all irreducible components of the exceptional set of the minimal resolution at $x_0$ with the configuration as in (\ref{graphAA}). 
Letting $D$ be the divisor on $\wS _{\kc}$ defined by $D := 2(-K_{\wS _{\kc}})-(M_1+2M_2+3M_3+3M_4+2M_5+M_6)$, which is the divisor as in (e) in Table \ref{div D} and is defined over $k$, we obtain a contradiction by the argument similar to Lemma \ref{5-4(2)}(1). 

In (2), otherwise, let $y_1$ and $y_2$ be two singular points of type $A_1$ on $S_{\kc}$, let $\{ M_{0,j}\} _{1 \le j \le n}$ (resp. $M_{2,1}$, $M_{3,1}$) be all irreducible components of the exceptional set of the minimal resolution at $x_0$ (resp. $y_1$, $y_2$) with the configuration as in (\ref{graphAA}). 
Letting $D$ be the divisor on $\wS _{\kc}$ defined by $D := 2(-K_{\wS _{\kc}})-\sum _{j=1}^nM_{0,j} -M_{1,1}-M_{2,1}$, which is the divisor as in (c) in Table \ref{div D} and is defined over $k$, we obtain a contradiction by the argument similar to Lemma \ref{5-4(2)}(1). 

In (3), by the classification of types of weak del Pezzo surfaces combined with the assumption $n \ge 4$, $\wS$ is of $A_n+sA_1$-type for some integer $s =0$ or $2$. 
Moreover, we precisely see that $\wS$ is then of $(A_7)''$, $A_5$ or $A_4$-type by (2) and a similar argument using Lemma \ref{(-2)-curve}. 
\end{proof}
Proposition \ref{5-4(0)} follows from Lemmas \ref{5-4(1)}, \ref{5-4(2)}, \ref{5-4(3)}, \ref{5-4(4)} and \ref{5-4(5)}. 

Conversely, for any type of weak del Pezzo surface in the list of Proposition \ref{5-4(0)}, there exists certainly a Du Val del Pezzo surface $S$ over $k$ with $\rho _k(S)=1$ admitting a singular point of type $A^-_n$, $D^-_n$ or $E^-_n$ such that its minimal resolution $\wS$ is of this type. 
Indeed, we can explicitly construct a birational morphism $\tau :\wS \to \bF _2$ over $k$ and the contraction $\sigma : \wS \to S$ of all $(-2)$-curves, so that $S$ is the Du Val del Pezzo surface of Picard rank one (see also \S \S \ref{5-5}, for detailed constructions of such morphisms $\tau$). 
Here, $\wS$ is a weak del Pezzo surface such that its Picard number is the number, which is summarized in ``$\rho _k(\wS )$" in Table \ref{list(3)} according to the type of $\wS$. 
Furthermore, the singularity types of all singular points on $S_{\kc}$, which are $k$-rational, are summarized in ``$k$-rat.\ sing.\," in Table \ref{list(3)} according to the type of $\wS$. 
As an example, in the case $d=2$ and of $3A_2$-type, $S_{\kc}$ has three singular points of type $A_2$. 
If $\rho _k(S)=1$, then one is $k$-rational and of type $A_2^-$ on $S$, however, the others lie in the same $\Gal$-orbit, namely $\rho _k(\wS ) = \rho _k(S)+4=5$. 
\begin{table}[htbp]
\begin{center}
\caption{Types of $\wS$ in the ``if" part of Theorem \ref{main(1-3)}(4)}\label{list(3)}
\begin{tabular}{|c|c|c||c|c|c|} \hline

\multirow{3}{*}{$d$} & Type & \multirow{3}{*}{Dual graph} & \multirow{3}{*}{$d$} & Type & \multirow{3}{*}{Dual graph} \\
 & ($k$-rat.\ sing.) & & & ($k$-rat.\ sing.) & \\ 
 & $\rho _k(\wS )$ & & & $\rho _k(\wS )$ & \\ \hline \hline

\multirow{3}{*}{$2$} & $A_5+A_2$ & 
\multirow{3}{*}{$$\xygraph{
\circ ([]!{+(0,-.3)} {M}) -[]!{+(.5,0)} \circ ([]!{+(0,-.3)} {C_3}) -[]!{+(.5,0)} \bullet ([]!{+(0,0)} {}) -[]!{+(.5,0)} \circ ([]!{+(0,0)} {})-[]!{+(.5,0)} \cdots ([]!{+(0,-.3)} {\underbrace{\qquad \qquad}_{5\text{-vertices}}})-[]!{+(.5,0)} \circ ([]!{+(0,0)} {})
}$$} & 

\multirow{3}{*}{$2$} & $2A_3+A_1$ & 
\multirow{3}{*}{$$\xygraph{
(-[]!{+(0,-.25)} \circ ([]!{+(0,-.3)} {F})(-[]!{+(-.5,0)} \circ ([]!{+(0,-.3)} {M}),-[]!{+(.5,0)} \circ ([]!{+(0,-.3)} {C_2})-[]!{+(.5,0)} \bullet ([]!{+(0,0)} {})-[]!{+(.5,0)} \circ ([]!{+(0,0)} {})-[]!{+(.5,0)} \circ ([]!{+(0,0)} {})-[]!{+(.5,0)} \circ ([]!{+(0,0)} {})),(-[]!{+(0,.25)} \bullet ([]!{+(0,0)} {}) -[]!{+(-.5,0)} \circ ([]!{+(0,0)} {}))
}$$}  \rule[0mm]{0mm}{4mm} \\ 

 & ($A^-_5$, $A^-_2$) & & & ($A^-_3$, $A^-_3$, $A^+_1$) &  \rule[0mm]{0mm}{4mm}  \\ 
 & $8$ & & & $8$ &  \rule[0mm]{0mm}{4mm}  \\ \hline

\multirow{3}{*}{$2$} & $2A_3$ & 
\multirow{3}{*}{$\xygraph{
\circ ([]!{+(0,-.3)} {M}) -[]!{+(.5,0)} \circ ([]!{+(0,-.3)} {F}) (-[]!{+(.5,0)} \circ ([]!{+(0,-.3)} {C_2})-[]!{+(.5,0)} \bullet ([]!{+(0,0)} {})-[]!{+(.5,0)} \circ ([]!{+(0,0)} {})-[]!{+(.5,0)} \circ ([]!{+(0,0)} {})-[]!{+(.5,0)} \circ ([]!{+(0,0)} {}),(-[]!{+(-.5,.25)} \bullet ([]!{+(0,0)} {}),-[]!{+(.5,.25)} \bullet ([]!{+(0,0)} {})))
}$} &

\multirow{3}{*}{$2$} & $A_3+3A_1$ & 
\multirow{3}{*}{$\xygraph{
(-[]!{+(0,-.125)} \circ ([]!{+(0,-.3)} {F}) (-[]!{+(-.5,0)} \circ ([]!{+(0,-.3)} {M}),-[]!{+(.5,0)} \circ ([]!{+(0,-.3)} {C_2}) (-[]!{+(.5,.25)} \bullet ([]!{+(0,0)} {})-[]!{+(.5,0)} \circ ([]!{+(0,0)} {}),-[]!{+(.5,-.25)} \bullet ([]!{+(0,0)} {})-[]!{+(.5,0)} \circ ([]!{+(0,0)} {})),-[]!{+(0,.5)} \bullet ([]!{+(0,0)} {}) -[]!{+(-.5,0)} \circ ([]!{+(0,0)} {}))
}$}  \rule[0mm]{0mm}{4mm} \\ 

 & ($A^-_3$, $A^-_3$) & & & ($A^-_3$, $A^+_1$) &  \rule[0mm]{0mm}{4mm} \\ 
 & $7$ & & & $6$ &  \rule[0mm]{0mm}{4mm} \\ \hline

\multirow{3}{*}{$2$} & $3A_2$ & 
\multirow{3}{*}{$\xygraph{
\circ ([]!{+(0,-.3)} {M}) -[]!{+(.5,0)} \circ ([]!{+(0,-.3)} {C_3}) (-[]!{+(.5,.25)} \bullet ([]!{+(0,0)} {})-[]!{+(.5,0)} \circ ([]!{+(0,0)} {})-[]!{+(.5,0)} \circ ([]!{+(0,0)} {}),-[]!{+(.5,-.25)} \bullet ([]!{+(0,0)} {})-[]!{+(.5,0)} \circ ([]!{+(0,0)} {})-[]!{+(.5,0)} \circ ([]!{+(0,0)} {}))
}$} &

\multirow{3}{*}{$2$} & $(A_5)'$ & 
\multirow{3}{*}{$\xygraph{
\circ ([]!{+(0,-.3)} {M_1}) -[]!{+(.5,0)} \circ ([]!{+(0,-.3)} {\Gamma _1}) -[]!{+(.5,0)} \circ ([]!{+(0,0)} {}) (-[]!{+(0,.5)} \bullet ([]!{+(0,0)} {}),-[]!{+(.5,0)} \circ ([]!{+(0,-.3)} {\Gamma_2}) -[]!{+(.5,0)} \circ ([]!{+(0,-.3)} {M_2}))
}$}  \rule[0mm]{0mm}{4mm} \\ 

 & ($A^-_2$) & & & ($A^-_5$) &  \rule[0mm]{0mm}{4mm} \\ 
 & $5$ & & & $6$ &  \rule[0mm]{0mm}{4mm} \\ \hline

\multirow{3}{*}{$2$} & $(A_3+2A_1)''$ & 
\multirow{3}{*}{$\xygraph{
\circ ([]!{+(0,-.3)} {M}) -[]!{+(.5,0)} \circ ([]!{+(0,-.3)} {F}) (-[]!{+(.5,0)} \circ ([]!{+(0,-.3)} {C_2}) (-[]!{+(.5,.25)} \bullet ([]!{+(0,0)} {}) -[]!{+(.5,0)} \circ ([]!{+(0,0)} {}),-[]!{+(.5,-.25)} \bullet ([]!{+(0,0)} {})-[]!{+(.5,0)} \circ ([]!{+(0,0)} {})),(-[]!{+(-.5,.25)} \bullet ([]!{+(0,0)} {}),-[]!{+(.5,.25)} \bullet ([]!{+(0,0)} {})))
}$} &

\multirow{3}{*}{$2$} & $A_2+3A_1$ & 
\multirow{3}{*}{$\xygraph{
\circ ([]!{+(0,-.3)} {C_3})
 ((-[]!{+(.5,.25)} \bullet ([]!{+(0,0)} {})-[]!{+(.5,0)} \circ ([]!{+(0,0)} {}), -[]!{+(.5,0)} \bullet ([]!{+(0,0)} {})-[]!{+(.5,0)} \circ ([]!{+(0,0)} {}), -[]!{+(.5,-.25)} \bullet ([]!{+(0,0)} {})-[]!{+(.5,0)} \circ ([]!{+(0,0)} {})), -[]!{+(-.5,0)} \circ ([]!{+(0,-.3)} {M}))
}$}  \rule[0mm]{0mm}{4mm} \\ 

 & ($A^-_3$) & & & ($A^-_2$) &  \rule[0mm]{0mm}{4mm} \\ 
 & $5$ & & & $4$ &  \rule[0mm]{0mm}{4mm} \\ \hline

\multirow{3}{*}{$2$} & $(A_3+A_1)'$ & 
\multirow{3}{*}{$\xygraph{
(-[]!{+(0,-.25)} \circ ([]!{+(0,-.3)} {\Gamma})(-[]!{+(-.5,0)} \circ ([]!{+(0,-.3)} {M_1}),-[]!{+(.5,0)} \circ ([]!{+(0,-.3)} {M_2})),(-[]!{+(0,.25)} \bullet ([]!{+(0,0)} {}) -[]!{+(-.5,0)} \circ ([]!{+(0,0)} {}))
}$} &

\multirow{3}{*}{$2$} & $A_3$ & 
\multirow{3}{*}{$\xygraph{
\circ ([]!{+(0,-.3)} {M_1}) -[]!{+(.5,0)} \circ ([]!{+(0,-.3)} {\Gamma}) (-[]!{+(.5,0)} \circ ([]!{+(0,-.3)} {M_2}),(-[]!{+(-.5,.25)} \bullet ([]!{+(0,0)} {}),-[]!{+(.5,.25)} \bullet ([]!{+(0,0)} {})))
}$}  \rule[0mm]{0mm}{4mm} \\ 

 & ($A^-_3$, $A^+_1$) & & & ($A^-_3$) &  \rule[0mm]{0mm}{4mm} \\ 
 & $5$ & & & $4$ &  \rule[0mm]{0mm}{4mm} \\ \hline

\multirow{3}{*}{$2$} & $A_2$ & 
\multirow{3}{*}{$\xygraph{
\circ ([]!{+(0,.3)} {M}) -[]!{+(.5,0)} \circ ([]!{+(0,.3)} {C_3}) (-[]!{+(-.5,-.25)} \bullet ([]!{+(0,0)} {})[]!{+(.5,0)} \cdots ([]!{+(0,-.3)} {\underbrace{\qquad \qquad}_{6\text{-vertices}}})[]!{+(.5,0)} \bullet ([]!{+(0,0)} {}),-[]!{+(.5,-.25)} \bullet ([]!{+(0,0)} {}))
}$} &

\multirow{3}{*}{$1$} & $A_7+A_1$ & 
\multirow{3}{*}{$\xygraph{
(-[]!{+(0,-.25)} \circ ([]!{+(0,0)} {}) (-[]!{+(.5,0)} \circ ([]!{+(0,0)} {})-[]!{+(.5,0)} \circ ([]!{+(0,0)} {})-[]!{+(.5,0)} \circ ([]!{+(0,0)} {}),-[]!{+(-.5,0)} \circ ([]!{+(0,0)} {})-[]!{+(-.5,0)} \circ ([]!{+(0,0)} {}) (-[]!{+(-.5,0)} \circ ([]!{+(0,0)} {}),-[]!{+(0,.5)} \bullet -[]!{+(-.5,0)} \circ)),-[]!{+(0,.25)} \bullet ([]!{+(.3,0)} {\widetilde{E}}))
}$}  \rule[0mm]{0mm}{4mm} \\ 

 & ($A^-_2$) & & & ($A^-_7$, $A^+_1$) &  \rule[0mm]{0mm}{4mm} \\ 
 & $3$ & & & $9$ &  \rule[0mm]{0mm}{4mm} \\ \hline

\multirow{3}{*}{$1$} & $E_6+A_2$ & 
\multirow{3}{*}{$\xygraph{
(-[]!{+(0,-.25)} \circ ([]!{+(0,-.2)} {\scriptsize \textcircled{\tiny 5}}),-[]!{+(0,.25)} \circ ([]!{+(0,.25)} {\scriptsize \textcircled{\tiny 4}}) (-[]!{+(.5,0)} \circ ([]!{+(0,.25)} {\scriptsize \textcircled{\tiny 3}})-[]!{+(.5,0)} \circ ([]!{+(0,.3)} {L_3}),-[]!{+(-.5,0)} \circ ([]!{+(0,.25)} {\scriptsize \textcircled{\tiny 2}})-[]!{+(-1,0)} \circ ([]!{+(0,.3)} {L_1})-[]!{+(0,-.5)} \bullet ([]!{+(0,-.2)} {\scriptsize \textcircled{\tiny 9}})-[]!{+(.5,0)} \circ ([]!{+(0,-.2)} {\scriptsize \textcircled{\tiny 8}})-[]!{+(.5,0)} \circ ([]!{+(0,-.2)} {\scriptsize \textcircled{\tiny 7}})))
}$} &

\multirow{3}{*}{$1$} & $D_5+A_3$ & 
\multirow{3}{*}{$\xygraph{
(-[]!{+(0,-.25)} \circ ([]!{+(0,0)} {})(-[]!{+(-.5,0)} \circ ([]!{+(0,0)} {}),-[]!{+(.5,0)} \circ ([]!{+(0,0)} {})-[]!{+(.5,0)} \bullet ([]!{+(0,0)} {})-[]!{+(.5,0)} \circ ([]!{+(0,0)} {})-[]!{+(.5,0)} \circ ([]!{+(0,0)} {})-[]!{+(.5,0)} \circ ([]!{+(0,0)} {})),(-[]!{+(0,.25)} \circ ([]!{+(0,0)} {}) (-[]!{+(-.5,0)} \circ ([]!{+(0,0)} {}),-[]!{+(.5,0)} \bullet ([]!{+(.3,0)} {\widetilde{E}})))
}$}  \rule[0mm]{0mm}{4mm} \\ 

 & ($E^-_6$, $A^-_2$) & & & ($D^-_5$, $A^-_3$) &  \rule[0mm]{0mm}{4mm} \\ 
 & $9$ & & & $9$ &  \rule[0mm]{0mm}{4mm} \\ \hline

\multirow{3}{*}{$1$} & $A_5+A_2+A_1$ & 
\multirow{3}{*}{$\xygraph{
(-[]!{+(0,.25)} \bullet ([]!{+(0,.25)} {\scriptsize \textcircled{\tiny 10}})-[]!{+(.5,0)} \circ ([]!{+(0,.25)} {\scriptsize \textcircled{\tiny 9}})-[]!{+(.5,0)} \circ ([]!{+(0,.25)} {\scriptsize \textcircled{\tiny 8}}),-[]!{+(0,-.25)} \circ ([]!{+(0,-.25)} {\scriptsize \textcircled{\tiny 7}}) (-[]!{+(.5,0)} \circ ([]!{+(0,-.25)} {\scriptsize \textcircled{\tiny 6}}),-[]!{+(-1,0)} \circ ([]!{+(0,-.25)} {\scriptsize \textcircled{\tiny 3}}) (-[]!{+(-.5,0)} \circ ([]!{+(0,-.25)} {\scriptsize \textcircled{\tiny 2}})-[]!{+(-.5,0)} \circ ([]!{+(0,-.25)} {\scriptsize \textcircled{\tiny 1}}),-[]!{+(0,.5)} \bullet ([]!{+(0,.25)} {\scriptsize \textcircled{\tiny 5}})-[]!{+(-.5,0)} \circ ([]!{+(0,.25)} {\scriptsize \textcircled{\tiny 4}}))))
}$} &

\multirow{3}{*}{$1$} & $2A_4$ & 
\multirow{3}{*}{$\xygraph{
(-[]!{+(0,.25)} \circ ([]!{+(0,.25)} {\scriptsize \textcircled{\tiny 8}}) (-[]!{+(-1,0)} \circ ([]!{+(0,.25)} {\scriptsize \textcircled{\tiny 1}}),-[]!{+(.5,0)} \circ ([]!{+(0,.25)} {\scriptsize \textcircled{\tiny 7}})-[]!{+(.5,0)} \circ ([]!{+(0,.25)} {\scriptsize \textcircled{\tiny 6}}),-[]!{+(0,-.5)} \bullet ([]!{+(0,-.25)} {\scriptsize \textcircled{\tiny 13}})-[]!{+(.5,0)} \circ ([]!{+(0,-.25)} {\scriptsize \textcircled{\tiny 12}})-[]!{+(.5,0)} \circ ([]!{+(0,-.25)} {\scriptsize \textcircled{\tiny 11}})-[]!{+(.5,0)} \circ ([]!{+(0,-.25)} {\scriptsize \textcircled{\tiny 10}})-[]!{+(.5,0)} \circ ([]!{+(0,-.25)} {\scriptsize \textcircled{\tiny 9}}))
}$}  \rule[0mm]{0mm}{4mm} \\ 

 & ($A^-_5$, $A^-_2$, $A^+_1$) & & & ($A^-_4$, $A^-_4$) &  \rule[0mm]{0mm}{4mm} \\
 & $9$ & & & $9$ & \rule[0mm]{0mm}{4mm} \\ \hline

\multirow{3}{*}{$1$} & $(A_7)'$ & 
\multirow{3}{*}{$\xygraph{
(-[]!{+(0,-.25)} \circ ([]!{+(0,0)} {}) (-[]!{+(.5,0)} \circ ([]!{+(0,0)} {})-[]!{+(.5,0)} \circ ([]!{+(0,0)} {})-[]!{+(.5,0)} \circ ([]!{+(0,0)} {}),-[]!{+(-.5,0)} \circ ([]!{+(0,0)} {})-[]!{+(-.5,0)} \circ ([]!{+(0,0)} {})-[]!{+(-.5,0)} \circ ([]!{+(0,0)} {})),-[]!{+(0,.25)} \bullet ([]!{+(.3,0)} {\widetilde{E}}))
}$} &

\multirow{3}{*}{$1$} & $D_5+2A_1$ & 
\multirow{3}{*}{$\xygraph{
(-[]!{+(0,-.125)} \circ ([]!{+(0,-.3)} {F}) (-[]!{+(-.5,0)} \circ ([]!{+(0,-.3)} {M}),-[]!{+(.5,0)} \circ ([]!{+(0,-.3)} {C_2}) (-[]!{+(.5,.25)} \bullet ([]!{+(0,0)} {})-[]!{+(.5,0)} \circ ([]!{+(0,0)} {}),-[]!{+(.5,-.25)} \bullet ([]!{+(0,0)} {})-[]!{+(.5,0)} \circ ([]!{+(0,0)} {})),-[]!{+(0,.5)} \circ ([]!{+(0,0)} {}) (-[]!{+(-.5,0)} \circ ([]!{+(0,0)} {}),-[]!{+(.5,0)} \bullet ([]!{+(.2,0)} {\widetilde{E}})))
}$}  \rule[0mm]{0mm}{4mm} \\ 

 & ($A^-_7$) & & & ($D^-_5$) &  \rule[0mm]{0mm}{4mm} \\ 
 & $8$ & & & $7$ &  \rule[0mm]{0mm}{4mm} \\ \hline

\multirow{3}{*}{$1$} & $A_5+A_2$ & 
\multirow{3}{*}{$\xygraph{
(-[]!{+(0,.25)} \bullet ([]!{+(0,.25)} {\scriptsize \textcircled{\tiny 10}})-[]!{+(.5,0)} \circ ([]!{+(0,.25)} {\scriptsize \textcircled{\tiny 9}})-[]!{+(.5,0)} \circ ([]!{+(0,.25)} {\scriptsize \textcircled{\tiny 8}}),-[]!{+(0,-.25)} \circ ([]!{+(0,-.25)} {\scriptsize \textcircled{\tiny 7}}) (-[]!{+(.5,0)} \circ ([]!{+(0,-.25)} {\scriptsize \textcircled{\tiny 6}}),-[]!{+(-1,0)} \circ ([]!{+(0,-.25)} {\scriptsize \textcircled{\tiny 3}}) (-[]!{+(-.5,0)} \circ ([]!{+(0,-.25)} {\scriptsize \textcircled{\tiny 2}})-[]!{+(-.5,0)} \circ ([]!{+(0,-.25)} {\scriptsize \textcircled{\tiny 1}}),(-[]!{+(-.5,.5)} \bullet ([]!{+(0,.25)} {\scriptsize \textcircled{\tiny 5}}),-[]!{+(.5,.5)} \bullet ([]!{+(0,.25)} {\scriptsize \textcircled{\tiny 4}})))))
}$} &

\multirow{3}{*}{$1$} & $E_6$ & 
\multirow{3}{*}{$\xygraph{
(-[]!{+(0,-.25)} \circ ([]!{+(0,0)} {}) (-[]!{+(.5,0)} \circ ([]!{+(0,0)} {})-[]!{+(.5,0)} \circ ([]!{+(0,0)} {}),-[]!{+(-.5,0)} \circ ([]!{+(0,0)} {})-[]!{+(-.5,0)} \circ ([]!{+(0,0)} {})),-[]!{+(0,.25)} \circ ([]!{+(0,0)} {})-[]!{+(.5,0)} \bullet ([]!{+(.3,0)} {\widetilde{E}}))
}$}  \rule[0mm]{0mm}{4mm} \\ 

 & ($A^-_5$, $A^-_2$) & & & ($E^-_6$) &  \rule[0mm]{0mm}{4mm} \\ 
 & $8$ & & & $7$ &  \rule[0mm]{0mm}{4mm} \\ \hline

\multirow{3}{*}{$1$} & $(A_5+A_1)'$ & 
\multirow{3}{*}{$\xygraph{
((-[]!{+(0,-.25)} \bullet ([]!{+(0,-.25)} {\scriptsize \textcircled{\tiny 5}})-[]!{+(-.5,0)} \circ ([]!{+(0,-.25)} {\scriptsize \textcircled{\tiny 4}}),(-[]!{+(0,.25)} \circ ([]!{+(0,.25)} {\scriptsize \textcircled{\tiny 3}}) (-[]!{+(1,0)} \circ ([]!{+(0,.25)} {\scriptsize \textcircled{\tiny 7}})-[]!{+(.5,0)} \circ ([]!{+(0,.25)} {\scriptsize \textcircled{\tiny 6}}),-[]!{+(-.5,0)} \circ ([]!{+(0,.25)} {\scriptsize \textcircled{\tiny 2}})-[]!{+(-.5,0)} \circ ([]!{+(0,.25)} {\scriptsize \textcircled{\tiny 1}})))
}$} &

\multirow{3}{*}{$1$} & $D_5$ & 
\multirow{3}{*}{$\xygraph{(
(-[]!{+(0,-.25)} \circ ([]!{+(0,0)} {}) (-[]!{+(.5,0)} \circ ([]!{+(0,0)} {}),-[]!{+(-.5,0)} \circ ([]!{+(0,0)} {})),-[]!{+(0,.25)} \circ ([]!{+(0,0)} {}) (-[]!{+(-.5,0)} \circ ([]!{+(0,0)} {}),-[]!{+(.5,0)} \bullet ([]!{+(.3,0)} {\widetilde{E}})))
}$}  \rule[0mm]{0mm}{4mm} \\ 

 & ($A^-_5$, $A^+_1$) & & & ($D^-_5$) & \rule[0mm]{0mm}{4mm} \\ 
 & $7$ & & & $6$ & \rule[0mm]{0mm}{4mm} \\ \hline

\multirow{3}{*}{$1$} & $A_5$ & 
\multirow{3}{*}{$\xygraph{
(-[]!{+(-.25,-.25)} \bullet ([]!{+(0,-.25)} {\scriptsize \textcircled{\tiny 4}}) -[]!{+(.5,.5)} \circ ([]!{+(0,.25)} {\scriptsize \textcircled{\tiny 3}})
(-[]!{+(.5,-.5)} \bullet ([]!{+(0,-.25)} {\scriptsize \textcircled{\tiny 5}}),(-[]!{+(1,0)} \circ ([]!{+(0,.25)} {\scriptsize \textcircled{\tiny 7}})-[]!{+(.5,0)} \circ ([]!{+(0,.25)} {\scriptsize \textcircled{\tiny 6}}),-[]!{+(-.5,0)} \circ ([]!{+(0,.25)} {\scriptsize \textcircled{\tiny 2}})-[]!{+(-.5,0)} \circ ([]!{+(0,.25)} {\scriptsize \textcircled{\tiny 1}})))
}$} &

\multirow{3}{*}{$1$} & $A_4$ & 
\multirow{3}{*}{$\xygraph{
\circ ([]!{+(0,-.25)} {\scriptsize \textcircled{\tiny 1}}) -[]!{+(1,0)} \circ ([]!{+(0,-.25)} {\scriptsize \textcircled{\tiny 8}}) -[]!{+(.5,0)} \circ ([]!{+(0,-.25)} {\scriptsize \textcircled{\tiny 7}}) -[]!{+(.5,0)} \circ ([]!{+(0,-.25)} {\scriptsize \textcircled{\tiny 6}})
}$}  \rule[0mm]{0mm}{4mm} \\ 

 & ($A^-_5$) & & & ($A^-_4$) & \rule[0mm]{0mm}{4mm} \\ 
 & $6$ & & & $5$ & \rule[0mm]{0mm}{4mm} \\ \hline
\end{tabular}
\end{center}
\end{table}

At the end of this subsection, we shall present the notation in Table \ref{list(3)}. 
The meanings of ``$k$-rat.\ sing.\," and ``$\rho _k(\wS )$" have already been presented. 
``Dual graph" in Table \ref{list(3)} means the dual graph corresponding to the union of all $(-2)$-curves and some $(-1)$-curves on $\wS$. 
Here, ``$\circ$" and ``$\bullet$"mean a $(-2)$-curve and a $(-1)$-curve, respectively. 
For all types of $\wS$ in the list in Table \ref{list(3)}, the union of the $(-1)$-curves on $\wS$ corresponding to all vertices $\bullet$ in Table \ref{list(3)} certainly exists and is further defined over $k$. 
The existence of these curves can be shown by using Propositions \ref{minimal}(1) and \ref{ADE-prop} with suitable choices of divisors on $\wS _{\kc}$. 
These dual graphs will be used for the construction of cylinders on the surfaces $\wS$ in \S \S \ref{5-5}. 

\subsection{Proof for the ``if'' part of Theorem \ref{main(1-3)}(4)}\label{5-5}
Let the notation and assumptions be the same as in Proposition \ref{5-4(0)}. 
Then the type of $\wS$ is one of those in Table \ref{list(3)}. 
In this subsection, we shall show the ``if" part of Theorem \ref{main(1-3)}(4). 
In other words, we will explicitly construct a cylinder on $S$ according to the type in the list in Table \ref{list(3)}. 
\begin{lem}\label{5-5(1)}
With the notation and assumptions as in Proposition \ref{5-4(0)}, assume further that one of the following conditions holds: 
\begin{itemize}
\item $d=2$ and $\wS$ is of one of those in the list of Table \ref{list(3)}; 
\item $d=1$ and $\wS$ is of $A_7+A_1$, $D_5+2A_1$, $(A_7)'$, $D_5+2A_1$, $E_6$ or $D_5$-type. 
\end{itemize}
Then $S$ contains a cylinder. 
\end{lem}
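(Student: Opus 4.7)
The plan is to give a case-by-case explicit construction of a cylinder on $\wS$ (hence on $S$) following the template already established in Lemmas \ref{construct cylinder(1)} and \ref{construct cylinder(2)}. For each type appearing in Table \ref{list(3)} covered by condition (1) or (2) of the lemma, I will identify a reduced divisor $\widetilde{E}$ on $\wS$ whose geometric irreducible components are displayed in the ``Dual graph'' column, show that $\widetilde{E}$ is defined over $k$, and verify that the complement $\wU := \wS \setminus \Supp(\widetilde{E})$ is a cylinder of the form $\bA^1_k \times \bA^1_{\ast,k}$ or $\bA^1_k \times \bA^1_{\ast\ast,k}$. The cylinder on $S$ is then the isomorphic image $\sigma(\wU)$, since by construction $\widetilde{E}$ will contain all $(-2)$-curves on $\wS_{\kc}$.

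First, I will verify that all the $(-1)$-curves $\bullet$ appearing in the dual graphs of Table \ref{list(3)} exist on $\wS_{\kc}$ and that the indicated unions are $\Gal$-invariant. The existence of the required $(-1)$-curves follows from Proposition \ref{ADE-prop}, applied to a divisor of the appropriate type (a)–(g) from Table \ref{div D} chosen according to the singularity configuration; part (4) of that proposition also matches the numerical data $(n, n(1), \ldots)$ imposed by the type. Galois invariance is then deduced from the hypothesis that the relevant singular point of $S$ is of $A_n^-$, $D_n^-$ or $E_n^-$-type, which by Definition \ref{def:duval} forces all components of the corresponding exceptional chain to be defined over $k$ individually; this propagates to the $(-1)$-curves through Proposition \ref{ADE-prop}(3) as in Example \ref{ex of prop(4-2)}.

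Second, for each type I will exhibit a birational morphism $\tau : \wS \to V$ over $k$ by contracting successively the $(-1)$-curves marked $\bullet$ together with those $(-2)$-curves that become $(-1)$-curves after prior contractions, exactly as in the proofs of Lemmas \ref{construct cylinder(1)} and \ref{construct cylinder(2)}. For $d = 2$ the target $V$ will be a $k$-form of $\bF_2$; the existence of a $k$-rational singular point on $S$ of the specified type produces a $k$-rational point on $\wS$, whose image lies on $V$, so $V \simeq \bF_2$ via Proposition \ref{Severi-Brauer} combined with the standard fact that a $k$-form of $\bF_2$ with a rational point is $\bF_2$ itself. For the $d = 1$ cases $A_7+A_1$, $D_5+2A_1$, $(A_7)'$, $E_6$, $D_5$ I will contract a further $(-1)$-curve (the one labelled $\widetilde{E}$ in Table \ref{list(3)}) to first reach a weak del Pezzo surface of degree $2$ already appearing in Table \ref{list(2)}, and then invoke Lemmas \ref{construct cylinder(1)}--\ref{construct cylinder(2)} for that intermediate surface. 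The cylinder on $V \simeq \bF_2$ is read off directly as the complement of the $(-2)$-section $M$ together with one, two, or three fibers (plus, for the types where $\widetilde{E}$ contains a curve labelled $C_2$ or $\Gamma$, a further bisection curve obtained via Lemma \ref{cylinder F2}).

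The main obstacle will be ensuring that everything descends to $k$. In particular, for several types (most delicately $(A_5)'$, $(A_3+2A_1)''$, $A_2+3A_1$, $A_2$ with $d=2$, and $(A_7)'$, $E_6$, $D_5$ with $d=1$) the $(-1)$-curves that are contracted by $\tau$ do not come in Galois-paired families, so their individual $k$-rationality must be carefully certified using Proposition \ref{ADE-prop}(2) together with the fact that the corresponding divisor class in $\Pic(\wS_{\kc})$ is $\Gal$-invariant (being an integral combination of $-K_{\wS}$ and $(-2)$-curves whose classes are individually $\Gal$-stable by the $A_n^-$/$D_n^-$/$E_n^-$ hypothesis). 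Once this Galois descent is in hand, the geometric verification $\wU_{\kc} \simeq \bA^1_{\kc} \times \bA^1_{\ast\ast,\kc}$ is immediate from the explicit model on $\bF_2$, and Lemma \ref{no cycle} does not need to be invoked since $\widetilde{E}$ is chosen to be a tree by inspection of the dual graphs.
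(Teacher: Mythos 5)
Your overall strategy coincides with the paper's: contract the marked $(-1)$-curves (whose existence and $k$-rationality come from Proposition \ref{ADE-prop} plus a Galois-descent argument on $\Gal$-invariant divisor classes) down to a surface on which a cylinder avoiding all $(-2)$-curves is already known, and for $d=1$ first contract the curve $\widetilde{E}$ to drop to degree $2$. However, two of your reductions are aimed at the wrong targets. First, for $d=1$ you propose to land, after contracting $\widetilde{E}$, on a degree-$2$ surface ``already appearing in Table \ref{list(2)}'' and to invoke Lemmas \ref{construct cylinder(1)}--\ref{construct cylinder(2)}. This fails: those lemmas require the degree-$2$ surface to carry a $k$-rational singular point of $D_4$, $A_6$, $A_7$, $D_5$, $D_6$, $E_6$ or $E_7$-type, and the surfaces you actually obtain are of the Table \ref{list(3)} types. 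For instance, contracting $\widetilde{E}$ on the $(A_7)'$ surface of degree $1$ turns the central $(-2)$-curve of the $A_7$-chain into a $(-1)$-curve and leaves a $2A_3$-configuration, which is outside the scope of Table \ref{list(2)}. The paper instead reduces the $d=1$ cases to part (1) of this very lemma; your argument is repaired by the same redirection, since your part (1) does claim to treat every $d=2$ type of Table \ref{list(3)}.

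Second, your blanket assertion that for $d=2$ the contraction terminates on a $k$-form of $\bF_2$ is not justified for the types $(A_5)'$, $(A_3+A_1)'$ and $A_3$ (the ones whose dual graphs in Table \ref{list(3)} carry the labels $M_1$, $M_2$, $\Gamma$ or $\Gamma_1$, $\Gamma_2$). There the natural $k$-morphism stops at a weak del Pezzo surface $W_4$ of degree $4$ and $2A_1(1)$-type -- which can even be $k$-minimal by Proposition \ref{minimal} -- and the paper concludes by using the cylinder on $W_4$ constructed in \S\S\ref{3-2}, whose boundary contains the two $(-2)$-curves and the image of the contracted configuration. To push on to $\bF_2$ over $k$ you would have to verify that suitable pairs of $(-1)$-curves on $W_{4,\kc}$ form $\Gal$-stable disjoint unions, which is exactly the kind of descent you have not checked. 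For the remaining $d=2$ types your description (target $\bF_2$, boundary $M+F+C_2$ or $M+C_3$, cylinder via Lemma \ref{cylinder F2}) matches the paper. With these two corrections your proof goes through along the paper's lines.
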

\begin{proof}
In the case of $d=2$, let $N$ be the union of all $(-2)$-curves on $\wS$. 
At first, we shall deal with the cases in which $\wS$ is of $(A_5)'$, $(A_3+A_1)'$ and $A_3$-type. 
For these cases, we can take a birational morphism $\tau :\wS \to W_4$, which is the compositions of the successive contractions of the $(-1)$-curves corresponding to the vertices $\bullet$ in the dual graph in Table \ref{list(3)} and that of the proper transform of the branch components such that all curves corresponding to vertices with no label in the dual graph in Table \ref{list(3)} are contracted by $\tau$, according to the type of $\wS$, where $W_4$ is a weak del Pezzo surface of degree $4$ and of $(2A_1)_<$-type over $k$. 
Note that, by construction, $\tau$ is defined over $k$. 
Moreover, the image of the reduced curves corresponding to all vertices of this dual graph via $\tau$ is the union of either $M_1+M_2+\Gamma$ or $M_1+M_2+\Gamma _1+\Gamma _2$, where $M_1$ and $M_2$ are $(-2)$-curves on $W_{4,\kc}$, $\Gamma$ is a $0$-curve on $W_{4,\kc}$, and $\Gamma _1$ and $\Gamma _2$ are $(-1)$-curves on $W_{4,\kc}$ meeting transversally at a point. 
Notice that these curves on $W_{4,\kc}$ are in one-to-one correspondence to these vertices with a label of this dual graph.  
Since two $(-2)$-curves on $W_{4,\kc}$ admit a $k$-rational point respectively, $W_4$ contains a cylinder, which contains $\tau _{\ast}(N)$ in its boundary (see also \S \S \ref{3-2}). 
Thus, $\wS$ contains a cylinder $\wU$, which contains $N$ in its boundary. 
Therefore, we see that $S$ contains a cylinder $\sigma (\wU ) \simeq \wU$. 

In what follows, we shall deal with the remaining cases. 
For all remaining cases, we can take a birational morphism $\tau :\wS \to \bF _2$, which is the compositions of the successive contractions of the $(-1)$-curves corresponding to the vertices $\bullet$ in the dual graph in Table \ref{list(3)} and that of the proper transform of the branch components such that all curves corresponding to vertices with no label in the dual graph in Table \ref{list(3)} are contracted by $\tau$, according to the type of $\wS$. 
Note that, by construction, $\tau$ is defined over $k$. 
Moreover, the image of the reduced curves corresponding to all vertices of this dual graph via $\tau$ is the union of either $M + F + C_2$ or $M + C_3$, where $M$ is the $(-2)$-curve on $\bF _2$, $F$ is a closed fiber of the $\bP ^1$-bundle $\bF _2 \to \bP ^1_k$ and $C_n$ is a rational curve on $\bF _2$ with $C_n \sim M+nF$ for $n=2,3$. 
Notice that these curves on $\bF _2$ are in one-to-one correspondence to these vertices with a label of this dual graph.  
For all cases, $\bF _2$ contains a cylinder, whose boundary includes the above union of curves, by Lemma \ref{cylinder F2}. 
Thus, we see that $S$ contains a cylinder by an argument similar to the above. 

In (2), for all cases, the dual graph in Table \ref{list(3)} corresponding to the type of $\wS$ contains a vertex with a label written $\widetilde{E}$. 
This vertex corresponds to a $(-1)$-curve on $\wS _{\kc}$, which is defined over $k$. 
Letting $\widetilde{E}$ be this $(-1)$-curve on $\wS$, we can take the contraction $\tau : \wS \to W_2$ of $\widetilde{E}$ over $k$, so that $W_2$ is a weak del Pezzo surface of degree $2$, whose type is one of those in the list in Table \ref{list(3)}, moreover, the point $\tau (\widetilde{E})$ lies on a curve, which corresponds to a vertex with no label in the dual graph in Table \ref{list(3)} according to the type of $W_2$. 
Thus, we see that $S$ contains a cylinder by using (1). 
\end{proof}
In order to deal with all remaining cases, we shall recall how to construct cylinders in del Pezzo surfaces with Du Val singularities found in {\cite[\S \S 4.2--4.3]{CPW16b}}. 
More precisely, we construct two birational morphisms  $g:\check{S} \to \wS _{\kc}$ and $h:\check{S} \to \bP ^2_{\kc}$ over $\kc$ (but not necessarily defined over $k$) in such a way that there exists a suitable cylinder $U$ in $\bP ^2_{\kc}$, which would be preserved via $g \circ h^{-1}: \bP ^2_{\kc} \dashrightarrow \wS _{\kc}$ and $(g \circ h^{-1})(U) \cap \Supp (N) = \emptyset$, where $N$ is the union of all $(-2)$-curves on $\wS _{\kc}$. 
In particular, $S_{\kc}$ contains the cylinder $(\sigma \circ g \circ h^{-1})(U)$. 
In the following lemmas (Lemmas \ref{5-5(3)}, \ref{5-5(4)} and \ref{5-5(5)}), in order to show that above argument is still working well over $k$, we shall prove that $g$ and $h$ are defined over $k$. 
In the proofs for Lemmas \ref{5-5(4)} and \ref{5-5(5)}, we look at the corresponding dual graphs in Table \ref{list(3)} and {\cite[Table 1]{CPW16b}}. 
We note that the numbering something like \textcircled{\scriptsize $i$} in Table \ref{list(3)} corresponds to that in {\cite[Table 1]{CPW16b}}. 
\begin{lem}\label{5-5(3)}
Let the notation and assumptions be the same as in Proposition \ref{5-4(0)}. 
If $d=1$ and $\wS$ is of $E_6+A_2$, $A_5+A_2+A_1$, $2A_4$ or $A_5+A_2$-type, then $S$ contains a cylinder. 
\end{lem}
\begin{proof}
For all cases, we see that any $(-2)$-curve on $\wS _{\kc}$ is defined over $k$ by the configuration of singular points on $S_{\kc}$ (see also Table \ref{list(3)}). 
In particular, any point meeting two $(-2)$-curves on $\wS _{\kc}$ is also defined over $k$. 
Then we can construct a birational morphism $g:\check{S} \to \wS _{\kc}$, whose $\check{S}$ is that as in {\cite[\S \S 4.2]{CPW16b}} according to the type of $\wS$, defined over $k$. 
Indeed, we shall consider a sequence of some blow-ups at some $k$-rational points starting at an intersection point of two $(-2)$-curves on $\wS _{\kc}$ (according to the type of $\wS$) and including infinitely near points such that we obtain the configuration of ``Construction" in {\cite[Table 1]{CPW16b}} according to the types of $\wS$. 
Moreover, we immediately have a birational morphism $h:\check{S} \to \bP ^2_{\kc}$, which plays same role as $h$  in {\cite[\S \S 4.2]{CPW16b}}. 
This $h$ is clearly defined over $k$. 
Therefore, we see that $S$ contains a cylinder. 
\end{proof}
\begin{rem}
In Lemma \ref{5-5(3)}, if $\wS$ is of $E_6+A_2$, $A_5+A_2+A_1$ or $2A_4$-type, then we could have also inferred the same result from the fact that $g:\check{S} \to \wS _{\kc}$ and $h:\check{S} \to \bP ^2_{\kc}$ are clearly defined over $k$, where $g$ and $h$ are those as in {\cite[\S \S 4.2]{CPW16b}}. 
Indeed, for these types, all $(-1)$-curves and $(-2)$-curves on $\wS _{\kc}$ are defined over $k$ since $\rho _k(\wS ) = \rho _{\kc}(\wS _{\kc}) = 9$. 
\end{rem}
\begin{lem}\label{5-5(4)}
Let the notation and assumptions be the same as in Proposition \ref{5-4(0)}. 
If $d=1$ and $\wS$ is of $(A_5+A_1)'$ or $A_5$-type, then $S$ contains a cylinder. 
\end{lem}
\begin{proof}
Let $M_i$ be the smooth rational curve on $\wS _{\kc}$ corresponding to the vertex with a label written \textcircled{\scriptsize $i$} in the weighted dual graph of Table \ref{list(3)}. 
There exists a $(-1)$-curve $\widetilde{E}$ on $\wS _{\kc}$, which is defined over $k$, such that $(\widetilde{E} \cdot M_i) = \delta _{1,i} + \delta _{6,i}$ by Lemma \ref{ADE-(-1)}(1). 
Hence, we obtain the birational morphism $\tau :\wS \to W_4$ over $k$ with the reduced exceptional divisor $M_4+M_5+\widetilde{E}$, so that $W_4$ is a weak del Pezzo surface of degree $4$ and of $(2A_1)_<$-type. 
Notice that $\tau _{\ast} (M_2)_{\kc}$ and $\tau _{\ast} (M_7)_{\kc}$ (resp. $\tau _{\ast} (M_1)_{\kc}$ and $\tau _{\ast} (M_6)_{\kc}$) are $(-2)$-curves (resp. $(-1)$-curves) on $W_{4,\kc}$. 
By Proposition \ref{minimal}(1), we know that $\tau _{\ast}(M_7)_{\kc}$ meets exactly four $(-1)$-curves such that one is $\tau _{\ast}(M_6)_{\kc}$. 
Let $E$ be the union of three $(-1)$-curves meeting $\tau _{\ast} (M_7)_{\kc}$ other than $\tau _{\ast} (M_6)_{\kc}$ on $W_{4,\kc}$. 
Noting that $E$ is defined over $k$, so is $\tau ^{-1}_{\ast}(E)$. 
Moreover, $\tau ^{-1}_{\ast}(E)_{\kc}$ consists of three $(-1)$-curve on $\wS _{\kc}$ corresponding to curves with a label written \textcircled{\scriptsize 8}, \textcircled{\scriptsize 9}, \textcircled{\scriptsize 10} in {\cite[Table 1]{CPW16b}}. 
Thus, we can construct two birational morphisms $g:\check{S} \to \wS _{\kc}$ and $h:\check{S} \to \bP ^2_{\kc}$, which play same role as in $g$ and $h$ in {\cite[\S \S 4.2]{CPW16b}}, defined over $k$ (see the following weighted dual graph): 
\begin{align*}
(A_5+A_1)'\text{-type}: 
&\xygraph{
\circ ([]!{+(0,.2)} {^{M_1}}) - []!{+(.5,0)} \circ ([]!{+(0,.2)} {^{M_2}}) - []!{+(.5,0)} \circ ([]!{+(0,.2)} {^{M_3}}) ((- []!{+(0,-.5)} \bullet ([]!{+(0,-.25)} {^{M_5}}) - []!{+(-.5,0)} \circ ([]!{+(0,-.25)} {^{M_4}}) ), - [r] \circ ([]!{+(0,.2)} {^{M_7}})
((- []!{+(-.3,-.5)} \bullet , - []!{+(.3,-.5)} \bullet ), - []!{+(0,-.5)} \bullet ([]!{+(0,-.4)} {\underbrace{\qquad \ \ }_{\tau ^{-1}_{\ast}(E)}})) - []!{+(.5,0)} \circ ([]!{+(0,.2)} {^{M_6}})
))}
\overset{g}{\longleftarrow}
\xygraph{
\circ ([]!{+(0,.2)} {\scriptsize \textcircled{\tiny 1}}) - []!{+(.5,0)} \circ ([]!{+(0,.2)} {\scriptsize \textcircled{\tiny 2}}) - []!{+(.5,0)} \circ ([]!{+(0,.15)} {^{-3}}) ([]!{+(0,.5)} {\scriptsize \textcircled{\tiny 3}}) ((- []!{+(0,-.5)} \bullet ([]!{+(0,-.25)} {\scriptsize \textcircled{\tiny 5}}) - []!{+(-.5,0)} \circ ([]!{+(0,-.25)} {\scriptsize \textcircled{\tiny 4}}) ), - []!{+(.5,0)} \circ ([]!{+(0,.2)} {^{L_1}}) - []!{+(.5,0)} \bullet ([]!{+(0,.2)} {^{L_2}}) - []!{+(.5,0)} \circ ([]!{+(0,.15)} {^{-4}}) ([]!{+(0,.5)} {\scriptsize \textcircled{\tiny 7}})
((- []!{+(-.3,-.5)} \bullet ([]!{+(0,-.25)} {\scriptsize \textcircled{\tiny 8}}), - []!{+(.3,-.5)} \bullet ([]!{+(0,-.25)} {\scriptsize \textcircled{\tiny 10}})), - []!{+(0,-.5)} \bullet ([]!{+(0,-.25)} {\scriptsize \textcircled{\tiny 9}})) - []!{+(.5,0)} \circ ([]!{+(0,.2)} {\scriptsize \textcircled{\tiny 6}})
))}
\overset{h}{\longrightarrow}
\xygraph{\circ ([]!{+(0,-.2)} {_{1}}) ([]!{+(0,.2)} {^{h_{\ast}(L_1)}}) -[r] \circ ([]!{+(0,-.2)} {_{1}}) ([]!{+(0,.2)} {^{h_{\ast}(L_2)}})}\\
A_5\text{-type}: 
&\xygraph{
\circ ([]!{+(0,.2)} {^{M_1}}) - []!{+(.5,0)} \circ ([]!{+(0,.2)} {^{M_2}}) - []!{+(.5,0)} \circ ([]!{+(0,.2)} {^{M_3}}) ((- []!{+(-.3,-.5)} \bullet ([]!{+(0,-.25)} {^{M_4}}) , - []!{+(.3,-.5)} \bullet ([]!{+(0,-.25)} {^{M_5}}) ), - [r] \circ ([]!{+(0,.2)} {^{M_7}})
((- []!{+(-.3,-.5)} \bullet , - []!{+(.3,-.5)} \bullet ), - []!{+(0,-.5)} \bullet ([]!{+(0,-.4)} {\underbrace{\qquad \ \ }_{\tau ^{-1}_{\ast}(E)}})) - []!{+(.5,0)} \circ ([]!{+(0,.2)} {^{M_6}})
))}
\overset{g}{\longleftarrow}
\xygraph{
\circ ([]!{+(0,.2)} {\scriptsize \textcircled{\tiny 1}}) - []!{+(.5,0)} \circ ([]!{+(0,.2)} {\scriptsize \textcircled{\tiny 2}}) - []!{+(.5,0)} \circ ([]!{+(0,.15)} {^{-3}}) ([]!{+(0,.5)} {\scriptsize \textcircled{\tiny 3}}) ((- []!{+(-.3,-.5)} \bullet ([]!{+(0,-.25)} {\scriptsize \textcircled{\tiny 4}}) , - []!{+(.3,-.5)} \bullet ([]!{+(0,-.25)} {\scriptsize \textcircled{\tiny 5}}) ), - []!{+(.5,0)} \circ ([]!{+(0,.2)} {^{L_1}}) - []!{+(.5,0)} \bullet ([]!{+(0,.2)} {^{L_2}}) - []!{+(.5,0)} \circ ([]!{+(0,.15)} {^{-4}}) ([]!{+(0,.5)} {\scriptsize \textcircled{\tiny 7}})
((- []!{+(-.3,-.5)} \bullet ([]!{+(0,-.25)} {\scriptsize \textcircled{\tiny 8}}), - []!{+(.3,-.5)} \bullet ([]!{+(0,-.25)} {\scriptsize \textcircled{\tiny 10}})), - []!{+(0,-.5)} \bullet ([]!{+(0,-.25)} {\scriptsize \textcircled{\tiny 9}})) - []!{+(.5,0)} \circ ([]!{+(0,.2)} {\scriptsize \textcircled{\tiny 6}})
))}
\overset{h}{\longrightarrow}
\xygraph{\circ ([]!{+(0,-.2)} {_{1}}) ([]!{+(0,.2)} {^{h_{\ast}(L_1)}}) -[r] \circ ([]!{+(0,-.2)} {_{1}}) ([]!{+(0,.2)} {^{h_{\ast}(L_2)}})}
\end{align*}
Here, in the above graph, the numbering something like \textcircled{\scriptsize $i$} corresponds to that in {\cite[Table 1]{CPW16b}} and vertices ``$\circ$'' and ``$\bullet$'', whose weights are omitted, mean a $(-2)$-curve and a $(-1)$-curve, respectively. 
Therefore, we see that $S$ contains a cylinder. 
\end{proof}
\begin{lem}\label{5-5(5)}
Let the notation and assumptions be the same as in Proposition \ref{5-4(0)}. 
If $d=1$ and $\wS$ is of $A_4$-type, then $S$ contains a cylinder. 
\end{lem}
\begin{proof}
Let $M_i$ be the $(-2)$-curve on $\wS$ corresponding to the vertex with a label written \textcircled{\scriptsize $i$} in the dual graph of Table \ref{list(3)}. 
There exists a $(-1)$-curve $\widetilde{E}$ on $\wS _{\kc}$, which is defined over $k$, such that $(\widetilde{E} \cdot M_i) = \delta _{1,i} + \delta _{6,i}$ by Lemma \ref{ADE-(-1)}(1). 
Hence, we have the contraction $\tau _1:\wS \to W_2$ of $\widetilde{E}$ over $k$, so that $W_2$ is a weak del Pezzo surface of degree $2$ and of $A_2$-type. 
Notice that $\tau _{1,\ast}(M_7)_{\kc}$ and $\tau _{1,\ast}(M_8)_{\kc}$ (resp. $\tau _{1,\ast}(M_1)_{\kc}$ and $\tau _{1,\ast}(M_6)_{\kc}$) are $(-2)$-curves (resp. $(-1)$-curves) on $W_{2,\kc}$. 
By Proposition \ref{minimal}(1), we know that $\tau _{1,\ast}(M_8)_{\kc}$ meets exactly six $(-1)$-curves such that one is the $\tau _{1,\ast}(M_1)_{\kc}$. 
Let $E$ be the union of five $(-1)$-curves meeting $\tau _{1,\ast}(M_8)_{\kc}$ other than $\tau _{1,\ast} (M_1)_{\kc}$ on $W_{2,\kc}$. 
Noting that $E$ is defined over $k$, so is $\tau ^{-1}_{1,\ast}(E)$. 
Moreover, $\tau ^{-1}_{1,\ast}(E)_{\kc}$ consists of five $(-1)$-curves on $\wS _{\kc}$ corresponding to curves with a label written \textcircled{\scriptsize 9}--\textcircled{\scriptsize 13} in {\cite[Table 1]{CPW16b}}. 
On the other hand, we have the contraction $\tau _2:W_2 \to \bF _2$ of $\tau _{1,\ast}(M_1)+E$ over $k$. 
Set $M := \tau _{\ast}(M_7)$, $F_0 := \tau _{\ast}(M_6)$ and $C_3 := \tau _{\ast}(M_8)$, where $\tau := \tau _2 \circ \tau _1: \wS \to W_2 \to \bF _2$. 
Then we see $\Pic (\bF _2) = \bZ [M] \oplus \bZ [F_0]$ and $C_3 \sim M + 3F_0$ (cf. Lemma \ref{5-5(1)}(1)). 
Since $(F_0 \cdot C_3) = 1$, $F_0$ and $C_3$ meet transversely at a point, say $p$, which is $k$-rational. 
Moreover, we see that there exists a unique curve $C_2$ on $\bF _2$ such that $C_2 \sim M+2F$ and $i(C_2,C_3;p)=3$, where $i(C_2,C_3;p)$ is the local intersection multiplicity at $p$ of $C_2$ and $C_3$. 
Notice that $C_2$ is defined over $k$. 
Moreover, $\tau ^{-1}_{\ast}(C_2)$, which is also defined over $k$, corresponds to the curve with a label written \textcircled{\scriptsize 5} in {\cite[Table 1]{CPW16b}}. 
Thus, we can construct two birational morphisms $g:\check{S} \to \wS _{\kc}$ and $h:\check{S} \to \bP ^2_{\kc}$, which play same role as in $g$ and $h$ in {\cite[\S \S 4.2]{CPW16b}}, defined over $k$ (see the following weighted dual graph): 
\begin{align*}
\xygraph{
\circ ([]!{+(0,.2)} {^{M_1}}) -[r] \circ ([]!{+(0,.2)} {^{M_8}})
((((- []!{+(-.3,-.5)} \bullet , - []!{+(.3,-.5)} \bullet),(- []!{+(-.6,-.5)} \bullet, - []!{+(.6,-.5)} \bullet)), - []!{+(0,-.5)} \bullet ([]!{+(0,-.4)} {\underbrace{\qquad \qquad}_{\tau ^{-1}_{1,\ast}(E)}})), - []!{+(.5,0)}
\circ ([]!{+(0,.2)} {^{M_7}}) - []!{+(.5,0)} \circ ([]!{+(0,.2)} {^{M_6}}))}
\overset{g}{\longleftarrow}
\xygraph{
\circ ([]!{+(0,.2)} {\scriptsize \textcircled{\tiny 1}}) ([]!{+(0,-.2)} {^{-3}}) - []!{+(.5,0)} \circ ([]!{+(0,.2)} {\scriptsize \textcircled{\tiny 3}}) - []!{+(.5,0)} \circ ([]!{+(0,.2)} {\scriptsize \textcircled{\tiny 4}}) ( - []!{+(0,-.5)} \bullet ([]!{+(0,-.25)} {\scriptsize \textcircled{\tiny 5}}), - []!{+(.5,0)} \circ ([]!{+(0,.2)} {\scriptsize \textcircled{\tiny 2}}) ([]!{+(0,-.2)} {^{-3}}) - []!{+(.5,0)} \bullet ([]!{+(0,.2)} {^{L_1}}) - []!{+(.5,0)} \circ ([]!{+(0,.2)} {^{L_2}}) - []!{+(.5,0)} \circ ([]!{+(0,.15)} {^{-6}}) ([]!{+(0,.5)} {\scriptsize \textcircled{\tiny 8}})
((((- []!{+(-.3,-.5)} \bullet ([]!{+(0,-.25)} {\scriptsize \textcircled{\tiny 10}}), - []!{+(.3,-.5)} \bullet ([]!{+(0,-.25)} {\scriptsize \textcircled{\tiny 12}})),(- []!{+(-.6,-.5)} \bullet ([]!{+(0,-.25)} {\scriptsize \textcircled{\tiny 9}}), - []!{+(.6,-.5)} \bullet ([]!{+(0,-.25)} {\scriptsize \textcircled{\tiny 13}}))), - []!{+(0,-.5)} \bullet ([]!{+(0,-.25)} {\scriptsize \textcircled{\tiny 11}})), - []!{+(.5,0)}
\circ ([]!{+(0,.2)} {\scriptsize \textcircled{\tiny 7}}) - []!{+(.5,0)} \circ ([]!{+(0,.2)} {\scriptsize \textcircled{\tiny 6}})))}
\overset{h}{\longrightarrow}
\xygraph{\circ ([]!{+(0,-.2)} {_{1}}) ([]!{+(0,.2)} {^{h_{\ast}(L_1)}}) -[r] \circ ([]!{+(0,-.2)} {_{1}}) ([]!{+(0,.2)} {^{h_{\ast}(L_2)}})}
\end{align*}
Here, in the above graph, the numbering something like \textcircled{\scriptsize $i$} corresponds to that in {\cite[Table 1]{CPW16b}} and vertices ``$\circ$'' and ``$\bullet$'', whose weights are omitted, mean a $(-2)$-curve and a $(-1)$-curve, respectively. 
Therefore, we see that $S$ contains a cylinder. 
\end{proof}

The ``if" part of Theorem \ref{main(1-3)}(4) follows from Proposition \ref{5-4(0)} and Lemmas \ref{5-5(1)}, \ref{5-5(3)}, \ref{5-5(4)} and \ref{5-5(5)}. 

\section{Examples}\label{6}

In this section, we shall present some examples of Du Val del Pezzo surfaces of Picard rank one and canonical del Pezzo fibrations. 

At first, we treat some examples of Du Val del Pezzo surfaces of Picard rank one over $k$, moreover, we shall discuss whether these surfaces contain or not a cylinder. 
\begin{eg}\label{eg1}
Put $\zeta := \frac{-1+\sqrt{-3}}{2}$ and let $S$ be the cubic surface over $\bQ$ defined by: 
\begin{align*}
S := \left( 12z^2w -2x^3-y^3-4w^3 + 6xyw = 0\right) \subseteq \bP ^3_{\bQ} = \Proj (\bQ [x,y,z,w]). 
\end{align*}
Then $S _{\overline{\bQ}}$ has exactly three singular points $[\sqrt[3]{2}\zeta ^i\!:\!\sqrt[3]{4} \zeta ^{2i}\!:\!0\!:\!1] \in \bP ^3_{\overline{\bQ}}$ of type $A_1$ for $i=0,1,2$ (see also Remark \ref{rem-eg1}). 
Let $\sigma : \wS \to S$ be the minimal resolution over $\bQ$. 
Then there exists the blow-down $\tau : \wS \to S_6$ over $\bQ$ such that $S_6$ is a smooth del Pezzo surface of degree $6$. 
Hence, $S_{6,\overline{\bQ}}$ has six $(-1)$-curves, say $\{ E_i \} _{1\le i \le 6}$. 
Moreover, the proper transform of these $(-1)$-curve by $\tau \circ \sigma ^{-1}$ are defined by the following equations: 
\begin{align*}
\sqrt[3]{2}\zeta ^ix=y,\ x = \pm \frac{\sqrt[3]{2}}{3}\zeta ^i(\zeta -1)z + \sqrt[3]{2}\zeta ^iw
\end{align*}
for $i=0,1,2$. 
Since all $(-1)$-curves on $S _{6,\overline{\bQ}}$ lie in the same ${\rm Gal}(\overline{\bQ}/\bQ)$-orbit, $S_6$ is $\bQ$-minimal, in particular, we obtain $\rho _{\bQ}(S_6) = 1$. 
By construction of $\sigma$ and $\tau$, we also obtain $\rho _{\bQ}(S) = 1$. 
Thus, $S$ does not contain a cylinder by Theorem \ref{main(1-2)}. 
Indeed, $S_{\overline{\bQ}}$ does not allow any singular point which is $\bQ$-rational (see also Tables \ref{list(1-1)} and \ref{list(1-2)}). 
On the other hand, we know that $S_{\overline{\bQ}}$ contains a cylinder by {\cite[Theorem 1.5]{CPW16b}}. 
This implies that any cylinder on $S_{\overline{\bQ}}$ is not defined over $\bQ$. 
\end{eg}
\begin{rem}\label{rem-eg1}
Let $S$ and $\zeta$ be those as in Example \ref{eg1} and let $A$ be the square matrix of order $4$ defined by: 
\begin{align*}
A := 
\left[ \begin{array}{cccc} 
\sqrt[3]{2} & \sqrt[3]{2} \zeta & \sqrt[3]{2} \zeta ^2 & 0 \\
\sqrt[3]{4} & \sqrt[3]{4} \zeta  ^2 & \sqrt[3]{4} \zeta & 0 \\
0 & 0 & 0 & 3 \\
1 & 1 & 1 & 0 \\
\end{array} \right] 
\in GL (4; \overline{\bQ}). 
\end{align*}
Then we obtain the projective transformation $\varphi _A: \bP ^3 _{\overline{\bQ}} \overset{\sim}{\to} \bP ^3 _{\overline{\bQ}}$ associated to $A$ and we see: 
\begin{align*}
\varphi _A^{-1}(S _{\overline{\bQ}}) = \left( w^2(x+y+z)+xyz=0\right) \subseteq \bP ^3_{\overline{\bQ}} = \Proj (\overline{\bQ} [x,y,z,w]). 
\end{align*}
It is easily to see that $\varphi _A^{-1}(S _{\overline{\bQ}})$ has exactly three singular points $[1\!:\!0\!:\!0\!:\!0], [0\!:\!1\!:\!0\!:\!0], [0\!:\!0\!:\!1\!:\!0] \in \bP ^3_{\overline{\bQ}}$, which are of type $A_1$. 
\end{rem}
\begin{eg}\label{eg2}
Let $S$ be the complete intersection of two quadrics over $\bR$ in $\bP ^4_{\bR}$ as follows: 
\begin{align*}
S := \left( x^2+y^2+wv = zw + wv + vz = 0\right) \subseteq \bP ^4_{\bR} = \Proj (\bR [x,y,z,w,v]). 
\end{align*}
Then $S$ is a Du Val del Pezzo surface of degree $4$ such that $S _{\bC}$ has exactly three singular points $p_{\pm} := [1\!:\! \pm \sqrt{-1} \!:\!0\!:\!0\!:\!0]$ and $p := [0\!:\!0\!:\!1\!:\!0\!:\!0]$ in $\bP ^4_{\bC}$, which are of type $A_1$. 
Since $p_+$ and $p_-$ lie in the same ${\rm Gal}(\bC /\bR )$-orbit, we see $\rho _{\bR}(S)=1$ (see also Table \ref{list(1-1)}). 
Hence, $S$ contains a cylinder if and only if $p$ is of type $A_1^+$ on $S$, by Theorem \ref{main(1-2)}. 
However, $p \in S$ is actually of type $A_1^{++}$, that is, $S$ does not contain any cylinder. 
Indeed, the exceptional set by the minimal resolution at $p$ does not have any $\bR$-rational point since it  can be written locally as follows: 
\begin{align*}
(u^2+v^2 + 1 = 0) \subseteq \bA ^2_{\bR} = \Spec (\bR [u,v])
\end{align*}
for some two parameters $u$ and $v$. 
Meanwhile, this example can not be constructed if the base field of $S$ is a $C_1$-field (see Example \ref{eg4}). 
\end{eg}
\begin{eg}\label{eg3}
Let $S$ be the del Pezzo surface over $k$ of degree $2$ defined by: 
\begin{align*}
S:=\left(\lambda w^2+x^2y^2+xz^3 = 0 \right) \subseteq \bP (1,1,1,2) = \Proj (k[x,y,z,w]), 
\end{align*}
where $\lambda \in k \backslash \{0 \}$. 
Then $S_{\kc}$ has exactly two singular points $p_1 := [1\!:\!0\!:\!0\!:\!0],\ p_2 := [0\!:\!1\!:\!0\!:\!0] \in \bP (1,1,1,2)$, which are $k$-rational and of type $A_2$ and $(A_5)'$, respectively. 
Namely, $\rho _k(S)=1$ (see also Table \ref{list(3)}). 
Let $\sigma : \wS \to S$ be the minimal resolution over $k$. 
By Example \ref{ex of prop(4-2)}, we see that $\wS _{\kc}$ contains reduced curves, whose union is defined over $k$, corresponding to the following dual graph, where ``$\circ$" and ``$\bullet$" mean a $(-2)$-curve and a $(-1)$-curve on $\wS _{\kc}$, respectively: 
\begin{align*}
\xygraph{
\bullet ([]!{+(-.5,0)} {}) (
        - []!{+(-.75,.5)} \circ ([]!{+(-.3,+.3)} {}) -[r] \circ ([]!{+(-.3,+.3)} {}) -[r] \circ ([]!{+(-.3,+.3)} {}) -[r] \circ ([]!{+(0,+.3)} {}) -[r] \circ ([]!{+(+.3,+.3)} {})- []!{+(-.75,-.5)} \bullet ([]!{+(+.5,0)} {})) 
        - []!{+(.75,-.5)} \circ ([]!{+(-.3,-.3)} {}) -[r] \circ ([]!{+(0,+.3)} {}) - []!{+(.75,.5)} \circ ([]!{+(0,+.3)} {})
)}
\end{align*}
By Theorem \ref{main(1-3)}(4) combined with the above dual graph, $S$ contains a cylinder if and only if $p_1$ is of type $A_2^-$ on $S$. 
By easy computation, we see that the exceptional curve by the minimal resolution at $p_1 \in S$ can be written locally as follows: 
\begin{align*}
M := (\lambda u^2+v^2 = 0) \subseteq \bA ^2_k = \Spec (k[u,v]). 
\end{align*}
for some two parameters $u$ and $v$. 
Note that $p_1 \in S$ is of type $A_2^-$ if and only if $M$ is reducible. 
Therefore, $S$ contains a cylinder if and only if the element $\sqrt{-\lambda} \in \kc$ with $(\sqrt{-\lambda})^2=-\lambda$ is included in $k$. 
\end{eg}
In what follows, we treat three examples of canonical del Pezzo fibrations defined over $\bC$. 
\begin{eg}\label{eg4}
Let $f:X \to Y$ be a canonical del Pezzo fibration of degree $3$ or $4$ over a curve $Y$ and let $X_{\eta}$ be the generic fiber of $f$. 
For simplicity, we put $S := X_{\eta}$ and $k:= \bC (Y)$. 
Assuming that $S_{\kc}$ has a singular point $x$ of type $A_1$, which is $k$-rational, and let $\sigma : \widetilde{S} \to S$ be the minimal resolution at $x$. 
Since $x$ is defined over $k$, so is the exceptional curve $E := \sigma ^{-1}(x)$. 
Note that $E_{\kc}$ is a $(-2)$-curve. 
Now, we see that $E$ has a $k$-rational point since $k=\bC (Y)$ is a $C_1$-field by the Tsen's theorem. 
In other words, the singular point $x$ is always of type $A_1^+$ on $S$ (compare Example \ref{eg2}). 
Therefore, by Theorem \ref{main(1-2)} combined with the above observation, we obtain that $f$ admits a vertical cylinder if and only if $X_{\eta ,\overline{\bC (Y)}}$ allows a singular point defined over $\bC (Y)$. 
\end{eg}
\begin{eg}\label{eg5}
Note that the classification of Du Val del Pezzo surfaces of Picard rank one over $\bC$ is well-known, in particular, the degree of a Du Val del Pezzo surface of Picard rank one with a singular point over $\bC$ is $1,\dots ,6$ or $8$ (see, e.g., {\cite{MZ88}}). 
Let $S$ be a Du Val del Pezzo surface of Picard rank one with degree $d \in \{ 1,\dots ,6,8\}$ over $\bC$ such that ${\rm Sing}(S) \not= \emptyset$, let $Y$ be an algebraic variety over $\bC$ and let $X$ be the direct product $S \times Y$. 
Then the second projection $f:X \to Y$ is a canonical del Pezzo fibration of degree $d$. 
Let $X_{\eta}$ be the generic fiber of $f$. 
For simplicity, put $k := \bC (Y)$. 
Then all $(-1)$-curves and $(-2)$-curves on $X_{\eta ,\kc}$ are defined over $k$. 
Therefore, $f$ does not admit any vertical cylinder if and only if $d=1$ and $X_{\eta ,\kc}$ allows only singular points of types $A_1$, $A_2$, $A_3$ or $D_4$ by Theorems \ref{main(1-1)}, \ref{main(1-2)} and \ref{main(1-3)}. 
This condition is actually equivalent to the condition that $S$ does not contain a cylinder (see {\cite[Theorem 1.6]{Bel17}}). 
\end{eg}
\begin{eg}\label{eg6}
Let $\sO$ be a discrete valuation ring of the rational function field $\bC (t)$ such that the maximal ideal of $\sO$ is generated by $t$, and let $X$ be the $3$-fold variety defined by: 
\begin{align*}
X := ( t^nw^2+x^2y^2+xz^3 = 0) \subseteq \bP _{\sO}(1,1,1,2) = \Proj (\sO [x,y,z,w]), 
\end{align*}
where $n \in \bZ$. 
Then we obtain the structure morphism $f: X \to \Spec (\sO )$. 
Letting $\eta$ be the generic point on $\Spec (\sO )$, the generic fiber $X_{\eta}$ of $f$ can be written as follows: 
\begin{align*}
X_{\eta} = ( t^nw^2+x^2y^2+xz^3 = 0) \subseteq \bP _{\bC (t)}(1,1,1,2) = \Proj (\bC (t)[x,y,z,w]). 
\end{align*}
By Example \ref{eg3}, $X_{\eta}$ is a Du Val del Pezzo surface over $\bC (Y)$ with $\rho _{\bC (t)}(X_{\eta})=1$ and of degree $2$, moreover, $X_{\eta}$ contains a cylinder if and only if $\sqrt{-t^n} \in \bC (t)$. 
Hence, $f$ is a canonical del Pezzo fibration of degree $2$, furthermore, $f$ admits a vertical cylinder if and only if $n$ is even. 
\end{eg}


\end{document}